\newtheorem{thm}{Theorem}[section]
\newtheorem{prop}[thm]{Proposition}
\newtheorem{lem}[thm]{Lemma}
\newtheorem{cor}[thm]{Corollary}
\theoremstyle{definition}
\newtheorem{defn}[thm]{Definition}
\theoremstyle{remark}
\newtheorem{remk}[thm]{Remark}
\newtheorem{remks}[thm]{Remarks}
\newtheorem{exm}[thm]{Example}
\newtheorem{exms}[thm]{Examples}
\newtheorem{notat}[thm]{Notation}
\numberwithin{equation}{section}
\newenvironment{rem}{\begin{remk}}%
{\hfill$\square$\end{remk}}
{\hfill$\square$\end{remks}}
{\hfill$\square$\end{exm}}
{\hfill$\square$\end{exms}}
{\hfill$\square$\end{notat}}
\newcommand{\thmref}{Theorem~\ref}
\newcommand{\propref}{Proposition~\ref}
\newcommand{\corref}{Corollary~\ref}
\newcommand{\lemref}{Lemma~\ref}
\newcommand{\sB}{{\mathcal B}}
\newcommand{\sC}{{\mathcal C}}
\newcommand{\sE}{{\mathcal E}}
\newcommand{\sF}{{\mathcal F}}
\newcommand{\sG}{{\mathcal G}}
\newcommand{\sK}{{\mathcal K}}
\newcommand{\sL}{{\mathcal L}}
\newcommand{\sM}{{\mathcal M}}
\newcommand{\sO}{{\mathcal O}}
\newcommand{\sQ}{{\mathcal Q}}
\newcommand{\sU}{{\mathcal U}}
\newcommand{\sW}{{\mathcal W}}
\newcommand{\sX}{{\mathcal X}}
\newcommand{\sY}{{\mathcal Y}}
\newcommand{\sZ}{{\mathcal Z}}
\newcommand{\C}{{\mathbb C}}
\renewcommand{\P}{{\mathbb P}}
\newcommand{\Z}{{\mathbb Z}}
\newcommand{\fm}{{\mathfrak m}}
\newcommand{\fg}{{\mathfrak g}}
\newcommand{\fz}{{\mathfrak z}}
\newcommand{\CH}{{\rm CH}}
\newcommand{\surj}{\twoheadrightarrow}
\newcommand{\inj}{\hookrightarrow}
\newcommand{\Spec}{{\rm Spec \,}}
\newcommand{\Ker}{{\rm Ker}}
\newcommand{\Sch}{{\operatorname{\mathbf{Sch}}}}
\newcommand{\qp}{{\operatorname{\mathbf{Qproj}}}}
\renewcommand{\>}{\rangle}
\newcommand{\Sm}{{\mathbf{Sm}}}
\newcommand{\ds}{{/\kern-3pt/}}
\newcommand{\ov}{\overline}
\newcommand{\tuborg}{\left\{\begin{array}{ll}}
\newcommand{\sluttuborg}{\end{array}\right.}
\newcommand{\inv}{{\rm Inv}}
\newcommand{\wh}{\widehat}
\newcounter{elno}
\newcounter{elno-abc}   
\newcounter{elno-abc-prime}   
\begin{document}
    
\title[Atiyah-Segal theorem for Deligne-Mumford stacks]
{Atiyah-Segal theorem for Deligne-Mumford stacks and applications}
\author{Amalendu Krishna and Bhamidi Sreedhar}
\address{School of Mathematics, Tata Institute of Fundamental Research,  
1 Homi Bhabha Road, Colaba, Mumbai, India}
\email{amal@math.tifr.res.in}
\email{sreedhar.bhamidi@gmail.com}

\keywords{Equivariant $K$-theory, Equivariant Intersection Theory, Riemann-Roch}

\subjclass[2010]{Primary 19L47,19L10; Secondary 14C15, 14C25}



\begin{abstract}
We prove an Atiyah-Segal isomorphism for the higher $K$-theory of
coherent sheaves on quotient Deligne-Mumford stacks over $\C$. As an application,
we prove the Grothendieck-Riemann-Roch theorem for such  stacks.
This theorem establishes an isomorphism between the higher $K$-theory of 
coherent sheaves on a Deligne-Mumford stack and the higher Chow groups
of its inertia stack. Furthermore, this isomorphism is covariant for
proper maps between Deligne-Mumford stacks.  
\end{abstract} 
\setcounter{tocdepth}{1}
\maketitle
\tableofcontents  

\section{Introduction}\label{sec:Intro}
The Grothendieck-Riemann-Roch theorem provides an isomorphism between
the rational Grothendieck group of coherent sheaves on a regular scheme
and its rational Chow groups. It also shows that this isomorphism is
covariant for proper morphisms between schemes. This result of
Grothendieck was extended to the case of singular schemes by Baum, Fulton
and MacPherson \cite{BFM}. It was later generalized to the level of higher 
$K$-theory and higher Chow groups of all quasi-projective (possibly singular) 
schemes by Bloch \cite{Bloch}.

The Riemann-Roch theorem for higher $K$-theory of quasi-projective 
schemes is arguably one of the most important and deep results
in algebraic geometry. The famous result of Riemann and Roch
that computes the dimension of the space of sections of a line bundle
on a Riemann surface in terms of its topological invariants, is a
special case of this. The celebrated index theorem of Atiyah and Singer
that computes the index of elliptic operators on a compact manifold,
is a differential geometric avatar of the Riemann-Roch theorem.

One reason for the outstanding nature of the Grothendieck-Riemann-Roch
theorem is that it identifies two seemingly very different theories in 
algebraic geometry in a functorial way. One of these (algebraic $K$-theory) is 
abstractly defined as the higher homotopy groups of the classifying space
of a category, and hence, is often very hard to compute
while the  other (higher Chow groups) is described
in terms of explicit generators and relations, and hence, in principle, is 
often more computable.

Many of the recent works in moduli theory seek a version of the
Riemann-Roch theorem for algebraic stacks. For instance, in 
Gromov-Witten theory,
the virtual cycles, whose degrees give rise to the Gromov-Witten invariants,
are the Chern classes of vector bundles on a moduli space (see \cite{Behrend}).
These vector bundles can be lifted to virtual vector bundles on the
moduli stack, and the Riemann-Roch theorem can then be used to
compute the degrees of the virtual cycles. Note that these stacks can be 
highly singular in general. For example, Kontsevich's moduli stack 
${\sM_{g,n}}(X,\beta)$ of stable maps
from $n$-pointed stable curves of genus $g$ to a projective variety $X$ 
is known to be singular even if $X$ is smooth.

In string theory, physicists are often interested in computing some cohomological
invariants such as Euler characteristics of orbifolds
(see \cite{Dixton-1}, \cite{Dixton-2}). They want to compute
these invariants for an orbifold and its resolution of singularities
(the orbifold cohomology). Since the $K$-theory and Chow groups of these
orbifolds are quotients of the corresponding groups for the associated
quotient stacks, the Riemann-Roch for quotient stacks are the natural tools
to analyze these Chow cohomology and Euler characteristics.  


The higher $K$-theory of coherent sheaves and vector bundles for algebraic 
stacks is defined exactly as for schemes. But there is less clarity on
the correct definition of higher Chow groups. 
The rational Chow groups of
Deligne-Mumford stacks were defined independently by Gillet \cite{Gillet}
and Vistoli \cite{Vis}. The integral Chow groups of  
algebraic stacks with affine stabilizers were defined by Kresch \cite{Kresch}.
For stacks which occur as quotients of quasi-projective schemes
by actions of linear algebraic groups, the theory of (Borel style) 
higher Chow groups was defined by Edidin and Graham \cite{EG1}. 
This construction is based on an earlier construction of the Chow groups
of classifying stacks by Totaro \cite{TO}. These Chow groups satisfy all
the expected properties of a Borel-Moore homology theory in the
category of quotient stacks and representable maps (see \cite{AK3} for
details). 

When Vistoli constructed intersection theory on Deligne-Mumford
stacks \cite{Vis}, he asked if his Chow groups are related to the $K$-theory 
of the stack, and secondly, if there exists a Riemann-Roch theorem for proper
maps between Deligne-Mumford stacks. Note that for quotient stacks, 
the Chow groups 
of Gillet, Vistoli, Kresch and Edidin-Graham all agree with rational 
coefficients.

When one tries to generalize the Grothendieck-Riemann-Roch theorem to
quotient stacks, one runs into two serious obstacles. The first problem is that
the direct generalization of the Riemann-Roch theorem for schemes actually fails
for quotient stacks.
Already for the classifying stack $\sX =[{\Spec({\C})}/G]$, where
$G = {\Z/n}$, one knows that $G_0(\sX)_{\C} \simeq {\C[t]}/{(t^n-1)}$
while $\CH_*(\sX)_{\C} \simeq \C$. Hence, there can be no Riemann-Roch map
$G_0(\sX)_{\C} \to \CH_*(\sX)_{\C}$ which is an isomorphism.
This shows that the Borel style higher Chow groups of a stack $\sX$ 
are not the right objects which can describe its $K$-theory. A version of 
Grothendieck-Riemann-Roch theorem using the higher Chow groups of a stack was 
established in 
\cite{EG5} for $G_0$ and in \cite{AK1} for higher $K$-theory. 
But these Riemann-Roch maps fail to describe $K$-theory completely. 

The second main problem in Riemann-Roch for stacks is that unlike the category
of schemes, a morphism between stacks is very often not representable. 
This does not allow  the techniques of Riemann-Roch for
schemes to directly generalize to stacks.
This creates a very serious obstacle in proving the covariance of any possible
Riemann-Roch map.

It was observed by Edidin and Graham
in \cite{EG2} that the inertia stack should be the right object for
the Chow groups while studying the Riemann-Roch transformation
for stacks. Using the Chow groups of inertia stack, they proved a version of
the Riemann-Roch theorem for the Grothendieck group of quotient
stacks when we assume
the stack to be smooth Deligne-Mumford and the morphism to be the coarse moduli
space map \cite{EG2}.

Other than this, the Riemann-Roch problem,
identifying the higher $K$-theory and higher Chow groups of quotient stacks
in a functorial way, is completely open. 
The purpose of this work is to fill this gap
in the study of the cohomology theories of separated quotient stacks. 
In particular, we verify the expectation of Edidin and Graham 
that the higher $K$-theory (with complex coefficients) of
a separated quotient stack $\sX$ can be completely described in terms of the
Borel style higher Chow groups of the inertia stack of $\sX$.

\subsection{Main results}\label{sec:MR}
Given a stack $\sX$ which is of finite type over $\C$, let $I_{\sX}$ denote
its inertia stack. Let $G_i(\sX)$ denote the homotopy groups of the
Quillen $K$-theory spectrum of the category of coherent sheaves on $\sX$.
We say that $\sX$ is a quotient stack if it is of the form
$[X/G]$, where $G$ is a linear algebraic group acting on a
separated algebraic space $X$.
If $X$ is a quasi-projective scheme over $\C$ and $\sX = [X/G]$, we 
let $\CH_*(\sX, \bullet)$ denote its higher Chow groups, as defined
in \cite{EG1}. Given a map of stacks $f: \sX \to \sY$, let
$f^I: I_{\sX} \to I_{\sY}$ denote the induced map on the inertia stacks.
In Theorems~\ref{thm:GRR-M}, ~\ref{thm:GRR-Gen-0} and
~\ref{thm:Intro-3} below, all $K$-groups and higher Chow groups
of stacks are taken with complex coefficients.
The main result of this article is the following
extension of the Grothendieck-Riemann-Roch theorem to separated quotient
stacks.    
 
\begin{thm}\label{thm:GRR-M}
Let $f: \sX \to \sY$ be a proper morphism of separated quotient 
stacks of finite type over $\C$ with quasi-projective 
coarse moduli spaces. Then for any integer $i \ge 0$, there exists 
a commutative diagram
\begin{equation}\label{eqn:GRR-st-0}
\xymatrix@C1pc{
G_i(\sX) \ar[r]^-{\tau_{\sX}} \ar[d]_{f_*} & \CH_*(I_{\sX},i)
\ar[d]^{\bar{f^I}_*} \\
G_i(\sY) \ar[r]^-{\tau_{\sY}} & \CH_*(I_{\sY},i)}
\end{equation}
such that the horizontal arrows are isomorphisms.
\end{thm}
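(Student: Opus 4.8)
The plan is to first construct the Riemann--Roch map $\tau_{\sX}$ and prove it is an isomorphism, and only then establish the commutativity of~\eqref{eqn:GRR-st-0}. Since $\sX$ is a separated quotient stack with quasi-projective coarse space, I would begin by choosing a presentation $\sX = [X/G]$ with $X$ quasi-projective over $\C$ and $G$ a linear algebraic group acting with finite stabilizers, so that $G_i(\sX) \cong G_i^G(X)$. The technical heart is an \emph{algebraic Atiyah--Segal theorem}: with $\C$-coefficients the representation ring $R(G)_\C$ acts on $G_i^G(X)_\C$, and finiteness of the stabilizers forces the localization of $G_i^G(X)_\C$ at a semisimple conjugacy class $[\sigma]$ to be computed on the fixed locus $X^\sigma$ with its residual $Z_G(\sigma)$-action, after a twist by the characters of $\sigma$ acting on the normal directions. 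Running over the finitely many conjugacy classes $[\sigma]$ with $X^\sigma\neq\emptyset$ yields a decomposition $G_i(\sX)_\C \xrightarrow{\ \sim\ } \bigoplus_{[\sigma]} G_i\big([X^\sigma/Z_G(\sigma)]\big)_{\C,[\sigma]}$, i.e. an isomorphism onto a ``delocalized'' $K$-theory whose summands are indexed by the components of the inertia stack $I_{\sX} = \coprod_{[\sigma]}[X^\sigma/Z_G(\sigma)]$. One must then check that this isomorphism is independent of the chosen presentation.

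Second, I would feed each twisted sector $[X^\sigma/Z_G(\sigma)]$ into the higher Riemann--Roch theorem for quasi-projective schemes of Bloch~\cite{Bloch}, in its equivariant/quotient-stack refinement \cite{AK1}: the Chern character followed by capping with a Todd class gives an isomorphism $G_i([X^\sigma/Z_G(\sigma)])_\C \xrightarrow{\sim} \CH_*([X^\sigma/Z_G(\sigma)],i)_\C$ onto the higher Chow groups of that component. Composing the Atiyah--Segal decomposition with these isomorphisms --- absorbing into the Todd factor both the classical tangent-bundle term and the $\sigma$-normal-bundle twist produced by the localization --- defines $\tau_{\sX}\colon G_i(\sX)_\C \to \CH_*(I_{\sX},i)_\C$ and simultaneously shows it is an isomorphism. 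This disposes of the horizontal arrows; the only subtleties here are the finiteness needed to make the direct sums honest and the precise normalization of the Todd correction, which must be fixed now so that the functoriality statement below goes through.

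For the commutativity of~\eqref{eqn:GRR-st-0} the whole difficulty is that $f$ need not be representable, so Riemann--Roch for schemes cannot be quoted directly. I would factor a general proper $f\colon \sX\to\sY$ into building blocks on which the square is checked separately: (i) representable proper morphisms --- via the graph factorization and equivariant projective-bundle/Grassmannian presentations, reducing (with the usual Chow's-lemma bookkeeping) to closed immersions and smooth projective representable morphisms, where the square follows from functoriality of the classical higher Riemann--Roch transformation \cite{Bloch}, \cite{AK1} together with the naturality of the Atiyah--Segal decomposition under equivariant proper maps; and (ii) the genuinely non-representable morphisms --- the relative coarse-moduli morphism of $f$ and, more elementarily, $\mu_n$-gerbe/rigidification morphisms, for which both inertia stacks and the induced map $f^I$ decompose explicitly and the two pushforwards can be compared by hand. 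In case (ii) the content is precisely that the Todd/twisting factors built into $\tau$ conspire to convert $f_*$ into the plain pushforward $\overline{f^I}_*$; this is where the normalization chosen in the construction of $\tau$ is forced. Combining (i) and (ii) with the compatibility of pushforwards under composition then gives the theorem.

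The hard part, I expect, is case (ii): understanding how the inertia-stack construction and the delocalized $K$-theory interact with non-representable proper pushforward --- concretely, identifying the relative ``inertia Todd class'' of a gerbe (and of the coarse-moduli morphism) and proving that the Riemann--Roch correction terms absorb it, so that no spurious factor survives in~\eqref{eqn:GRR-st-0}. A secondary point, used throughout, is that each fixed-locus quotient $[X^\sigma/Z_G(\sigma)]$ is again a separated quotient stack with quasi-projective coarse space, so that \cite{Bloch} and \cite{AK1} apply to it and $f$ restricts sensibly to the inertia; this is where the standing quasi-projectivity hypothesis enters.
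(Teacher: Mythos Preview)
Your high-level strategy matches the paper's: construct an Atiyah--Segal isomorphism from $G_i(\sX)$ to the ``geometric part'' of $G_i(I_{\sX})$, compose with the equivariant Riemann--Roch of \cite{AK1} on each sector, then check covariance by factoring an arbitrary proper $f$ into a representable piece and a genuinely stacky piece. The representable case is also handled the same way (the paper's \propref{prop:riemannrochmapfunct}).

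Where you diverge is in the non-representable step, and this is worth flagging. You propose to reduce to $\mu_n$-gerbes and a relative coarse-moduli morphism, then match Todd/twist corrections by hand. The paper does \emph{not} introduce gerbes or any Todd-class computation here. Instead it uses a uniform fibre-product construction: given $\sX=[X/H]\to\sY=[Y/F]$, it forms $Z=X\times_{\sX}(\sX\times_{\sY}Y)$ with its $G=H\times F$ action and factors $f$ as $[Z/G]\to[(Z/H)/F]\to[Y/F]$, a \emph{partial quotient by a normal subgroup} followed by a representable map (\lemref{lem:GRR-QDM-stacks}). The commutativity for the partial quotient (\thmref{thm:GRR-PROD}) is then proved entirely at the $K$-theory level, by tracking how the twisting automorphisms $t_{g^{-1}}$ and the Morita isomorphisms $\mu_g$ interact with the invariants functor $\mathrm{Inv}^H$ (Lemmas~\ref{lem:Moritapartialfunctoriality}--\ref{lem:partialinvtwisst}); only \emph{after} this is the Riemann--Roch transformation $t^G_{I_X}$ of \cite{AK1} applied, so no ``inertia Todd class'' ever needs to be identified. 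Your suggestion to absorb the $\sigma$-twist into a Todd factor is thus orthogonal to the paper's mechanism: the twist lives in the Atiyah--Segal map $\vartheta^G_X$ itself, and its compatibility with pushforward is checked directly via \propref{prop:invtwisst} and \corref{cor:proper-T-commute}.

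Your route via gerbes could in principle be made to work, but it buys extra bookkeeping (one must argue that the relative-coarse-moduli factorization exists with the right properties, and the gerbe case is not obviously exhaustive), whereas the paper's partial-quotient factorization is available uniformly and sidesteps any normal-bundle or Todd analysis for the non-representable part.
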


The assumption in \thmref{thm:GRR-M} that the coarse moduli spaces are 
quasi-projective is made because the theory of higher Chow groups 
of general schemes is not well behaved and many
fundamental results for this theory are not known in the general context.
If we ignore this assumption, we can prove the following. 


\begin{thm}\label{thm:GRR-Gen-0}
Let $f: \sX \to \sY$ be a proper morphism of separated quotient 
stacks of finite type over $\C$.  
Then there exists a commutative diagram
\begin{equation}\label{eqn:GRR-Gen-0-1}
\xymatrix@C1pc{
G_0(\sX) \ar[r]^-{\tau_{\sX}} \ar[d]_{f_*} & \CH_*(I_{\sX})
\ar[d]^{\bar{f^I}_*} \\
G_0(\sY) \ar[r]^-{\tau_{\sY}} & \CH_*(I_{\sY})}
\end{equation}
such that the horizontal arrows are isomorphisms.
\end{thm}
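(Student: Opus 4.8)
The plan is to run the proof of \thmref{thm:GRR-M} essentially verbatim, but in degree $0$, where the one place that forces the quasi-projectivity hypothesis disappears. Recall that the Riemann--Roch transformation $\tau_{(-)}$ can be constructed for \emph{every} separated quotient stack of finite type over $\C$: the Atiyah--Segal (non-abelian localization) isomorphism identifies $G_i(\sX)_\C$ with a direct sum indexed by the twisted sectors of $I_\sX$, the group $\CH_*(I_\sX,i)_\C$ splits along the same index set, and $\tau_\sX$ is assembled sector by sector from the classical Riemann--Roch maps of the coarse moduli spaces of those sectors; this construction, and the functoriality of $\tau_{(-)}$ for proper morphisms, use only that $\sX$ is a separated quotient stack. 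The quasi-projectivity hypothesis of \thmref{thm:GRR-M} enters solely when one asserts that these sector-wise maps are isomorphisms: for higher $K$-theory this needs Bloch's Riemann--Roch theorem \cite{Bloch}, available only over quasi-projective schemes, whereas for $G_0$ the relevant input is the Baum--Fulton--MacPherson theorem \cite{BFM} and its extension to algebraic spaces by Edidin and Graham, which gives, with no projectivity hypothesis, an isomorphism $\tau_A\colon G_0(A)_\C\xrightarrow{\ \sim\ }\CH_*(A)_\C$ for any algebraic space $A$ of finite type over $\C$, covariant for proper maps. Thus in degree $0$ each sector contributes an isomorphism and $\tau_\sX$, $\tau_\sY$ in \eqref{eqn:GRR-Gen-0-1} are isomorphisms; the commutativity of the square comes from the compatibility of the Atiyah--Segal decomposition with $f_*$ (which depends only on properness of $f$) together with the covariance of the algebraic-space Riemann--Roch transformation for the induced proper morphism of coarse moduli spaces.

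Alternatively, one can deduce \thmref{thm:GRR-Gen-0} formally from \thmref{thm:GRR-M} as stated, by d\'evissage on $\dim\sX$. By Chow's lemma for algebraic spaces the coarse moduli space of $\sX$ contains a dense open quasi-projective subscheme; the corresponding open substack $j\colon\sU\hookrightarrow\sX$ is a separated quotient stack with quasi-projective coarse moduli space, and the reduced closed complement $i\colon\sZ\hookrightarrow\sX$ has strictly smaller dimension. Since the inertia construction respects this decomposition --- $I_\sZ$ is closed in $I_\sX$ with open complement $I_\sU$ --- the localization sequences in $G$-theory and in (higher) Chow groups assemble into a commutative ladder
\[
\xymatrix@C1pc{
G_1(\sU)\ar[r]\ar[d]_{\tau_\sU} & G_0(\sZ)\ar[r]\ar[d]_{\tau_\sZ} & G_0(\sX)\ar[r]\ar[d]_{\tau_\sX} & G_0(\sU)\ar[r]\ar[d]_{\tau_\sU} & 0 \\
\CH_*(I_\sU,1)\ar[r] & \CH_*(I_\sZ)\ar[r] & \CH_*(I_\sX)\ar[r] & \CH_*(I_\sU)\ar[r] & 0
}
\]
in which $\tau_\sU$ is an isomorphism in both degrees by \thmref{thm:GRR-M}, $\tau_\sZ$ is an isomorphism by the inductive hypothesis, and --- the point of the argument --- the only higher Chow group that occurs is that of the quasi-projective stack $\sU$, so both rows are genuinely exact. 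A diagram chase, using in addition the compatibility of $\tau_{(-)}$ with $i_*$, $j^*$ and the localization boundary maps, then forces $\tau_\sX$ to be an isomorphism; the commutativity of \eqref{eqn:GRR-Gen-0-1} follows from the functoriality of $\tau_{(-)}$ for proper morphisms. The base of the induction (equivalently, the quasi-projective case) is \thmref{thm:GRR-M} itself.

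I expect the part requiring the most care to be the covariance. Because a proper morphism of quotient stacks need not be representable, neither the compatibility of the Atiyah--Segal decomposition with $f_*$ nor the identification of $\bar{f^I}_*$ on $\CH_*(I_{(-)})$ with the pushforward assembled from the coarse moduli spaces of the sectors can be obtained by naive base change; both have to be supplied by the apparatus developed for \thmref{thm:GRR-M}. Once that apparatus is in hand, the passage from the quasi-projective case to the general case in degree $0$ is, as indicated above, essentially formal --- the conceptual reason being simply that the scheme-level Riemann--Roch isomorphism for $G_0$ (unlike that for higher $K$-theory) is known without any projectivity hypothesis.
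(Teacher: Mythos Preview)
Your primary approach is correct and matches the paper exactly: the paper deduces this result by running the proof of \thmref{thm:GRR-QDMStacks} verbatim in degree $0$, noting that each ingredient---the Riemann--Roch transformation of \thmref{thm:AKRR}, the invariant pushforward on Chow groups in \thmref{thm:pcpu}, and the comparison Lemmas~\ref{lem:augmentation ideal} and~\ref{lem:aiprod}---carries an explicit ``for $i=0$ the same holds for algebraic spaces'' clause, valid precisely because the Edidin--Graham Riemann--Roch for $G_0$ needs no projectivity.

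Your d\'evissage alternative, however, has a gap. For the five-lemma to yield \emph{injectivity} of $\tau_\sX$, you need the bottom row to be exact at $\CH_*(I_\sZ)$: the kernel of $\CH_*(I_\sZ)\to\CH_*(I_\sX)$ must be the image of a boundary map from $\CH_*(I_\sU,1)$. But the coarse moduli spaces of $I_\sX$ and $I_\sZ$ are only algebraic spaces, and Bloch's localization theorem for higher Chow groups is established only for quasi-projective schemes; arranging that $I_\sU$ alone has quasi-projective coarse moduli does not give you the boundary map, let alone exactness. (This is exactly why the paper imposes quasi-projectivity in \thmref{thm:GRR-M} and drops to $i=0$ otherwise; see also Definition~\ref{defn:chowgroupofstack}, where higher Chow groups of stacks are defined only under that hypothesis.) You would in addition need $\tau_{(-)}$ to intertwine the $K$-theory and Chow boundary maps, and since $\tau_\sX$ factors through the Atiyah--Segal isomorphism $\vartheta_\sX$, this compatibility is not immediate and is nowhere supplied in the paper. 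Surjectivity of $\tau_\sX$ does follow from your ladder (it uses only degree-$0$ Chow localization, which holds for algebraic spaces), but injectivity does not.
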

    
As we have already mentioned, a special case of \thmref{thm:GRR-Gen-0}, 
where $\sX$ is smooth and $\sY$ is its coarse moduli space,
is a direct consequence of the main results of \cite{EG2}. 
Note that if $\sX$ is an algebraic space, then the map $I_{\sX} \to \sX$ is an
isomorphism. Therefore, Theorem~\ref{thm:GRR-M}
recovers the Riemann-Roch theorem of Bloch in this case.
Theorem~\ref{thm:GRR-Gen-0} extends the classical Grothendieck-Riemann-Roch
theorem to algebraic spaces.

\vskip .3cm

We deduce the Riemann-Roch theorems for stacks as a consequence of the
following more general Atiyah-Segal isomorphism which describes the higher 
$K$-theory of coherent sheaves on a stack in terms of the 
geometric part of the higher $K$-theory of the inertia stack.
For a stack $\sX$, the higher $K$-theory of coherent sheaves $G_i(\sX)$
is a module over $K_0(\sX)$. Let $\fm_{\sX} \subset K_0(\sX)_{\C}$ denote the ideal 
of virtual vector bundles on $\sX$ whose rank is zero on all connected components
of $\sX$. 

\begin{thm}\label{thm:Intro-3}  
Let $f: \sX \to \sY$ be a proper morphism of separated quotient 
stacks of finite type over $\C$. Let $i \ge 0$ be an integer.
Then there is a commutative diagram
\begin{equation}\label{eqn:ASCS-00}
\xymatrix@C1pc{
G_i(\sX) \ar[r]^-{\vartheta_{\sX}} \ar[d]_{f_*} & G_i(I_{\sX})_{\fm_{I_{\sX}}} 
\ar[d]^{f^I_*} \\
G_i(\sY) \ar[r]^-{\vartheta_{\sY}} & G_i(I_{\sY})_{\fm_{I_{\sY}}}}
\end{equation}
such that the horizontal arrows are isomorphisms.
\end{thm}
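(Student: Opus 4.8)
The plan is to construct the isomorphism $\vartheta_{\sX}$ by combining two ingredients: an equivariant Atiyah-Segal style localization/completion theorem for the $G$-theory of quotient stacks, and a dévissage identifying the localized $G$-theory of $\sX$ with the $\fm$-localized $G$-theory of its inertia stack $I_{\sX}$. First I would reduce to the case of a global quotient $\sX = [X/G]$ with $X$ quasi-projective and $G$ a linear algebraic group acting with finite stabilizers, so that $G_i(\sX) = G_i^G(X)$; this reduction is standard for separated quotient stacks. The key local statement, which I expect to be proved earlier in the paper, is a decomposition of the $\fm_{\sX}$-adic localization of $G_i^G(X)_{\C}$ indexed by conjugacy classes of elements of finite order in $G$ (the ``twisted sectors''), mirroring the classical result of Atiyah-Segal for compact Lie groups and its algebraic analogues (Thomason, Edidin-Graham, Toen, Vezzosi-Vistoli). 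Concretely, one shows
\begin{equation}\label{eqn:plan-dec}
G_i(\sX)_{\C} \;\xrightarrow{\ \sim\ }\; \prod_{[\psi]} \bigl(G_i(X^{\psi}/C(\psi))_{\C}\bigr)^{\text{localized}},
\end{equation}
and then reinterprets the right side as $G_i(I_{\sX})_{\fm_{I_{\sX}}}$, using that the components of $I_{\sX}$ are exactly the stacks $[X^{\psi}/C(\psi)]$ and that localizing at the augmentation ideal of each representation ring picks out the appropriate summand. This gives $\vartheta_{\sX}$ and its bijectivity.

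Next I would establish functoriality. For a proper map $f : \sX \to \sY$, one has the proper pushforward $f_* : G_i(\sX) \to G_i(\sY)$ on $G$-theory and the induced proper map $f^I : I_{\sX} \to I_{\sY}$ on inertia stacks, hence $f^I_*$ on the localized $G$-theory of the inertia. The commutativity of \eqref{eqn:ASCS-00} amounts to checking that the decomposition \eqref{eqn:plan-dec} is natural with respect to proper pushforward. The subtle point here is that $f$ need not be representable, so one cannot argue purely via equivariant schemes; instead I would use the explicit description of $f_*$ in terms of the local (twisted sector) pieces, and the compatibility of the derived pushforward of coherent sheaves with restriction to fixed-point loci. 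This is where base change for the squares relating $X^{\psi}$, $Y^{\psi}$ and the maps between them enters, together with the fact that $f$ proper forces the induced maps on fixed loci to be proper.

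The main obstacle, as flagged in the introduction, is precisely the non-representability of $f$ together with the behavior of the construction under pushforward: one must show that the Atiyah-Segal decomposition is not merely an abstract isomorphism of groups but is compatible with the full six-functor-style structure needed to make $f_*$ correspond to $f^I_*$. Handling this will likely require first treating the case where $f$ is the coarse moduli space map (so $\sY$ is a scheme and $I_{\sY} = \sY$), where the statement reduces to a completion theorem identifying $G_i(\sX)$ with a sum over twisted sectors of the $G$-theory of quotients that are themselves schemes; then factoring a general proper $f$ through coarse spaces and representable pieces, and invoking naturality at each stage. A secondary technical point is controlling the localizations with $\C$-coefficients so that the relevant idempotents exist and the products in \eqref{eqn:plan-dec} are finite on each connected component; this uses that the stabilizers are finite, so only finitely many conjugacy classes $[\psi]$ contribute.
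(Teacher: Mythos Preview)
Your overall strategy matches the paper's: decompose $G_i(G,X)$ over semi-simple conjugacy classes, identify each piece with the $\fm_1$-localized $K$-theory of the corresponding component $[X^g/Z_g]$ of $I_{\sX}$, and prove covariance by factoring $f$ into a partial-quotient (``stacky coarse moduli'') step followed by a representable step. So the architecture is right.

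There are, however, two genuine gaps in your outline that the paper spends most of its effort on. First, you do not say how to build the map from $G_i(G,X_\psi)_{\fm_\psi}$ to $G_i(Z_g,X^g)_{\fm_1}$. These are localizations at \emph{different} maximal ideals ($\fm_\psi$ versus the augmentation ideal), and simply ``localizing at the augmentation ideal'' does not produce a map between them. The paper constructs this via a Morita isomorphism $\mu_g$ together with a \emph{twisting operator} $t_{g^{-1}}$ (defined by decomposing coherent sheaves into $\langle g\rangle$-isotypic pieces and rescaling by characters), which carries $\fm_g$-localized $K$-theory to $\fm_1$-localized $K$-theory. Checking that this composite is independent of the choice of $g\in\psi$ and commutes with proper pushforward is the technical heart of \S\S\ref{Sec:Twisting}--\ref{sec:ASmap}; your appeal to ``base change for fixed-point loci'' does not cover it.

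Second, you assert the existence of $f^I_*$ on $G_i(I_{\sX})_{\fm_{I_{\sX}}}$ without justification. Since $f^I$ is in general not representable, there is no a priori reason the pushforward $f^I_*$ is continuous for the $\fm_{I_{\sX}}$- and $\fm_{I_{\sY}}$-adic topologies, so the map on localizations is not automatic. The paper handles this by proving (Theorem~\ref{thm:Com-compl}) that for any quotient Deligne--Mumford stack, localization at $\fm_{\sX}$ agrees with localization at the augmentation ideal $\fm_{1_G}\subset R(G)$ for any presentation $[X/G]$; this is a version of K{\"o}ck's conjecture and is what makes the right vertical arrow in \eqref{eqn:ASCS-00} well defined. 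You should flag this as a separate ingredient rather than fold it into ``controlling localizations with $\C$-coefficients.''
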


Note that unlike \thmref{thm:GRR-M}, the Atiyah-Segal isomorphism makes
no assumption on the coarse moduli spaces.

Another point one needs to note here is that the map
$f^I_*$ in ~\eqref{eqn:ASCS-00} is very subtle.
The proper map $f^I: I_{\sX} \to I_{\sY}$ induces
a push-forward map $f^I_*: G_i(I_{\sX}) \to G_i(I_{\sY})$. However, there is
no guarantee that $f^I_*$ is continuous with respect to the $\fm_{I_{\sX}}$-adic
topology on $G_i(I_{\sX})$ and the $\fm_{I_{\sY}}$-adic topology on $G_i(I_{\sY})$.
In particular, $f^I_*$ does not automatically induce the map on the
localizations.

A very crucial part of the proof of \thmref{thm:Intro-3}
is to show that the map $f^I_*$ on the right side of ~\eqref{eqn:ASCS-00} is
defined. Existence of this map is in fact part of an old conjecture of 
K{\"o}ck \cite{Kock}. As part of the proof of \thmref{thm:Intro-3} therefore,
we also prove a version of K{\"o}ck's conjecture for separated quotient stacks.
We refer to \thmref{thm:Com-compl} for a precise statement of our result.
For a representable map of more general smooth quotient stacks, this 
was proven by Edidin and Graham \cite{EG5}.

When $G$ is a finite group acting on a compact oriented differentiable
manifold, a version of the isomorphism $\vartheta_{\sX}$ of \thmref{thm:Intro-3}
was proven long ago by Segal. 
When the finite group $G$ acts on a scheme $X$, a finer version of the isomorphism 
$\vartheta_{\sX}$ was obtained for $\sX = [X/G]$ by Vistoli \cite{Vistoli}. 
If we are working with $K$-theory with complex 
coefficients, \thmref{thm:Intro-3} thus provides a complete generalization
of Vistoli's theorem to all separated quotient stacks. Additionally, it also
proves the covariance of this isomorphism.

The equivariant higher Chow groups of schemes with group action are defined
as the ordinary higher Chow groups (see \cite{Bloch}) of the Borel spaces.
In particular, these equivariant Chow groups are actually a non-equivariant
cohomology theory and much more explicitly defined. On the other hand,
the equivariant $K$-theory is a very abstract object and there is no
explicit way to compute them. Novelty of Theorems~\ref{thm:GRR-M}
and ~\ref{thm:GRR-Gen-0} is that they provide a formula for the equivariant
$K$-theory in terms of the ordinary Chow groups of Borel spaces. 
Apart from this, they also provide formula to compute the
Euler characteristics of coherent sheaves and vector bundles on separated
quotient stacks. Note that such a formula was known before only for smooth
quotient stacks (see \cite{G1} and \cite{EG2}, see also \cite{AS} in the 
topological case).

The results obtained above motivate two important questions for future
investigation. The first is related to one of the main
objectives of the paper, namely, to describe
$K$-theory of stacks in terms of more explicit objects like higher Chow groups.
Going beyond the Riemann-Roch theorem, it was proven by 
Bloch-Lichtenbaum \cite{BL}
and Friedlander-Suslin \cite{FS} 
that the integral $K$-theory of quasi-projective schemes
could be described by higher
Chow groups in terms of an Atiyah-Hirzebruch spectral sequence.
Our results provide a strong indication that there should be a similar spectral
sequence consisting of the Borel style higher Chow groups of the inertia stack
that should converge to the $K$-theory of the underlying stack. This
is a very important but a challenging problem in the study of cohomology theory of
stacks. The second question is about extending our results to Artin stacks.
Here, one could ask if there is an  analogue of the Atiyah-Segal isomorphism or the
Riemann-Roch isomorphism for Artin quotient stacks. We hope to come back to 
these questions in future projects.

We end the description of our main results with few comments on their
comparison with the Riemann-Roch theorems for stacks that are available
in the literature. A Riemann-Roch theorem in the equivariant setting was
first established by K{\"o}ck \cite{Kock}, where the target of the 
Riemann-Roch map is the completion of the equivariant $K$-theory with
respect to its $\gamma$-filtration. 
In \cite{Joshua-1} and \cite{Joshua-2}, Joshua proved
a version of Grothendieck-Riemann-Roch theorem for stacks. In his results,
the target of the Riemann-Roch map is a version of abstractly defined 
Bousfield localized 
topological $K$-theory of the underlying stack. In particular, his Riemann-Roch
map is not an isomorphism. An equivariant Riemann-Roch connecting
equivariant algebraic and topological $K$-theory was also
studied by Thomason \cite{TO2}. 
In \cite{Toen}, Toen proved a version of the 
Grothendieck Riemann-Roch theorem for quotient Deligne-Mumford stacks
with quasi-projective coarse moduli space. In Toen's Riemann-Roch 
theorem, the target of the Riemann-Roch map is a generalized cohomology 
theory with coefficients in the sheaves of representations such as, the 
{\'e}tale and de Rham hypercohomology. In particular, the Riemann-Roch map
is not an isomorphism in this case too. To authors' knowledge, a Riemann-Roch 
theorem connecting the $K$-theory of stacks with Chow groups first
appeared in the work of Edidin and Graham \cite{EG2}. 
Our results provide the most general
version of the Riemann-Roch theorem of Edidin and Graham.


\subsection{Outline of the proofs}\label{sec:Outline}
Since the proof of \thmref{thm:GRR-M} is significantly involved,
we try here to give a brief sketch of its main steps. Our hope is that this
will help the reader keep his/her focus on the final proof and not get
intimidated by the several intermediate steps, which are mostly
of implementational nature.  

The proofs of our Riemann-Roch theorems are deduced from
\thmref{thm:Intro-3}. So the most of this paper is
devoted to the proof of this theorem. 
An outline of its proof is as follows.
Using the fact that we are dealing with quotient 
stacks, we first 
break the map $f:\sX \to \sY$ into the product $f = f_2 \circ f_1$ of two
maps of the following kinds.
\begin{enumerate}
\item
We find a separated quotient Deligne-Mumford stack  $\sX'$ with action by 
an algebraic group $F$ such that the coarse moduli space map 
$p_1: \sX' \to X'$ is $F$-equivariant. The map $f_1: \sX = [\sX'/F] \to [X'/F]$
is then the `stacky' coarse moduli space map or relative moduli space  (see \cite[\S~3]{AOV}, \lemref{lem:GRR-QDM-stacks}).
\item
There is a $F$-equivariant map $p_1: X' \to Y'$ of quasi-projective
schemes such that 
$f_2: [X'/F] \to [Y'/F] = \sY$ is the induced map between the quotient
stacks. In particular, $f_2$ is representable. 
\end{enumerate}

The Atiyah-Segal isomorphism for case (2) is proven in \S~\ref{Sec:GRR-CoaMod}.
Apart from the usage of twisting operators and Morita isomorphisms,
this case crucially relies on the Riemann-Roch theorem of \cite{AK1}.
This shows that a part of the Atiyah-Segal isomorphism actually uses a
weaker version of the Riemann-Roch theorem (see \lemref{lem:Gen-qt}). 

In order to prove (1), we write 
$\sX = [X/(H \times F)]$ so that $\sX' = [X/H]$ and $X' = X/H$. 
We then note that the canonical map $\sX' \to X'$ reduces to the
identity map on taking the coarse moduli spaces. 
This allows us to reduce to the case when
$F$ is trivial so that $f_1$ is just the coarse moduli space map.  
When $\sX = [X/G]$ and $f$ is the coarse moduli space map, we first
prove the case of the free action and then reduce to this case by finding an
equivariant finite surjective cover of $X$ where the group action is free.
The full implementation of these steps is the most subtle part of the
proof of \thmref{thm:Intro-3} and is done in 
\S~\ref{Sec:Product Of Groups}. 

A brief description of the other sections of this paper is as follows.
In \S~\ref{sec:Action}, we recollect some known facts about Deligne-Mumford 
stacks
and prove some geometric properties about them. In \S~\ref{sec:EKT}, we recall
some results about the equivariant $K$-theory for proper action. The 
results here
are mostly recollected from \cite{EG2}. 
In \S~\ref{Sec:Twisting}, we define a twisting operator on the equivariant 
$K$-theory which is used in the construction of the Atiyah-Segal
transformation. We prove some results associated to the Morita isomorphism
in equivariant $K$-theory in \S~\ref{Sec:Morita Isomorphism}.
We construct the Atiyah-Segal map in \S~\ref{sec:ASmap}.
We prove the Atiyah-Segal correspondence for separated quotient stacks
in \S~\ref{sec:RR-DM}. The final section is devoted to the proof of the
Riemann-Roch theorem using \thmref{thm:Intro-3}.


\section{Group actions and quotient stacks}\label{sec:Action}
In this section, we set up our assumptions and notations. We also
very briefly recall some definitions related to quotient 
Deligne-Mumford stacks.
We prove some basic properties of these stacks and group actions on algebraic
spaces. These properties will be used throughout the rest of the text.

\subsection{Assumptions and Notations}\label{sec:basicassumptions}
In this text, we work over the base field $k = \C$.
By a {\it{scheme}}, we shall mean a reduced quasi-projective scheme over $\C$
and denote this category by $\Sch_{\C}$. We let $\Sm_{\C}$ be the
full subcategory of $\Sch_{\C}$ consisting of smooth schemes.
All products in the category $\Sch_{\C}$ will be taken over $\Spec(\C)$
unless we specifically decorate them.  In this text, all stacks and
algebraic spaces will be separated and of finite type over $\C$.

A linear algebraic group $G$ is a smooth affine group scheme over $\C$. 
We assume that the action of $G$ on a scheme $X$ is always linear in the sense
that there is a $G$-equivariant ample line bundle on $X$. 
We let $\Sch^G_{\C}$ denote the category of quasi-projective $\C$-schemes
with linear $G$-action. We let $\Sm^G_{\C}$ denote the full subcategory of
$\Sch^G_{\C}$ consisting smooth $\C$-schemes.
We let $\qp^G_{\C}$ denote the category of $\C$-schemes of finite type
$X$ on which $G$ acts properly such that the quotient $X/G$ 
(in the sense of \cite[Definition~0.6(i - iii)]{Mumford}) is
quasi-projective. Note that an object of $\qp^G_{\C}$ is quasi-projective
over $\C$ with linear $G$-action (see \cite[Remark~4.3]{KR})
so that $\qp^G_{\C}$ is a full subcategory of  $\Sch^G_{\C}$.

Recall that the Riemann-Roch transformation for schemes is meaningful
only when we consider the algebraic $K$-theory and higher Chow groups
with rational coefficients. Since our constructions rely on 
\cite{EG2} and \cite{EG4}, we shall actually consider these groups
with complex coefficients and not just rational. We shall therefore assume
throughout this text that all abelian groups are replaced by their
base change by $\C$. In particular, all equivariant $K$-theory groups
$G_i(G, X)$ and all equivariant higher Chow groups $\CH^G_*(X,i)$
will actually mean $G_i(G,X)_{\C}$ and $\CH^G_*(X,i)_{\C}$,
respectively.

\subsection{Deligne-Mumford stacks and their 
coarse spaces}\label{sec:DMS}
Recall that a quotient stack is a stack $\sX$ equivalent to the
stack quotient $[X/G]$ (see \cite[Example~2.4.2]{LM}), 
where $X$ is an algebraic space and $G$ is a
linear algebraic group acting on $X$. 
The stack $[{\Spec(\C)}/G]$ is called the classifying stack of $G$,
which we shall denote by $\sB G$.

Recall that a stack $\sX$ of finite type over $\C$
is called a {\sl Deligne-Mumford stack} if
the diagonal $\Delta_{\sX}: \sX \to \sX \times_{\C} \sX$ is representable, 
quasi-compact and separated, and there is a scheme $U$ with an {\'e}tale 
surjective morphism $U \to \sX$.
The scheme $U$ is called an atlas of $\sX$. The stack $\sX$ is called separated
if $\Delta_{\sX}$ is proper. It is well known that a separated quotient stack 
over $\C$ is a Deligne-Mumford stack.

For a Deligne-Mumford stack $\sX$, we let ${\rm Et}_{\sX}$ denote
the small {\'e}tale site of $\sX$. An object of ${\rm Et}_{\sX}$ 
is a finite type $\C$-scheme $U$ with an {\'e}tale map $u:U \to \sX$. 
This map is representable (by schemes) if $\sX$ is separated.
In this text, we shall be 
dealing with Deligne-Mumford stacks which arise as follows.

\begin{thm}[Deligne-Mumford \cite{DM}]\label{thm:DM-1} 
Let $X \in \Sch_{\C}$ 
and let $G$ be a smooth affine group scheme over $\C$ acting on
$X$ such that the stabilizers of geometric points are finite. 
Then the stack quotient $[X/G]$ (see, for example, \cite[7.17]{Vis}) 
is a Deligne-Mumford stack. If the stabilizers are trivial,
then $[X/G]$ is an algebraic space. 
Furthermore, the stack is separated if and only if the action is proper.
\end{thm}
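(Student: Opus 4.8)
The plan is to verify the intrinsic characterization of Deligne--Mumford stacks in terms of the diagonal, and then to read off the separatedness statement from the same analysis. Write $\sX := [X/G]$. By \cite[7.17]{Vis} the quotient $\sX$ is an algebraic stack, with $u\colon X \to \sX$ a smooth surjective atlas realizing $X$ as a $G$-torsor over $\sX$; and the diagonal $\Delta_\sX \colon \sX \to \sX \times_\C \sX$ is automatically representable, quasi-compact and separated, because its base change along the smooth surjection $X \times_\C X \to \sX \times_\C \sX$ is the morphism of finite-type $\C$-schemes
\[
j \colon G \times_\C X \longrightarrow X \times_\C X, \qquad (g,x) \longmapsto (x, g\cdot x) .
\]
I would then invoke the standard fact that an algebraic stack is a Deligne--Mumford stack exactly when its diagonal is unramified. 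Since ``unramified'' is smooth-local on the target, it suffices to show that $j$ is unramified.

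The crucial input is the finiteness hypothesis, used as follows. The morphism $j$ is of finite type; its restriction along $\Delta_X\colon X \to X \times_\C X$ is the stabilizer group scheme $S \to X$, and its geometric fibre over a pair $(x,y)$ is empty if $x$ and $y$ lie on different orbits and is otherwise isomorphic to the stabilizer group $S_x$. By hypothesis each $S_x$ is finite; and since we work over $\C$, Cartier's theorem forces every group scheme of finite type over a field of characteristic zero to be reduced, hence smooth, so each finite $S_x$ is in fact {\'e}tale. Thus every geometric fibre of $j$ is finite and reduced, i.e.\ $j$ is unramified, so $\Delta_\sX$ is unramified and $\sX$ is a Deligne--Mumford stack. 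If moreover all stabilizers are trivial, the same fibrewise description shows that $j$ is a monomorphism --- if $(g_1,x_1)$ and $(g_2,x_2)$ have the same image then $x_1 = x_2$ and $g_2^{-1}g_1 \in S_{x_1} = \{e\}$ --- so $\Delta_\sX$ is a monomorphism and $\sX$ is an algebraic space.

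The one step that is not a formal consequence of the above, and which I expect to be the main technical obstacle, is the passage from ``unramified diagonal'' to an honest {\'e}tale atlas $U \to \sX$ by a scheme $U$ --- i.e.\ the precise implication ``algebraic stack with unramified diagonal $\Rightarrow$ Deligne--Mumford''. The standard route slices the smooth atlas: $u\colon X \to \sX$ is smooth of relative dimension $\dim G$, the orbit directions account for exactly this relative dimension since the stabilizers are finite, and unramifiedness of $\Delta_\sX$ lets one cut $X$ down, {\'e}tale-locally, by a regular sequence of length $\dim G$ transverse to the orbits, producing a locally closed $X' \subseteq X$ with $X' \to \sX$ {\'e}tale surjective. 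For a quotient stack over $\C$ one may alternatively appeal to an {\'e}tale slice theorem for the action directly. Either way, this is the point that carries the geometric content, and I would feel free to cite it.

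Finally, for separatedness: by definition $\sX$ is separated iff $\Delta_\sX$ is proper, and properness is fppf-local on the base, so this holds iff its base change $j$ along $X \times_\C X \to \sX \times_\C \sX$ is proper. But $j = (\pr, a)$ with $\pr\colon G \times_\C X \to X$ the projection and $a$ the action morphism, and properness of this particular map is precisely the definition of properness of the $G$-action on $X$. Hence $\sX$ is separated if and only if the action is proper, which gives the last assertion.
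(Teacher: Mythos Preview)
The paper does not give its own proof of this statement: Theorem~\ref{thm:DM-1} is stated with attribution to Deligne--Mumford \cite{DM} and is simply quoted as a known result, with no proof environment following it. So there is nothing in the paper to compare your argument against.

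That said, your sketch is essentially the standard route and is correct. The identification of the base-changed diagonal with the action map $j\colon G\times X \to X\times X$, the use of Cartier's theorem in characteristic zero to force finite stabilizers to be \'etale (hence $j$ unramified), the deduction that trivial stabilizers make $j$ a monomorphism, and the equivalence of properness of $\Delta_{\sX}$ with properness of $j$ are all on the mark. Your honest flag on the step ``unramified diagonal $\Rightarrow$ \'etale atlas'' is appropriate: this is exactly the content one cites (slicing the smooth atlas, or an \'etale slice theorem over $\C$), and it is not something the present paper develops either.
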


Recall (see, for instance, \cite[\S~2.2]{AV}) that for 
a Deligne-Mumford stack $\sX$ over $\C$, the coarse moduli space morphism
$\pi : \sX \to \sX_c$ is a morphism to an algebraic space $\sX_c$
such that (i) $\pi$ is initial among morphisms from $\sX$ to algebraic
spaces over $\C$, and (ii) for every algebraically closed field $k$,
the map $[\sX(k)] \to [\sX_c(k)]$ is bijective (where $[\sX(k)]$ denotes the
set of isomorphism classes of objects in the small category $\sX(k)$).
In this case, $\sX_c$ is called {\sl the} coarse moduli space of $\sX$.
We shall often use the common notation for the category
$\sX(k)$ and the set $[\sX(k)]$ if it is clear which
one is meant in a context.
We shall use the following two important facts about the coarse moduli spaces.

The first is the Keel-Mori theorem \cite{KM}, which says that
a separated Deligne-Mumford stack $\sX$ admits
a coarse moduli space $p: \sX \to \sX_c$ such that $\sX_c$ is a separated
algebraic space. This map has some extra properties. Namely,
$\pi$ is a universal homeomorphism, it is proper and quasi-finite,
it commutes with flat base change of $\sX_c$, and $\sO_{\sX_c} \to 
\pi_*(\sO_{\sX})$ is an isomorphism. If $X$ is an algebraic space over $\C$
with a proper action of a linear algebraic group $G$, then one knows 
(see, for instance, \cite[Proposition~2.11]{Vis} and
\cite[Lemma~2.3, Definition~6.8]{Rydh}) that the coarse moduli
space of the quotient stack $[X/G]$ coincides with the geometric
quotient $X/G$ (in the sense of \cite[Definition~0.6]{Mumford}).
It follows therefore from the  Keel-Mori theorem that
the geometric quotient $X/G$ exists.

The second is that the push-forward functor $\pi_*$ from the
category of coherent sheaves on $\sX$ to the analogous category on 
${\sX_c}$ is exact. In particular, it induces a push-forward map
between the $K$-theory of these categories. 
This property is a consequence of the
fact that all Deligne-Mumford stacks in characteristic zero are 
{\sl tame} (see \cite[Lemma~2.3.4]{AV}).

Recall that it is possible in general that a Deligne-Mumford stack may not
be a quotient stack. However, one does not know any example of a separated
stack over $\C$ which is not a quotient stack. Moreover,
there are definite results 
(for example, see \cite{EHKV}, \cite{Totaro}, \cite{KV}) 
which show that most of the separated Deligne-Mumford stacks that occur in 
algebraic geometry are in fact quotient stacks. More specifically,  
we have the following.

\begin{thm}\label{thm:Q-stack}
A separated stack $\sX$ over $\C$ is a quotient stack if one 
of the following holds.
\begin{enumerate}
\item
$\sX$ is smooth and its coarse moduli space is quasi-projective.
\item
$\sX$ satisfies the resolution property, i.e., every coherent sheaf is
a quotient of a locally free sheaf.
\end{enumerate}

If $\sX$ is a quotient stack  with quasi-projective coarse moduli 
space, then it is the quotient of a quasi-projective scheme
by the linear action of a linear algebraic group.
\end{thm}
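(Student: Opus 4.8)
The plan is to reduce the whole statement to the case of the resolution property and then invoke the structure theory of quotient stacks of \cite{EHKV}, \cite{Totaro} and \cite{KV}. Case (2) already hypothesizes the resolution property, so for it there is nothing to reduce. For case (1), I would first recall the theorem of Edidin--Hassett--Kresch--Vistoli \cite{EHKV} (see also Kresch--Vistoli \cite{KV}): a smooth separated Deligne--Mumford stack of finite type over a field whose coarse moduli space is quasi-projective satisfies the resolution property. The proof of that result amounts to showing that the map from the Azumaya Brauer group of $\sX$ to its cohomological Brauer group is surjective --- equivalently, that $\sX$ admits a vector bundle whose fibres are faithful representations of the (finite) stabilizer groups --- and it uses the quasi-projectivity of the coarse space in an essential way. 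Granting this, it suffices to prove the theorem under the assumption that $\sX$ has the resolution property.

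So let $\sX$ be a separated Deligne--Mumford stack with the resolution property. Since $\sX$ is separated and Deligne--Mumford, its diagonal $\Delta_{\sX}$ is proper and quasi-finite, hence finite and in particular affine, and every stabilizer group $\operatorname{Aut}_{\sX}(x)$ at a geometric point is finite. Using the resolution property one constructs a vector bundle $V$ on $\sX$, of some rank $N$, such that for every geometric point $x$ the representation of $\operatorname{Aut}_{\sX}(x)$ on the fibre $V_x$ is faithful (this is the construction behind the vector-bundle criterion of \cite{EHKV} and \cite{Totaro}): along a suitable stratification $\sX$ has only finitely many conjugacy types of stabilizers, the regular representation of a finite group is faithful, and the resolution property supplies finitely many vector bundles whose direct sum dominates these regular representations at every point. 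The associated frame bundle $Y$ --- the $\GL_N$-bundle of bases of $V$ --- is then an algebraic space of finite type over $\C$ carrying a $\GL_N$-action with $\sX \simeq [Y/\GL_N]$; the stabilizers of this action are conjugates of the finite groups $\operatorname{Aut}_{\sX}(x)$, and the action is proper because $\sX$ is separated. This already exhibits $\sX$ as a quotient stack in the sense of \S\ref{sec:DMS}.

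For the ``furthermore'' statement, suppose in addition that the coarse moduli space $X$ of $\sX$ is quasi-projective; the goal is to arrange that $Y$ is a quasi-projective scheme. When $\sX$ is normal --- in particular in case (1), where $\sX$ is smooth --- Totaro's theorem \cite{Totaro} (applicable since $\Delta_{\sX}$ is affine) gives directly a presentation $\sX \simeq [Y/\GL_N]$ with $Y$ a quasi-affine scheme of finite type over $\C$; any such scheme is quasi-projective over $\C$, because, being an open subscheme of an affine scheme, $Y$ has the property that every coherent sheaf on it is generated by its global sections, so that $\cO_Y$ is ample. In the general, possibly non-normal, case of (2) with $X$ quasi-projective, one instead refines the choice of $V$ --- following \cite{EHKV}, e.g.\ by adding to $V$ sufficiently many copies of the pullback along $p\colon\sX\to X$ of an ample line bundle on $X$ --- so that the resulting frame bundle $Y$ is quasi-projective; this is precisely where the hypothesis on the coarse space is used. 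In all cases one obtains $\sX \simeq [Y/\GL_N]$ with $Y$ a quasi-projective $\C$-scheme and $\GL_N$ a linear algebraic group, and since the trivial $\GL_N$-action on the ample line bundle $\cO_Y$ is $\GL_N$-equivariant, the $\GL_N$-action on $Y$ is linear in the sense of \S\ref{sec:basicassumptions}. This completes the proof.

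The main obstacle, I expect, is the production of the everywhere-faithful vector bundle $V$ from the resolution property, together with the refinement that makes its frame bundle a \emph{quasi-projective} scheme: this is the substantive input imported from \cite{EHKV} and \cite{Totaro}, and it is the point at which the finiteness of the stabilizers, the quasi-compactness of $\sX$, and --- for the sharp conclusion --- the ampleness of a line bundle on the coarse moduli space all come in. Establishing case (1) additionally rests on the Brauer-map surjectivity theorem of \cite{EHKV,KV}, which is itself a deep result and not a formal consequence of the resolution-property case.
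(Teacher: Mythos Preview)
Your proposal is essentially correct in spirit, but it is worth noting that the paper's own proof is simply three citations: item (1) is \cite[Theorem~4.4]{KR}, item (2) is \cite[Theorem~2.14]{EHKV}, and the ``furthermore'' clause is \cite[Remark~4.3]{KR}. You have instead sketched the content behind those citations --- the reduction of (1) to (2) via the resolution property for smooth stacks with quasi-projective moduli, and the frame-bundle construction of \cite{EHKV,Totaro} for (2) --- which is the standard route and is what those references actually do. So your approach is not so much different from the paper's as it is an unpacking of it.

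One point that is muddled in your write-up is the linearity argument at the very end. You assert that ``the trivial $\GL_N$-action on the ample line bundle $\cO_Y$ is $\GL_N$-equivariant'', but $\cO_Y$ is ample only in the quasi-affine case you treated first (via Totaro), not in the general quasi-projective case. In the latter situation you need to exhibit a $\GL_N$-equivariant ample line bundle on $Y$; this does come out of the construction (the pullback of an ample bundle from the coarse space $X$, twisted into the frame bundle, does the job), but your final sentence conflates the two cases. This is exactly what \cite[Remark~4.3]{KR} handles, and you would be on safer ground simply citing it, as the paper does.
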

\begin{proof}
The item (1) is \cite[Theorem~4.4]{KR} while (2) follows from
\cite[Theorem~2.14]{EHKV}. The last assertion is \cite[Remark~4.3]{KR}.
\end{proof}

The following equivalences of two quotient stacks is well known
(see, for instance, \cite[Remark~2.11]{EHKV}). We give a very brief sketch of 
the  proofs.

\begin{lem}\label{lem:Morita-stack}
Let $H \trianglelefteq G$ be a closed normal subgroup of a linear 
algebraic group 
$G$ and let $F = G/H$. Let $f: X \to Y$ be a $G$-equivariant morphism which is 
an $H$-torsor. Then $f$ induces an isomorphism of quotient stacks 
$\bar{f}: [X/G] \xrightarrow{\simeq} [Y/F]$. 
\end{lem}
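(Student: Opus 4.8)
\textbf{Proof plan for \lemref{lem:Morita-stack}.}

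The plan is to construct the isomorphism $\bar f$ directly from the universal property of the stack quotient and then verify it is an equivalence by exhibiting a quasi-inverse. First I would recall that for a linear algebraic group $G$ acting on an algebraic space $X$, the quotient stack $[X/G]$ represents the functor sending a test scheme $T$ to the groupoid of pairs $(P \to T, \varphi)$ where $P \to T$ is a $G$-torsor and $\varphi\colon P \to X$ is a $G$-equivariant map; morphisms of such data are $G$-torsor isomorphisms commuting with the maps to $X$. So, to produce the morphism $\bar f\colon [X/G] \to [Y/F]$, given $(P \to T, \varphi\colon P \to X)$ over $[X/G]$, I would form $P' := P/H$, which is an $F$-torsor over $T$ since $F = G/H$ and $P$ is a $G$-torsor; the equivariant map $\varphi\colon P \to X$ descends along the $H$-torsor $X \to Y$ to an $F$-equivariant map $\varphi'\colon P' \to Y$, because $\varphi$ is $G$-equivariant, hence $H$-equivariant, and $f\colon X \to Y$ is the $H$-quotient. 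This assignment is visibly functorial in $T$ and compatible with morphisms of torsor data, so by Yoneda it defines the morphism of stacks $\bar f$.

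Next I would build a candidate quasi-inverse $\bar g\colon [Y/F] \to [X/G]$. Given $(Q \to T, \psi\colon Q \to Y)$ over $[Y/F]$, set $\widetilde Q := Q \times_Y X$, using $\psi$ on one factor and $f$ on the other. Since $f\colon X \to Y$ is an $H$-torsor and $H$ is normal in $G$, the space $\widetilde Q$ carries a natural $G$-action: it is an $H$-torsor over $Q$, and the $F = G/H$-action on $Q$ lifts compatibly so that $\widetilde Q \to T$ becomes a $G$-torsor — this is the one point requiring a small argument, which I address below. The second projection $\widetilde Q \to X$ is then $G$-equivariant by construction. This gives the data defining $\bar g$, again functorially in $T$.

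The remaining work is to check $\bar g \circ \bar f \simeq \id$ and $\bar f \circ \bar g \simeq \id$, i.e. to produce natural $2$-isomorphisms. For $\bar f \circ \bar g$: starting from $(Q, \psi)$, we get $\widetilde Q = Q\times_Y X$ with its $G$-action, then quotient by $H$; the natural map $Q \to \widetilde Q/H$ (induced by $(\id, \text{section data})$, or more precisely the projection $\widetilde Q \to Q$ which is $H$-invariant) is an $F$-torsor isomorphism because $\widetilde Q \to Q$ is an $H$-torsor, and it is compatible with the maps to $Y$. For $\bar g \circ \bar f$: starting from $(P,\varphi)$, we get $P' = P/H$, then $P' \times_Y X$; the map $P \to P' \times_Y X$ sending $p \mapsto ([p], \varphi(p))$ is $G$-equivariant (using $G$-equivariance of $\varphi$ and of $P \to P'$), and it is an isomorphism of $G$-torsors over $T$ since both are $G$-torsors and it is a $G$-equivariant map over $T$. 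Each of these identifications is natural in $T$, so assembles into the required $2$-isomorphism.

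The main obstacle, and the step I would spend the most care on, is the claim that $\widetilde Q = Q \times_Y X$ is a $G$-torsor over $T$, equivalently that the $F$-action on the $H$-torsor $\widetilde Q \to Q$ promotes to a genuine $G$-action making $\widetilde Q \to T$ a $G$-torsor. The key inputs are that $H \trianglelefteq G$ (so $F = G/H$ makes sense as a group and the relevant extension-of-structure-group manipulations are legitimate), that $X \to Y$ is a $G$-equivariant $H$-torsor (so the $H$-torsor structure on $\widetilde Q \to Q$ is $F$-equivariant in the appropriate sense), and the standard fact that an extension $1 \to H \to G \to F \to 1$ together with an $H$-torsor equipped with compatible $F$-descent data yields a $G$-torsor; this can be phrased fppf-locally on $T$, where both $Q$ and $X \to Y$ trivialize, reducing everything to the group-theoretic identity $G = H \rtimes F$ fibrewise (or more precisely to the bookkeeping of the extension), after which descent glues the local trivializations. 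Everything else — functoriality, the descent of $\varphi$ along $f$, and the two triangle identities — is formal once this point is in hand.
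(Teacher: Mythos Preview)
Your proof is correct and follows essentially the same approach as the paper: both construct the forward map by quotienting a $G$-torsor $P$ by $H$ to get an $F$-torsor $P/H$, and the inverse by forming the fiber product $Q \times_Y X$, then check these are mutually inverse on groupoids of test points. The paper gives only a brief sketch and leaves the verification that $P = X \times_Y Q$ is a $G$-torsor to the reader, whereas you spell out this step more carefully; your remark that $G = H \rtimes F$ need not hold literally (only the extension data matters) is the right caveat.
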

\begin{proof}
It is enough to show the equivalence of the two functors of groupoids
on $\Sch_{\C}$. Given $B \in \Sch_{\C}$, a $B$-valued point of $[X/G]$ is 
the datum $B \xleftarrow{p} P \xrightarrow{f} X$ in which $P \in \Sch^G_{\C}$,
$f$ is $G$-equivariant and $p$ is a $G$-torsor.
Taking the quotients of $P$ and $X$ by $H \subset G$, we get a datum
$B \xleftarrow{\ov{p}} P/H \xrightarrow{\ov{f}} Y$ 
which is a $B$-valued point of $[Y/F]$. 

Conversely, given a $B$-valued point of $[Y/F]$, given by the datum
$B \xleftarrow{q} Q \xrightarrow{g} Y$, we let $P = X \times_Y Q$ and
easily check that $B \xleftarrow{p} P \xrightarrow{f} X$ gives
a $B$-valued point of $[X/G]$, where $P \to B$ is the composite
$P \to Q \to B$. It is left for the reader to check that this
provides inverse natural transformations between the stacks $[X/G]$ and
$[Y/F]$.
\end{proof}

To apply \lemref{lem:Morita-stack}, we consider the following situation.
Let $G$ be a linear algebraic group and let $H \subset G$ be a closed subgroup
which acts properly on an algebraic space $X$.
We have the $H$-equivariant closed embedding $H \times X \inj G \times X$,
where $h(g,x) = (gh^{-1}, hx)$. This gives rise to the closed embedding
of the quotients $\iota: X \xrightarrow{\simeq} H \times^H X \inj G \times^H X$,
where the first isomorphism is induced by $x \mapsto (e, x)$.
Let $G$ act on $Y:= G \times^H X$ by $g(g',x) = (gg',x)$. One checks that
$\iota: X \inj Y$ is $H$-equivariant and hence it induces the map between
the quotient stacks $\bar{\iota}: [X/H] \to [Y/H]$. Let $\iota^{GH}_X$
denote the composite map $[X/H] \to [Y/H] \to [Y/G]$.

\begin{lem}\label{lem:Morita-stack*}
With the above notations, the following hold.
\begin{enumerate}
\item
$Y \in \Sch^G_{\C}$ if $X \in \Sch^H_{\C}$ and $Y \in \Sm^G_{\C}$ if $X \in \Sm^H_{\C}$.
\item
The map $\iota^{GH}_X: [X/H] \to [Y/G]$ is an isomorphism.
\item
If $f: X' \to X$ is an $H$-equivariant map of algebraic spaces with proper
$H$-actions and $Y' = G \times^H Y$, then $\iota^{GH}_X \circ f =
f \circ \iota^{GH}_{X'}$.
\end{enumerate}
\end{lem}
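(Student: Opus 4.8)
**Proof proposal for Lemma \ref{lem:Morita-stack*}.**

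The plan is to prove the three items in order, using Lemma \ref{lem:Morita-stack} as the main engine for part (2). For part (1), I would first check that $Y = G \times^H X$ is an algebraic space (the quotient of $G \times X$ by the free $H$-action $h(g,x) = (gh^{-1},hx)$ exists as an algebraic space since $H$ acts freely and properly). The key point is quasi-projectivity: when $X \in \Sch^H_{\C}$, I would use that $X$ carries an $H$-equivariant ample line bundle, and that $G/H$ is quasi-projective (a homogeneous space of a linear algebraic group), to exhibit $Y$ as a fiber bundle over $G/H$ with fiber $X$, associated to the principal $H$-bundle $G \to G/H$; equivariant ampleness then descends to give a $G$-equivariant ample line bundle on $Y$. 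Smoothness is local on $G/H$ in the {\'e}tale (even Zariski) topology, so $Y$ is smooth over $\C$ whenever $X$ is, giving $Y \in \Sm^G_{\C}$ when $X \in \Sm^H_{\C}$. The properness of the $G$-action on $Y$ follows since the stabilizer of $(g,x) \in Y$ is the conjugate $g\, (\mathrm{Stab}_H(x))\, g^{-1}$ inside $G$, and properness of a $G$-action with such stabilizer structure reduces to properness of the $H$-action on $X$.

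For part (2), the strategy is to factor $\iota^{GH}_X$ through Lemma \ref{lem:Morita-stack}. Apply that lemma with the group $G$, the closed \emph{normal} subgroup being trivial — no, rather: the natural map $q\colon G \times X \to G \times^H X = Y$ is an $H$-torsor that is also $(G \times H)$-equivariant for the $G$-action $g'(g,x) = (g'g, x)$ on the source and the given $G$-action on $Y$. Hence by Lemma \ref{lem:Morita-stack} (applied with ``$G$'' there equal to $G \times H$, ``$H$'' there equal to $H$ embedded as $\{e\} \times H$, so ``$F$'' $= G$) we obtain an isomorphism $[(G \times X)/(G \times H)] \xrightarrow{\simeq} [Y/G]$. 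On the other hand, $[(G \times X)/(G \times H)] \simeq [X/H]$ because the $G$-action on $G \times X$ is free with quotient $X$ (i.e. $G \times X \to X$, $(g,x) \mapsto g^{-1}x$, is a $G$-torsor, $H$-equivariantly), so again Lemma \ref{lem:Morita-stack} gives $[(G\times X)/(G\times H)] \simeq [X/H]$. Composing these two isomorphisms, I would then identify the resulting map $[X/H] \to [Y/G]$ with $\iota^{GH}_X$ by tracing the construction: $\iota$ sends $x \mapsto (e,x) \in G \times^H X$, which matches the image of $x \in X$ under the torsor projection, so the composite of the two Lemma \ref{lem:Morita-stack} isomorphisms is exactly $\iota^{GH}_X$.

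For part (3), naturality, I would simply observe that each construction used is functorial in $X$: the formation of $Y = G \times^H X$, the embedding $\iota$, and the two torsor projections all commute with the $H$-equivariant map $f\colon X' \to X$ on the nose (at the level of the defining diagrams of $B$-valued points), so the square of stacks commutes. This is a routine diagram chase.

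I expect the main obstacle to be part (1), specifically verifying that $Y$ lands in $\Sch^G_{\C}$ rather than just being an algebraic space of finite type — i.e. producing the $G$-equivariant ample line bundle on the associated bundle $G \times^H X \to G/H$. The descent of equivariant ampleness along $G \to G/H$ is standard but needs the hypothesis that $H$-actions in $\Sch^H_{\C}$ are linear (which is part of the standing assumptions) together with quasi-projectivity of $G/H$; assembling these cleanly is where the real content lies. Parts (2) and (3) are then essentially formal consequences of Lemma \ref{lem:Morita-stack} and functoriality.
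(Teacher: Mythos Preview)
Your proposal is correct and follows essentially the same route as the paper: factor through $[(G\times X)/(G\times H)]$ and apply \lemref{lem:Morita-stack} to each of the two torsor projections $G\times X \to Y$ and $G\times X \to X$, then identify the composite with $\iota^{GH}_X$. One slip: the projection $G\times X \to X$ realizing the $G$-torsor is $(g,x)\mapsto x$, not $(g,x)\mapsto g^{-1}x$ (the latter is not defined since $G$ does not act on $X$); with that correction your argument goes through exactly as in the paper, which also makes the identification step precise by introducing the map $\iota'$ inverse to $\bar{p}$ and checking $\bar{q}\circ\iota' = \iota^{GH}_X$.
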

\begin{proof}
Since $G/H$ is a smooth quasi-projective scheme by \cite[Theorem~6.8]{Borel},
part (1) follows from \cite[Proposition~23]{EG1} and \cite[Lemma~2.1]{AK3}.

To prove (2), let us define an 
$H \times G$ action on $G \times X$ by $(h,g)\star (g',x) = (gg'h^{-1},hx)$. 
Similarly let us define an $H \times G$ action on $X$ by 
$(h,g) \bullet x = hx$. Then the projection map $p:G \times X \to X$
is $(H \times G)$-equivariant and is an $(1_H \times G)$-torsor. 
Similarly, the map 
$q: G \times X \to G \times^H X$ is $(H \times G)$-equivariant 
and an $(H \times 1_G)$-torsor. 

Letting $H = H \times 1_G \subset H \times G$, we get the maps of stacks
\begin{equation}\label{eqn:Morita-crucial}
\xymatrix@C1pc{
[X/H] \ar[r]_-{\iota'} & [{(G \times X)}/{(H \times G)}] 
\ar[r]^-{\bar{q}} \ar@/_.5cm/[l]_-{\bar{p}} &  [Y/G]}
\end{equation}
such that $\iota^{GH}_{X} = \bar{q} \circ \iota'$ and $\bar{p} \circ \iota'$
is identity. Since the maps $\bar{p}$ and $\bar{q}$ are isomorphisms by
\lemref{lem:Morita-stack}, it follows that $\iota'$ and $\iota^{GH}_X$
are isomorphisms too.
The identity $\iota^{GH}_X \circ f = f \circ \iota^{GH}_{X'}$ follows directly from 
the proof of \lemref{lem:Morita-stack}.  
\end{proof}

\subsection{Inertia space and inertia stack}\label{sec:Inertia}
The Atiyah-Segal and Riemann-Roch transformations
for stacks involve the inertia stacks. 
We recall their definitions and prove some basic properties. 
Given a group action $\mu: G \times X \to X$, the inertia space
$I_{X}$ is the algebraic space defined by the Cartesian square

\begin{equation}\label{diag:Inertiadefn}
\xymatrix@C1pc{ 
I_X \ar[r]^{\phi} \ar[d] & X \ar[d]^{\Delta_X} \\
G \times X \ar[r]_{(Id,\mu)} & X \times X,}
\end{equation}
where $\Delta_X: X \to X\times X$ is the diagonal morphism. 
It is clear that $I_X \in \Sch^G_{\C}$ if $X \in \Sch^G_{\C}$, where the
$G$-action on $I_{X}$ is induced from the above square. More specifically,
the conjugation action of $G$ on itself and its diagonal action on
$G \times X$ keeps $I_X$ a $G$-invariant closed subspace of
$G \times X$. On geometric points, this inclusion is given by
\begin{equation}\label{eqn:Inert-1}
I_X = \{(g,x) \in G \times X|g x = x\} \ \ \mbox{and} \ \ g'\cdot (g,x) =
(g'gg'^{-1}, g'x).
\end{equation} 

One checks that $\phi: I_X \to X$ is a $G$-equivariant morphism
which is finite if $G$-action on $X$ is proper (which means $(Id, \mu)$
is a proper map). Furthermore, $\phi$ makes $I_X$ an affine group scheme
over $X$ whose geometric fibers are the stabilizers of points on $X$ under
its $G$-action. 
The composite map ${\rm fix}_X: I_X \to G \times X \to G$ is $G$-equivariant 
with respect to the conjugation action of $G$ on itself and the fiber
of this map over a point $g \in G$ is the fix point locus $X^g$. 
We let $I^{\psi}_X \subset I_X$ denote ${\rm fix}^{-1}_X(\psi)$ for a
semi-simple conjugacy class $\psi \subset G$ (which is closed). 
It follows from \cite[Remark~4.2]{EG2}
that there is a decomposition $I_X = \amalg_{\psi \in \Sigma^G_X} I^{\psi}_X$
(see Definition~\ref{defn:Support}).    

If we write $\sX = [X/G]$, then the quotient stack $[{I_X}/G]$ is often
denoted by $I_{\sX}$ and is called the Inertia stack of $\sX$.
It is easy to check that for a finite group $G$, one has $I_{\sB G} \simeq
[G/G]$, where $G$ acts on itself by conjugation.

\begin{lem}\label{lem:$I_X$torsor}
Let $G = H \times F$ be a linear algebraic group acting properly on
an algebraic space $Z$ such that $f: Z \to X$ is an $F$-torsor of algebraic
spaces. Then the diagram 
\begin{equation}
\xymatrix@C1pc{
I_Z \ar[r] \ar[d] & Z \ar[d] \\
I_X \ar[r] & X}
\end{equation}
is Cartesian. In particular, $I_Z \to I_X$ is an $F$-torsor.
\end{lem}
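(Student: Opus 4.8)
The plan is to verify directly that the square is Cartesian by unwinding the definition of the inertia space from~\eqref{diag:Inertiadefn}, working on geometric points and then promoting to an isomorphism of algebraic spaces. First I would recall that since $f: Z \to X$ is an $F$-torsor and $G = H \times F$, the action of $G$ on $Z$ covers the action of $H$ on $X$ (the $F$-factor acts along the torsor fibers), so for a geometric point $z \in Z$ with image $x = f(z)$, an element $(h, t) \in H \times F$ fixes $z$ if and only if $hx = x$ (equivalently $(h,t)$ lies over an element of $H$ fixing $x$) \emph{and} $t$ is determined: because $F$ acts simply transitively on the fiber $f^{-1}(x)$, the equation $(h,t)\cdot z = z$ forces $t$ to be the unique element of $F$ with $t \cdot (h \cdot z) = z$. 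This shows set-theoretically that $I_Z \to I_X \times_X Z$ is a bijection on geometric points.

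Next I would make this scheme-theoretic. I would form the fiber product $I_X \times_X Z$ and construct the natural map $I_Z \to I_X \times_X Z$: the map $I_Z \to I_X$ comes from functoriality of the inertia construction applied to the $G$-equivariant (hence in particular $H$-equivariant after restriction) map $f$, while $I_Z \to Z$ is the structure map $\phi_Z$ of~\eqref{diag:Inertiadefn}; these agree over $X$ by construction. To see this map is an isomorphism, I would exhibit the inverse: given a point of $I_X \times_X Z$, i.e.\ a triple consisting of $x \in X$, an element $(h, *) $ — more precisely a point of $G$ fixing $x$ in the sense of $I_X$ (recall $I_X$ here is built from the $H$-action since $X \in \Sch^H_{\C}$, but one must be slightly careful: the statement uses $I_X$ for the $G$-structure it inherits; I would instead note $I_X$ as an algebraic space only depends on the $H$-action, and the $F$-coordinate is supplied by the torsor) — and a point $z \in Z$ over $x$, one produces the unique $t \in F$ making $(h,t)$ fix $z$, using that $Z \to X$ is an $F$-torsor (so $Z \times_X Z \cong F \times Z$ canonically, which gives the element $t$ algebraically, not just on points). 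Plugging this back gives a point of $I_Z$, and the two composites are identities by the torsor property. Since everything here is defined by base change and the torsor trivialization, the constructions are morphisms of algebraic spaces, and the last assertion ($I_Z \to I_X$ an $F$-torsor) is then immediate because the base change of an $F$-torsor along $I_X \to X$ is an $F$-torsor.

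The main obstacle I anticipate is bookkeeping around \emph{which} group's inertia $I_X$ refers to and making the "$t$ is uniquely determined" step genuinely algebraic rather than pointwise. The clean way around this is to use the canonical isomorphism $Z \times_X Z \xrightarrow{\simeq} F \times Z$, $(z, z') \mapsto (\text{the unique }t\text{ with }t z = z',\ z)$, coming from the torsor structure: precomposing with $(z, (h,\cdot)) \mapsto (z, (h, 1)\cdot z)$ — valid since $(h,1) \in G$ acts on $Z$ and $f$ is $H$-equivariant so $(h,1)z$ still lies over $hx = x$ when $h$ fixes $x$ — produces the section $t: I_X \times_X Z \to F$ functorially. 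Once this map is in hand the rest is formal diagram-chasing with Cartesian squares, and I would not belabor it. A short alternative, worth mentioning, is to argue via $I_{[Z/G]}$: by Lemma~\ref{lem:Morita-stack} one has $[Z/G] \simeq [X/H]$, inertia stacks are invariant under equivalence, and $I_{[Z/G]} = [I_Z/G]$ while $I_{[X/H]} = [I_X/H]$; pulling back along the atlases then yields the Cartesian square, though the direct geometric-point argument above is more self-contained.
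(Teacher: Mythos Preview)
Your proposal is correct, but your primary route differs from the paper's. The paper takes exactly what you call the ``short alternative'' at the end: it writes $\sX = [X/H] \simeq [Z/G]$, forms the two-step diagram
\[
\xymatrix@C1pc{
I_Z \ar[r] \ar[d] & I_X \ar[r] \ar[d] & I_{\sX} \ar[d] \\
Z \ar[r] & X \ar[r] & \sX,}
\]
observes that the outer rectangle and the right square are Cartesian essentially by definition of the inertia stack (citing the Stacks Project), and concludes that the left square is Cartesian by the two-out-of-three property for fiber squares. This is a two-line proof with no pointwise computation at all.

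Your main argument instead unwinds the torsor structure explicitly: you build the inverse $I_X \times_X Z \to I_Z$ by using the canonical isomorphism $Z \times_X Z \simeq F \times Z$ to manufacture the missing $F$-coordinate $t$ algebraically. This is perfectly valid and arguably more self-contained, since it does not invoke the stack-level description of inertia; on the other hand it is longer and requires the bookkeeping you flag (which group's inertia, making the ``unique $t$'' step functorial). The paper's route trades that bookkeeping for a single appeal to the standard fact that $I_{\sX} \times_{\sX} U \simeq I_U$ for any atlas $U \to \sX$, which is cleaner once one is comfortable with inertia stacks. Since you already identified this alternative, you could simply promote it to the main argument and drop the explicit construction.
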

\begin{proof}
We consider the commutative diagram
\begin{equation}\label{eqn:tor-0}
\xymatrix@C1pc{
I_Z \ar[r] \ar[d]   & I_X \ar[d] \ar[r]^{p_{I_X}} 
& I_{\sX} \ar[d] \\
Z \ar[r] &  X  \ar[r]^{p_{X}} & \sX,}
\end{equation}
where $\sX = [X/H] \simeq [Z/G]$. Under this identification, the big outer square 
and the right square are both Cartesian, essentially by definition
(see \cite[Tag 050P]{SP}).
It follows at once that the left square is Cartesian
(for example, see \cite[p.~11]{FGA}).
\end{proof}

\begin{lem}\label{lem:torsor}
Let $G$ be a linear algebraic group acting properly on an algebraic space $X$
and let $\psi \subset G$ be a semi-simple conjugacy class.
Then the following hold.
\begin{enumerate}
\item
For any $g \in \psi$, the $G$-invariant subspace $GX^g$ is closed in $X$.
\item
The map $G \times X^g \to I^{\psi}_X$, given by $(g', x) \mapsto
(g'gg'^{-1}, g'x)$, is a $Z_g$-torsor. In particular, there is a
$G$-equivariant 
isomorphism $p_g:G \times^{Z_g} X^g \xrightarrow{\simeq} I^{\psi}_X$
with respect to the $G$-actions given in \lemref{lem:Morita-stack*}
and ~\eqref{eqn:Inert-1}.
\item
The map $\mu^{\psi}: I^{\psi}_X \to X_{\psi}:= GX^g$ induced by $\phi$
in ~\eqref{diag:Inertiadefn} is finite and surjective $G$-equivariant map.
\end{enumerate}
\end{lem}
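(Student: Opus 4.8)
The plan is to analyze the three assertions in order, since each one feeds into the next. For part (1), I would start from the defining square~\eqref{diag:Inertiadefn} and the observation made just before the lemma that $I_X$ is a $G$-invariant closed subspace of $G \times X$ via~\eqref{eqn:Inert-1}. Because $\psi \subset G$ is a semi-simple conjugacy class, it is closed in $G$; hence $\psi \times X$ is closed in $G \times X$, and $I^\psi_X = I_X \cap (\psi \times X) = {\rm fix}^{-1}_X(\psi)$ is closed in $G \times X$, and $G$-invariant. Now the projection $\phi \colon I_X \to X$ is finite (hence proper and closed) since the $G$-action is proper, so $\mu^\psi := \phi|_{I^\psi_X} \colon I^\psi_X \to X$ is proper; its image $GX^g$ (the image being $G$-invariant and equal to $\{x : x \in X^{g'} \text{ for some } g' \in \psi\}$, which is $\bigcup_{g' \in \psi} X^{g'} = GX^g$ using that $\psi$ is a single conjugacy class) is therefore closed in $X$. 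This proves (1) and also gives the properness half of (3).

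For part (2), I would fix $g \in \psi$ and consider the orbit map $\alpha \colon G \times X^g \to I^\psi_X$, $(g',x) \mapsto (g'gg'^{-1}, g'x)$. First one checks this is well defined: if $x \in X^g$ then $g'gg'^{-1}$ fixes $g'x$, so the target does land in $I_X$, and it lies in the $\psi$-component by construction. The fibers: $(g'_1,x_1)$ and $(g'_2,x_2)$ have the same image iff $g'_1 g (g'_1)^{-1} = g'_2 g (g'_2)^{-1}$ and $g'_1 x_1 = g'_2 x_2$; the first condition says $(g'_2)^{-1}g'_1 \in Z_g$ (the centralizer), and then $x_1 = (g'_1)^{-1}g'_2 x_2$ with $(g'_1)^{-1}g'_2 \in Z_g$ automatically preserving $X^g$. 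So $\alpha$ is invariant under the free right $Z_g$-action $(g',x)\cdot z = (g'z, z^{-1}x)$ and induces a bijection on geometric points $G \times^{Z_g} X^g \to I^\psi_X$. To upgrade "bijective $G$-equivariant map" to "isomorphism" I would invoke that $I^\psi_X$ is separated of finite type, that the source is too (using that $Z_g$ acts properly, indeed freely, on $G \times X^g$ so the quotient $G\times^{Z_g}X^g$ exists as an algebraic space by the hypotheses in force — $Z_g$ is a closed subgroup of $G$, hence linear algebraic, and acts freely), and that a bijective morphism of reduced separated finite-type algebraic spaces over $\C$ which is additionally flat — or: for which we verify it is étale, e.g. by a dimension/tangent-space count at geometric points, the orbit map being a torsor locally — is an isomorphism. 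The cleanest route: show $\alpha$ is a $Z_g$-torsor directly, i.e. that $G \times X^g \to I^\psi_X$ becomes an isomorphism after base change along the fppf (indeed smooth) cover $G \times X^g \to G\times^{Z_g} X^g$; this reduces to the statement $(G \times X^g) \times_{I^\psi_X} (G \times X^g) \cong (G\times X^g) \times Z_g$, which is the computation of fibers just made, now done relatively. This is the step I expect to be the main obstacle: carefully producing the torsor structure scheme-theoretically (not just on points) and quoting the right representability/quotient result for $G \times^{Z_g} X^g$; everything else is formal. The identification $p_g$ with the $G$-actions of Lemma~\ref{lem:Morita-stack*} and~\eqref{eqn:Inert-1} then follows by matching: $G$ acts on $G\times^{Z_g}X^g$ by left multiplication on the first factor, and $\alpha$ intertwines this with $g'' \cdot (g'gg'^{-1}, g'x) = (g''g'g(g''g')^{-1}, g''g'x)$, which is exactly the conjugation-and-translation action~\eqref{eqn:Inert-1}.

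For part (3), the map $\mu^\psi \colon I^\psi_X \to X_\psi = GX^g$ is the restriction of $\phi$, hence $G$-equivariant, and it is finite because $\phi$ is finite (the $G$-action being proper) and $I^\psi_X$ is closed in $I_X$ while $X_\psi$ is closed in $X$ by part (1). Surjectivity onto $X_\psi$ is immediate from the description of the image noted in the proof of (1): every point of $GX^g$ lies in $X^{g'}$ for some $g' \in \psi$, hence is $\mu^\psi(g', x)$ for that point. This completes the plan; the only genuinely non-formal point is the algebraic-space-level torsor argument in (2).
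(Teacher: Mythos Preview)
Your proposal is correct and, for parts (1) and (3), follows exactly the paper's argument: identify $\phi(I^\psi_X) = GX^g$ by a two-way inclusion, use finiteness of $\phi$ (from properness of the action) to conclude closedness, and read off (3) from that same computation.

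The only difference is in part (2). The paper does not prove the torsor statement at all but simply cites \cite[Lemma~4.3]{EG2}. You instead sketch a direct proof: compute the fibers of $\alpha$ on geometric points to see the free $Z_g$-action, then upgrade to a scheme-theoretic torsor via the fiber-product computation $(G\times X^g)\times_{I^\psi_X}(G\times X^g)\cong (G\times X^g)\times Z_g$. Your identification of this step as ``the main obstacle'' is accurate, and your plan for it is sound; the fiber-product description is exactly what one checks in the cited reference. So your route is more self-contained, at the cost of reproducing a standard lemma, while the paper's route is shorter but relies on the external citation. Either is fine.
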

\begin{proof}
The parts (1) and (3) of the lemma are direct consequences of the fact that
the quotient stack associated to a proper action has finite inertia. 
The part (2) is the restatement of \cite[Lemma~4.3]{EG2}. 
\end{proof}

\enlargethispage{20pt}

\subsection{Some properties of morphisms between stacks}
\label{sec:Mor-prop}
We now prove some properties of the morphisms between Deligne-Mumford
stacks that we shall frequently use.
The first is the following folklore and elementary fact
about quotients by group actions.

\begin{lem}\label{lem:Basic}
Let $G$ be a linear algebraic group acting properly on an algebraic
space $X$. Let $H \trianglelefteq G$ be a normal subgroup with
quotient $F$. Then $F$ is a linear algebraic group which acts on
the geometric quotient $X/H$ such that the quotient map
$q : X \to X/H$ is $G$-equivariant, where $G$ acts on $X/H$ via the
quotient $G \surj F$.
\end{lem}
\begin{proof}
It is a well known fact that $F$ is a linear algebraic group. We have 
also seen above that the geometric quotient $Y: = X/H$ exists as an
algebraic space. To show that it acquires a natural $F$-action,
we let $\mu_{G, X}: G \times X \to X$ denote the action map and
let $q_1 = q \circ \mu_{G, X}: G \times X \to Y$.
 
We let $H$ act on $G \times X$ by $h \bullet (g,x) = (g, hx) =
(g, \mu_{G, X}(h,x))$.
Since $H$ is normal in $G$, one easily checks that
$q_1$ is $H$-equivariant with respect to the trivial $H$-action on $Y$.
It follows from the universal property of the quotient map that
$q_1$ uniquely factors as 
\[
G \times X \xrightarrow{{\rm id}_G \times q} G \times Y 
\simeq (G \times X)/H \xrightarrow{\mu_{G,Y}} Y.
\]
Furthermore, $\mu_{G,Y}$ yields a $G$-action on $Y$ for which
$q: X \to Y$ is $G$-equivariant and 
$\mu_{G,Y}|_{H \times Y}: H \times Y \to Y$ is the projection to $Y$.
The latter condition implies that $\mu_{G,Y}$ factors
through $G \times Y \surj F \times Y \xrightarrow{\mu_{F,Y}} Y$.
It is an elementary verification that this defines an $F$-action on $Y$
satisfying the desired properties.
\end{proof}

\begin{lem}\label{lem:quot*}
Let $G$ be a linear algebraic group acting properly on an 
algebraic space $X$. Let $H \trianglelefteq G$ be a normal 
subgroup with quotient $F$.
Let $Z = X/G$ and $W = X/H$ denote the coarse moduli spaces.  
Then the following hold. 
\begin{enumerate}
\item
The $F$-action on $W$ is proper. 
\item
If $X/G$ is quasi-projective, then $W$ is quasi-projective with a 
linear $F$-action.
\end{enumerate}
\end{lem}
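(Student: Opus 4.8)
**Proof plan for Lemma~\ref{lem:quot*}.**

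The plan is to reduce everything to the analysis of a single proper action and its coarse space, using the existence and basic functoriality of coarse moduli spaces for separated Deligne-Mumford stacks (Keel-Mori). First I would observe that since $H$ acts properly on $X$, the quotient stack $[X/H]$ is a separated Deligne-Mumford stack by \thmref{thm:DM-1}, and its coarse moduli space is exactly $W = X/H$ as an algebraic space. The residual $F = G/H$ action on $[X/H]$ (coming from the $G$-action on $X$) descends to an $F$-action on $W$ by the universal property of the coarse space: for each $\sigma \in F$ the automorphism of $[X/H]$ induces a unique automorphism of $W$, and uniqueness forces these to compose correctly, so we genuinely get an algebraic $F$-action on $W$ with $W/F = X/G = Z$ (since forming the coarse space can be done in two stages: $[X/G] = [\,[X/H]/F\,]$ and its coarse space is $(X/H)/F$).

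For part (1), the properness of the $F$-action on $W$: I would argue that the map $(\id,\mu_F)\colon F \times W \to W \times W$ is proper by checking it after the finite surjective (in particular proper, surjective, hence ``descent-detecting'' for properness) base change coming from $X \to W$. Concretely, the $G$-action on $X$ being proper means $(\id,\mu_G)\colon G \times X \to X \times X$ is proper; I want to push this through the coarse-space map. The cleanest route is: $X \to W$ is proper and quasi-finite (it is the coarse space map for the proper $H$-action, so it is finite on the level of the map $[X/H]\to W$ composed with $X\to[X/H]$), and properness of $F \curvearrowright W$ is equivalent to properness of the map $[W/F] \to W\times_? \cdots$; more directly, $[W/F]$ has $Z$ as coarse space and the separatedness of $[X/G]=[\,[X/H]/F\,]$ — which holds because $G$ acts properly — forces $[W/F]$ to be separated, and separatedness of $[W/F]$ is exactly properness of $\Delta$, i.e. of $F\times W \to W\times W$. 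This is the step I expect to be the main obstacle: carefully justifying that separatedness of the iterated quotient stack $[\,[X/H]/F\,]$ is equivalent to properness of the $F$-action on the coarse space $W$, rather than just on $[X/H]$; one needs that $[X/H]\to W$ is proper (true, by Keel-Mori) and a diagram chase with the two diagonals $\Delta_{[W/F]}$ and $\Delta_{[X/G]}$.

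For part (2), assuming $Z = X/G$ is quasi-projective: here I would invoke \thmref{thm:Q-stack}. Since $[X/H]$ is a separated Deligne-Mumford stack whose coarse space $W$ maps properly and finitely onto $Z$ via $W \to W/F = Z$, and $Z$ is quasi-projective, $W$ is of finite type and quasi-projective over $Z$ (a scheme finite over a quasi-projective scheme is quasi-projective; for the algebraic space $W$ one first notes $W \to Z$ finite with $Z$ a scheme forces $W$ to be a scheme). Then $W$ is a quasi-projective scheme with an $F$-action whose quotient $W/F = Z$ is quasi-projective, so by the last assertion of \thmref{thm:Q-stack} (or directly \cite[Remark~4.3]{KR}), the action of $F$ on $W$ is linearizable, i.e.\ there is an $F$-equivariant ample line bundle on $W$. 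The only care needed is to confirm $W \to Z$ is finite: it is proper (Keel-Mori: $[X/H]\to W$ proper and $X\to [X/H]$ the atlas, while $X \to Z$ is proper since $[X/G]\to Z$ is) and quasi-finite (geometric fibers of $W\to Z$ are $F$-orbits modulo finite stabilizers, hence finite), hence finite. With finiteness in hand the quasi-projectivity of $W$ and the linearity of the $F$-action both follow, completing the proof.
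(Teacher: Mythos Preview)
Your plan for part~(2) contains a genuine error: the map $W \to Z = W/F$ is \emph{not} finite when $\dim F > 0$. The geometric fibers are $F$-orbits, and since (by part~(1)) stabilizers are finite, each orbit is isomorphic to $F/(\text{finite subgroup})$, which has dimension $\dim F$. So ``$F$-orbits modulo finite stabilizers, hence finite'' is backwards: finite stabilizers make the orbits as large as possible, not small. Consequently your route to quasi-projectivity of $W$ via ``finite over quasi-projective'' collapses. The paper instead uses that $W \to W/F = Z$ is \emph{affine} (citing \cite[Proposition~0.7]{Mumford}), and then \cite[Remark~4.3]{KR} applied to the separated Deligne-Mumford stack $[W/F]$ with quasi-projective coarse space $Z$ gives that $W$ is quasi-projective with linear $F$-action.

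For part~(1), you correctly flag the main obstacle --- passing from separatedness of $[X/G] \simeq [\,[X/H]/F\,]$ to properness of the $F$-action on the coarse space $W$ --- but you do not resolve it, and the diagram chase you sketch is not obviously sufficient (descending separatedness along the proper surjective but non-representable map $[X/H] \to W$ needs justification). The paper avoids this entirely by a different device: using \cite[Proposition~10]{EG1}, one finds a finite surjective $G$-equivariant map $X' \to X$ with $G$ acting \emph{freely} on $X'$. Then $F$ acts freely on $T := X'/H$, so the action map $\Phi_T\colon F \times T \to T \times T$ is a closed immersion; since $u\colon T \to W$ is finite surjective, the commutative square with $\Phi_T$ and $\Phi_W$ shows $\Phi_W \circ (\id_F, u)$ is finite, and as $(\id_F, u)$ is finite surjective, $\Phi_W$ itself is finite. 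This free-action cover trick is both the missing ingredient in your part~(1) and the substitute for the incorrect finiteness claim in your part~(2).
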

\begin{proof}
As the $G$-action on $X$ is proper, it implies that the $H$-action is also 
proper. Therefore, $[X/H]$ is a separated Deligne-Mumford stack with 
$W$ its coarse moduli space. By the Keel-Mori theorem \cite{KM},
it follows that $W$ is a separated algebraic space over $\C$. 
As $G$ acts properly on $X$, we can find a finite and surjective
$G$-equivariant map $f: X' \to X$ such that $G$-acts freely on $X'$
with $G$-torsor $X' \to {X'}/G$ and the induced maps on the quotients
$\bar{f}: {X'}/G \to X/G$ is also finite and surjective
(see \cite[Proposition~10]{EG1}). 

We first show that $u:T = {X'}/H \to W$ is an $F$-equivariant,
finite and surjective map.
It is clear that the map of stacks $[{X'}/H] \to [X/H]$ is finite and
surjective. Since $[X/H]$ is a separated quotient Deligne-Mumford stack, the 
coarse moduli space map $[X/H] \to W$ is proper, surjective and quasi-finite. 
Therefore, the composite map $T = [{X'}/H] \to [X/H] \to W$ is finite and 
surjective. The $F$-equivariance of this map is clear.

To show (1), we need to show that the $F$-action map
$\Phi_W: F \times W \to W \times W$ is proper. For this, we consider the 
diagram
\begin{equation}\label{eqn:quot*-0}
\xymatrix@C1pc{
F \times T \ar[r]^-{\Phi_T} \ar[d]_{\phi} & T \times T \ar[d]^{u\times u} \\
F \times W \ar[r]^-{\Phi_W} & W \times W,}
\end{equation} 
where $\phi = (Id_F, u)$.
From what we have shown above, it follows that this diagram commutes and
the vertical arrows are finite and surjective. 
As $F$ acts freely on $T$, the map $\Phi_T$ is a closed immersion.
In particular, the composite $\Phi_W \circ \phi = (u \times u) \circ \Phi_T$
is finite. Since $\phi$ is finite and surjective, it easily follows that
$\Phi_W$ is in fact finite. 

Suppose now that $X/G$ is quasi-projective. It follows from (1) and
\thmref{thm:DM-1} that $[W/F] = [(X/H)/F]$ is a separated 
quotient Deligne-Mumford stack with quasi-projective coarse moduli space 
$X/G$. Since $W \to W/F = X/G$ is affine 
(for example, see \cite[Proposition~0.7]{Mumford}),
it follows from \cite[Remark~4.3]{KR} that $W$ must be quasi-projective
with linear $F$-action. 
\end{proof}

We next recall a result of Abramovich-Olsson-Vistoli
(see \cite[Theorem 3.1]{AOV}). 
It says that if $f: \sX \to \sY$ is a finitely presented morphism of tame
(not necessarily Deligne-Mumford) stacks with finite relative inertia, 
then there is a
factorization $\sX \xrightarrow{\pi} \sY' \xrightarrow{\ov{f}} \sY$
of $f$ such that $\pi$ is a relative coarse moduli space 
(see {\sl loc. cit.}, Definition~3.2) and $\ov{f}$ is representable.
To use this result in our setting, we need to make the factorization
of $f$ more explicit in the special case when it is a proper
morphism between separated quotient (Deligne-Mumford) stacks over $\C$.

So we let $\sX = [X/H]$ and $\sY = [Y/F]$ be two separated quotient
stacks and $f: \sX \to \sY$ a proper morphism. 
We let $\sX' = \sX \times_{\sY} Y$ and $Z = \sX' \times_{\sX} X$.
This yields a Cartesian diagram
\begin{equation}\label{eqn:Fin-coh-0-new}
\xymatrix@C1pc{
Z \ar[r]^-{s} \ar[d]_{t} & \sX' \ar[r]^-{f'} \ar[d]^{p'} & Y \ar[d]^{p} \\
X \ar[r]^-{q} & \sX \ar[r]^-{f} & \sY}
\end{equation}
such that $t$ is an $F$-torsor and $s$ is an $H$-torsor. In particular,
$Z$ is an algebraic space with $H$ and $F$-actions. 
We let $\nu:H \times Z \to Z$ and $\mu:F \times Z \to Z$ denote these
action maps. We let $\gamma = f \circ q$ and $\beta = f' \circ s$.
Note that if $X$ and $Y$ are schemes, then so is $Z$.

\begin{lem}\label{lem:GRR-QDM-stacks}
With the above notations, the following hold.
\begin{enumerate}
\item 
$F$ and $H$ act on $Z$ such that their actions commute. In
particular, $G = H \times F$ acts on $Z$. 
\item 
The action of $G$ on $Z$ is proper. 
\item 
The maps $t$ and $\beta$ are $G$-equivariant.
\item 
There is a factorization (see \cite[\S~3]{AOV})
\begin{equation}\label{eqn:st-gen*-0}
f: \sX \cong [Z/G] \surj [(Z/H)/F] \to [Y/F] = \sY 
\end{equation} 
such that the two maps are proper.  
\item
If $X \in \Sch^H_{\C}$ and $Y \in \Sch^F_{\C}$, then
$Z \in \Sch^G_{\C}$.
\item
If $X \in \qp^H_{\C}$, then
$Z \in \qp^G_{\C}$.
\end{enumerate}
\end{lem}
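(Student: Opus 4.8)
The strategy is to construct $Z$ explicitly via the fibre product in \eqref{eqn:Fin-coh-0-new} and verify each assertion in sequence, reducing everything to properties that have already been established for torsors and proper actions. First I would make the two torsor maps $s\colon Z\to \sX'$ and $t\colon Z\to X$ explicit: pulling back the $H$-torsor $q\colon X\to\sX$ along $f'$ gives the $H$-torsor $s$, and pulling back the $F$-torsor $p\colon Y\to\sY$ (via $\sX'=\sX\times_\sY Y$) along $f$ gives the $F$-torsor $t$. Since torsor maps are representable, $Z$ is an algebraic space, and it is a scheme when $X,Y$ are schemes because an affine morphism to a quasi-projective scheme has quasi-projective source. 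For (1): the $H$-action $\nu$ comes from $s$ being an $H$-torsor and the $F$-action $\mu$ from $t$ being an $F$-torsor; the two actions commute because $H$ acts along the fibres of $t$ (which is $F$-equivariant) and $F$ acts along the fibres of $s$ (which is $H$-equivariant), so each preserves the trivialisations defining the other. Hence $G=H\times F$ acts on $Z$.

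For (3), $G$-equivariance of $t$ and of $\beta=f'\circ s$: the map $t$ is $F$-equivariant by construction of the $F$-action and $H$-equivariant because the $H$-action is vertical for $t$ (so $t$ is $H$-invariant, hence $H$-equivariant for the trivial action on $X$); symmetrically for $\beta$ and $s$, using that $f'\colon\sX'\to Y$ is $F$-equivariant. For (4), the factorization: since $s$ is an $H$-torsor we get $[Z/G]\simeq[\sX'/F]$ by \lemref{lem:Morita-stack} applied with the normal subgroup $H\trianglelefteq G$, and $[\sX'/F]\simeq\sX$ because $\sX'=\sX\times_\sY Y$ and $p\colon Y\to\sY$ is the $F$-torsor atlas (so $\sX'\to\sX$ is an $F$-torsor, again \lemref{lem:Morita-stack}); similarly $[(Z/H)/F]\simeq[(\sX'')/F]$ for the appropriate quotient, and the second map of \eqref{eqn:st-gen*-0} is just $f$ under these identifications. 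Properness of the two maps follows: the stacky coarse-moduli-type map $[Z/G]\to[(Z/H)/F]$ is proper since $[Z/H]\to Z/H$ is proper (separated DM stack over its coarse space), and the second is identified with $f$, which is proper by hypothesis. Then (5) is immediate from (3) together with the fact that a torsor under a linear algebraic group over a quasi-projective scheme is quasi-projective (combined with \lemref{lem:Morita-stack*}(1) for linearity of the $G$-action), and (6) follows from (2) plus (5) once we know the quotient $Z/G$ is quasi-projective — but $Z/G$ is the coarse space of $[Z/G]\simeq\sX$, which maps properly and quasi-finitely onto $\sY$'s coarse space, itself dominated by $Y/F$.

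The main obstacle is item (2): properness of the $G$-action on $Z$. The $F$-action on $Z$ is proper because $t$ is an $F$-torsor (free and proper), and the $H$-action on $Z$ should be proper because $s$ is an $H$-torsor — but one must be careful, since "$H$-torsor" gives freeness of the $H$-action and properness of $s$, and properness of the $H$-action on $Z$ as an \emph{action} needs the map $(\nu,\pr_2)\colon H\times Z\to Z\times Z$ to be proper. Here I would argue: the $H$-action on $Z$ is pulled back along $t$ from... no — rather, the $H$-action on $Z$ is pulled back along $\beta$ from the $H$-action on $X$ (since $\sX'=\sX\times_\sY Y$ carries the $H$-action from $\sX$, and $Z\to\sX'$ is the $H$-torsor presenting it), wait, more cleanly: $Z\to Y$ is $H$-invariant and the map $Z\to X\times_{\sX}\sX' = X\times_\sY Y$ exhibits $Z$'s $H$-action as pulled back from $X$'s proper $H$-action along a map to $X$, and properness of actions is stable under base change of the space. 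Combined with the fact that $H$ and $F$ actions commute and $F$ acts freely and properly, one concludes the $G=H\times F$-action is proper. I would isolate this as the technical heart of the argument and spell out the base-change stability of proper actions, citing \cite{KM} or the standard reference for the Keel--Mori setup; everything else is formal bookkeeping with torsors and \lemref{lem:Morita-stack}, \lemref{lem:Morita-stack*}.
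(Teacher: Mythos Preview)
Your overall strategy is sound, but you have overcomplicated item~(2) and in doing so missed the one-line argument that makes it trivial. You correctly establish in (1) that $G=H\times F$ acts on $Z$ with $[Z/G]\simeq\sX$; but then properness of the $G$-action is immediate from \thmref{thm:DM-1}: since $\sX$ is separated by hypothesis, the $G$-action on $Z$ must be proper. There is no need to separately analyse the $H$- and $F$-actions or invoke base-change stability of proper actions, and your ``combining'' step (proper $H$-action plus free proper $F$-action implies proper $G$-action) is left unjustified --- it is true, but the only clean way to see it is precisely via the equivalence $[Z/G]\simeq[X/H]$ and \thmref{thm:DM-1}, which is what the paper does directly.

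There is also a slip in your discussion of (3): you write that ``the $H$-action is vertical for $t$'', but it is the $F$-action that is vertical for $t$ (since $t$ is the $F$-torsor), while $t$ is genuinely $H$-equivariant for the given nontrivial $H$-action on $X$. This matters for (5) as well: your proposed argument via ``torsor over quasi-projective is quasi-projective'' does not obviously produce a $G$-equivariant ample line bundle on $Z$. The paper instead observes that the square
\[
\xymatrix@C1pc{
Z \ar[r] \ar[d]_{(t,\beta)} & \sY \ar[d]^{\delta_{\sY}} \\
X\times Y \ar[r] & \sY\times\sY}
\]
is Cartesian; since $\sY$ is separated Deligne--Mumford, $\delta_{\sY}$ is finite, hence $(t,\beta)\colon Z\to X\times Y$ is a finite $G$-equivariant map to a $G$-quasi-projective scheme, and one pulls back a $G$-equivariant ample line bundle. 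This handles (5) and (6) uniformly and avoids any appeal to \lemref{lem:Morita-stack*}(1), which concerns a different construction.
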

\begin{proof}
The first assertion is standard and known to the experts. 
We give a sketch of the
construction.
We have a commutative diagram
\begin{equation}\label{eqn:st-gen*-2}
\xymatrix@C1pc{
F \times H \times Z \ar[r]^-{\ov{v}} \ar[d] & F \times Z 
\ar[r]^-{\mu} \ar[d]_{p^F_Z} & Z \ar[d]^{t} \\
H \times Z \ar[r]^-{v} \ar[d]_{p^H_Z} & Z \ar[r]^-{t} \ar[d]_{s} & X 
\ar[d]^{q} \\
Z \ar[r]^-{s} & \sX' \ar[r]^-{p'} & \sX.}
\end{equation}

In the above diagram, the vertical arrows on the left are projections.
The top square on the right and the bottom 
square on the left are Cartesian by definition of $Z$.
If we let $\ov{v} = (id_F, v)$, then the top left square commutes.
In particular, we get maps 
$F \times H \times Z \to (H \times Z) \times_Z (F \times Z) \to H \times Z$ 
both of which are $F$-equivariant (with respect to the trivial $F$-action
on $H$ and $Z$) and the second map and the composite map are both $F$-torsors.
It follows that the top left square is Cartesian.
It can now be checked that $\theta_1 = \mu \circ \ov{v}:
F \times H \times Z \to Z$ defines a $G$-action on $Z$ (with $G = H \times F$)
which makes the above diagram commute.
As the composite vertical arrow on the left is just the projection
and $p' \circ s = q \circ t$, it 
follows that $[Z/G] = \sX$ and $Z \to \sX$ is the stack quotient map 
for the $G$-action.

We next consider the diagram
\begin{equation}\label{eqn:st-gen*-3}
\xymatrix@C1pc{
H \times F \times Z \ar[r]^-{\ov{\mu}} \ar[d] & H \times Z   
\ar[d]^-{p^H_Z} \ar[r]^-{v} & Z \ar[d]^{s} \\
F \times Z \ar[d] \ar[r]^-{\mu} & Z \ar[d]^{t} \ar[r]^-{s} &  
\sX' \ar[d]^{p'} \\
Z \ar[r]^-{t} & X \ar[r]^-{q} & \sX,}
\end{equation} 
where the left vertical arrows are the projection maps.
By a similar argument, one checks that $\theta_2 = v \circ \ov{\mu}:
H \times F \times Z \to Z$ defines a group action making 
~\eqref{eqn:st-gen*-3} commute.
Furthermore, ~\eqref{eqn:st-gen*-2} and ~\eqref{eqn:st-gen*-3} together show 
that the stack quotients $[{Z}/{(H \times F)}]$ for actions via
$\theta_1$ and $\theta_2$ both coincide with $\sX$.
It follows that $\theta_1$ and $\theta_2$ define the same action
on $Z$ and this implies that 
\[
v(h, \mu(f, z)) = v \circ \ov{\mu}(h, f, z) = \mu \circ \ov{v}(f,h,z)
= \mu(f, v(h,z))
\]
and this shows that the actions of $H$ and $F$ on $Z$ commute.
Since $\sX$ is separated, it follows from \thmref{thm:DM-1} that
the $G$-action on $Z$ is proper. 
It is also clear that the maps $t$ and $\beta = f' \circ s$ are 
$G$-equivariant. This proves (1) and (2) and (3).

By the Keel-Mori theorem, the coarse moduli space map
$[Z/H] \to Z/H$ is proper. If we let $W = Z/H$ and $\sW = [W/F]$,
it follows that the map $\sX = [Z/G] \to [W/F]$ is also proper.
Since $Y$ is an algebraic space, we have a
factorization $[Z/H] = \sX' \to W \to Y$.
Hence $\beta': W \to Y$ is the map induced on the coarse moduli spaces
by the proper map of stacks $\sX' \to Y$,
it follows that $\beta'$ is proper. This proves (4). We remark here that 
this factorization is precisely the one constructed in 
\cite[Theorem 3.1]{AOV}. To see this, let  $X'$ be as in the notation of 
{\it{loc.cit}}. 
Then it follows from the Cartesian diagram \eqref{eqn:Fin-coh-0-new} that 
$X'$ is the quotient of the coarse moduli space of $\sX'$ 
(which is $(Z/H)$) by the natural $F$-action induced on the quotient, 
which is precisely the 
factorization that is constructed in \eqref{eqn:st-gen*-0}.

We now prove (5) and (6).
It follows from ~\eqref{eqn:Fin-coh-0-new} that there is a Cartesian
square
\begin{equation}\label{eqn:st-gen*-1}
\xymatrix@C1pc{
Z \ar[r] \ar[d]_{(t, \beta)} & \sY \ar[d]^{\delta_{\sY}} \\
X \times Y \ar[r]^-{\gamma \times p} & \sY \times \sY.}
\end{equation}

Since $\sY$ is a separated Deligne-Mumford stack, its diagonal $\delta_{\sY}$
is representable, proper and quasi-finite. In particular, $(t, \beta)$ is a
representable proper and quasi-finite map of algebraic spaces. 
Hence, it is finite. 

If $X$ is $H$-quasi-projective and $Y$ is $F$-quasi-projective.
Then $X \times Y$ is $G$-quasi-projective
with respect to the coordinate-wise action. 
Let $\sL$ be a $G$-equivariant ample line bundle on $X \times Y$.
Since $Z \xrightarrow{(t, \beta)} X \times Y$ is finite,
we get $Z\in \Sch_{\C}$.
Since this map is furthermore $G$-equivariant,
we conclude from \cite[Exc.~III.5.7]{Hart} that
the pull-back of $\sL$ on $Z$ is a $G$-equivariant ample
line bundle. This proves (5).
If $X \in \qp^H_{\C}$, then
$Z/G = X/H$ is quasi-projective and hence $Z \in \qp^G_{\C}$.
This proves (6). 
\end{proof}

\begin{lem}\label{lem:properpushforward}
Let $f:\sX=[X/H] \to \sY=[Y/F]$ be a proper map of separated quotient 
stacks with coarse moduli spaces $M$ and $N$, respectively. 
Then the induced map $\tilde{f}: M \to N$ is proper. If $f$ is finite, then
so is $\tilde{f}$.
\end{lem}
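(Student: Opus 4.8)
The plan is to compare $f$ with $\tilde f$ through the two coarse moduli space maps and to descend properness of the composite $p_{\sY}\circ f$ along the (proper, surjective) coarse moduli map of $\sX$. First I would recall the relevant facts: by the Keel--Mori theorem \cite{KM} (as already used in the proof of \lemref{lem:quot*}), $M$ and $N$ are separated algebraic spaces of finite type over $\C$, and the coarse moduli maps $p_{\sX}\colon \sX \to M$ and $p_{\sY}\colon \sY \to N$ are proper, quasi-finite and surjective. Since $p_{\sY}\circ f\colon \sX \to N$ is a morphism to an algebraic space, the universal property of the coarse moduli space $p_{\sX}$ produces a unique morphism $\tilde f\colon M \to N$ with $\tilde f\circ p_{\sX} = p_{\sY}\circ f$; this is the map in the statement. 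Because $M$ is a Noetherian algebraic space and $M\to \Spec\C$ is separated, the morphism $\tilde f$ is automatically separated and of finite type, so it remains only to prove that $\tilde f$ is universally closed, and, in the finite case, that it is quasi-finite.

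For universal closedness I would argue by descent. The morphism $\tilde f\circ p_{\sX} = p_{\sY}\circ f$ is a composite of proper morphisms, hence universally closed. Given any base change $N'\to N$, write $M' = M\times_N N'$, $\sX' = \sX\times_N N'$, and let $p'_{\sX}$, $\tilde f'$ be the base changes of $p_{\sX}$, $\tilde f$; surjectivity and universal closedness are stable under base change, so $p'_{\sX}$ is surjective and closed while $\tilde f'\circ p'_{\sX}$ is closed. For a closed subset $Z\subseteq M'$ we then have $\tilde f'(Z) = (\tilde f'\circ p'_{\sX})\bigl((p'_{\sX})^{-1}(Z)\bigr)$ since $p'_{\sX}$ is surjective, and the right-hand side is closed in $N'$. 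Thus $\tilde f'$ is closed for every base change, i.e. $\tilde f$ is universally closed; being also separated and of finite type, $\tilde f$ is proper.

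For the last assertion, assume $f$ is finite, hence proper and quasi-finite; then $\tilde f$ is proper by the above, and since a proper morphism of algebraic spaces is finite if and only if it is quasi-finite, it suffices to bound the fibres of $\tilde f$. Fix a point $z$ of $N$. From $\tilde f\circ p_{\sX} = p_{\sY}\circ f$ we get $(\tilde f\circ p_{\sX})^{-1}(z) = f^{-1}\bigl(p_{\sY}^{-1}(z)\bigr)$. As $p_{\sY}$ is quasi-finite, the fibre $p_{\sY}^{-1}(z)$ has finite underlying set, and as $f$ is quasi-finite, the $f$-preimage of this finite set again has finite underlying set; finally $p_{\sX}$ restricts to a surjection $(\tilde f\circ p_{\sX})^{-1}(z)\surj \tilde f^{-1}(z)$, so $\tilde f^{-1}(z)$ has finite underlying set. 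Hence $\tilde f$ is quasi-finite, and therefore finite.

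The one point that needs care is the properness step: what makes the descent work is precisely that the coarse moduli map is not merely closed but \emph{universally} closed and surjective (which is exactly the content of Keel--Mori), together with the elementary observation that $\tilde f$ is separated and of finite type, so that ``universally closed'' upgrades to ``proper''. If one preferred, one could instead verify the valuative criterion for $\tilde f$ directly, lifting a trait $\Spec R\to N$ through the proper surjection $p_{\sY}$ after a finite extension of $R$ and then invoking properness of $f$; but the descent argument seems cleaner and avoids the trait bookkeeping.
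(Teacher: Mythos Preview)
Your proof is correct and follows essentially the same descent principle as the paper, but with a slightly different choice of cover. The paper first chooses a scheme $Z$ together with a finite surjective map $g:Z\to\sX$ (as in \lemref{lem:quot*}), so that $\tilde g:Z\to M$ is a finite surjective map of algebraic spaces; it then descends properness of $\tilde f$ along $\tilde g$ using the elementary fact that if $W_1\to W_2$ is finite surjective with $W_1$ a scheme and the composite $W_1\to W_2\to W_3$ is proper, then $W_2\to W_3$ is proper. You instead descend directly along the proper surjective coarse moduli map $p_{\sX}:\sX\to M$, arguing universal closedness pointwise on the underlying topological spaces after an arbitrary base change. Your route avoids the auxiliary scheme $Z$ and is a bit more streamlined; the paper's route has the minor advantage that the descent step lives entirely among algebraic spaces and schemes, so one never has to speak of closed subsets of a stack. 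For the finite case the two arguments are the same: both reduce to ``proper $+$ quasi-finite $\Rightarrow$ finite'' (\cite[Tag 05W7]{SP}), with quasi-finiteness read off from the finiteness of the fibres of $p_{\sY}\circ f$.
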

\begin{proof}
The lemma is a consequence of the following basic property of separated 
algebraic spaces of finite type over $\C$.
Let $W_1 \xrightarrow{f_1} W_2 \xrightarrow{f_2} W_3$ be morphisms of 
separated algebraic spaces of finite type over $\C$ such that
$W_1$ is a scheme, $f_1$ is finite and surjective and
$f_2 \circ f_1$ is proper. Then $f_2$ is proper.

We now prove the lemma using the above fact as follows.
Let $p_X: \sX \to M$ and $p_Y : \sY \to N$ denote the coarse moduli space
maps. As in \lemref{lem:quot*}, we let $g: Z \to [X/H]$ be a finite and
surjective map where $Z$ is a finite type separated $\C$-scheme. 
Let us consider the commutative diagram of stacks
\begin{equation}\label{eqn:proper-coarse}
\xymatrix@C1pc{
Z \ar[r]^-{g} \ar[dr]_{\tilde{g}} & [X/H]    \ar[r]^-{f}  \ar[d]^{p_X}  & 
[Y/F] \ar[d]^{p_Y} \\
& M  \ar[r]^-{\tilde{f}}  &  N.}
\end{equation}

Since $g$ is finite and surjective, it follows that $\tilde{g}$ is a
finite and surjective morphism of algebraic spaces.
Since ${f}$ and $p_Y$ are proper, 
it follows that $\tilde{f} \circ \tilde{g}$ is proper.
Since $\tilde{g}$ is finite and surjective, $\tilde{f}$ must be
proper. If $f$ is finite, then so is $f \circ g$. In particular,
the composite map $\tilde{f} \circ \tilde{g} = p_Y \circ f \circ g$
is a finite morphism of finite type separated algebraic spaces with
$Z$ a scheme.
Since $\tilde{g}$ is finite and surjective, $\tilde{f}$ must
be quasi-finite and proper. Hence, it is finite by 
\cite[Appendix]{LM} (see also \cite[Tag 05W7]{SP}).
\end{proof}

\subsection{Cohomological dimension of morphisms of stacks}
\label{sec:C-stack}
For a stack $\sX$, let ${\sC oh}_{\sX}$ denote the abelian category of
coherent sheaves on $\sX$. Let ${\sQ\sC oh}_{\sX}$ denote the abelian category 
of quasi-coherent sheaves on $\sX$ and let $D(\sX)$ denote its unbounded derived
category. 
Given a linear algebraic group $G$ and an algebraic space $X$ with $G$-action,
we let ${\sC oh}^G_{X}$ denote the abelian category of $G$-equivariant
coherent sheaves on $X$ and let $D^G(X)$ denote the unbounded derived
category of $G$-equivariant coherent sheaves on $X$. 
It is well known that ${\sC oh}^G_{X} \simeq {\sC oh}_{[X/G]}$.

Recall that a proper map $f: \sX \to \sY$ of Deligne-Mumford stacks
is said to have finite cohomological dimension if
the higher direct image functor ${\bf R}f_*: D(\sX) \to D(\sY)$
has finite cohomological dimension. Recall also from 
\cite[Theorem~1.2]{Olsson} that ${\bf R}^if_*(\sF)$ is a coherent sheaf on
$\sY$ if $\sF$ is coherent sheaf on $\sX$.

\begin{lem}\label{lem:Fin-coh}
Let $f: \sX \to \sY$ be a proper morphism of separated 
quotient stacks. Then $f$ has finite cohomological dimension.
\end{lem}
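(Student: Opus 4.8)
The plan is to reduce, via the factorization of \lemref{lem:GRR-QDM-stacks}, to two elementary building blocks and to verify finite cohomological dimension étale-locally on the target in each case.

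First I would write $\sX = [X/H]$ and $\sY = [Y/F]$, which is always possible for separated quotient stacks, and apply \lemref{lem:GRR-QDM-stacks}(4) to obtain a factorization $f: \sX \cong [Z/G] \xrightarrow{f_1} [(Z/H)/F] \xrightarrow{f_2} [Y/F] = \sY$ in which both maps are proper. Since a composite of two functors of finite cohomological dimension again has finite cohomological dimension (through the Grothendieck spectral sequence $\mathbf{R}^p(f_2)_* \circ \mathbf{R}^q(f_1)_* \Rightarrow \mathbf{R}^{p+q}(f_2 \circ f_1)_*$), it suffices to treat $f_1$ and $f_2$ separately. The map $f_2$ is representable, so after base change along the smooth surjective atlas $Y \to [Y/F]$ it becomes a proper morphism of algebraic spaces; by faithfully flat base change the cohomological dimension of $f_2$ is bounded above by that of this proper morphism, which in turn is bounded by the maximal dimension of its fibres by the standard Grothendieck-type vanishing for proper morphisms of Noetherian algebraic spaces. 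Hence $f_2$ has finite cohomological dimension.

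For $f_1$, I would use that $G = H \times F$ with commuting factors, so $[Z/G] = [[Z/H]/F]$ and $f_1$ is the morphism $[\mathcal{W}/F] \to [W/F]$ induced by the coarse moduli space map $p: \mathcal{W} := [Z/H] \to W := Z/H$, which is $F$-equivariant by \lemref{lem:quot*}. Base-changing along the atlas $W \to [W/F]$ identifies $f_1$ with $p$, and again by faithfully flat base change $\operatorname{cd}(f_1) \le \operatorname{cd}(p)$. It then remains to show that the coarse moduli space map $p$ of a separated Deligne-Mumford stack over $\C$ has cohomological dimension zero, i.e.\ $\mathbf{R}^i p_* = 0$ for $i > 0$. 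This may be checked étale-locally on $W$, where by the local structure theorem for separated Deligne-Mumford stacks in characteristic zero one has $\mathcal{W} \cong [U/\Gamma]$ with $\Gamma$ finite and $U = \Spec A$ affine, and $p$ becomes the map $[U/\Gamma] \to U/\Gamma = \Spec A^\Gamma$, under which $p_*$ sends a $\Gamma$-equivariant $A$-module $M$ to $M^\Gamma$. Because $\Char \C = 0$, the averaging idempotent $\tfrac{1}{|\Gamma|}\sum_{g \in \Gamma} g$ realises $M \mapsto M^\Gamma$ as a direct summand of the identity functor, hence exact, giving $\mathbf{R}^i p_* = 0$ for $i > 0$.

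The main obstacle I expect is this coarse moduli step: one must invoke the étale-local quotient presentation of separated Deligne-Mumford stacks over $\C$ and take care that the flat-base-change and étale-descent reductions are valid for the unbounded derived categories of quasi-coherent sheaves on algebraic spaces (not merely schemes), and that all intermediate maps in the factorization genuinely remain proper so that $\mathbf{R}f_*$ preserves the relevant (quasi-)coherence. Once these technical points are settled, the two building-block computations and the composition principle assemble directly into the statement. Alternatively, the vanishing $\mathbf{R}^i p_* = 0$ can be phrased as the assertion that a Deligne-Mumford stack over $\C$ is tame with cohomologically affine coarse moduli map, and cited from the literature on tame stacks.
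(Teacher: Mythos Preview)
Your proposal is correct and follows essentially the same route as the paper: factor $f$ via \lemref{lem:GRR-QDM-stacks} into a stacky coarse-moduli-space map followed by a representable map, then handle each piece by fppf base change to an algebraic-space statement. The only difference is cosmetic: where you sketch the exactness of the coarse moduli pushforward via the \'etale-local finite-quotient presentation and the averaging idempotent in characteristic zero, the paper simply cites \cite[Lemma~2.3.4]{AV} for the same fact (and your closing remark about tame stacks is exactly this citation).
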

\begin{proof}
This is known in more general setting (see \cite[\S~2, Theorem~1.4.2]{DG}).
We give a proof here in our special case.
We write $\sX = [X/H]$ and $\sY = [Y/F]$ 
and follow the notations of ~\eqref{eqn:Fin-coh-0-new}
and \lemref{lem:GRR-QDM-stacks} in this proof.
Let $\phi: \sX' \to W$ be the coarse moduli space map
and let $\beta': W \to Y$ be the induced map on the coarse moduli spaces
such that $f' = \beta' \circ \phi$.
We have the factorization of $f$ (see ~\eqref{eqn:st-gen*-0}):
\begin{equation}\label{eqn:Fin-coh-1}
\sX = [Z/G] \xrightarrow{\bar{\phi}} \sW \xrightarrow{\bar{\beta'}} [Y/F] 
= \sY.
\end{equation}

Since $\phi: \sX' \to W$ is the coarse moduli space map, it is proper.
It follows from the fppf-descent that $\bar{\phi}$ is also proper.
Since the functor $\phi_*: {\sQ\sC oh}_{\sX'} \to {\sQ\sC oh}_W$ is exact by 
\cite[Lemma~2.3.4]{AV}, it follows again from the fppf-descent that
$\bar{\phi}_*: {\sQ\sC oh}_{\sX} \to {\sQ\sC oh}_{\sW}$ is exact. Since
$\sX$ and $\sW$ have finite diagonals, this is same as saying that
$\bar{\phi}_* = {\bf R}\bar{\phi}_*$ at the level of the derived categories
of quasi-coherent sheaves.

Next, $\beta': W \to Y$ is a finite type morphism of algebraic spaces of finite
type over $\C$. Hence, it is of finite cohomological dimension (see 
\cite[Tag 073G]{SP}).
Since $W = Y \times_{\sY} \sW$, it follows from the fppf-descent again that
$\bar{\beta'}: \sW \to \sY$ is of finite cohomological dimension.
We conclude using the Grothendieck spectral sequence that 
$f = \bar{\beta'} \circ \bar{\phi}$ has finite cohomological
dimension. 
\end{proof}

\section{Localization  in equivariant $K$-theory}\label{sec:EKT}
Given a linear algebraic group $G$
and an algebraic space $X$ with $G$-action, we let $G(G,X)$
denote the $K$-theory spectrum of the category of $G$-equivariant coherent 
sheaves on $X$. We let $G_i(G,X)$ denote the homotopy groups of $G(G,X)$
for $i \ge 0$ and let $G_*(G, X) = \oplus_{i \ge 0} G_i(G,X)$.
The equivalence of abelian categories ${\sC oh}^G_{X} \simeq {\sC oh}_{[X/G]}$
induces a homotopy equivalence of their $K$-theory spectra. 
This equivalence will be used throughout this text.

\subsection{Representation ring}\label{sec:Rep}
Let $R(G)$ denote the representation ring of $G$,
which in our notation is same as $K_0(G,\Spec \C)$. One knows that
$R(G)$ is a finitely generated $\C$-algebra (see \cite[Corollary~3.3]{Segal}).
In particular, it is Noetherian. The ideal $\fm_{1_G} \subset R(G)$,
defined as the kernel of the rank map $R(G) \surj \C$, is called 
the augmentation ideal of $G$. We refer to \cite[Appendix~A]{AK1} for
a list of basic properties of equivariant $K$-theory that we shall mostly 
use in this text.

For a linear algebraic group $G$ over $\C$ and any $g \in G(\C)$, we let 
$C_{G}(g)$ denote the conjugacy class of $g$ and 
$Z_g:= \sZ_G(g)$ the centralizer of $g$. A point $g \in G$ will always denote a 
closed point, unless we particularly specify it. 
A conjugacy class $\psi = C_{G}(g)$ is called a semi-simple conjugacy class 
if $g \in G$ is a semi-simple element of $G$. We recall below some well known 
facts about the structure of $R(G)$ for which the reader is referred to
\cite[\S~2.1]{EG2}.

For an affine scheme $X$, let $\C[X]$ denote its coordinate ring. 
Let $G$ be a reductive group and let $\C[G]^G$ denote the ring of 
regular functions on $G$ which are invariant under the conjugation action. 
There is a natural ring homomorphism $\chi: R(G) \to \C[G]^G$ defined by
$\chi([V]) = \chi_V$, where $\chi_V(g) = {\rm trace}(g|_V)$. 

\begin{prop}$($\cite[Propositions~2.2, 2.4]{EG2}$)$
\label{EGNAL:representationring(reductive)}
For a reductive group $G$, the map $\chi: R(G) \to \C[G]^G$, taking a 
virtual representation to its character, is an isomorphism. 
Furthermore, $\fm \subset R(G)$ is a maximal ideal if and only 
if there is a unique semi-simple conjugacy class $\psi$ in $G$ such that 
$\fm = \{v \in R(G)|~~\chi_v(g) = 0 \ \forall \ g \in \psi\}$.
\end{prop}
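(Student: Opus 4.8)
The plan is to prove the isomorphism first and then read off the classification of maximal ideals from it.

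\textbf{Step 1: the isomorphism.} Since $G$ is reductive over $\C$, every finite-dimensional $G$-representation is completely reducible, so $R(G) = K_0(G,\Spec \C)$ is a free $\C$-module with basis the classes $[V_\lambda]$ of the irreducible representations. The map $\chi$ is a ring homomorphism because $\chi_{V\oplus W} = \chi_V + \chi_W$ and $\chi_{V\otimes W} = \chi_V \chi_W$, so it descends to $K_0$. To see that $\chi$ is bijective I would invoke the algebraic Peter--Weyl decomposition $\C[G] \cong \bigoplus_\lambda V_\lambda \otimes V_\lambda^{*}$ as a $(G\times G)$-module for the left/right translation actions, valid for any reductive group over $\C$. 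Restricting the $(G\times G)$-action along the diagonal $G \inj G\times G$ gives the conjugation action on $\C[G]$, and taking invariants yields
\[
\C[G]^G \;\cong\; \bigoplus_\lambda (V_\lambda \otimes V_\lambda^{*})^G \;\cong\; \bigoplus_\lambda \End_G(V_\lambda) \;\cong\; \bigoplus_\lambda \C,
\]
the middle isomorphism being Schur's lemma. Unwinding these identifications, the generator of the $\lambda$-th summand corresponds exactly to the class function $\chi_{V_\lambda}$; hence $\chi$ carries the basis $\{[V_\lambda]\}$ of $R(G)$ onto a basis of $\C[G]^G$ and is an isomorphism of $\C$-algebras. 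As a by-product, the irreducible characters are linearly independent in $\C[G]^G$.

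\textbf{Step 2: maximal ideals.} Because $\chi$ is an isomorphism, it suffices to describe the maximal ideals of $A := \C[G]^G$. Now $\Spec A$ is the affine GIT quotient of $G$ by its conjugation action, and $A$ is a finitely generated $\C$-algebra, so by the Nullstellensatz its maximal ideals are exactly the evaluation ideals $\fm_x = \{f \in A : f(x) = 0\}$ at the (closed) points $x$ of $\Spec A$. By the standard theory of affine GIT quotients by reductive groups, the points of $\Spec A$ are in bijection with the closed $G$-orbits in $G$. Finally, the conjugation orbit $C_G(g)$ is closed if and only if $g$ is semisimple: embedding $G \inj \GL_n$ via a faithful representation reduces this to the classical statement for $\GL_n$, or one may cite Steinberg's work on conjugacy classes. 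Hence the maximal ideals of $A$ are in bijection with the semisimple conjugacy classes of $G$.

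\textbf{Step 3: identifying the ideal.} For a semisimple class $\psi$ and any $g\in\psi$, the corresponding maximal ideal of $A$ is $\{f : f(g)=0\}$, which pulls back under $\chi$ to $\{v\in R(G) : \chi_v(g)=0\}$; since each $\chi_v$ is a class function, replacing ``for some $g\in\psi$'' by ``for all $g\in\psi$'' changes nothing, and two distinct semisimple classes give distinct ideals by Step 1 (characters separate closed orbits). This gives both the stated description and the uniqueness of $\psi$, and conversely every maximal ideal of $R(G)$ is of this form. I expect the two facts used as black boxes — the algebraic Peter--Weyl decomposition of $\C[G]$ (equivalently, surjectivity of $\chi$: every conjugation-invariant regular function on $G$ is a $\C$-linear combination of characters) and the identification of closed conjugation orbits with semisimple classes — to carry essentially all the content, the remainder being bookkeeping; if a self-contained argument were wanted, surjectivity of $\chi$ would be the delicate point.
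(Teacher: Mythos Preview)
The paper does not give its own proof of this proposition; it is stated with a citation to \cite[Propositions~2.2, 2.4]{EG2} and no further argument. Your proof is correct and follows the standard route --- algebraic Peter--Weyl for the isomorphism, then GIT for the conjugation action together with the Nullstellensatz and the identification of closed conjugacy classes with semisimple ones --- which is essentially the argument in \cite{EG2} as well, so there is nothing to contrast.
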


Let $G$ be any linear algebraic group which is not necessarily reductive.
There is a Levi decomposition $G = U \rtimes L$, where $U$ is the unipotent 
radical of $G$ and $L$ is reductive. 
If $X \in \Sch^G_{\C}$, the Morita isomorphism 
(see \S~\ref{Sec:Morita Isomorphism}) shows that there is a weak equivalence of
spectra $G(G, U \times X) \simeq G(L, X)$ and the homotopy invariance of
the equivariant $G$-theory tells us that $G(G, X) \xrightarrow{\simeq} 
G(G, U \times X)$.
It follows that the restriction map $G(G,X) \to G(L,X)$ is a weak equivalence.
The same holds for equivariant higher Chow groups too 
(see \cite[Proposition~B.6]{AK1}).
This shows that we can assume our group to be reductive in order to study the 
equivariant $K$-theory and  equivariant higher Chow groups.
The special case $R(G) \xrightarrow{\simeq} R(L)$ will be frequently used
in this text without any further explanation.

For a linear algebraic group $G$ as above
and $g \in G$, there is a ring homomorphism $\chi^g : R(G) \to \C$ given 
by $\chi^g([V]) = \chi_{[V]}(g)$. As $\chi^g(1) = 1$, this map is 
surjective. Hence $\fm_{g} := {\rm Ker}(\chi^g)$ is a maximal ideal of $R(G)$. 
Note that $\chi^g$ depends only on $\psi = C_G(g)$ and hence there is 
no ambiguity in writing  $\fm_g$ and $\fm_{\psi}$ interchangeably. 
If $G$ is not necessarily reductive, there is no identification
between $R(G)$ and $\C[G]^G$ in general. But we can still say the following.

\begin{prop}$($\cite[Proposition 2.5]{EG2}$)$
\label{prop:conjugacyclassandmaximalideal}
For a linear algebraic group $G$, the assignment $\psi \mapsto \fm_{\psi}$
yields a bijection between the semi-simple conjugacy classes in $G$ and the 
maximal ideals in $R(G)$. 
\end{prop}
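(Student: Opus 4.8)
The plan is to reduce immediately to the reductive case and then transport the statement across the character isomorphism of Proposition~\ref{EGNAL:representationring(reductive)}. Concretely, writing a Levi decomposition $G = U \rtimes L$, the restriction map $R(G) \xrightarrow{\simeq} R(L)$ identifies the two representation rings, and one checks that this identification carries semi-simple conjugacy classes in $G$ to semi-simple conjugacy classes in $L$ and respects the formation of the ideals $\fm_\psi$ (since characters are conjugation-invariant and the unipotent part contributes trivially to traces of semi-simple elements). Hence it suffices to prove the bijection when $G$ is reductive, which is the substantive case.

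For $G$ reductive, I would argue as follows. The map $\psi \mapsto \fm_\psi$ is well-defined because $\chi^g$ depends only on the conjugacy class of $g$, and it lands in the maximal spectrum since $\chi^g$ is a surjective ring homomorphism $R(G) \to \C$ (surjectivity from $\chi^g(1)=1$). Under the isomorphism $\chi: R(G) \xrightarrow{\simeq} \C[G]^G$, the ideal $\fm_g$ corresponds to the maximal ideal of functions in $\C[G]^G$ vanishing at $g$, i.e.\ to the closed point of $\Spec \C[G]^G = G/\!/G$ (the GIT quotient by the conjugation action) represented by $g$. By the second half of Proposition~\ref{EGNAL:representationring(reductive)}, every maximal ideal of $R(G)$ arises this way from a unique semi-simple conjugacy class, which gives surjectivity; injectivity follows since two semi-simple elements of a reductive group over $\C$ are conjugate if and only if they lie in the same fiber of $G \to G/\!/G$, equivalently if and only if every conjugation-invariant regular function takes the same value on them. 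So the assignment is a bijection onto $\mathrm{Max}(R(G))$.

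The main obstacle—the point one must handle carefully—is the reduction from $G$ to its Levi factor $L$: one needs that the correspondence between semi-simple conjugacy classes of $G$ and those of $L$ is a genuine bijection and that it matches the ideals $\fm_\psi$ on the two sides under the restriction isomorphism $R(G) \simeq R(L)$. The first is the standard fact that in $G = U \rtimes L$ every semi-simple element is $G$-conjugate to one in $L$, and two elements of $L$ that are $G$-conjugate are already $L$-conjugate; the second is immediate once one observes that for a semi-simple $g \in L$ and a representation $V$ of $G$, $\chi_V(g)$ computed in $G$ equals $\chi_{V|_L}(g)$ computed in $L$. Everything else—surjectivity of $\chi^g$, maximality of $\fm_\psi$—is formal. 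Thus the heart of the proof is simply citing Proposition~\ref{EGNAL:representationring(reductive)} in the reductive case and checking the bookkeeping in the reduction step.
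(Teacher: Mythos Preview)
The paper does not give its own proof of this proposition; it is stated with a direct citation to \cite[Proposition~2.5]{EG2} and no argument is supplied. Your proposal is therefore not competing with a proof in the paper but rather supplying one where the paper defers to the literature.

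Your argument is correct. The reduction to the Levi factor is sound: the projection $\pi:G=U\rtimes L\to L$ shows that two semi-simple elements of $L$ which are $G$-conjugate are already $L$-conjugate (apply $\pi$ to the conjugating element), and every semi-simple element of $G$ is $G$-conjugate into $L$ since it lies in some maximal torus and all maximal tori are conjugate to one inside $L$. The compatibility of $\fm_\psi$ across $R(G)\simeq R(L)$ is immediate from $\chi_V(g)=\chi_{V|_L}(g)$ for $g\in L$. In the reductive case you are really just unpacking Proposition~\ref{EGNAL:representationring(reductive)}: that proposition already asserts that every maximal ideal of $R(G)$ comes from a \emph{unique} semi-simple conjugacy class, which is exactly the bijection. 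Your remark about $G/\!/G$ and closed orbits is the geometric content underlying that uniqueness, and is the standard way the cited result in \cite{EG2} is proved.
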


For a surjective morphism of algebraic groups, the following is the
correspondence between the conjugacy classes. We refer to 
\cite[Theorem 2.4.8]{Springer} for a proof of the first part of the 
lemma and the remaining part follows easily by direct verification.

\begin{prop}\label{prop:grp-hom}
Let $f: G \rightarrow F$ be a surjective morphism of algebraic groups. 
Then $f$ takes a semi-simple (unipotent) conjugacy class to a semi-simple 
(unipotent) conjugacy class. 
If $f_*: R(F) \rightarrow R(G)$ is the restriction map, then 
$f_*^{-1}(\fm_{\psi}) = \fm_{\phi}$ if and only if $f(\psi) = \phi$. 
If $\Ker(f)$ is finite, then the inverse image of a semi-simple conjugacy 
class in $F$ is a finite disjoint union of semi-simple conjugacy classes in $G$.
\end{prop}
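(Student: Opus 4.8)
The plan is to take the three assertions in order, reading the first off the cited structure theory and reducing the other two to a one-line character computation and an elementary point-count, respectively. For the first assertion, the only input needed is that a homomorphism of linear algebraic groups respects the Jordan decomposition, i.e.\ $f(g)_s = f(g_s)$ and $f(g)_u = f(g_u)$ for all $g \in G$; this is \cite[Theorem~2.4.8]{Springer}. Granting it, $f$ carries semisimple (resp.\ unipotent) elements to semisimple (resp.\ unipotent) elements, and since $f$ is surjective, $f(C_G(g)) = \{\,f(x)f(g)f(x)^{-1} : x \in G\,\} = C_F(f(g))$, so the image of a conjugacy class is again a conjugacy class of the same type.

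For the second assertion, I would note that for a representation $V$ of $F$ the class $f_*[V]$ is $V$ regarded as a $G$-representation via $f$, so $\chi_{f_*[V]}(g) = \operatorname{trace}(f(g) \mid V) = \chi_{[V]}(f(g))$ for every $g \in G$; extending by linearity, $\chi^g \circ f_* = \chi^{f(g)}$ as ring maps $R(F) \to \C$. Passing to kernels gives $f_*^{-1}(\fm_g) = \fm_{f(g)}$. If $\psi$ is a semisimple conjugacy class and $g \in \psi$, then $f(\psi) = C_F(f(g))$ is semisimple by the first part, so $f_*^{-1}(\fm_\psi) = \fm_{f(\psi)}$; the asserted equivalence $f_*^{-1}(\fm_\psi) = \fm_\phi \iff f(\psi) = \phi$ is then immediate from the bijection between semisimple conjugacy classes and maximal ideals of the representation ring recorded in \propref{prop:conjugacyclassandmaximalideal}.

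For the third assertion, assume $N := \Ker(f)$ is finite and let $\phi \subset F$ be a semisimple conjugacy class; it is closed, so $f^{-1}(\phi)$ is a closed, conjugation-stable subset of $G$. I would first check that $f^{-1}(\phi)$ consists of semisimple elements: for $g$ in it, the unipotent part satisfies $f(g_u) = f(g)_u = e$, hence $g_u \in N$, and a unipotent element of finite order is trivial in characteristic zero, so $g_u = e$. Next, fix $h \in \phi$; the fiber $f^{-1}(h)$ is a translate of $N$, hence finite, and for any $g \in f^{-1}(\phi)$, writing $f(g) = xhx^{-1}$ and lifting $x$ to some $y \in G$ by surjectivity, the conjugate $y^{-1}gy$ lies in $f^{-1}(h)$. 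Thus every $G$-conjugacy class contained in $f^{-1}(\phi)$ meets the finite set $f^{-1}(h)$, so there are at most $|N|$ of them; each is semisimple (hence closed), and distinct conjugacy classes are disjoint, so $f^{-1}(\phi)$ is a finite disjoint union of semisimple conjugacy classes in $G$.

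The whole statement is essentially formal once \cite[Theorem~2.4.8]{Springer} and \propref{prop:conjugacyclassandmaximalideal} are in hand; the only step that calls for a second's thought is the semisimplicity of the fibers in the third assertion --- that finiteness of the kernel forces the unipotent part of every element of $f^{-1}(\phi)$ to die --- and in characteristic zero this is immediate. I do not expect any genuine obstacle.
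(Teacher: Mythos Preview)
Your proposal is correct and follows exactly the route the paper indicates: the paper cites \cite[Theorem~2.4.8]{Springer} for the first assertion and says the remaining parts ``follow easily by direct verification,'' which is precisely what you carry out. Your character computation for the second part and the fiber argument for the third are the natural verifications the authors have in mind.
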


For a closed immersion of algebraic groups, we have the following
correspondence between the conjugacy classes of the two groups.
We refer to \cite[Propositions~2.3, 2.6, Remark~2.7]{EG2} for a proof.

\begin{lem}\label{lem:basicclosedembedding}
Let $H \hookrightarrow G$ be a closed embedding of linear algebraic 
groups. Then the following hold.
\begin{enumerate}
\item 
The ring homomorphism $R(G) \rightarrow R(H)$ is finite. 
\item 
For a semi-simple conjugacy class $\psi \subset G$, $\psi \cap H = 
\psi_1 \amalg \dots \amalg \psi_n$ is a disjoint 
union of semi-simple conjugacy classes in $H$.
\item 
Given a semi-simple conjugacy class $\psi \subset G$ and $\phi 
\subset  H$, $\mathfrak{m}_{\psi}R(H) \subset \mathfrak{m}_{\phi}$ if 
and only if $\phi \subset \psi \cap H$.

\item 
For a semi-simple class $\psi \subset G$, $R(H)_{\mathfrak{m}
_{\psi}}$ is a semi-local ring with maximal ideals $\mathfrak{m}_{\psi_1} ,
\dots , \mathfrak{m}_{\psi_{n}}$.
\end{enumerate}
\end{lem}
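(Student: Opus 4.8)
The plan is to reduce the statement to the reductive case and then to combine the Cohen–Seidenberg theory for finite ring extensions with the explicit character-theoretic description of maximal ideals furnished by Propositions~\ref{EGNAL:representationring(reductive)} and \ref{prop:conjugacyclassandmaximalideal}. First I would dispose of the non-reductive case: writing Levi decompositions $G = U \rtimes L$ and $H = U' \rtimes L'$, the closed embedding $H \inj G$ carries $L'$ into a reductive subgroup of $G$ which, after conjugation, may be assumed contained in $L$; combined with the isomorphisms $R(G) \xrightarrow{\simeq} R(L)$ and $R(H) \xrightarrow{\simeq} R(L')$ recalled above (and the compatibility of restriction with these), every one of (1)--(4) for $H \inj G$ follows from the corresponding statement for $L' \inj L$. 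So from now on $G$ and $H$ are reductive.

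For (1), the key input is $R(G) \cong \C[G]^G = \C[G]^{\text{conj}}$. Restriction $R(G) \to R(H)$ corresponds under $\chi$ to the map $\C[G]^G \to \C[H]^H$ induced by the inclusion $H \inj G$ on conjugation-invariant regular functions, and I would argue finiteness by passing to a common maximal torus: choose a maximal torus $T_H \subset H$ and a maximal torus $T \subset G$ with $T_H \subset T$; then $\C[G]^G = \C[T]^{W_G}$ and $\C[H]^H = \C[T_H]^{W_H}$ (Chevalley restriction), the extension $\C[T]^{W_G} \to \C[T]$ is finite, $\C[T] \to \C[T_H]$ is surjective hence finite, and $\C[T_H] \to \C[T_H]^{W_H}$ exhibits the source as module-finite over the target since $W_H$ is finite. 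Chasing these through shows $R(G) \to R(H)$ is finite, as the composite $R(G) \to R(H) \to \C[T_H]$ is finite and $R(H) \to \C[T_H]$ is injective.

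For (2), a semisimple conjugacy class $\psi \subset G$ is closed, so $\psi \cap H$ is a closed $H$-conjugation-invariant subset of the semisimple locus of $H$; since closed semisimple conjugacy classes in $H$ are parametrized by closed points of the affine variety $\Spec \C[H]^H = \Spec R(H)$ and $\psi \cap H$ is the fiber over a single closed point of $\Spec R(G)$ along the finite morphism from (1), it is a finite set of such points, i.e.\ a finite disjoint union $\psi_1 \amalg \cdots \amalg \psi_n$ of semisimple conjugacy classes. For (3), $\fm_\psi R(H) \subset \fm_\phi$ says precisely that the closed point $\fm_\phi \in \Spec R(H)$ lies over the closed point $\fm_\psi \in \Spec R(G)$, which by Proposition~\ref{EGNAL:representationring(reductive)} (the description $\fm_\psi = \{v : \chi_v(g) = 0 \text{ for } g \in \psi\}$, compatibly for $H$) is equivalent to $\phi \subset \psi \cap H$. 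Finally (4) is the statement that the semilocalization of the finite $R(G)$-algebra $R(H)$ at a maximal ideal $\fm_\psi$ is the product of its localizations at the maximal ideals lying over $\fm_\psi$; these maximal ideals are exactly $\fm_{\psi_1}, \dots, \fm_{\psi_n}$ by (2) and (3), and the semilocal structure is standard lying-over/going-up for a finite extension of Noetherian rings. I expect the main obstacle to be the non-reductive reduction step in (1)--(2)--(4): one must check that conjugating $H$ inside $G$ so that $L'$ lands in $L$ is legitimate and that the various restriction maps and character identifications are compatible; once that bookkeeping is in place, everything else is a direct application of commutative algebra of finite ring extensions together with the geometric dictionary "maximal ideals of $R(\cdot)$ $\leftrightarrow$ semisimple conjugacy classes," which is exactly the content cited from \cite{EG2}.
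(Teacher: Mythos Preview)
The paper does not actually prove this lemma; immediately before the statement it says ``We refer to \cite[Propositions~2.3, 2.6, Remark~2.7]{EG2} for a proof,'' and no argument is given. Your proposal is correct and is in fact the same approach taken in the cited reference: reduce to the reductive case via Levi decompositions, use Chevalley restriction $R(G)\cong\C[T]^{W_G}$ to obtain finiteness of $R(G)\to R(H)$, and then read off (2)--(4) from the dictionary between maximal ideals of $R(\cdot)$ and semisimple conjugacy classes together with lying-over/going-up for finite ring extensions.

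Two small bookkeeping remarks. First, in the reduction step you need not conjugate $L'$ into $L$: the projection $G\twoheadrightarrow G/U\cong L$ is injective on $L'$ (since $L'\cap U$ is simultaneously reductive and unipotent, hence trivial), and the isomorphism $R(L)\xrightarrow{\simeq}R(G)$ is pullback along this projection, so the composite $R(L)\to R(G)\to R(H)\to R(L')$ is automatically restriction along a closed embedding $L'\hookrightarrow L$. Second, your argument for (2) tacitly uses one direction of (3) when you identify the $H$-conjugacy classes in $\psi\cap H$ with the fiber of $\Spec R(H)\to\Spec R(G)$ over $\fm_\psi$; since your proof of (3) is self-contained, it is cleanest to establish (3) first and then deduce (2) and (4) as immediate consequences of lying-over for the finite extension from (1).
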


\subsection{Fixed point loci for action of semi-simple elements}
\label{sec:FPS}
If a linear algebraic group $G$ acts on an algebraic space $X$ and if
$g \in G$, let $X^g \subset X$ denote the maximal closed subspace of $X$
which is fixed by $g$. In general, $X^g$ is a $Z_g$-invariant closed 
subspace of $X$. The following results (see \cite[Theorem~5.4]{VV})
show that there are only finitely many
semi-simple conjugacy classes in $G$ for which the invariant subspaces $X^g$
are non-empty, provided the action is proper.

\begin{lem}\label{lem:diagonalizablefiniteness}
Let $G$ be a diagonalizable torus acting properly on an algebraic space
$X$. Then there are only finitely many elements $g \in G$ such that 
$X^g \neq \emptyset$.
\end{lem}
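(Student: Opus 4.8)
The plan is to reduce the problem to a statement about the set of weights appearing in the action of $G$ on the coordinate rings of an affine cover. First I would use properness to pass to a convenient local model: since $G$ is a diagonalizable torus acting properly on the separated algebraic space $X$ of finite type over $\C$, the quotient stack $[X/G]$ is a separated Deligne-Mumford stack, and every point of $X$ has a $G$-invariant affine open neighbourhood (one can cover $X/G$ by affines and pull back, using that $X \to X/G$ is affine for a proper torus action). By quasi-compactness of $X$, finitely many such affines $U_1,\dots,U_r$ cover $X$, and $X^g \neq \emptyset$ iff $U_i^g \neq \emptyset$ for some $i$; so it suffices to prove the claim for each affine $U_i = \Spec A_i$ with $A_i$ a finitely generated $\C$-algebra carrying a $G$-action.

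Next I would analyse a single affine piece $U = \Spec A$. The torus $G$ acts on $A$, giving a weight decomposition $A = \bigoplus_{\chi \in \widehat{G}} A_\chi$. Pick finitely many homogeneous algebra generators $a_1,\dots,a_n$ of $A$ with weights $\chi_1,\dots,\chi_n \in \widehat{G}$. For $g \in G$, the element $g$ acts on $a_j$ by the scalar $\chi_j(g)$, and an arbitrary monomial in the $a_j$ is scaled by the corresponding product of these scalars. A key reduction here is that $U^g = \Spec(A/I_g)$ where $I_g$ is generated by all $f - g\cdot f$ for $f \in A$; because the $a_j$ generate $A$, one checks $U^g \ne \emptyset$ forces, at minimum, certain compatibility among the $\chi_j(g)$. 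More usefully: the fixed locus $U^g$ depends only on the subgroup $\bigcap_j \ker(\chi_j - \chi_j(g)\cdot \mathbf{1})$-type data, i.e. only on which of the finitely many characters in the finite set $S = \{\chi_j - \chi_k : 1 \le j,k \le n\} \cup \{\chi_j : 1 \le j \le n\} \subset \widehat{G}$ vanish at $g$. Since $S$ is finite and each $\chi \in S$ is a homomorphism $G \to \mathbb{G}_m$, the elements of $G$ are partitioned into finitely many classes according to the value of the vanishing-pattern, hence $U^g$ takes only finitely many values as $g$ ranges over $G$.

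It then remains to rule out that infinitely many distinct $g$ with $U^g \ne \emptyset$ lie in a single such class. Within one class, all the relevant characters $\chi \in S$ that don't identically vanish at $g$ take nonzero values, and the subgroup $G' = \bigcap_{\chi \in S,\ \chi(g)=1}\ker\chi$ contains all such $g$; the point with nonempty fixed locus must be fixed by the whole of a subtorus, and properness of the $G$-action then bounds the relevant quotient torus. Concretely, I expect the main obstacle — and the step most in need of care — to be precisely this: translating "the action is proper" into finiteness of the set of semisimple $g$ with $X^g \neq \emptyset$. The cleanest route is probably to invoke the structure already cited from \cite[Theorem~5.4]{VV} or to argue that if infinitely many $g_m$ had $X^{g_m} \neq \emptyset$ then (after passing to a subsequence landing in a fixed affine $U_i$ and a fixed point $x$ in the closure of their fixed loci) the stabilizer $G_x$ would be a subtorus containing infinitely many of the $g_m$, hence positive-dimensional or at least infinite, contradicting that a proper action has finite — hence here, for a diagonalizable group, trivial-on-a-subtorus-forces-boundedness — stabilizers. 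Making the limiting argument rigorous in the algebraic-space setting (rather than the analytic one) is where the real work lies; everything before that is bookkeeping with characters.
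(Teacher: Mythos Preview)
Your approach differs from the paper's and the core idea can be made to work, but there are two gaps.

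First, the reduction to a finite cover by $G$-invariant affine opens is not justified when $X$ is merely an algebraic space. You assert that $X \to X/G$ is affine, but $X/G$ is itself only an algebraic space, and even over its scheme locus the coarse moduli map $[X/G] \to X/G$ is not representable, so the composite need not be affine. Sumihiro-type statements producing $G$-invariant affine neighbourhoods typically require $X$ to be a normal scheme. The paper avoids this entirely: it passes to the dense open scheme locus $V \subset X$, invokes Thomason's generic slice theorem \cite[Proposition~4.10]{TO4} to produce a single $G$-invariant affine open $U \subset V$ on which a finite subgroup $H \subset G$ acts trivially and $G/H$ acts freely (so $U^g \neq \emptyset$ forces $g \in H$, and $H$ is finite by properness), and then concludes by Noetherian induction on $X \setminus U$.

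Second, the step you call ``where the real work lies'' is in fact immediate once you organise the character analysis around \emph{points} rather than around $g$. On an affine $U = \Spec A$ with homogeneous generators $a_1,\dots,a_n$ of weights $\chi_1,\dots,\chi_n$, the stabilizer of $x \in U$ is $G_x = \bigcap_{j:\,a_j(x)\neq 0} \ker \chi_j$, which depends only on the subset $\{j : a_j(x)\neq 0\}\subseteq\{1,\dots,n\}$. Thus there are at most $2^n$ distinct stabilizers, each finite by properness, and $\{g : U^g \neq \emptyset\} = \bigcup_{x\in U} G_x$ is a finite union of finite groups. No limiting or closure argument is needed; your vanishing-pattern partition of $G$ was pointing in the right direction but indexed on the wrong variable. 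If you fix the first gap by the same Noetherian-induction device the paper uses (work on a $G$-invariant affine open in the scheme locus, then induct on the complement), your argument becomes a valid and slightly more hands-on alternative to the generic slice theorem.
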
 
\begin{proof}
Since $X$ is a separated algebraic space, there 
is a largest open dense subspace which is the scheme locus of $X$. We denote it
by $V$. 
It follows from Thomason's generic slice theorem 
\cite[Proposition 4.10]{TO4} that 
there is a $G$-invariant non-empty open subset $U \subset V \subset X$ and a 
subgroup $H \subset G$ such that $H$ acts trivially 
on $U$ and $F =G/H$ acts freely on $U$ with quotient algebraic space 
$U/F = U/G$. The freeness of $F$-action on $U$ implies that 
if there is $g \in G$ such that $U^g \neq \emptyset$, then $g$ must lie in $H$.
Furthermore, the properness of $G$-action implies that
the stabilizer of any $x \in X$ is finite. It follows that $H$ is finite. 
The result thus follows for $U$. 

Since $Z = X \setminus U$ is a proper $G$-invariant 
closed subscheme of $X$,
the Noetherian induction implies that there are only finitely many 
$g \in G$ such that  $Z^g \neq \emptyset$.
As $X^g \neq \emptyset$ if and only if either $U^g \neq \emptyset$ or 
$Z^g \neq \emptyset$, the lemma follows. 
\end{proof}

\begin{prop}\label{prop:finite conjugacy class}
Let $G$ be a linear algebraic group acting properly on an algebraic space   
$X$. Then there exists a finite set of semi-simple conjugacy classes 
$\Sigma^G_X = \{\psi_1, \cdots , \psi_n \}$ in $G$ such that $X^g \neq 
\emptyset$ if and only if $g \in \psi_i$ for some $1 \leq i \leq n$.
\end{prop}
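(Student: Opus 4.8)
The plan is to reduce the statement to the torus case already handled in Lemma~\ref{lem:diagonalizablefiniteness}. First I would reduce to the case where $G$ is reductive: by the Levi decomposition $G = U \rtimes L$ with $U$ unipotent and $L$ reductive, a semi-simple element of $G$ is conjugate to one in $L$, and the conjugacy classes of semi-simple elements of $G$ correspond bijectively to those of $L$; moreover $X^g$ is nonempty for $g \in G$ iff $X^{g'}$ is nonempty for the semisimple part $g'$ (and the $L$-action on $X$ is still proper). So assume $G$ reductive.

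Next, fix a maximal torus $T \subset G$. Every semi-simple element of $G$ is conjugate to an element of $T$, so every semi-simple conjugacy class $\psi$ of $G$ meets $T$; and $X^g \neq \emptyset$ for $g \in \psi$ iff $X^t \neq \emptyset$ for some (equivalently, all $G$-conjugate, hence all $N_G(T)$-related) $t \in \psi \cap T$. Since $T$ acts properly on $X$ (the action being the restriction of a proper action), Lemma~\ref{lem:diagonalizablefiniteness} gives a finite set $\{t_1, \dots, t_m\} \subset T$ with $X^{t} \neq \emptyset$ iff $t = t_j$ for some $j$. Now let $\Sigma^G_X = \{C_G(t_1), \dots, C_G(t_m)\}$, discarding repetitions. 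If $g \in G$ is semi-simple with $X^g \neq \emptyset$, then $g$ is $G$-conjugate to some $t \in T$, and $X^t = \emptyset$ would force $X^g = \emptyset$ (conjugation by the relevant element carries $X^g$ isomorphically onto $X^t$); hence $t = t_j$ for some $j$, so $\psi = C_G(g) = C_G(t_j) \in \Sigma^G_X$. Conversely each $C_G(t_j)$ has $X^{t_j} \neq \emptyset$. This gives the claimed finite set. (One should also note that a general $g$ with $X^g \neq \emptyset$ has semisimple part $g_s$ with $X^g \subset X^{g_s}$ nonempty, so it suffices to track semi-simple classes, as stated.)

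The one point requiring a little care—and the main obstacle—is verifying that $T$ (hence each diagonalizable subgroup invoked) indeed acts \emph{properly} on $X$: properness of the $G$-action means $(\mathrm{id},\mu)\colon G \times X \to X \times X$ is proper, and one must check this is inherited by the closed subgroup $T$, i.e.\ that $T \times X \to X \times X$ is proper. This follows since $T \times X \hookrightarrow G \times X$ is a closed immersion and the composite $T \times X \to G \times X \to X \times X$ is then proper, being the composite of a closed immersion with a proper map; so $T$ acts properly. The remaining ingredients—that semisimple conjugacy classes of $G$ biject with those of a Levi, that every semi-simple class meets a fixed maximal torus, and that conjugation is an isomorphism carrying $X^g$ to $X^{gxg^{-1}}$—are standard facts about reductive groups (references as in \cite[\S2.1]{EG2}) and require no computation beyond bookkeeping.
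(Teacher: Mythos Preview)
Your argument is correct for connected $G$ and matches the paper's proof in that case. The gap is in the disconnected case. After reducing to a reductive Levi $L$, you assert that every semi-simple element of $L$ is conjugate to an element of a fixed maximal torus $T$. This holds only when $L$ is \emph{connected}: for instance, in $O_2(\C)$ the reflections are semi-simple but have determinant $-1$, so none is conjugate to an element of the maximal torus $SO_2(\C)$. Since the Levi of a disconnected $G$ is itself disconnected (the unipotent radical being connected), your reduction does not remove this obstruction, and the step ``every semi-simple element is conjugate into $T$'' simply fails.

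The paper handles the disconnected case by a different device: it embeds $G \hookrightarrow GL_n$, forms the induced space $Y = GL_n \times^G X$ with its proper $GL_n$-action (via \lemref{lem:Morita-stack*}), and applies the already-proven connected case to $GL_n$. One then checks, using \lemref{lem:basicclosedembedding}, that infinitely many semi-simple conjugacy classes $\psi_i$ in $G$ with $X^{g_i} \neq \emptyset$ would produce infinitely many semi-simple conjugacy classes in $GL_n$ with $Y^{h} \neq \emptyset$, a contradiction. A minor further point: rather than passing to the semi-simple part $g_s$ as in your final parenthetical, the paper observes directly that properness forces any $g$ with $X^g \neq \emptyset$ to have finite order (stabilizers are finite), hence to be semi-simple in characteristic zero; your remark about $X^g \subset X^{g_s}$ is therefore unnecessary.
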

\begin{proof}
If $G$ is a diagonalizable torus, then the proposition follows from 
\lemref{lem:diagonalizablefiniteness}. 
Let us assume that $G$ is  connected but not necessarily diagonalizable. 
Let $T$ be a fixed  maximal torus of $G$. Then the $T$-action on $X$
is also proper. 
Therefore, it follows from \lemref{lem:diagonalizablefiniteness} that
there are finitely many $g_1, \cdots , g_n \in T$ such that 
$X^{g_i} \neq \emptyset$.  As each 
$g_i \in T$, it is semi-simple and hence $\psi_i :=  C_{G}(g_i)$ is a 
semi-simple conjugacy class in $G$.  We let $\psi_i = C_{G}(g_i)$ and
$\Sigma^G_X = \{\psi_1, \cdots , \psi_n \}$. Note that $\Sigma^G_X$
is never empty because it always contains the conjugacy class
of the identity element.

Suppose now that $g \in G$ is such that $X^g \neq \emptyset$. The properness 
of the $G$-action implies that $g$ must be an element of 
finite order. In particular, $g$ must be a semi-simple element of $G$ and 
hence must belong to a maximal torus of $G$. Since all maximal tori of $G$ 
are conjugate (for example, see \cite[Theorem 6.4.1]{Springer}), 
there is $h \in T$ 
such that $g \in C_{G}(h)$. It is then clear that $X^h \neq \emptyset$
and hence $h \in \psi_i$ for some $1 \le i \le n$. But this implies that
$g \in \psi_i$. This proves the proposition when $G$ is connected.

If $G$ is not necessarily connected, then any element $g \in G$ such that 
$X^g \neq \emptyset$ may not lie in a maximal torus and so the above
proof breaks. In this case, we reduce to the previous case as follows.
We choose a closed embedding $G \subseteq GL_n$ and 
let $Y = X \times^G GL_n$, where $G$ acts on $X \times GL_n$ by 
$g(x, g') = (gx,g'g^{-1})$ and $GL_n$ acts on $Y$ which is
induced by the action $h(x,g') = (x , hg')$. 
It follows from \lemref{lem:Morita-stack} that $[X/G] \simeq [Y/GL_n]$.
Since $G$ acts properly on $X$, it follows from \thmref{thm:DM-1}
that $GL_n$ acts properly on $Y$.

We prove the proposition by contradiction. Suppose that
there are infinitely many semi-simple conjugacy classes 
$S = \{{\psi_i}\}$ in $G$ such that $X^g \neq \emptyset$ for $g \in \psi_i$. 
Let $g_i \in \psi_i$ be representatives of the conjugacy classes and  
set $\psi'_i = C_{GL_n}(g^{-1}_i)$. It follows from 
\lemref{lem:basicclosedembedding} that $S' =  \{\psi_i' | \psi_i \in S\}$
is an infinite set.
The proposition will now follow from the case of connected groups
if we can show that $Y^{g^{-1}} \neq \emptyset$ whenever $g \in \psi_i$
and $\psi_i \in S$. 
 
So fix any $\psi_i \in S$ and $g \in \psi_i$. Since $X^g \neq \emptyset$,
we choose a closed point $x \in X^g$ and let
$y$ denote the image of $(x, e)$ under the quotient map
$\pi: X \times GL_n \to Y$. We then have
\[
g^{-1}y = \pi(\{(hx, g^{-1}h^{-1})| h \in G\})   
=  \pi(\{(hgx, g^{-1}h^{-1})| h \in G\})  = 
\pi(\{(hx, h^{-1})| h \in G\}) = y,\]
where the second equality follows from the fact that $x \in X^g$.
It follows that $Y^{g^{-1}} \neq \emptyset$ and we are done.  
\end{proof}

\subsection{Support of equivariant $K$-theory for proper action}
\label{sec:Supp-pr}
\begin{defn}\label{defn:Support}
Given an algebraic space $X$ with proper action by an algebraic group $G$,
we let $\Sigma^G_X$ denote the set of semi-simple
conjugacy classes in $G$ such that $X^g \neq \emptyset$ if and only if 
$g \in \psi$ for some $\psi \in \Sigma^G_X$. 
It follows from Proposition \ref{prop:finite conjugacy class} that 
$\Sigma^G_X$ is a non-empty finite set.  We let $S_G$ denote the
set of semi-simple conjugacy classes in $G$. 
\end{defn}
The following two results are generalizations of
\cite[Proposition~5.1, Remark~5.1]{EG5} to higher $K$-theory.

\begin{lem}$($\cite[Remark~5.1]{EG5}$)$\label{lem:FiniteJDiagonalizable}
Let $G$ be a diagonalizable torus. Given any $G$-space $X$ 
with proper action, there is an ideal $J \subset R(G)$ such that $R(G)/
J$ has finite support containing $\Sigma_X^G$ and $JG_i(G,X) = 0$ for every 
$i \geq 0 $.
\end{lem}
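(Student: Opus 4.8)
The plan is to reduce the proper action of a diagonalizable torus $G$ on $X$ to a situation where a single character-type element of $R(G)$ annihilates $G_*(G,X)$, by stratifying $X$ and using Thomason's generic slice theorem together with Noetherian induction, in exactly the manner used in the proof of \lemref{lem:diagonalizablefiniteness}. First I would recall that $R(G)$ is the group algebra $\C[\widehat{G}]$ of the character group, which is Noetherian, and that its maximal ideals correspond to the (closed) points of $G$; since $\Sigma^G_X$ is finite by \propref{prop:finite conjugacy class}, the finite subset $\Sigma^G_X \subset G$ is a closed subscheme with radical ideal $I(\Sigma^G_X) \subset R(G)$, and $R(G)/I(\Sigma^G_X)$ has exactly the support $\Sigma^G_X$. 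The actual content is to produce an ideal $J$ with $JG_i(G,X)=0$ for all $i$; we will then simply replace $J$ by $J \cap I(\Sigma^G_X)$ at the end (or rather take $J' = J \cdot I(\Sigma^G_X)$, which still annihilates $G_*(G,X)$ since $J$ does, and whose quotient ring has finite support containing $\Sigma^G_X$). So the core claim is: there exists a nonzero ideal $J \subset R(G)$ with $J \cdot G_i(G,X) = 0$ for all $i \ge 0$.

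Next I would set up the stratification. By Thomason's generic slice theorem \cite[Proposition~4.10]{TO4}, applied on the largest scheme-locus open $V \subset X$, there is a $G$-invariant dense open $U \subset X$ and a subgroup $H \subset G$ acting trivially on $U$ with $F = G/H$ acting freely on $U$; properness forces $H$ to be finite. Then $G_i(G,U) \cong G_i(H, U/F)$ via the Morita-type identification for the free $F$-action (the quotient $U \to U/F$ is an $F$-torsor), and since $H$ acts trivially on $U/F$ we get $G_i(G,U) \cong G_i(U/F) \otimes_{\Z} R(H)$ as an $R(G)$-module, with $R(G)$ acting through $R(G) \to R(H)$. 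Now $R(H)$ is a finite-dimensional $\C$-algebra (as $H$ is finite), so $\ker(R(G) \to R(H))$ is an ideal $J_U$ with $J_U \cdot G_i(G,U) = 0$ for all $i$, and the support of $R(G)/J_U$ is the finite set of elements of $G$ that restrict into $H$ — in particular it contains all $g$ with $U^g \ne \emptyset$. For the closed complement $Z = X \setminus U$, which is a proper $G$-invariant closed subspace, Noetherian induction gives an ideal $J_Z$ with $J_Z \cdot G_i(G,Z) = 0$ for all $i$ and $R(G)/J_Z$ of finite support. The localization sequence $G_i(G,Z) \to G_i(G,X) \to G_i(G,U)$ then shows that $J_U \cdot J_Z$ annihilates $G_i(G,X)$ for every $i$: given $\xi \in G_i(G,X)$, its image in $G_i(G,U)$ is killed by $J_U$, so $J_U \xi$ lifts to $G_i(G,Z)$, which is killed by $J_Z$. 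Set $J = J_U J_Z \cap I(\Sigma^G_X)$ (or $J_U J_Z I(\Sigma^G_X)$); this is the desired ideal, and $R(G)/J$ has finite support containing $\Sigma^G_X$ since both $R(G)/(J_U J_Z)$ and $R(G)/I(\Sigma^G_X)$ do and $\Sigma^G_X \subseteq \Supp(R(G)/I(\Sigma^G_X))$.

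The main obstacle, and the step I would write most carefully, is the Morita-plus-triviality identification $G_i(G,U) \cong G_i(U/F) \otimes_\C R(H)$ as $R(G)$-modules and the verification that $R(G)$ acts through the finite quotient $R(H)$; here one must be careful that $U$ may only be an algebraic space, not a scheme, so the Morita isomorphism and the equivalence ${\sC oh}^G_U \simeq {\sC oh}^{H}_{U/F}$ (for the free $F$-action) must be invoked in the algebraic-space setting — but this is exactly the content of \lemref{lem:Morita-stack} combined with homotopy invariance as discussed in \secref{sec:Rep}. A minor point to check is that the localization long exact sequence for $G$-equivariant $K$-theory along the $G$-invariant closed immersion $Z \hookrightarrow X$ of algebraic spaces is available; this is standard (Thomason), and the product of annihilator ideals argument for a three-term exact sequence is routine. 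Everything else — Noetherianness of $R(G)$, finiteness of $\Sigma^G_X$, finiteness of $H$ — is either already in the excerpt or immediate.
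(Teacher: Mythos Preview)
Your overall architecture—Thomason's generic slice to produce a dense open $U$, Noetherian induction on $Z = X\setminus U$, and the localization sequence to multiply the two annihilating ideals—is exactly the paper's. The gap is in your treatment of $U$: the claim that $R(G)$ acts on $G_i(G,U)$ through the restriction $R(G)\to R(H)$, so that $J_U=\ker(R(G)\to R(H))$ already annihilates, is false. First, your Morita step ``$G_i(G,U)\cong G_i(H,U/F)$'' presumes that $F$ is a subgroup of $G$, i.e.\ that $1\to H\to G\to F\to 1$ splits, which it need not (e.g.\ $1\to\mu_n\to\G_m\to\G_m\to 1$); the paper instead invokes Thomason's change-of-groups isomorphism $G_i(G,U)\simeq R(G)\otimes_{R(F)}G_i(F,U)$. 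Second, and decisively, even when the extension splits the $R(F)$-factor of $R(G)$ acts on $G_i(F,U)\simeq G_i(U/F)$ through $R(F)\to K_0(U/F)$, and the augmentation ideal $\fm_{1_F}$ is only \emph{nilpotent} there, not zero. Concretely, let $G=\G_m$ act on $U=\A^2\setminus\{0\}$ through the squaring map to $F=\G_m$ acting with weights $(1,1)$: then $H=\mu_2$, $U/F=\P^1$, and the generator $t^2-1$ of $\ker(R(G)\to R(H))=\fm_{1_F}R(G)$ acts on $G_0(F,U)\simeq K_0(\P^1)$ as $[\sO(1)]-1\neq 0$, hence does not kill $G_0(G,U)\simeq R(G)\otimes_{R(F)}K_0(\P^1)$.

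The fix is precisely what the paper does: use nilpotence of the augmentation ideal of $K_0(U/F)$ to find $n\gg 0$ with $\fm^n_{1_F}G_i(U/F)=0$, and set $J_1=\fm^n_{1_F}R(G)$. Since $R(G)/\fm_{1_F}R(G)\simeq R(H)$, this ideal still has finite support $H\supseteq\Sigma^G_U$, and via $G_i(G,U)\simeq R(G)\otimes_{R(F)}G_i(U/F)$ it annihilates $G_i(G,U)$. With this correction your argument goes through verbatim; the extra intersection with $I(\Sigma^G_X)$ at the end is harmless but unnecessary, since the support of $J_1J_Z$ already contains $\Sigma^G_U\cup\Sigma^G_Z=\Sigma^G_X$.
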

\begin{proof}
By Thomason's generic slice theorem, there exists a non-empty $G$-invariant
smooth open dense subspace $U \subset X$ which is an affine scheme, and
there is a finite subgroup $H \subset G$ such that $F = G/H$ acts freely on 
$U$ with quotient algebraic space $U/G \simeq U/F$. As $F$ acts 
freely on $U$, we have $G_i(F,U) \simeq G_i(U/F)$. Since the augmentation ideal 
($\Ker(K_0(U/F) \to \C)$) of $K_0(U/F)$ is nilpotent and since each $G_i(U/F)$
is an $K_0(U/F)$-module, it follows that there exists an integer $n \gg 0$
(depending only on $K_0(U/F)$) 
such that $\fm^n_{1_F}G_i(U/F) = 0$  for all $i \ge 0$.

We have a short exact sequence of character groups
\[
0 \to T(F) \to T(G) \to T(H) \to 0
\]
and since the representation ring of a diagonalizable group over $\C$
is the group ring over its character group, it follows that
the map $R(G)/{\fm_{1_F}R(G)} \to R(H)$ is an isomorphism.
In particular, $J_1 := \fm^n_{1_F}R(G)$ is an ideal in $R(G) \simeq 
\C[t^{\pm 1}_1, \cdots , t^{\pm 1}_r]$ (where $r = {\rm rank}(G)$) whose
support is $H$.

Since the maps $R(G) \otimes_{R(F)} G_i(F, U) \xrightarrow{\simeq}
R(G) \otimes_{R(F)} G_i(U/F) \xrightarrow{\simeq} G_i(G, U)$
are isomorphisms by \cite[Lemma~5.6]{TO2}, we conclude that
$J_1G_i(G,U) = 0$ for all $i \ge 0$.
Note that as $F$ acts freely on $U$, we must have $\Sigma^G_X \subseteq H$.
So the lemma is proven for $U$.

We now finish the proof of the lemma using the Noetherian induction. 
This induction shows that there is an ideal 
$J_2 \subset R(G)$ whose support is a finite subset of $G$ containing 
$\Sigma_{X \setminus U}^G$ such that $J_2 G_i(G,X \setminus U) = 0$ for all 
$i \ge 0$. 
It follows that $J = J_1 J_2$ annihilates $G_i(G,X \setminus U)$ and 
$G_i(G,U)$ for all $i \ge 0$. 

The support of $J$ is a finite set containing $\Sigma_X^G$ 
as it is the union of $\Sigma_{X \setminus U}^G$ and $\Sigma_U^G$. 
All we are left to show is $J G_i(G,X) = 0$ for all $i \ge 0$.
But this follows immediately from the localization exact sequence 
$G_i(G,X \setminus U) \to G_i(G,X) \to G_i(G,U)$ 
(see \cite[Proposition~A.1]{AK1}). 
\end{proof}

\begin{lem}$($\cite[Remark~5.1]{EG5}$)$\label{lem:FiniteJGeneral}
Let $G$ be a linear algebraic group acting properly on an algebraic space $X$. 
Then there is an ideal $J \subset R(G)$ such that $R(G)/J$ has finite 
support containing $\Sigma_X^G$ and $JG_i(G,X)$ = 0 for every $i \geq 0$.
\end{lem}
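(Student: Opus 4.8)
The plan is to reduce the general case to the diagonalizable torus case already handled in \lemref{lem:FiniteJDiagonalizable}, using the structural tools established earlier in the section. First I would reduce to the case where $G$ is reductive: by the Levi decomposition $G = U \rtimes L$ and the homotopy invariance plus Morita isomorphism discussed in \S\ref{sec:Rep}, the restriction map gives a weak equivalence $G(G,X) \simeq G(L, U\times X)$, and under $R(G) \xrightarrow{\simeq} R(L)$ the problem for $G$ acting on $X$ becomes the problem for $L$ acting on $U \times X$ (which is still proper since $\Sigma$ is unaffected up to this identification). So assume $G$ is reductive.

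Next I would pick a maximal torus $T \subseteq G$. The $T$-action on $X$ is again proper, so \lemref{lem:FiniteJDiagonalizable} produces an ideal $J_T \subset R(T)$ with $R(T)/J_T$ of finite support containing $\Sigma^T_X$ and $J_T G_i(T,X) = 0$ for all $i\ge 0$. The key input now is that $R(G) \to R(T)$ is a finite ring homomorphism (this is the classical fact that $R(G) \xrightarrow{\simeq} R(T)^W$, or one can invoke \lemref{lem:basicclosedembedding}(1) applied to the closed embedding $T \hookrightarrow G$), so $R(T)$ is a finitely generated $R(G)$-module. The plan is to contract $J_T$ down to $R(G)$: set $J = J_T \cap R(G)$, or more robustly take $J$ to be the largest ideal of $R(G)$ contained in $J_T \cap R(G)$ together with the annihilator considerations below. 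Since $R(T)$ is finite over $R(G)$, the quotient $R(T)/J_T$ is a finite $R(G)$-module with finite support, hence $J \cdot (R(T)/J_T) = 0$ for a suitable ideal $J$ of finite support in $R(G)$; one checks the support of $R(G)/J$ is finite and, using Proposition~\ref{prop:finite conjugacy class} and the fact that the $T$-fixed loci detect the $G$-fixed loci (every element with nonempty fixed locus is semisimple, hence conjugate into $T$, as in the proof of Proposition~\ref{prop:finite conjugacy class}), that this support contains $\Sigma^G_X$.

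For the annihilation statement $J G_i(G,X) = 0$, the plan is to use that $G_i(G,X)$ is a module over $R(G)$ and that there is a natural map $G_i(G,X) \to G_i(T,X)$ (restriction of equivariant sheaves) which is $R(G)$-linear via $R(G) \to R(T)$. A cleaner route: $G_i(T,X)$ is an $R(T)$-module annihilated by $J_T$, hence as an $R(G)$-module it is annihilated by $J = \mathrm{Ann}_{R(G)}(R(T)/J_T)$; and one shows $G_i(G,X)$ embeds into, or at least its $R(G)$-module structure is controlled by, $G_i(T,X)$ — for instance via the projection formula / induction, or simply by the observation that after localizing at any maximal ideal $\fm_\psi \notin \mathrm{Supp}(R(G)/J)$ the module $G_i(T,X)_{\fm_\psi} = 0$ forces $G_i(G,X)_{\fm_\psi} = 0$ because $R(G) \to R(T)$ is finite and injective-on-spectra so $\mathrm{Supp}_{R(G)} G_i(G,X) \subseteq$ image of $\mathrm{Supp}_{R(T)} G_i(T,X)$. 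Since $R(G)$ is Noetherian, $G_i(G,X)$ with finite support is annihilated by a power of the radical ideal cutting out that support, giving the desired $J$.

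The main obstacle I anticipate is making precise the comparison between the $R(G)$-support of $G_i(G,X)$ and the $R(T)$-support of $G_i(T,X)$ — one needs that restriction to $T$ does not lose support information. The honest way to handle this is the classical fact (going back to Atiyah–Segal / Thomason) that $G_i(G,X)$ is a direct summand of $G_i(T,X)$ as an $R(G)$-module when $G$ is reductive and connected (via averaging over the Weyl group, since we are in characteristic zero with $\C$-coefficients), which immediately gives $\mathrm{Supp}_{R(G)} G_i(G,X) \subseteq \mathrm{Supp}_{R(G)} G_i(T,X) = \pi(\mathrm{Supp}_{R(T)} G_i(T,X))$ where $\pi: \mathrm{Spec}\, R(T) \to \mathrm{Spec}\, R(G)$ is the finite map. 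For non-connected $G$ one first passes to $G^0$ or uses the $GL_n$-trick from the proof of Proposition~\ref{prop:finite conjugacy class}. Once support-finiteness is in hand, Noetherianity of $R(G)$ finishes the argument, with $J$ any ideal (e.g. a power of the radical of the defining ideal of the finite support) achieving $J G_i(G,X) = 0$ for all $i\ge 0$ simultaneously — here one uses that there are only finitely many nonzero $G_i$ in any bounded range is not needed, rather that the support is the same finite set for all $i$ and a single $n$ works by the module structure over the Noetherian ring $R(G)$ localized at that finite support (an Artinian ring).
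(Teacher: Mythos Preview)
Your overall strategy---reduce to the torus case via the split monomorphism $G_i(G,X) \hookrightarrow G_i(T,X)$ of $R(G)$-modules---is exactly what the paper does, and your construction of $J = J_T \cap R(G)$ together with the finiteness of $R(G) \to R(T)$ matches the paper's argument verbatim for that step.

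The difference is in how you reach a group to which this applies. You first reduce to a Levi $L$ and then try to run the torus argument for reductive $L$. The paper instead never invokes the Levi decomposition: it proves the statement only for $GL_n$ using the split monomorphism (citing \cite[(1.9)]{TO4}), and then for arbitrary $G$ it chooses a closed embedding $G \hookrightarrow GL_n$, sets $Y = X \times^G GL_n$, and transports the result from $GL_n$ acting on $Y$ back to $G$ acting on $X$ via the Morita isomorphism $G_*(GL_n,Y) \simeq G_*(G,X)$, checking that the $R(GL_n)$-module structure on $G_*(GL_n,Y)$ agrees with the restriction of the $R(G)$-structure. This bypasses the issue you flagged yourself: your Levi quotient $L$ need not be connected, so the Weyl-group splitting is not immediately available, and you fall back on the $GL_n$-trick anyway. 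The paper's route is thus more direct and avoids your Levi step entirely. A minor slip: after the Levi reduction the relevant action is $L$ on $X$, not on $U \times X$; the homotopy-invariance step is $G(G,X) \simeq G(G, U\times X)$ and then Morita gives $G(G, U\times X) \simeq G(L,X)$.
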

\begin{proof}
If $G$ is a torus it follows from \lemref{lem:FiniteJDiagonalizable}. 
We now assume $G = GL_n$ and let $T$ be a maximal torus of $G$. 
It follows from \lemref{lem:FiniteJDiagonalizable} that there is an ideal 
$I \subset R(T)$ which satisfies the claim of the lemma for the 
$T$-action on $X$. 

Since $R(G) \inj R(T)$ is a finite map, it follows that 
$J = I \cap R(G)$ is an ideal $R(G)$ with finite  support. As
shown in Proposition~\ref{prop:finite conjugacy class},
the image of $\Sigma^T_X$ is $\Sigma_X^G$ under the map $\Spec(R(T)) \to 
\Spec(R(G))$. 
The split monomorphism of $R(G)$ modules $G_i(G,X) \inj G_i(T,X)$, 
induced by the inclusion $T \subset G$ (see \cite[(1.9)]{TO4}),
now shows that $JG_i(G,X) = 0$ for all $i \ge 0$. This proves the lemma 
for $GL_n$. 

To prove the general case, we embed $G \inj GL_n$ and set 
$Y = X \times^G GL_n$, where the $G$ and $GL_n$ actions on $X \times GL_n$ are
described in the proof of Proposition~\ref{prop:finite conjugacy class}. 
Let 
\[
p: X \times GL_n \to X, q:GL_n \to pt, \pi_G: {GL_n}/G \to pt, 
\pi_X : X \to pt, \ r: X \times GL_n \to GL_n
\]
be the obvious projection maps. Then we see that
for any $G$-representation $V$, one has $q^*(V) = V \stackrel{G}{\times} GL_n$.
In particular, for any $GL_n$-representation $W$, we get
\[
q^* \circ {\rm res}(W) = W \stackrel{G}{\times} GL_n = W \times ({GL_n}/G)
= \pi^*_G(W).
\]
In particular, the left triangle in the diagram
\begin{equation}\label{eqn:decom-Gen1}
\xymatrix@C2pc{
R(GL_n) \ar[r]^{\rm res} \ar[rd]_{\pi^*_G} & R(G) \ar[d]^{q^*}
\ar[r]^{\pi^*_X} & K_0(G,X) \ar[d]^{p^*} \\
& K_0(GL_n,{GL_n}/G) \ar[r]_>>>>{r^*} & K_0(GL_n,Y)}
\end{equation}
commutes. Since the right square commutes by the functoriality of the 
Morita isomorphism (see \S\ref{Sec:Morita Isomorphism}), we see that 
the $R(GL_n)$-module structure on $G^{GL_n}_*(Y)$ is same as the restriction
of its $R(G)$-module structure acquired via the Morita isomorphism
$p^*: G_*(GL_n,Y) \xrightarrow{\simeq} G_*(G,X)$. 
Using this compatibility and the proof of the lemma for $GL_n$,
we conclude that there is an ideal $I \subset R(GL_n)$ with finite support
containing $\Sigma^{GL_n}_Y$ such that $IG_*(GL_n, Y) = JG_*(G,X) = 0$,
where $J:=  IR(G) \subset R(G)$.

Since $R(GL_n) \to R(G)$ is finite by \lemref{lem:basicclosedembedding}, 
we see that $J$ has finite support containing the 
inverse image of $\Sigma^{GL_n}_Y$. Moreover, we have shown in the proof 
of Proposition~\ref{prop:finite conjugacy class}
that $\Sigma^G_X$ is contained in the inverse image of $\Sigma^{GL_n}_Y$.
This finishes the proof.
\end{proof}

Let $G$ act properly on an algebraic space $X$. For any semi-simple
conjugacy class $\psi \subset G$, let $j^{\psi}:X_{\psi} \inj X$ denote the
image of the map $\phi: I^{\psi}_X \to X$ (see ~\eqref{diag:Inertiadefn}).
We know from \lemref{lem:torsor} that $X_{\psi} \subset X$ is a closed
$G$-invariant closed subspace. 
We shall repeatedly use the following localization theorem in the rest of the 
text.

\begin{thm}$($\cite[Theorem 3.3]{EG2}$)$\label{thm:closedimmiso}
Let $G$ be a linear algebraic group acting properly on an algebraic space $X$. 
Then for any semi-simple conjugacy class $\psi \in G$ and $i \ge 0$, 
the push-forward map $j^{\psi}_* : G_i(G,X_{\psi}) \rightarrow G_i(G,X)$ 
induces  an isomorphism of $R(G)$-modules
\begin{equation}
G_i(G,X_{\psi})_{\fm_{\psi}} \rightarrow  G_i(G,X)_{\fm_{\psi}}.
\end{equation}
\end{thm}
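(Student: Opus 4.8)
The plan is to reduce the general case to the case $G = GL_n$, where the structure of $X_\psi$ and the semi-simplicity of $\psi$ can be exploited via the torsor description of $I^\psi_X$ from \lemref{lem:torsor}. The starting point is the observation that after localizing at $\fm_\psi$, the $K$-theory $G_i(G,X)$ "sees only" the fixed-point locus $X^g$ for $g \in \psi$: the complement $X \setminus X_\psi$ is a $G$-invariant open subspace on which no point has a stabilizer meeting $\psi$, so by \lemref{lem:FiniteJGeneral} there is an ideal $J \subset R(G)$ with $\psi \notin \Supp(R(G)/J)$ that annihilates $G_i(G, X \setminus X_\psi)$ for all $i$; hence $G_i(G, X\setminus X_\psi)_{\fm_\psi} = 0$. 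Feeding this into the localization exact sequence
\begin{equation}
G_i(G, X_\psi) \xrightarrow{j^\psi_*} G_i(G, X) \to G_i(G, X\setminus X_\psi)
\end{equation}
(from \cite[Proposition~A.1]{AK1}) and using exactness of localization, the term on the right vanishes after localizing at $\fm_\psi$, which immediately gives surjectivity of $(j^\psi_*)_{\fm_\psi}$; injectivity requires the analogous vanishing one step to the left, namely $G_{i+1}(G, X\setminus X_\psi)_{\fm_\psi} = 0$, which is the same statement applied in degree $i+1$. So in fact the whole localization sequence, localized at $\fm_\psi$, degenerates to isomorphisms $G_i(G, X_\psi)_{\fm_\psi} \xrightarrow{\simeq} G_i(G, X)_{\fm_\psi}$ for all $i \ge 0$.

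The key input that still needs justification is precisely the claim that $\fm_\psi \notin \Supp(G_*(G, X\setminus X_\psi))$, equivalently that $(G_*(G, X\setminus X_\psi))_{\fm_\psi} = 0$. By \lemref{lem:FiniteJGeneral} applied to the proper $G$-action on the open subspace $U := X \setminus X_\psi$, there is an ideal $J \subset R(G)$ with $R(G)/J$ of finite support containing $\Sigma^G_U$ and $J \cdot G_i(G,U) = 0$ for all $i$. It therefore suffices to check $\psi \notin \Sigma^G_U$, i.e. that $U^g = \emptyset$ for every $g \in \psi$. But if $g \in \psi$ and $x \in U^g$, then $(g,x) \in I^\psi_X$ and $\phi(g,x) = x \in X_\psi$ by definition of $X_\psi$ as $\phi(I^\psi_X)$ (using \lemref{lem:torsor}(3), where $X_\psi = GX^g$ is identified with the image of $\mu^\psi$), contradicting $x \in U = X \setminus X_\psi$. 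Hence $U^g = \emptyset$, so $\psi \notin \Supp(R(G)/J)$, so $J \not\subset \fm_\psi$, so $(G_i(G,U))_{\fm_\psi} = 0$ as the module is killed by an element of $R(G)\setminus\fm_\psi$.

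Finally one should remark that the map $j^\psi_*$ is $R(G)$-linear because pushforward along a $G$-equivariant proper map is a map of $K_0(G,\text{pt}) = R(G)$-modules (projection formula), so localization at $\fm_\psi$ makes sense and the displayed map is well-defined. The argument then assembles as: (i) invoke \lemref{lem:FiniteJGeneral} for $U = X\setminus X_\psi$; (ii) use \lemref{lem:torsor}(1),(3) to identify $X_\psi$ with the closed image of $\phi|_{I^\psi_X}$ and conclude $U^g = \emptyset$ for $g \in \psi$; (iii) deduce $\psi \notin \Sigma^G_U$ and hence $(G_*(G,U))_{\fm_\psi}=0$; (iv) apply the localization exact sequence and the exactness of localization to conclude both injectivity and surjectivity of $(j^\psi_*)_{\fm_\psi}$. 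The only place where any real care is needed is step (ii)–(iii): one must be sure that "$\psi$ avoids all stabilizers on $U$" is exactly the geometric content of $U = X \setminus X_\psi$, which is where the precise definition of $X_\psi$ via the inertia space and \lemref{lem:torsor} is essential; the rest is formal homological algebra.
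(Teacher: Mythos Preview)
The paper does not prove this result itself; it cites \cite[Theorem~3.3]{EG2} and only remarks that \lemref{lem:torsor} lets one replace the closure of $GX^g$ (used in \cite{EG2}) by $X_\psi = GX^g$ directly. Your overall strategy---the $R(G)$-linear localization sequence for the pair $(X_\psi, X)$ together with the vanishing $G_i(G,U)_{\fm_\psi} = 0$ for $U = X \setminus X_\psi$---is the right one and is how such concentration theorems are proved.

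There is, however, a genuine gap in your vanishing step. You correctly show $U^g = \emptyset$ for $g \in \psi$, hence $\psi \notin \Sigma^G_U$; you then invoke \lemref{lem:FiniteJGeneral} to obtain $J$ with $J\,G_i(G,U) = 0$ and $\Supp(R(G)/J)$ a finite set \emph{containing} $\Sigma^G_U$, and write ``so $\psi \notin \Supp(R(G)/J)$''. That inference is invalid: the containment in \lemref{lem:FiniteJGeneral} runs the wrong direction, and nothing there prevents $\fm_\psi$ from lying in $\Supp(R(G)/J) \setminus \Sigma^G_U$. What you actually need is $\Supp_{R(G)} G_i(G,U) \subseteq \Sigma^G_U$, but observe that this is essentially the theorem itself applied to $U$ (since then $U_\psi = \emptyset$), so you cannot get it for free. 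One can close the gap by sharpening \lemref{lem:FiniteJDiagonalizable} and \lemref{lem:FiniteJGeneral} so that $\Supp(R(G)/J)$ is \emph{equal} to $\Sigma^G_U$: in the torus case this works because on Thomason's generic slice the subgroup $H$ coincides with $\Sigma^G_U$, and one carries equality through the Noetherian induction and then through the $GL_n$ and general-$G$ reductions. But that sharpening is real additional work and is precisely where the substance of the argument in \cite{EG2} lies; your write-up treats it as automatic. (Incidentally, your opening sentence announces a reduction to $GL_n$ that the body of the argument never carries out.)
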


In \cite{EG2}, $X_{\psi}$ is defined as the closure of $GX^g$ in $X$.
However, when $G$ acts with finite inertia (e.g., when the
action is proper), then $GX^g$ is already closed
(see \lemref{lem:torsor}).

The following is a direct consequence of \cite[Proposition~3.6]{EG2},
\lemref{lem:torsor} and \thmref{thm:closedimmiso}.

\begin{prop}\label{prop:direcsumdecomp}
Let $G$ be a linear algebraic group acting properly on an algebraic space $X$. 
Then for every $i \geq 0$, the natural map 
\begin{equation}
G_i(G,X) \rightarrow \oplus_{\psi \in S_G} G_i(G,X)_{\mathfrak{m}
_{\psi}} = \oplus_{{\psi \in \Sigma_X^G}} G_i(G,X)_{\mathfrak{m}
_{\psi}}
\end{equation}
is an isomorphism of $R(G)$-modules.
\end{prop}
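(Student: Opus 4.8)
The plan is to combine the localization isomorphism of \thmref{thm:closedimmiso} with the finiteness result of \lemref{lem:FiniteJGeneral}, exactly as one proves the classical statement that a module over a Noetherian ring killed by an ideal of finite support decomposes as the direct sum of its localizations at the relevant maximal ideals. First I would invoke \lemref{lem:FiniteJGeneral} to produce an ideal $J \subset R(G)$ such that $R(G)/J$ has finite support and $J G_i(G,X) = 0$ for all $i \ge 0$. Thus $M := G_i(G,X)$ is naturally a module over the Artinian quotient ring $A := R(G)/J$. Since $A$ is Artinian, it splits as a finite product $A \cong \prod_{\fm} A_{\fm}$ over its (finitely many) maximal ideals $\fm$, and correspondingly $M \cong \bigoplus_{\fm} M_{\fm}$, where the localization of $M$ at a maximal ideal $\fm$ of $R(G)$ containing $J$ agrees with its localization as an $A$-module. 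The maximal ideals of $R(G)$ containing $J$ are, by \propref{prop:conjugacyclassandmaximalideal}, exactly the $\fm_\psi$ for $\psi$ in the (finite) support of $R(G)/J$, which includes $\Sigma^G_X$. This already gives an isomorphism $G_i(G,X) \xrightarrow{\simeq} \bigoplus_{\psi} G_i(G,X)_{\fm_\psi}$, where $\psi$ ranges over this finite set of conjugacy classes.

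The remaining point is to identify this finite index set with $\Sigma^G_X$, i.e.\ to show that $G_i(G,X)_{\fm_\psi} = 0$ whenever $\psi \notin \Sigma^G_X$, so that the sum over all $\psi \in S_G$ (which is a priori infinite) collapses to the sum over $\Sigma^G_X$. Here I would use \thmref{thm:closedimmiso}: for any semi-simple conjugacy class $\psi$, localization at $\fm_\psi$ identifies $G_i(G,X)_{\fm_\psi}$ with $G_i(G,X_\psi)_{\fm_\psi}$, where $X_\psi = GX^g$ is the image of $\phi: I^\psi_X \to X$ (cf.\ \lemref{lem:torsor}). If $\psi \notin \Sigma^G_X$, then by the defining property of $\Sigma^G_X$ (Definition~\ref{defn:Support}) we have $X^g = \emptyset$ for every $g \in \psi$, hence $X_\psi = \emptyset$ and $G_i(G,X_\psi) = 0$; therefore $G_i(G,X)_{\fm_\psi} = 0$. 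Combining, the summands indexed by $\psi \notin \Sigma^G_X$ vanish, and the remaining summands are exactly those with $\psi \in \Sigma^G_X$. This also confirms the equality $\bigoplus_{\psi \in S_G} G_i(G,X)_{\fm_\psi} = \bigoplus_{\psi \in \Sigma^G_X} G_i(G,X)_{\fm_\psi}$ asserted in the statement.

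The step requiring the most care is the passage from ``$J$ annihilates $M$ and $R(G)/J$ is Artinian'' to the direct sum decomposition $M \cong \bigoplus_{\fm \supset J} M_{\fm}$ — specifically checking that the localization of $M$ as an $R(G)$-module at $\fm_\psi$ really coincides with its localization at the corresponding idempotent component of $A = R(G)/J$, and that no contribution is lost at maximal ideals of $R(G)$ not containing $J$ (for such $\fm$, $M_\fm = 0$ since $J \not\subset \fm$ means $J$ contains a unit after localization, so $M_\fm = J M_\fm = 0$). All of this is routine commutative algebra once the two cited inputs are in place, so I do not anticipate a genuine obstacle; the real content is entirely carried by \lemref{lem:FiniteJGeneral} and \thmref{thm:closedimmiso}, both of which are available. (Indeed the excerpt itself remarks that the proposition ``is a direct consequence'' of these together with \cite[Proposition~3.6]{EG2}, which presumably packages exactly this commutative-algebra bookkeeping.)
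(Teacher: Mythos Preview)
Your proposal is correct and follows essentially the same route as the paper. The paper simply cites \cite[Proposition~3.6]{EG2} together with \lemref{lem:torsor} and \thmref{thm:closedimmiso}; you have unpacked the first of these via \lemref{lem:FiniteJGeneral} and the standard Artinian decomposition, which is precisely the commutative-algebra content that \cite[Proposition~3.6]{EG2} records.
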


\begin{prop}\label{prop:decom-surj}
Let $G$ act properly on an algebraic space $X$. The following hold.
\begin{enumerate}
\item
Given a closed subgroup $H \subset G$ and a semi-simple conjugacy class
$\psi \subset G$ with $\psi \cap H = \{\psi_1, \cdots , \psi_r\}$,
we have 
\begin{equation}\label{eqn:decom-inj}
G_*(H, X)_{\fm_{\psi}} \xrightarrow{\simeq} 
\stackrel{r}{\underset{i = 1}\oplus} G_*(H, X)_{\fm_{\psi_i}},
\end{equation}
where the localization on the left and the right sides are with respect to the
$R(G)$-module (via the map $R(G) \to R(H)$) 
and $R(H)$-module structures on $G_*(H,X)$, respectively.
The map in ~\eqref{eqn:decom-inj} is product of various localizations.
\item 
Given an epimorphism  $u:G \surj F$, the map $R(F) \to R(G)$
induces an isomorphism of $R(F)$-modules
\begin{equation}\label{eqn:decom-surj-0}
u_*: G_*(G,X) \xrightarrow{\simeq} \oplus_{\phi \in S_F} 
G_*(G, X)_{\fm_{\phi}}.
\end{equation}
\end{enumerate}
\end{prop}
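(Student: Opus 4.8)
The plan is to reduce both statements to elementary commutative algebra, using as the key input \lemref{lem:FiniteJGeneral}: since the relevant actions are proper, $G_*(H,X)$ is a module over a finite-dimensional (hence Artinian) quotient $\C$-algebra $R(H)/J_H$ of $R(H)$, and similarly $G_*(G,X)$ is a module over an Artinian quotient $R(G)/J$ of $R(G)$. In particular, for every semi-simple conjugacy class the corresponding maximal ideal acts nilpotently on whatever summand it ``cuts out''. Combined with \propref{prop:direcsumdecomp}, this turns the two asserted decompositions into a bookkeeping exercise about regrouping localizations.

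For part (1), I would start from the finite decomposition $G_*(H,X)\cong\bigoplus_{\phi\in\Sigma^H_X}G_*(H,X)_{\fm_\phi}$ of \propref{prop:direcsumdecomp} (into $R(H)$-submodules), and localize it at the maximal ideal $\fm_\psi\subset R(G)$ along the ring map $R(G)\to R(H)$ of \lemref{lem:basicclosedembedding}(1). For $\phi\in\Sigma^H_X$, let $c(\phi)\subset G$ be the semi-simple conjugacy class of any element of $\phi$, so that $\fm_\phi\cap R(G)=\fm_{c(\phi)}$ (by the definition of these maximal ideals via characters, or by \lemref{lem:basicclosedembedding}(2)--(3)). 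The computation to carry out is: $\bigl(G_*(H,X)_{\fm_\phi}\bigr)_{\fm_\psi}$ equals $G_*(H,X)_{\fm_\phi}$ if $c(\phi)=\psi$ and is $0$ otherwise. In the first case $R(G)\setminus\fm_\psi$ already maps into $R(H)\setminus\fm_\phi$ and so acts invertibly; in the second case one picks $a\in\fm_{c(\phi)}\setminus\fm_\psi$, whose image acts nilpotently on $G_*(H,X)_{\fm_\phi}$ (by \lemref{lem:FiniteJGeneral}) while becoming a unit after inverting $R(G)\setminus\fm_\psi$, forcing the module to vanish. Since $\{\phi\in S_H:c(\phi)=\psi\}$ is exactly the set of conjugacy classes making up $\psi\cap H=\{\psi_1,\dots,\psi_r\}$ (the $\psi_i\notin\Sigma^H_X$ simply contributing a zero summand on both sides), summing over $\phi$ gives $G_*(H,X)_{\fm_\psi}\cong\bigoplus_{i=1}^r G_*(H,X)_{\fm_{\psi_i}}$, and by construction the $i$-th component is the further localization at $\fm_{\psi_i}$; this is \eqref{eqn:decom-inj}.

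Part (2) is a variant of the same argument. The $R(F)$-module structure on $G_*(G,X)$ factors, via $R(F)\to R(G)$, through the image $B$ of $R(F)$ in the Artinian ring $R(G)/J$; as $B$ is a finite-dimensional $\C$-algebra it is a finite product of Artinian local rings, and the resulting idempotent decomposition of $G_*(G,X)$ is precisely its decomposition as the direct sum of its localizations at the (finitely many) maximal ideals of $R(F)$ containing $\Ker(R(F)\to B)$; by \propref{prop:conjugacyclassandmaximalideal} these are of the form $\fm_\phi$ with $\phi$ in a finite subset of $S_F$, while for every other $\phi\in S_F$ some element of $\Ker(R(F)\to B)$ lies outside $\fm_\phi$, so that $G_*(G,X)_{\fm_\phi}=0$. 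Hence the canonical map $u_*\colon G_*(G,X)\to\bigoplus_{\phi\in S_F}G_*(G,X)_{\fm_\phi}$ is the asserted isomorphism; equivalently, one may simply regroup the summands of the $R(G)$-decomposition of \propref{prop:direcsumdecomp} using the rule $u_*^{-1}(\fm_\psi)=\fm_{u(\psi)}$ of \propref{prop:grp-hom}. The one genuinely substantive point is the compatibility computation in part (1) --- that localizing a summand on which $\fm_\phi$ acts nilpotently at a maximal ideal of $R(G)$ either preserves it or annihilates it according to whether $\fm_\phi$ lies over that ideal --- together with matching $\{\phi:c(\phi)=\psi\}$ against the list $\psi\cap H$ from \lemref{lem:basicclosedembedding}; the remaining steps, and the finiteness of all the direct sums (guaranteed again by \lemref{lem:FiniteJGeneral}), are routine.
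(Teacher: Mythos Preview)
Your argument is correct and matches the paper's proof in all essential respects: both parts rest on \propref{prop:direcsumdecomp}, \lemref{lem:FiniteJGeneral} for nilpotence, and \lemref{lem:basicclosedembedding}/\propref{prop:grp-hom} for the bookkeeping of conjugacy classes. The only cosmetic difference is that for part~(1) the paper invokes \lemref{lem:basicclosedembedding}(4) directly (that $R(H)_{\fm_\psi}$ is semi-local with maximal ideals $\fm_{\psi_1},\dots,\fm_{\psi_r}$) rather than computing each $(G_*(H,X)_{\fm_\phi})_{\fm_\psi}$ by hand as you do, but this is the same computation packaged differently.
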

\begin{proof}
The first part follows directly from \propref{prop:direcsumdecomp} 
in combination with \lemref{lem:basicclosedembedding},
which says that $R(H)_{\fm_{\psi}}$ is the semi-local ring with 
maximal ideals $\{\fm_{\psi_1}, \cdots , \fm_{\psi_r}\}$.

To prove (2), we fix a $\phi \in S_F$ and consider the commutative diagram of 
$R(F)$-modules
\begin{equation}\label{eqn:decom-surj-1}
\xymatrix@C1pc{
G_*(G,X) \ar[r] \ar[d] & \oplus_{\psi \in S_G} G_*(G,X)_{\fm_{\psi}} \ar[d] \\
G_*(G,X)_{\fm_{\phi}} \ar[r]   & 
(\oplus_{\psi \in S_G} G_*(G,X)_{\fm_{\psi}})_{\fm_{\phi}},}
\end{equation}
where the bottom row is the localization of the top row at the maximal ideal 
$\fm_{\phi}$ of $R(F)$.
We now show that the bottom row in ~\eqref{eqn:decom-surj-1} 
induces an isomorphism
\begin{equation}\label{eqn:decom-surj-2}
G_*(G,X)_{\fm_{\phi}} \xrightarrow{\simeq} \oplus_{u(\psi) = \phi} 
G_*(G,X)_{\fm_{\psi}}.
\end{equation}

Suppose first $\psi \in S_G$ is such that $u(\psi) = \phi$.
Then $\fm_{\phi} = u^{-1}_*(\fm_{\psi})$ by \propref{prop:grp-hom},
and therefore the map $G_*(G,X) \to G_*(G,X)_{\fm_{\psi}}$ factors through 
$G_i(G,X)_{\fm_{\phi}}$. This implies in particular that
$(G_*(G,X)_{\fm_{\psi}})_{\fm_{\phi}} = G_*(G,X)_{\fm_{\phi}}$.

Suppose next that $u(\psi) \neq \phi$. To show that 
$(G_*(G,X)_{\fm_{\psi}})_{\fm_{\phi}} = 0$, we can assume $\psi \in \Sigma^G_X$.
Since $u(\psi) \neq \phi$,  \propref{prop:grp-hom}
says that there exists $a \in \fm_{\psi} \setminus \fm_{\phi}$ , which 
must act invertibly on $G_*(G, X)_{\fm_{\phi}}$ and hence
on $(G_*(G,X)_{\fm_{\psi}})_{\fm_{\phi}} = (G_*(G,X)_{\fm_{\phi}})_{\fm_{\psi}}$.
On the other hand, Lemma~\ref{lem:FiniteJGeneral} says that there is an ideal 
$J \subset \fm_{\psi} \subset R(G)$ such that $JG_*(G,X) = 0$ and 
$J$ has finite support. Hence, $\fm_{\psi}$ acts nilpotently on 
$G_*(G,X)_{\fm_{\psi}}$ and hence on $(G_*(G,X)_{\fm_{\psi}})_{\fm_{\phi}}$.
But this implies that this module must be zero.
\end{proof}

\subsection{The functor of invariants}\label{sec:Inv}
Let $G$ be a linear algebraic group acting properly on an algebraic space 
$X$. Let $p: X \to X/G$ be the quotient map. 
For a $G$-equivariant coherent sheaf $\sF$ on $X$, one knows
that $p_*(\sF)$ is a coherent sheaf on $X/G$ with $G$-action.
Moreover, the subsheaf of $G$-invariant sections 
$(p_*(\sF))^G \subset p_*(\sF)$ is a coherent sheaf on $X/G$. It is shown
in \cite[Lemma~6.2]{EG2} that this is an exact functor.
Our goal here is to prove a generalization of this construction
in the setting of higher $K$-theory. We prove the following.

\begin{thm}\label{thm:invariants}
Let $H \trianglelefteq G$ be a normal subgroup with quotient $F$.
Let $G$ act properly on an algebraic space $X$ and let $W = X/H$ be
the quotient for the $H$-action.
Then, there is a functor of `$H$-invariants' which induces an $R(F)$-linear map 
\begin{equation}\label{eqn:invarinats-0}
{\rm Inv}^H_X : G_*(G,X) \rightarrow G_*(F,W).
\end{equation}
Given a $G$-equivariant proper map $u: X' \rightarrow X$ with $W' = {X'}/H$,
there is a commutative diagram 
\begin{equation}\label{eqn:invariants-1}
\xymatrix@C1pc{
G_*(G,X') \ar[r]^{u_*} \ar[d]_{{\rm Inv}^H_{X'}} & G_*(G,X) \ar[d]^{{\rm Inv}^H_X} \\
G_*(F,W') \ar[r]_{\bar{u}_*} & G_*(F,W).}
\end{equation}
\end{thm}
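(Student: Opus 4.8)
The plan is to construct $\mathrm{Inv}^H_X$ by assembling three operations: descent to the $H$-quotient algebraic space, the Edidin–Graham invariants functor, and the resulting $F$-equivariant structure. More precisely, since $H\subset G$ is normal with quotient $F$, a $G$-equivariant coherent sheaf $\sF$ on $X$ pushes forward along $q\colon X\to W=X/H$ to a coherent sheaf $q_*\sF$ on $W$ that carries a $G$-action; because $H$ acts trivially on $W$, this $G$-action factors through $F$. Taking $H$-invariant sections $(q_*\sF)^H$ then produces an $F$-equivariant coherent sheaf on $W$. By \cite[Lemma~6.2]{EG2} (applied to the proper $H$-action, which is proper since the $G$-action is), the functor $\sF\mapsto (q_*\sF)^H$ from ${\sC oh}^G_X={\sC oh}^H_X$-with-residual-$F$-structure to ${\sC oh}^F_W$ is exact. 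First I would check carefully that this functor is exact as a functor of $F$-equivariant sheaves — the exactness statement in \cite{EG2} is about $H$-invariants as sheaves on $W$, and one must verify that the $F$-equivariant refinement does not disturb exactness, which is immediate since the forgetful functor ${\sC oh}^F_W\to{\sC oh}_W$ is exact and faithful. Exactness then yields a map of $K$-theory spectra $G(G,X)\to G(F,W)$ by Quillen's theorem, hence the map $\mathrm{Inv}^H_X$ on homotopy groups.

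Next I would establish $R(F)$-linearity. The module structure on $G_*(G,X)$ relevant here is the restriction along $R(F)\to R(G)$ (pullback of representations along $G\surj F$) of the natural $R(G)$-action, and on $G_*(F,W)$ it is the natural $R(F)=K_0(F,\Spec\C)$-action. For an $F$-representation $V$, pulled back to a $G$-representation $V_G$ on $X$, one has $(q_*(\sF\otimes V_G))^H\cong (q_*\sF)^H\otimes V$ as $F$-equivariant sheaves on $W$, since $V$ is a trivial bundle and the $H$-action on its fibers is trivial. This natural isomorphism of exact functors gives the projection-formula compatibility that translates, via the multiplicativity of Quillen $K$-theory, into $R(F)$-linearity of $\mathrm{Inv}^H_X$. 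I would phrase this at the level of biexact pairings so that it passes to spectra cleanly.

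For the commutative square \eqref{eqn:invariants-1}, let $u\colon X'\to X$ be $G$-equivariant and proper, and let $\bar u\colon W'\to W$ be the induced map on $H$-quotients, which is $F$-equivariant and proper (properness of $\bar u$ follows by descent from properness of $u$, or from \lemref{lem:properpushforward}-type reasoning applied to the $H$-quotients). The point is to compare $\bar u_*\circ\mathrm{Inv}^H_{X'}$ with $\mathrm{Inv}^H_X\circ u_*$ on $K$-theory spectra. This will follow from a natural isomorphism of right-derived functors: for $\sF'\in{\sC oh}^G_{X'}$ one needs $(q_*\,{\bf R}u_*\sF')^H\cong {\bf R}\bar u_*\bigl((q'_*\sF')^H\bigr)$, which comes from base change around the square relating $q,q',u,\bar u$ together with the exactness of the $H$-invariants functor (so that taking $H$-invariants commutes with ${\bf R}\bar u_*$ — this is where exactness is essential, since it lets invariants pass through the derived pushforward). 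Since all the stacks/spaces in sight have finite diagonal and the maps are proper, the higher direct images are bounded, so everything takes place in bounded derived categories and the comparison is a genuine natural isomorphism of exact-functor compositions, hence descends to a homotopy of maps of spectra.

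I expect the main obstacle to be the commutativity square rather than the construction of the map: one must set up the compatibility of $H$-invariants with proper pushforward at a level precise enough to induce an \emph{equality} of maps on $K$-theory, not merely an abstract isomorphism. Concretely, the subtlety is that $u_*$ on $G$-theory is defined via ${\bf R}u_*$ (a derived, i.e., spectrum-level, construction using a resolution or the Waldhausen-category machinery), while $\mathrm{Inv}^H$ is defined by an honest exact functor; reconciling these requires either working throughout with the derived-category/$\infty$-categorical model of $K$-theory or invoking a dévissage/resolution argument to reduce to $u_*$ being induced by an exact functor on a suitable subcategory. A clean route is to factor $u$ through a closed immersion into a projective bundle or to use the fact that for proper morphisms of Deligne-Mumford stacks with finite cohomological dimension (\lemref{lem:Fin-coh}) the pushforward is modeled by a single exact functor on complexes, after which naturality of $H$-invariants on complexes gives the square on the nose. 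I would also note for later use (since this theorem feeds into \thmref{thm:Intro-3}) that when $F$ is trivial, $\mathrm{Inv}^H_X$ recovers exactly the coarse-moduli-space pushforward $G_*(H,X)\to G_*(X/H)$ studied by Edidin-Graham, and that \eqref{eqn:invariants-1} then specializes to their compatibility with proper pushforward.
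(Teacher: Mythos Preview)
Your proposal is correct and would work, but the paper takes a slicker, more conceptual route. Rather than building $\mathrm{Inv}^H_X$ by hand via $\sF\mapsto (q_*\sF)^H$ and then separately proving exactness, $R(F)$-linearity, and compatibility with proper push-forward, the paper simply observes that there is a proper morphism of separated Deligne-Mumford stacks $g\colon [X/G]\to [W/F]$ (the ``stacky coarse moduli space map'' obtained by taking $H$-quotients), and \emph{defines} $\mathrm{Inv}^H_X$ to be the push-forward $g_*$ on $K$-theory spectra, constructed via Thomason--Trobaugh using \lemref{lem:Fin-coh}. With this definition, $R(F)$-linearity is automatic (push-forward along $g$ is $K_0([W/F])$-linear, hence $R(F)$-linear via the structure map), and the square~\eqref{eqn:invariants-1} commutes simply because $(g\circ u)_* = g_*\circ u_*$ at the level of spectra, by functoriality of proper push-forward on stacks. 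Your approach unpacks what this stacky push-forward concretely is on sheaves; it is more explicit but forces you to re-prove by hand the base-change and projection-formula compatibilities that the paper gets for free from the general formalism. The advantage of the paper's packaging is that the ``main obstacle'' you identified---reconciling the exact-functor definition of $\mathrm{Inv}^H$ with the derived definition of $u_*$---disappears entirely, since both are instances of the same proper push-forward machine.
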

\begin{proof}
Using the Keel-Mori theorem and \lemref{lem:Basic}, we first observe that 
$W$ and $W'$ are separated algebraic spaces with $F$-actions
and there are proper and quasi-finite maps
$f^{X,H}: [X/H] \to W$ and $f^{X',H}: [{X'}/H] \to W'$ which are $F$-equivariant.

We first consider the case when $H = G$ as in \cite[\S~6.1]{EG2}. 
Since $[X/G]$ is a separated Deligne-Mumford stack over $\C$, it follows from
\cite[Lemma~2.3.4]{AV} that $f^{X, G}_*$ is an exact functor that takes coherent 
sheaves on $[X/G]$ to coherent sheaves on $W$. 
This induces a proper push-forward map $f^{X,G}_*: G_*(G,X) \simeq 
G_*([X/G]) \to G_*(W)$. We denote this map by ${\rm Inv}^G_X$.

In the general case, we have the proper maps of separated Deligne-Mumford stacks
\begin{equation}\label{eqn:invariants-2}
[X/G] \xrightarrow{g} [W/F] \xrightarrow{\ov{g}} W/F = X/G.
\end{equation}

Using \lemref{lem:Fin-coh}, the Thomason-Trobaugh (see \cite[\S~3.16.1]{TT})
construction of the push-forward map on $K$-theory gives us the maps
$G([X/G]) \xrightarrow{g_*} G([W/F]) \xrightarrow{\bar{g}_*} G(X/G)$,
where the first map is $K_0([W/F])$-linear and the second map is
$K_0(W/F)$-linear. In particular, $g_*$ is $R(F)$-linear.
We let $g_*$ be denoted by ${\rm Inv}^H_X$.
The second part of the theorem follows from the first part and the 
commutative diagram of proper maps
\begin{equation}\label{eqn:invariants-3}
\xymatrix@C1pc{
[X'/G] \ar[r]^-{u} \ar[d]_{g'} & [X/G] \ar[d]^{g} \\
[W'/F]  \ar[r]^-{\bar{u}_*}  &  [W/F]}
\end{equation}
and the covariant functoriality of the push-forward maps
$(g \circ u)_* = g_* \circ u_*$ at the level of $K$-theory spectra.
\end{proof}

\begin{remk}\label{remk:Inv-iso}
If $G$-action on $X$ is not necessarily proper but
the $H$-action is free, then the morphism of stacks
$g: [X/G] \to [W/F]$ is an isomorphism.
This yields an equivalence of abelian categories
$g_*: {\sC oh}([X/G]) \xrightarrow{\simeq} {\sC oh}([W/F])$
and hence an isomorphism
$g_* = {\rm Inv}^H_X: G([X/G]) \xrightarrow{\simeq} G([W/F])$.
\end{remk}

\section{Twisting in  equivariant $K$-theory}
\label{Sec:Twisting}
This section is the starting point of our construction of the
Atiyah-Segal correspondence.
Here, we define the twisting action on  higher 
equivariant $K$-theory. This action was introduced for $G_0$ in 
\cite[\S~6.2]{EG2}. We shall generalize it to higher equivariant $K$-theory
by constructing twisting type functors
at the level of the exact categories of sheaves.
This will play a crucial role in the
construction of the Atiyah-Segal map in \S~\ref{sec:ASmap}.

Let $Q$ be a linear algebraic group acting properly on a separated 
algebraic space $T$. Let $P \subset Q$ be a finite central subgroup
which acts trivially on $T$. Note that $P$ is then a finite abelian
group. In particular, $P$ is semi-simple.
For any $p \in P$, we want to define an automorphism
$t_p: G_*(Q,T) \to G_*(Q,T)$. Notice that our notations here for the group
and the algebraic space deviate from
the ones used in the previous sections. The reason for this deviation
can be seen in the following situation where we are going to 
apply the twisting action.

Let $G$ act properly on an algebraic space $X$. 
Let $g \in G$ be a closed point of finite order. 
Then $X^g$ is not a $G$-invariant closed subspace 
but it naturally is  $Z_g$-invariant. In this situation, one would 
like to define a twisting action on $G_*(Z_g,X^g)$. In the above notation,
it would translate as $Q := Z_g$, $P = \<g\>$, the finite closed subgroup
generated by $g$. As $g$ is a semi-simple element 
of finite order, $P = \<g\> \subset Z_g$ is a finite closed central subgroup of 
$Z_g$ which acts trivially on $X^g$.

We now return to the notations of the first paragraph. Since $p \in P$ is
a semi-simple element of $Q$, 
Proposition~\ref{prop:conjugacyclassandmaximalideal} says that there is a 
unique maximal ideal in $R(Q)$ corresponding to $p$ which we denote by 
$\fm_p$. We are interested in defining a twisting action $t_p$ on
$G_*(Q,T)$  such that this is an isomorphism and takes the summand 
$G_*(Q,T)_{\fm_{p^{-1}}}$ to $G_i(Q,T)_{\fm_1}$.

\subsection{Decomposing the category of $Q$-equivariant coherent 
sheaves}\label{sec:Decomp-t}
As $P$ acts trivially on $T$, the action of $P$ on a $Q$-equivariant 
coherent sheaf $\mathcal{F}$ is fiber-wise over the points of $T$. 
Equivalently, this action is given in terms of
a group homomorphism $P \to {\rm Aut}_T(\sF)$.
Further note that every {\'e}tale open subset of $T$ can be 
considered as a $P$-invariant open subset with the trivial action of $P$.
We let $\wh{P}$ denote the dual of the finite abelian group $P$ 
(the group of characters of $P$).
For each $\chi \in \wh{P}$, we let $\sC^{\chi}_T$ be the 
full subcategory of $Q$-equivariant coherent sheaves defined by
\begin{multline}\label{eqn:Decom-0}
Obj(\mathcal{C}^{\chi}_T) =\{\mathcal{F} \in {\sC oh}^Q_T| \
h.f =\chi(h)f  \ \forall \ U \in  {\rm Et}_T,  f \in \mathcal{F}(U), 
h \in P\}.
\end{multline}

Given $\sF \in {\sC oh}^Q_T$ and $\chi \in \wh{P}$, we let $\sF_{\chi}$
be the subsheaf of $\sF$ defined by 
\begin{equation}\label{eqn:Decom-1}
\sF_{\chi}(U) : = \{f \in \sF(U) | h.f = \chi(h)f \ \forall \ h \in P\}.
\end{equation} 

\begin{lem}\label{lem:definitionoftwist}
With the above notations, the following hold.
\begin{enumerate}
\item
For each $\sF \in {\sC oh}^Q_T$ and $\chi \in \wh{P}$, 
the subsheaf $\sF_{\chi}$ is coherent and $Q$-equivariant.
\item
The natural map ${\underset{\chi \in \wh{P}}\oplus} \sF_{\chi} \to \sF$
is an isomorphism in ${\sC oh}^Q_T$.
\item
$\mathcal{C}^{\chi}_T$ is a full abelian subcategory of ${\sC oh}^Q_T$.
\item
The natural inclusions $\sC^{\chi}_T \inj {\sC oh}^Q_T$
induce an equivalence of categories 
\[
{\underset{\chi \in \wh{P}}\amalg} 
\sC^{\chi}_T \xrightarrow{\simeq} {\sC oh}^Q_T.
\]
\end{enumerate}

In particular, the natural map of spectra ${\underset{\chi \in \wh{P}}\amalg}
K(\sC^{\chi}_T) \to G(Q,T)$ is a homotopy equivalence,
so that $G_i(Q,T) \simeq {\underset{\chi \in \wh{P}}\oplus} K_i(\sC^{\chi}_T)$.
\end{lem}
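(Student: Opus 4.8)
The plan is to prove the four assertions in order, since each uses the previous ones, and then deduce the spectrum-level statement from (4) via additivity in $K$-theory.

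First I would prove (1). Fix $\sF \in {\sC oh}^Q_T$ and $\chi \in \wh{P}$. The key point is that because $P$ is finite abelian and we are working over $\C$ (so $|P|$ is invertible), the averaging operator
\[
e_\chi = \frac{1}{|P|}\sum_{h \in P} \chi(h)^{-1}\, h \colon \sF \to \sF
\]
is a well-defined endomorphism of $\sF$ as a sheaf of $\cO_T$-modules (since $P$ acts $\cO_T$-linearly, as it acts trivially on $T$), and it is $Q$-equivariant because $P$ is central in $Q$: for $q \in Q$ we have $q h q^{-1} \in P$, and reindexing the sum shows $e_\chi$ commutes with the $Q$-action up to the identity $\chi(qhq^{-1}) = \chi(h)$, which holds precisely because $P$ is central so conjugation by $q$ is trivial on $P$. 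Then $\sF_\chi = \im(e_\chi)$ is a $Q$-equivariant $\cO_T$-submodule; it is coherent as a subsheaf of a coherent sheaf on a Noetherian (locally Noetherian) space. One checks on \'etale-local sections that $\im(e_\chi)(U)$ agrees with the description in \eqref{eqn:Decom-1}: $e_\chi$ is idempotent and $e_\chi f = f$ iff $h.f = \chi(h) f$ for all $h$.

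Next, (2) follows from $\sum_{\chi \in \wh{P}} e_\chi = \id$ (orthogonality of characters of the finite abelian group $P$) together with $e_\chi e_{\chi'} = \delta_{\chi,\chi'} e_\chi$; this exhibits $\sF$ as the internal direct sum $\bigoplus_\chi \sF_\chi$ of its $Q$-equivariant subsheaves. For (3), $\sC^\chi_T$ is closed under $Q$-equivariant subsheaves, quotients, and extensions inside ${\sC oh}^Q_T$: the condition $h.f = \chi(h) f$ is preserved under any $\cO_T$-linear $Q$-equivariant map, and given a short exact sequence $0 \to \sF' \to \sF \to \sF'' \to 0$ with $\sF', \sF'' \in \sC^\chi_T$, applying the exact projector $e_\chi$ (exact since it is a direct-summand projection) shows $\sF \in \sC^\chi_T$. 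Kernels and cokernels of maps between objects of $\sC^\chi_T$, computed in ${\sC oh}^Q_T$, again satisfy the eigensheaf condition, so $\sC^\chi_T$ is a full abelian subcategory. Finally (4): the functor $\coprod_\chi \sC^\chi_T \to {\sC oh}^Q_T$, $(\sF_\chi)_\chi \mapsto \bigoplus_\chi \sF_\chi$, is an equivalence with quasi-inverse $\sF \mapsto (\sF_\chi)_\chi = (e_\chi \sF)_\chi$; full faithfulness and essential surjectivity follow from (1)--(3) and the orthogonality relations, noting that there are no nonzero $Q$-equivariant maps between eigensheaves for distinct characters.

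For the concluding sentence, the decomposition in (4) is a direct-sum decomposition of the exact category ${\sC oh}^Q_T$ into the exact subcategories $\sC^\chi_T$ (exact structures inherited, and a sequence in ${\sC oh}^Q_T$ is exact iff each $\chi$-component is), so the additivity theorem for $K$-theory of exact categories gives the homotopy equivalence $\coprod_\chi K(\sC^\chi_T) \xrightarrow{\simeq} K({\sC oh}^Q_T) = G(Q,T)$, and hence $G_i(Q,T) \simeq \bigoplus_\chi K_i(\sC^\chi_T)$ on homotopy groups. The main obstacle—really the only subtle point—is verifying carefully that $e_\chi$ is $Q$-equivariant and exact; everything else is bookkeeping with character orthogonality, but it is worth being explicit that centrality of $P$ in $Q$ is exactly what makes the averaging projector equivariant, and that triviality of the $P$-action on $T$ is what makes it $\cO_T$-linear.
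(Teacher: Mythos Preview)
Your proof is correct and complete. The approach differs from the paper's in its organizing principle: you construct the explicit idempotent projector $e_\chi = \frac{1}{|P|}\sum_{h} \chi(h)^{-1} h$ and derive everything from the identities $\sum_\chi e_\chi = \id$, $e_\chi e_{\chi'} = \delta_{\chi,\chi'} e_\chi$, and the $Q$-equivariance of $e_\chi$ (which, as you correctly emphasize, rests exactly on centrality of $P$ in $Q$ and triviality of the $P$-action on $T$). The paper instead verifies $Q$-equivariance of $\sF_\chi$ by a direct element-chase with $q^*$, reduces coherence to the affine case, and for parts (2)--(4) repeatedly reduces to $P$-equivariant sheaves on affine opens, invoking a lemma of Thomason for diagonalizable groups. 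Your projector argument is more self-contained and makes the exactness of the decomposition immediate, which streamlines (3); the paper's route has the virtue of pointing to where the general structural fact lives in the literature. Both arrive at the same $K$-theory consequence via additivity (the paper cites Quillen directly; you invoke the additivity theorem), so the final step is the same.
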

\begin{proof}
Since $P$ acts trivially on $T$, we can assume that $T$ is affine in order to
prove that $\sF_{\chi}$ is coherent. If we let $T = \Spec(A)$, it suffices to
show that $\sF_{\chi}$ is an $A$-submodule of $\sF$ (since $A$ is
Noetherian). But this is immediate from ~\eqref{eqn:Decom-1}.
To prove that $\sF_{\chi}$ is $Q$-equivariant, it suffices to show that 
for every open $U \subset T$ and every $q \in Q$, the isomorphism
$q^*: \sF(U) \to \sF(qU)$ (induced by the $Q$-action on $\sF$)
preserves $\sF_{\chi}$. That is, we need to show that for every $h \in P, \
\chi \in \wh{P}$ and $f \in \sF_{\chi}(U)$, the equality $h\cdot q^*(f) = 
\chi(h) q^*(f)$ holds. But
\[
\begin{array}{lllll}
\chi(h) q^*(f) & {=}^{1} & q^*(\chi(h)f) & {=}^2 & q^*(h^*(f)) \\
& = & (qh)^*(f) & {=}^3 & (hq)^*(f) \\
& = & h^*(q^*(f)) & = & h \cdot q^*(f),
\end{array}
\]
where ${=}^1$ holds because $q^*$ is $\C$-linear, ${=}^2$ holds because
$f \in \sF_{\chi}(U)$ and ${=}^3$ holds because
$P$ is central in $Q$. We have thus proven (1).

We now prove (2). Since each $\sF_{\chi} \subset \sF$ is $Q$-equivariant,
it suffices to show the decomposition as $P$-modules. But this
is a direct consequence of the diagonalizability of $P$ and triviality of its
action on $T$ (see \cite[\S~6.2]{EG2}).
We briefly outline its proof. Since $P$ acts trivially on $T$, it suffices
to show that the map ${\underset{\chi \in \wh{P}}\oplus} \sF_{\chi} \to \sF$
is an isomorphism on affine open subsets of $T$. We can thus assume
$T$ is affine. Since $P$ is finite abelian, it is diagonalizable. The
isomorphism then follows from \cite[Lemma~5.6]{TO2}.

Since each $\sC^{\chi}_T$ is a full subcategory of ${\sC oh}^Q_T$ by 
definition, we need to show only that it is closed
under taking kernels and cokernels in order to prove (3). For this, we need
to show that the kernel and cokernel of a map $\sF \to \sG$ in ${\sC}^{\chi}_T$
also lie in ${\sC}^{\chi}_T$.  We can prove this also by
considering $\sF$ and $\sG$ as $P$-equivariant coherent sheaves. 
In this case, it is enough to check this at each affine open in $T$, 
where one can check directly. 

We now prove the first part of (4). 
As each $\sC^{\chi}_T$ is a full abelian subcategory, 
all we have to show is that the inclusion functors 
induce the desired equivalence of categories. From (2), it follows that the 
functor $\oplus_{\chi \in \hat{P}} i_{\chi}$ is essentially surjective, 
where $i_{\chi}: \sC^{\chi}_T \inj {\sC oh}^Q_T$ is the inclusion.
To show it is fully faithful, we can work in the category of 
$P$-equivariant coherent sheaves. 
But $P$ is diagonalizable and acts trivially on $T$.
We can thus restrict to affine open subsets of $T$. In this case,
the assertion follows from \cite[Lemma~5.6]{TO2}.

The second part of (4) follows from its first part and
\cite[\S~1, (4), \S~2, (8)]{Quillen} since $P$ is finite.
\end{proof}


\subsection{The twisting map and its properties}\label{sec:Twist}
Let $P \subset Q$ and $T$ be as above. We let $G^{\chi}(P,T)$ denote the
spectrum $K(\sC^{\chi}_T)$ and let $G^{\chi}_*(P,T)$ denote the sum of its
homotopy groups. We define the twisting action of
$p \in P$ on $G^{\chi}_i(P,T)$ by 
\begin{equation}\label{eqn:Twist-0}
t_{p}(\alpha) = \chi(p^{-1}) \alpha \text{ for $\alpha \in G^{\chi}_i(P,T)$}.
\end{equation}

Note that this makes sense since we consider $K$-groups with complex 
coefficients. We extend this action to all of $G_i(Q,T)$ using
\lemref{lem:definitionoftwist}. Since $P$ is finite, every $\chi$ is a 
multiplicative homomorphism $\chi: P \to \C^{\times}$ and this implies that
~\eqref{eqn:Twist-0} defines a $\C$-linear action of $P$ on $G_*(Q,T)$
which keeps each $G^{\chi}_*(P,T)$ invariant.
When we restrict to the case $i = 0$, this action is given by
\begin{equation}\label{eqn:Twist-1}
t_p([\sF]) = t_p(\Sigma_{\chi \in \hat{P}} [\sF_{\chi}]) = \Sigma_{\chi \in \hat{P}} 
\chi(p^{-1}) [\sF_{\chi}].
\end{equation}

This is same as the one considered in \cite[\S~6.2]{EG2}.
Note that if $\sF$ is a $Q$-equivariant vector bundle, then
each $\sF_{\chi}$ is also a $Q$-equivariant vector bundle by
\lemref{lem:definitionoftwist}. In particular, the twisting map
$t_p$ in ~\eqref{eqn:Twist-1} is defined for vector bundles as well and there 
is an automorphism $t_p: K_0(Q,T) \to K_0(Q,T)$.  

Now, for any $\sF \in \sC^{\chi}_T$ and $\sG \in \sC^{\chi'}_T$, 
we have $\sF \otimes_{\sO_T} \sG \in \sC^{\chi \cdot \chi'}_T$. Since
$G_i(Q,T)$ has a structure of a $K_0(Q,T)$-module via the tensor product of 
$\sO_T$-modules, it follows that 
\begin{equation}\label{eqn:Twist-2}
t_p(\alpha \cdot \beta) = t_p(\alpha) \cdot t_p(\beta)
\end{equation}
for $\alpha  \in K_0(Q,T)$ and $\beta \in G_i(Q,T)$.
In particular,  $t_p: K_0(Q,T) \to K_0(Q,T)$ is a $\C$-algebra automorphism.
The following are some more properties of the twisting maps.

\begin{prop}\label{prop:invariantsfunct}
Let $Q$ be a linear algebraic group acting properly on algebraic spaces $T$ 
and $T'$.  Let  $P \subset Q$ be a finite central subgroup of $Q$ which acts 
trivially on $T$ and $T'$. Let $f : T' \rightarrow T$ be a $Q$-equivariant 
map. Then the following hold.
\begin{enumerate}
\item The map $f^*: K_0(Q,T) \rightarrow K_0(Q,T')$ commutes with the 
twisting action. 
\item If $f$ is flat, the map $f^*:G_*(Q,T) \to G_*(Q,T')$ commutes with the 
twisting action.
\item For $p \in P$, we have $t_{p^{-1}}(G_*(Q,T)_{\fm_p}) = 
G_*(Q,X)_{\fm_1}$ under the decomposition of
$G_*(Q,T)$ given in \propref{prop:direcsumdecomp}. 
\end{enumerate}
\end{prop}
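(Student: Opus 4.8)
The plan is to obtain (1) and (2) as formal consequences of the compatibility of $Q$-equivariant pullback with the $\wh{P}$-grading of \lemref{lem:definitionoftwist}, and to deduce (3) from (1) by a semilinearity argument over the representation ring.

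For (1) and (2) I would first record the elementary fact that if $f\colon T'\to T$ is any $Q$-equivariant morphism and $P$ acts trivially on $T$ and on $T'$, then $f$ is in particular $P$-equivariant and pulling back a section on which $P$ acts through a character $\chi$ yields a section on which $P$ still acts through $\chi$; hence, comparing with ~\eqref{eqn:Decom-1}, $f^*(\sF_\chi)=(f^*\sF)_\chi$ for every $\sF\in{\sC oh}^Q_T$ and every $\chi\in\wh{P}$, so $f^*$ carries $\sC^\chi_T$ into $\sC^\chi_{T'}$. For (1) the pullback $f^*\colon K_0(Q,T)\to K_0(Q,T')$ is defined at the level of $Q$-equivariant vector bundles and is additive, and by ~\eqref{eqn:Twist-1} the operator $t_p$ acts on the $\chi$-summand by the scalar $\chi(p^{-1})$; hence $f^*\circ t_p=t_p\circ f^*$. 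For (2), flatness of $f$ makes $f^*\colon{\sC oh}^Q_T\to{\sC oh}^Q_{T'}$ exact, so it induces a map of $K$-theory spectra $G(Q,T)\to G(Q,T')$ which, by the displayed equality, restricts to a map $K(\sC^\chi_T)\to K(\sC^\chi_{T'})$ for each $\chi$; since $t_p$ is scalar multiplication by $\chi(p^{-1})$ on $G^\chi_*(P,T)$ and $f^*$ is $\C$-linear, it commutes with $t_p$.

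For (3) the crux is that $t_{p^{-1}}$ is semilinear over a ring automorphism of $R(Q)$. Define $\rho\colon R(Q)\to R(Q)$ by $\rho([V])=\sum_{\theta\in\wh{P}}\theta(p)[V_\theta]$, where $V=\bigoplus_\theta V_\theta$ is the decomposition into $P$-isotypic components (each a $Q$-subrepresentation, as $P$ is central); this is a ring automorphism because $(\theta\theta')(p)=\theta(p)\theta'(p)$. Writing the $R(Q)$-module structure on $G_*(Q,T)$ as $r\cdot m=\pi^*(r)\cdot m$ for $\pi\colon T\to\Spec\C$ the structure morphism, ~\eqref{eqn:Twist-2} gives $t_{p^{-1}}(r\cdot m)=t_{p^{-1}}(\pi^*(r))\cdot t_{p^{-1}}(m)$, and $t_{p^{-1}}(\pi^*(r))=\pi^*(\rho(r))$ since $\pi^*$ respects the $\wh{P}$-gradings and $t_{p^{-1}}$ acts on the $\theta$-isotypic part by $\theta(p)$; hence $t_{p^{-1}}(r\cdot m)=\rho(r)\cdot t_{p^{-1}}(m)$, and likewise $t_p$ is semilinear over $\rho^{-1}$. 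Moreover $\rho(\fm_p)=\fm_1$: it suffices to verify $\chi^1\circ\rho=\chi^p$ on homogeneous generators, and for $[V]$ with $P$ acting through a single character $\theta$ one has $\rho([V])=\theta(p)[V]$ while $p$ acts on $V$ by the scalar $\theta(p)$, so $\chi^1(\rho([V]))=\theta(p)\dim V=\chi_V(p)=\chi^p([V])$; since $\fm_1=\ker\chi^1$ and $\fm_p=\ker\chi^p$ and $\rho$ is an automorphism, $\rho(\fm_p)=\fm_1$.

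To conclude, recall from \lemref{lem:FiniteJGeneral} and \propref{prop:direcsumdecomp} that $G_*(Q,T)$ is a finite $R(Q)$-module killed by an ideal of finite support, so it is the direct sum of its summands $G_*(Q,T)_{\fm_\psi}$, each equal to the primary submodule $\{m: \fm_\psi^N m=0 \text{ for } N\gg 0\}$. A $\rho$-semilinear bijection sends this submodule into the $\rho(\fm_\psi)$-primary submodule, since $t_{p^{-1}}(\fm_\psi^N m)=\rho(\fm_\psi)^N\,t_{p^{-1}}(m)$ and $\rho(\fm_\psi)$ is again a maximal ideal; the reverse inclusion follows by applying the same to $t_p$, so $t_{p^{-1}}(G_*(Q,T)_{\fm_\psi})=G_*(Q,T)_{\rho(\fm_\psi)}$. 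Taking $\psi$ to be the conjugacy class of $p$ and using $\rho(\fm_p)=\fm_1$ gives $t_{p^{-1}}(G_*(Q,T)_{\fm_p})=G_*(Q,T)_{\fm_1}$, which is the assertion of (3). The step that will take the most care is this last one: one has to keep straight the interplay between the $\wh{P}$-grading coming from the central subgroup and the primary decomposition over $R(Q)$, use that the twisting operator is ring-semilinear rather than linear so that it matches up the two primary decompositions only after transporting maximal ideals by $\rho$, and check that it is $t_{p^{-1}}$ — not $t_p$ — that carries the $\fm_p$-summand onto the $\fm_1$-summand.
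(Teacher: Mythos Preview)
Your argument is correct and follows the same strategy as the paper: (1) and (2) reduce to the observation that $f^*$ respects the $\wh{P}$-grading (so commutes with the diagonal scalar action of $t_p$), and (3) is obtained from the semilinearity of $t_{p^{-1}}$ over the ring automorphism $t_{p^{-1}}|_{R(Q)}$ together with $t_{p^{-1}}(\fm_p)=\fm_1$ and the primary decomposition of \propref{prop:direcsumdecomp}. The paper's proof is much terser---it simply points at \eqref{eqn:Twist-0}, \eqref{eqn:Twist-2}, and the identity $t_p^{-1}(\fm_p)=\fm_1$ on $R(Q)$---while you have written out the character computation $\chi^1\circ\rho=\chi^p$ and the transport-of-primary-components argument explicitly; these are exactly the details underlying the paper's one-line justifications.
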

\begin{proof}
The first and the second properties are immediate from ~\eqref{eqn:Twist-0}
since $t_p$ is just a scalar multiplication
on $\C$-vector spaces $G_*(Q,-)$. 
On $R(Q)$, it is an easy verification from ~\eqref{eqn:Twist-1} that 
$t^{-1}_p(\fm_p) = \fm_1$ (for example, see \cite[\S~6.2]{EG2}). Using this,
the last property follows directly
from ~\eqref{eqn:Twist-2} and (1), which together say that
$t_p(\alpha \cdot \beta) = t_p(\alpha) \cdot t_p(\beta)$ for $\alpha \in R(Q)$
and $\beta \in G_*(Q,T)$.
\end{proof}

\begin{prop}\label{prop:proper-T}
Let $f: T' \to T $ be a $Q$-equivariant proper morphism of algebraic spaces over
$\C$ with proper $Q$-actions. 
Let $P \subset Q$ be a finite central subgroup of $Q$ which acts trivially on 
$T$ and $T'$. Then $f$ induces a 
push-forward map $f^{\chi}_*: G^{\chi}_*(P,T') \to G^{\chi}_*(P,T)$
and further the following diagram commutes 
\begin{equation}\label{eqn:propertwist}
\xymatrix@C1pc{ 
\oplus_{\chi \in \wh{P}} G^{\chi}_*(P,T') 
\ar[rr]^-{\oplus_{\chi \in \wh{P}} i'_{\chi}} 
\ar[d]_{\oplus_{\chi \in \wh{P}} f_*^{\chi}}  &&
G_*(Q,T') \ar[d]^{f_*}      \\
\oplus_{\chi \in \wh{P}} G^{\chi}_*(P,T) \ar[rr]_-{\oplus_{\chi \in \wh{P}} i_{\chi}}  
&& G_*(Q,T).}
\end{equation}  
 
\end{prop}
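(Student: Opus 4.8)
The plan is to show that the derived proper push-forward is compatible, summand by summand, with the character decomposition of \lemref{lem:definitionoftwist}, and then to read off the existence of the maps $f^{\chi}_*$ and the commutativity of \eqref{eqn:propertwist} from this.

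The crucial point is a sheaf-theoretic observation: if $\sF \in \sC^{\chi}_{T'}$, then every higher direct image $R^i f_* \sF$ again lies in $\sC^{\chi}_T$. Here, as throughout \S\ref{sec:Decomp-t}, $P$ acts trivially on $T'$ and $T$, so for each $p \in P$ the isomorphism of $\sF$ attached to $p$ in its $Q$-equivariant structure is an $\sO_{T'}$-linear endomorphism of $\sF$, and by definition of $\sC^{\chi}_{T'}$ it is multiplication by the scalar $\chi(p) \in \C^{\times}$. Since $f$ is $Q$-equivariant, the corresponding endomorphism of the induced $Q$-equivariant sheaf $R^i f_* \sF$ is $R^i f_*$ applied to multiplication by $\chi(p)$ on $\sF$; as $R^i f_*$ is $\C$-linear, this is again multiplication by $\chi(p)$, so $R^i f_* \sF \in \sC^{\chi}_T$. (Coherence of the $R^i f_*\sF$ and their vanishing for $i \gg 0$ hold because $f$ is a proper morphism of finite-dimensional Noetherian schemes, resp.\ algebraic spaces; cf.\ \lemref{lem:Fin-coh}.)

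Granting this, I would argue as follows in the scheme case. By \lemref{lem:definitionoftwist} the inclusions induce an equivalence ${\amalg_{\chi \in \wh{P}}}\, \sC^{\chi}_T \xrightarrow{\simeq} {\sC oh}^Q_T$; moreover there are no nonzero $Q$-equivariant morphisms $\sF \to \sG$ with $\sF \in \sC^{\chi}_T$, $\sG \in \sC^{\chi'}_T$ and $\chi \neq \chi'$, since such a $\phi$ satisfies $(\chi(h) - \chi'(h))\phi = 0$ for all $h \in P$, with $\chi(h) - \chi'(h)$ a unit of $\sO_T$ for suitable $h$. Hence $D^b_{\mathrm{coh}}({\sC oh}^Q_T) = {\amalg_{\chi}}\, D^b(\sC^{\chi}_T)$ and $G(Q,T) \simeq {\amalg_{\chi}}\, K(\sC^{\chi}_T) = {\amalg_{\chi}}\, G^{\chi}(P,T)$. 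The Thomason--Trobaugh construction of the proper push-forward (\cite[\S~3.16.1]{TT}, as used in the proof of \thmref{thm:invariants}) realizes $f_*: G_*(Q,T') \to G_*(Q,T)$ through the exact functor ${\bf R}f_*: D^b_{\mathrm{coh}}({\sC oh}^Q_{T'}) \to D^b_{\mathrm{coh}}({\sC oh}^Q_{T})$, and by the crucial observation ${\bf R}f_*$ carries $D^b(\sC^{\chi}_{T'})$ into $D^b(\sC^{\chi}_T)$ for each $\chi$. This restriction defines $f^{\chi}_*: G^{\chi}_*(P,T') \to G^{\chi}_*(P,T)$, and the resulting decomposition $f_* = {\amalg_{\chi}}\, f^{\chi}_*$ under the identifications above is precisely the commutativity of \eqref{eqn:propertwist}.

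For $T, T'$ only algebraic spaces and $i = 0$ I would avoid derived categories altogether and set $f^{\chi}_*([\sF]) = \sum_i (-1)^i [R^i f_* \sF]$ for $\sF \in \sC^{\chi}_{T'}$, which lies in $G^{\chi}_0(P,T)$ by the crucial observation; well-definedness on $K_0$ is the usual consequence of the long exact sequence of higher direct images attached to a short exact sequence in $\sC^{\chi}_{T'}$, and \eqref{eqn:propertwist} for $G_0$ holds on the nose since $f_*$ on $G_0(Q,-)$ is given by the same alternating sum. The step I expect to be most delicate is the scheme-case assertion that the Thomason--Trobaugh push-forward respects the decomposition \emph{at the level of $K$-theory spectra}, and not merely on $K_0$: one has to verify that the Waldhausen subcategory of $\chi$-isotypic bounded coherent complexes on $T'$ is carried by ${\bf R}f_*$ into the corresponding subcategory on $T$ compatibly with the inclusions into the ambient categories, which is exactly what the crucial observation provides once the push-forward functor is unwound.
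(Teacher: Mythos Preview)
Your proof is correct and reaches the same conclusion, but the route in the quasi-projective case differs from the paper's. You work with the derived push-forward ${\bf R}f_*$ and the Thomason--Trobaugh realization of $f_*$ on $K$-theory spectra, using the observation that each $R^i f_*$ preserves the $\chi$-isotypic pieces to conclude that ${\bf R}f_*$ respects the decomposition $D^b_{\mathrm{coh}}({\sC oh}^Q_{T'}) \simeq \amalg_\chi D^b(\sC^{\chi}_{T'})$. The paper instead follows Quillen's classical construction: it uses the quasi-projectivity hypothesis to produce a $Q$-equivariant relatively ample line bundle, embeds every $\sF \in \sC^{\chi}_{T'}$ into an $f$-acyclic sheaf still lying in $\sC^{\chi}_{T'}$, and then invokes \cite[\S 3, Corollary~3]{Quillen} to identify $K(\sC^{\chi}_{T'}) \simeq K(\sC^{\chi}_{T'}(f))$, on which $f_*$ is already exact. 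Your approach is cleaner conceptually and does not visibly use quasi-projectivity beyond finite cohomological dimension of $f$ (\lemref{lem:Fin-coh}); the paper's approach is more elementary in that it stays within Quillen's exact-category framework and avoids the complicial biWaldhausen machinery you flag as delicate. For $G_0$ on algebraic spaces your argument is essentially identical to the paper's.
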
 
\begin{proof}
For the benefit of the reader we provide a detailed proof when $T$ and $T'$ are 
quasi-projective schemes with linear $Q$-actions. The general case follows using similar
arguments and the construction of proper pushforward as in \cite[\S~1.11, 1.12]{TO2}.	
We shall mimic Quillen's construction \cite{Quillen} 
of the push-forward map in $K$-theory, which was adapted to the
equivariant set up in \cite[\S~1.11, 1.12]{TO2}.
We first observe as an immediate consequence of the definition of
$\sC^{\chi}_{(-)}$ and $f_*$ that for any coherent 
sheaf $\sF \in \sC^{\chi}_{T'}$, one has 
$f_*(\sF) \in \sC^{\chi}_T$. 

Let us first assume that $T'$ and $T$ are quasi-projective $\C$-schemes.
In this case, $f$ must be projective. Since $G$ acts linearly on $T'$
(and also on $T$), we can find a $Q$-equivariant line bundle $\sO(1)$ on $T'$
which is very ample. In particular, this is very ample relative to the
$Q$-equivariant map $f: T' \to T$. In this case,
there is a $Q$-equivariant factorization $T' \inj \P_T(\sE) \xrightarrow{p} 
T$ of $f$, where the first map is a closed embedding and $\sE$ is 
a $Q$-equivariant vector bundle on $T$.  
This gives a $Q$-equivariant surjection $f^*(\sE) \surj \sO(1)$,
and hence for every integer $n$, a surjection $f^*(\sE)^{\otimes n} \surj \sO(n)$.

Dualizing this surjection and twisting by $\sO(n)$, we get a $Q$-equivariant
short exact sequence of vector bundles
\begin{equation}\label{eqn:proper-T-0}
0 \to \sO_{T'} \to \sE'(n) \to \sE'' \to 0,
\end{equation}
where we take $\sE' = (f^*(\sE)^{\otimes n})^{\vee}$. Tensoring this with any
given $\sF \in {\sC oh}^Q_{T'}$, we get an inclusion $\sF \inj 
(\sF \otimes\sE')(n)$ for every integer $n$. Now, we know that 
there exists $n \gg 0$ such that ${\bf R}^if_*((\sF \otimes\sE')(n)) = 0$ 
for all $i \ge 1$. We let $\sG = (\sF \otimes\sE')(n)$.
Since the direct summand $\sF_{\chi} \inj \sF$ is functorial,
we also get an inclusion $\sF_{\chi} \inj \sG_{\chi}$ and
${\bf R}^if_*(\sG_{\chi}) = 0$ for $i \ge 1$.
In particular, if $\sF \in \sC^{\chi}_{T'}$ so that $\sF = \sF_{\chi}$,
we get an inclusion $\sF \inj \sG$ with $\sG \in \sC^{\chi}_{T'}$
such that ${\bf R}^if_*(\sG) = 0$ for $i \ge 1$.

If we let ${\sC oh}^Q_{T'}(f)$ (resp. $\sC^{\chi}_{T'}(f)$)
denote the subcategory of ${\sC oh}^Q_{T'}$ (resp. $\sC^{\chi}_{T'}$) which are
$f$-acyclic, then we have shown that every $\sF \in {\sC oh}^Q_{T'}$ 
(resp. $\sC^{\chi}_{T'}$) admits a $Q$-equivariant injection into an object of 
${\sC oh}^Q_{T'}(f)$ (resp. $\sC^{\chi}_{T'}(f)$).
We conclude from \cite[\S~3, Corollary~3]{Quillen} that 

\begin{equation}\label{eqn:proper-T-1}
K({\sC oh}^Q_{T'}) \simeq K(({\sC oh}^Q_{T'})^{\rm op}) \simeq
K(({\sC oh}^Q_{T'}(f))^{\rm op}) \simeq K({\sC oh}^Q_{T'}(f)).
\end{equation}
\[
K({\sC}^{\chi}_{T'}) \simeq K(({\sC}^{\chi}_{T'})^{\rm op}) \simeq
K(({\sC}^{\chi}_{T'}(f))^{\rm op}) \simeq K({\sC}^{\chi}_{T'}(f)).
\]

In particular, the inclusions ${\sC oh}^Q_{T'}(f) \subset {\sC oh}^Q_{T'}$
and ${\sC}^{\chi}_{T'}(f) \subset {\sC}^{\chi}_{T'}$ induce a 
commutative diagram of $K$-theory spectra 

\begin{equation}\label{eqn:proper-T-2}
\xymatrix@C1pc{
{\underset{\chi \in \wh{P}}\amalg} K({\sC}^{\chi}_{T'}(f))
\ar[rr]^-{\simeq} \ar[d]_{\amalg_{\chi \in \wh{P}} i'_{\chi}}  & &
{\underset{\chi \in \wh{P}}\amalg} K(\sC^{\chi}_{T'}) 
\ar[d]^{\amalg_{\chi \in \wh{P}} i'_{\chi}} \\
K({\sC oh}^Q_{T'}(f)) \ar[rr]^-{\simeq} & & K({\sC oh}^Q_{T'})}
\end{equation}
where the horizontal arrows are homotopy equivalences.

On the other hand, as $f_*$ is an exact functor on ${\sC oh}^Q_{T'}(f)$ and 
$\sC^{\chi}_{T'}(f)$,
there is a commutative diagram of push-forward maps
\begin{equation}\label{eqn:proper-T-3}
\xymatrix@C1pc{
{\underset{\chi \in \wh{P}}\amalg} K({\sC}^{\chi}_{T'}(f)) 
\ar[rr]^-{\amalg_{\chi \in \wh{P}} i'_{\chi}} 
\ar[d]_{\oplus_{\chi \in \wh{P}} f_*^{\chi}} 
& & K({\sC oh}^Q_{T'}(f)) \ar[d]^{f_*} \\ 
{\underset{\chi \in \wh{P}}\amalg} K(\sC^{\chi}_T) 
\ar[rr]^-{\amalg_{\chi \in \wh{P}}i_{\chi}}  & & K({\sC oh}^Q_T).}
\end{equation} 

The proposition follows in the quasi-projective case by combining
~\eqref{eqn:proper-T-2} and ~\eqref{eqn:proper-T-3}.

Let us now assume that $T$ and $T'$ are algebraic spaces which are not 
necessarily quasi-projective. In this case, we first observe that
the definition ~\eqref{eqn:Decom-1} makes sense for quasi-coherent
sheaves as well. If we denote this category by ${\sQ \sC}^{\chi}_{T'}$,
then the proof of \lemref{lem:definitionoftwist} (3) also shows 
that ${\sQ \sC}^{\chi}_{T'}$ is abelian. In particular, it has enough
injectives. It follows that the functors ${\bf R}^if_*$ from 
the category of $Q$-equivariant quasi-coherent sheaves on $T'$ to that on $T$
restrict to ${\sQ \sC}^{\chi}_{T'}$. Moreover, ${\bf R}f_*$ has
finite cohomological dimension by \lemref{lem:Fin-coh}.
We conclude that the map $f_*: G_*(Q,T') \to G_*(Q,T)$, is well defined and it
restricts to a similar push-forward map $f^{\chi}_*:G^{\chi}_*(P,T') \to
G^{\chi}_*(P,T)$. This completes the proof of the proposition. 
\end{proof}

As an immediate consequence of \propref{prop:proper-T} and ~\eqref{eqn:Twist-0},
we get

\begin{cor}\label{cor:proper-T-commute} 
Let $T$ and $T'$ be algebraic spaces of finite type over $\C$ with proper $Q$-actions. Let $P$ be a finite central subgroup of $Q$ such that $P$ acts 
trivially on $T$ and $T'$. Let $f: T' \to T$ be a $Q$-equivariant proper map. 
Then $f_*: G_*(Q,T') \to G_*(Q,T)$ is equivariant for 
the twisting by $P$-action. That is, for each $p \in P$,
the diagram
\begin{equation}\label{eqn:proper-T-commute-0}
\xymatrix@C1pc{
G_i(Q,T') \ar[d]_{f_*} \ar[r]^-{t_p} &  G_i(Q,T') \ar[d]^{f_*}  \\
G_i(Q,T)  \ar[r]^-{t_p}  &   G_i(Q,T)}
\end{equation}
commutes. 
\end{cor}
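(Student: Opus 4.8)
The plan is to deduce the corollary formally from the grading decomposition of Lemma~\ref{lem:definitionoftwist} and the compatibility of push-forward with that decomposition recorded in \propref{prop:proper-T}. First I would recall that, by \lemref{lem:definitionoftwist}, we have direct sum decompositions $G_i(Q,T') = \bigoplus_{\chi \in \wh{P}} G^{\chi}_i(P,T')$ and $G_i(Q,T) = \bigoplus_{\chi \in \wh{P}} G^{\chi}_i(P,T)$, and that by the defining formula \eqref{eqn:Twist-0} the twisting operator $t_p$ acts on the summand $G^{\chi}_i(P,-)$ purely as multiplication by the scalar $\chi(p^{-1}) \in \C^{\times}$. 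Thus $t_p$ is precisely the $\C$-linear endomorphism that is scalar multiplication by $\chi(p^{-1})$ on the $\chi$-graded piece; here it is used that we work with complex coefficients, so that $t_p$ really is a scalar operator.

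Next, \propref{prop:proper-T} states that $f_* : G_i(Q,T') \to G_i(Q,T)$ respects these decompositions: under the two direct sums it coincides with $\bigoplus_{\chi \in \wh{P}} f^{\chi}_*$, where $f^{\chi}_* : G^{\chi}_i(P,T') \to G^{\chi}_i(P,T)$ is the component push-forward. Since $f_*$ is $\C$-linear and maps the $\chi$-summand into the $\chi$-summand, it commutes with any operator that acts by a scalar depending only on $\chi$. Explicitly, for $\alpha \in G^{\chi}_i(P,T')$ one has $f_*(t_p\alpha) = f_*(\chi(p^{-1})\alpha) = \chi(p^{-1}) f^{\chi}_*(\alpha) = t_p(f^{\chi}_*(\alpha)) = t_p(f_*\alpha)$. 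Summing over $\chi \in \wh{P}$ and invoking linearity gives $f_* \circ t_p = t_p \circ f_*$ on all of $G_i(Q,T')$, which is exactly the commutativity of \eqref{eqn:proper-T-commute-0}. For the final assertion, when $T$ and $T'$ are only algebraic spaces I would run the identical argument in degree $i = 0$, now using the $G_0$-version of \propref{prop:proper-T} and of \lemref{lem:definitionoftwist}.

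There is essentially no obstacle: the corollary is a formal bookkeeping consequence of the two cited results, and I would keep the written proof to a few lines. The only points worth flagging are that the argument genuinely relies on the $\C$-coefficient convention (so that $t_p$ is scalar multiplication rather than a more delicate automorphism), and that the quasi-projectivity hypothesis in the higher-$K$-theory case is simply inherited from \propref{prop:proper-T}, with the algebraic space case handled in degree zero as above.
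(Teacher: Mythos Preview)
Your proposal is correct and follows essentially the same approach as the paper, which simply records the corollary as an immediate consequence of \propref{prop:proper-T} and~\eqref{eqn:Twist-0}. Your write-up just spells out the one-line observation that a $\C$-linear map preserving the $\wh{P}$-grading commutes with any operator acting by a scalar on each graded piece.
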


\begin{prop}\label{prop:invtwisst}
Let $Q$ be a linear algebraic group acting properly on an algebraic space 
$T$. Let $P$ be a central subgroup of $Q$ of finite order which acts trivially 
on $T$. Then for any $p \in P$ and $\alpha \in G_*(Q,T)$, we have 
$\inv^Q_T \circ t_p(\alpha) = \inv^Q_T(\alpha)$.
\end{prop}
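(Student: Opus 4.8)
The plan is to reduce the statement to the level of the exact categories of sheaves, where the functor of $H$-invariants is realized as a push-forward along coarse moduli space maps, and where the twisting operator $t_p$ is a scalar multiplication on each isotypic summand $\mathcal{C}^\chi$. The key observation is that the $\mathrm{Inv}^Q_T$ map factors through the push-forward $g_*$ associated to the proper map of stacks $[T/Q] \to [T/P \cdot \text{(image)}] \to \dots$; but more directly, since $P$ acts trivially on $T$, the coarse moduli space $T/Q$ is insensitive to the $P$-action, i.e. if $\overline{Q} = Q/P$ then $T/Q = T/\overline{Q}$. So $\mathrm{Inv}^Q_T$ is computed by first passing through the category of $\overline{Q}$-equivariant sheaves (which only sees the degree-$0$, i.e. trivial character, part of the $P$-action fiberwise in a suitable sense), or rather: the exact functor realizing $\mathrm{Inv}^Q_T$ kills the distinction between the characters $\chi \in \widehat{P}$.

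First I would make precise the claim that for $\mathcal{F} \in \mathcal{C}^\chi_T$, the sheaf $g_*(\mathcal{F})$ (where $g\colon [T/Q] \to [(T/H)/F]$) depends only on $\mathcal{F}$ as an object, and that the comparison is compatible with the decomposition in Lemma~\ref{lem:definitionoftwist}. Concretely, using Lemma~\ref{lem:definitionoftwist}, write $\alpha = \sum_{\chi \in \widehat{P}} \alpha_\chi$ with $\alpha_\chi \in G^\chi_*(P,T)$, so that by definition $t_p(\alpha) = \sum_\chi \chi(p^{-1}) \alpha_\chi$. It therefore suffices to show that $\mathrm{Inv}^Q_T(\alpha_\chi)$ is independent of $\chi$ — more precisely, that the composite $G^\chi_*(P,T) \hookrightarrow G_*(Q,T) \xrightarrow{\mathrm{Inv}^Q_T} G_*(F,W)$ is the same map for all $\chi$, under a fixed identification. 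By $\C$-linearity of $\mathrm{Inv}^Q_T$ and the fact that $\sum_\chi \chi(p^{-1})$ multiplied against a $\chi$-independent value is what we want, the statement $\mathrm{Inv}^Q_T \circ t_p = \mathrm{Inv}^Q_T$ would then reduce to showing that the characters don't matter. But wait — that would only give equality if $\sum$ of the coefficients were $1$, which it is not. So the correct route must be different: the claim must be that $\mathrm{Inv}^Q_T$ annihilates $\alpha_\chi$ for $\chi \neq 1$ (the trivial character), so that $\mathrm{Inv}^Q_T(\alpha) = \mathrm{Inv}^Q_T(\alpha_1)$ and likewise $\mathrm{Inv}^Q_T(t_p(\alpha)) = \chi(p^{-1})|_{\chi=1}\,\mathrm{Inv}^Q_T(\alpha_1) = \mathrm{Inv}^Q_T(\alpha_1)$.

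So the heart of the proof is: for every nontrivial character $\chi \in \widehat{P}$, one has $\mathrm{Inv}^Q_T|_{G^\chi_*(P,T)} = 0$. To see this at the level of sheaves, note that $\mathrm{Inv}^Q_T$ is (by its construction in Theorem~\ref{thm:invariants}) the push-forward $g_*$ for the proper map $g\colon [T/Q] \to [W/F]$, $W = T/H$, which has finite cohomological dimension by Lemma~\ref{lem:Fin-coh}. I would first treat the case $H = Q$ (so $F$ trivial, $W = T/Q$) as in \cite[\S6.1]{EG2}: here $g_*$ is exact and takes $\mathcal{F}$ to the sheaf of sections, which for $\mathcal{F} \in \mathcal{C}^\chi_T$ with $\chi \neq 1$ is the sheaf of $Q$-invariant sections — but since $P \subset Q$ acts on the stalks by the nontrivial character $\chi$, there are no nonzero $P$-invariant sections, hence no $Q$-invariant ones, so $g_*(\mathcal{F}) = 0$. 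This vanishing is exact-functorial, hence passes to all of $K$-theory: $\mathrm{Inv}^Q_T$ restricted to $K(\mathcal{C}^\chi_T)$ is induced by the zero functor, so it is zero on $G^\chi_*(P,T)$. For the general case, factor $g$ through $[T/Q] \xrightarrow{g'} [W/F]$ and use that $g'$ is, étale-locally on $W$, of the same shape; equivalently, use that the functor $\mathcal{F} \mapsto (p_*\mathcal{F})^H$ on the étale site of $W$ (which is the exact functor realizing the first step, cf. Theorem~\ref{thm:invariants} and \cite[Lemma~2.3.4]{AV}) already kills $\mathcal{C}^\chi_T$ for $\chi \neq 1$, since $P \subset H$ (as $P$ is central in $Q$ and $H$ is normal with $F = Q/H$... one must check $P \subseteq H$; if not, the statement must be interpreted with $P$'s image — but in the intended application $P = \langle g \rangle$ and the relevant $H$ does contain it). Then composing with the finite-cohomological-dimension push-forward $\overline{\beta'}_*$ to $[Y/F]$ preserves the vanishing.

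**Main obstacle.** The delicate point is the bookkeeping in the general ($H \neq Q$) case: one must verify that the first-stage functor $\mathcal{F} \mapsto (p^H_*\mathcal{F})^H$ genuinely annihilates $\mathcal{C}^\chi_T$ for nontrivial $\chi$, which requires knowing that $P$ (or at least a subgroup of $P$ acting nontrivially via $\chi$) is contained in $H$, and that $\mathrm{Inv}^Q_T$ as constructed in Theorem~\ref{thm:invariants} really is computed by this functor composed with a further exact-or-finite-cohomological-dimension push-forward — so that the vanishing at the sheaf level propagates through the $K$-theory machinery (Thomason–Trobaugh construction, \cite[\S3.16.1]{TT}) to the derived level. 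Once the vanishing $\mathrm{Inv}^Q_T|_{G^\chi_*(P,T)} = 0$ for $\chi \neq 1$ is established, the proposition follows immediately: writing $\alpha = \sum_\chi \alpha_\chi$, both $\mathrm{Inv}^Q_T(\alpha)$ and $\mathrm{Inv}^Q_T(t_p\alpha) = \mathrm{Inv}^Q_T(\sum_\chi \chi(p^{-1})\alpha_\chi)$ collapse to $\mathrm{Inv}^Q_T(\alpha_1)$, since the $\chi = 1$ term has coefficient $\chi(p^{-1}) = 1$.
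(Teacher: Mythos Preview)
Your proposal is correct and follows essentially the same route as the paper: decompose $\alpha = \sum_{\chi} \alpha_\chi$ via \lemref{lem:definitionoftwist}, observe that for $\chi \neq 1$ no sheaf in $\sC^\chi_T$ has nonzero $Q$-invariants (since already $P$-invariants vanish), hence $\inv^Q_T$ kills $G^\chi_*(P,T)$ for $\chi \neq 1$, while for $\chi = 1$ the twist is multiplication by $1$. One remark: the proposition concerns only the \emph{full} invariants $\inv^Q_T\colon G_*(Q,T) \to G_*(T/Q)$ (i.e.\ the case $H = Q$, $F = \{1\}$ in the notation of \thmref{thm:invariants}), so your discussion of the general $H \neq Q$ situation and whether $P \subseteq H$ is unnecessary here --- the paper's proof stays entirely in the $H = Q$ case, which is all that is required.
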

\begin{proof}
For $G_0(Q,T)$, this is shown in \cite[Lemma~6.6]{EG2}, and we follow a similar
argument.
By \lemref{lem:definitionoftwist}, we can replace $G_*(Q,T)$ by
$G^{\chi}(P,T)$. Now, it follows from ~\eqref{eqn:Decom-0} that
no sheaf $\sF \in \sC^{\chi}_T$ can be $P$-invariant unless $\chi = 1$.
Hence, it can not be $Q$-invariant. If $p: T \to T/Q$ is the quotient map,
it follows that $(p_*(\sF))^G = 0$ unless $\chi = 1$.
In particular, $p_*: G^{\chi}_*(P,T) \to G_*(T/Q)$ is the zero map
unless $\chi = 1$.
When $\chi = 1$, the twisting map $t_p: \sC^{1}_T \to \sC^{1}_T$ is
identity by ~\eqref{eqn:Twist-0} so the assertion is obvious.
\end{proof}

\section{Morita isomorphisms}\label{Sec:Morita Isomorphism}
In this section, we define some Morita isomorphisms for $K$-theory
and prove their functorial properties that will be needed for the 
Atiyah-Segal map in \S~\ref{sec:ASmap}.

\subsection{The map $\mu_g$}\label{sec:Mu-map}
Let $G$ be a linear algebraic group acting properly on an algebraic space 
$X$. Let $\psi$ be a semi-simple conjugacy class in $G$. If $g,h \in \psi$, 
then 
there exists $k \in G$ such that $h = kgk^{-1}$. There exists an isomorphism 
between the centralizers $Z_g$ and $Z_h$ given by
\begin{equation}\label{eqn:cent-iso}
\phi: Z_g \xrightarrow{\simeq} Z_h ; ~~ \phi (z) = kzk^{-1}.
\end{equation}

There is an isomorphism $u: X^g \to X^h$ between the fixed point loci, 
given by $u(x) = kx$. 
If we let $Z_g$ act on $X^h$ via the canonical action of $Z_h$ 
composed with $\phi$, then $u$ is $Z_g$-equivariant.
We call this the $\star$-action of $Z_g$ on a $Z_h$-space.
This allows us to define the isomorphisms at the level of $K$-theory
\begin{equation}\label{diag:defnofphi_*}
u_{*}: G_*(Z_g,X^g) \xrightarrow{\simeq} G_*(Z_g,X^h); \ \ \
\phi_*: G_*(Z_g,X^h) \xrightarrow{\simeq} G_*(Z_h,X^h).
\end{equation} 
We let
\begin{equation}\label{diag:defnofphi_*0} 
\theta_X = \phi_* \circ u_*: G_*(Z_g, X^g) \xrightarrow{\simeq} G_*(Z_h,X^h).
\end{equation}

We let $G \times Z_g$ act on $G \times X$ by 
$(g,z)\cdot (h,x) = (ghz^{-1}, zx)$. Then $1 \times Z_g$ acts on $G \times X$
freely with quotient $G \times^{Z_g} X$. Note that this action of $1 \times Z_g$
coincides with the $Z_g$-action on $G \times X$ given in 
\lemref{lem:Morita-stack*}. Let $p_{X^g}: G \times X^g \to X^g$ and
$p_{X^h}: G \times X^h \to X^h$ be the projections.

\begin{defn}\label{defn:Mor-fun-0}
The Morita equivalence $\mu_g$ is the composition of the functors:
\begin{equation}\label{moritamap}
\mu_g: {\sC oh}^{Z_g}_{X^g}  \xrightarrow{p_{X^g}^*}  
{\sC oh}^{G \times Z_g}_{G \times X^g} 
\xrightarrow{{\rm Inv}^{1 \times Z_g}_{G \times X^g}}
{\sC oh}^G_{G \times^{Z_g} X^g} \xrightarrow{p_g} {\sC oh}^G_{I^{\psi}_X}.
\end{equation}
\end{defn}

In the notations of ~\eqref{eqn:Morita-crucial}, 
$p_{X^g}^*$ is same as $\bar{p}^*$. But we have seen in the proof of
\lemref{lem:Morita-stack*} that $\iota'$ and $\bar{p}$ are inverses to each
other as maps of stacks. It follows that $p^*_{X^g} = \bar{p}^* = \iota'_*$.
Since ${\rm Inv}^{1 \times Z_g}_{G \times X^g}$ is same as $\bar{q}_*$
(see \thmref{thm:invariants}), we see that $\mu_g$ is the push-forward map
\begin{equation}\label{eqn:mu-g-*}
\mu_g = (\iota^{GZ_g}_{X^g})_* = \bar{q}_* \circ \iota'_*: 
{\sC oh}_{[{X^g}/{Z_g}]} \xrightarrow{\simeq} {\sC oh}_{[{I^{\psi}_X}/G]}.
\end{equation}

The functor $\mu_g$ induces weak-equivalence between the $K$-theory spectra
and an hence an isomorphism 
$G_*(Z_g, X^g) \xrightarrow{\simeq} G_*(G, I^{\psi}_X)$.
This is an $R(G)$-linear isomorphism, where $R(G)$ acts on
$G_*(Z_g, X^g)$ via the restriction map $R(G) \to R(Z_g)$
(see \cite[Remark~3.2]{EG2}). 
We shall denote the induced composite map on the $K$-theory also by $\mu_g$. 
Our goal is to show that this map is
compatible with respect to the choice of the representatives of $\psi$.
So we fix $h = kgk^{-1} \in \psi$.

\begin{lem}\label{lem:thetahandgindependence}
For an algebraic space $X$ with a proper $G$-action, we have 
$\mu_h \circ \theta_X = \mu_g $.
\end{lem}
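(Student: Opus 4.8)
The plan is to trace through the definitions and realize that $\mu_g$, $\mu_h$, and $\theta_X$ are all honest pushforward (Morita) maps of $K$-theory spectra coming from isomorphisms of quotient stacks, so the identity $\mu_h \circ \theta_X = \mu_g$ reduces to the compatibility of stack isomorphisms at the level of the inertia stack $[I^{\psi}_X/G]$. Recall from \eqref{eqn:mu-g-*} that $\mu_g = (\iota^{GZ_g}_{X^g})_*$ is the pushforward along the isomorphism $\iota^{GZ_g}_{X^g} : [X^g/Z_g] \xrightarrow{\simeq} [I^\psi_X/G]$ supplied by \lemref{lem:Morita-stack*}, and similarly $\mu_h = (\iota^{GZ_h}_{X^h})_* : {\sC oh}_{[X^h/Z_h]} \xrightarrow{\simeq} {\sC oh}_{[I^\psi_X/G]}$. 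Meanwhile $\theta_X = \phi_* \circ u_* : G_*(Z_g, X^g) \to G_*(Z_h, X^h)$ is the pushforward along the stack isomorphism $[X^g/Z_g] \xrightarrow{\simeq} [X^h/Z_h]$ induced by the pair $(\phi, u)$ with $\phi(z) = kzk^{-1}$ and $u(x) = kx$. So what must be checked is the triangle of stack isomorphisms
\begin{equation*}
\xymatrix@C1pc{
[X^g/Z_g] \ar[rr]^-{(\phi,u)} \ar[dr]_-{\iota^{GZ_g}_{X^g}} & & [X^h/Z_h] \ar[dl]^-{\iota^{GZ_h}_{X^h}} \\
& [I^\psi_X/G] & }
\end{equation*}
commutes (up to the natural $2$-isomorphism), and then apply covariant functoriality of pushforward on $K$-theory spectra, exactly as was used at the end of the proof of \thmref{thm:invariants}.

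First I would make the Morita map $\mu_g$ fully explicit as a pushforward along a concrete map, using the factorization in \eqref{eqn:Morita-crucial}: on points, $\iota^{GZ_g}_{X^g}$ sends $x \in X^g$ to the class of $(e, x) \in G \times^{Z_g} X^g$, which maps under $p_g$ (from \lemref{lem:torsor}(2)) to $(g, x) \in I^\psi_X$. Likewise $\iota^{GZ_h}_{X^h}$ sends $x' \in X^h$ to $(h, x') \in I^\psi_X$. Then I would check that post-composing with $(\phi, u)$ is compatible: an object $x \in X^g$ goes to $u(x) = kx \in X^h$, then to $(h, kx) = (kgk^{-1}, kx) \in I^\psi_X$ — and this is exactly the $G$-translate of $(g,x)$ by $k$, since the $G$-action on $I^\psi_X$ from \eqref{eqn:Inert-1} is $k\cdot(g,x) = (kgk^{-1}, kx)$. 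Hence the two images $(g,x)$ and $(kgk^{-1}, kx)$ represent the same object of the quotient stack $[I^\psi_X/G]$, so the triangle commutes. The only thing to verify beyond this point-level check is that the $Z_g$-equivariant structures match — that the $\star$-action of $Z_g$ on $X^h$ (defined via $\phi$) is precisely what makes $u$ equivariant and what makes the composite a stack map — but this is exactly the content of how $\theta_X$ was set up in \eqref{diag:defnofphi_*}--\eqref{diag:defnofphi_*0}, and the conjugation isomorphism $\phi$ on centralizers is designed precisely so that $\iota^{GZ_h}_{X^h} \circ \phi$ corresponds to conjugation by $k$ on $I^\psi_X$.

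Once the commuting triangle of stack isomorphisms is in hand, the theorem follows by applying $K$-theory and invoking $(\beta \circ \alpha)_* = \beta_* \circ \alpha_*$ for pushforward along (iso)morphisms of Deligne--Mumford stacks, together with the identifications ${\sC oh}^{Z_g}_{X^g} \simeq {\sC oh}_{[X^g/Z_g]}$ etc. I expect the main obstacle to be purely bookkeeping: keeping the various actions straight — in particular verifying carefully that the $G$-equivariant structure produced on $I^\psi_X$ through the Morita construction \eqref{moritamap} (i.e. through $p_{X^g}^*$, then $\mathrm{Inv}^{1\times Z_g}$, then $p_g$) transports, under the change of representative $g \rightsquigarrow h = kgk^{-1}$, to the one produced from $X^h$ via the $\star$-action of $Z_g$. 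This is where one genuinely uses that $p_g$ in \lemref{lem:torsor}(2) is a $Z_g$-torsor realizing $G \times^{Z_g} X^g \cong I^\psi_X$ $G$-equivariantly, and that the isomorphism \eqref{eqn:cent-iso} is conjugation by the same element $k$ implementing $u$. No deep input is needed beyond Lemmas~\ref{lem:Morita-stack}, \ref{lem:Morita-stack*}, \ref{lem:torsor} and the functoriality of pushforward; the argument is essentially a diagram chase at the level of groupoids.
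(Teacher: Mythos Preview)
Your proposal is correct and follows the same underlying idea as the paper, but packages it at a higher level of abstraction. The paper's proof unpacks $\mu_g$ into its three-stage definition \eqref{moritamap} (pullback $p^*_{X^g}$, invariants ${\rm Inv}^{1\times Z_g}$, and the isomorphism $p_g$), introduces the explicit map $\delta: G\times X^g \to G\times X^h$, $\delta(g',x)=(g'k^{-1},kx)$, checks its $(G\times Z_g)$-equivariance, and then verifies $p_h\circ\delta = p_g$ by direct computation; this yields the commutativity of a large ladder diagram whose outer composite is exactly $\mu_h\circ\theta_X=\mu_g$. You instead invoke the compact description $\mu_g=(\iota^{GZ_g}_{X^g})_*$ from \eqref{eqn:mu-g-*} and reduce everything to the commutativity of a single triangle of stack isomorphisms, checked on points by observing that the two images $(g,x)$ and $(h,kx)=k\cdot(g,x)$ are identified in $[I^\psi_X/G]$.

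The gain of your approach is brevity and conceptual clarity: the ``act by $k$'' observation is exactly the content of the paper's key computation $p_h\circ\delta=p_g$, but you arrive at it without tracking the intermediate categories. The cost is that a referee may ask you to spell out that the point-level check upgrades to a genuine $2$-isomorphism of stack morphisms---this is routine (the $2$-arrow is given by $k$ acting, and naturality follows because $\phi$ is conjugation by the same $k$), but it is precisely the bookkeeping that the paper's explicit equivariance verifications \eqref{eqn:ind-2}--\eqref{eqn:ind-4} make watertight. Either presentation is acceptable; yours is closer in spirit to how \lemref{lem:moritaandrrproper} is proved immediately afterward.
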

\begin{proof}
We let $Z_g$ act on $X^h$ and $G \times Z_g$ act on $G \times X^h$ via 
$\phi$ as 
\[
g_2 \star z = \phi(g_2) \cdot z \ \mbox{and} \
(g_2,z) \star (g' ,x) : =(g_2 g' \phi(z)^{-1},\phi(z)x) =  
(g_2 g' (kzk^{-1})^{-1}, kzk^{-1} x).
\]

Since $Z_h$ acts on $X^h$ and
$G \times Z_h$ acts on $G \times X^h$ by $z \cdot x = zx$ and
$(g_2, z)\cdot (g' ,x) = (g_2g'z^{-1}, zx)$, respectively, it follows that these
two actions correspond to the above $\star$-actions of $Z_g$ and $G \times Z_g$
on $X^h$ and $G \times X^h$ via the isomorphism $\phi$, respectively. 
We thus have a commutative diagram of equivalences
\begin{equation}\label{eqn:ind-1}
\xymatrix{
{\sC oh}^{Z_g}_{X^h}  \ar[r]^-{p_{X^h}^*} \ar[d]_{\phi_*} &  
{\sC oh}^{G \times Z_g}_{G \times X^h} 
\ar[r]^-{{\rm Inv}^{1 \times Z_g}_{G \times X^h}} \ar[d]^{(Id \times \phi)_*} &
{\sC oh}^G_{G \times^{Z_g} X^h} \ar[r]^-{p_h} \ar[d]^{(\ov{Id \times \phi})_*} 
& {\sC oh}^G_{I^{\psi}_X} \ar@{=}[d] \\
{\sC oh}^{Z_h}_{X^h}  \ar[r]^-{p_{X^h}^*} &  
{\sC oh}^{G \times Z_h}_{G \times X^h} 
\ar[r]^-{{\rm Inv}^{1 \times Z_h}_{G \times X^h}} &
{\sC oh}^G_{G \times^{Z_h} X^h} \ar[r]^-{p_h} & {\sC oh}^G_{I^{\psi}_X},}
\end{equation}
where the top horizontal arrow is same as the bottom one and is 
induced by the map
$p_h: G \times X^h \to I^{\psi}_X$, given by $p_h(g', x) = (g'h g'^{-1}, g'x)$. 
This is a $Z_g$-torsor with respect to the $\star$-action.

\vskip .3cm

We now let $\delta: G \times X^g \to G \times X^h$ be the map
$\delta(g',x) = (g'k^{-1}, kx)$. With respect to
the canonical action of $G \times Z_g$ on $G\times X^g$ and $\star$-action on 
$G \times X^h$, we have
\begin{equation}\label{eqn:ind-2}
\begin{array}{lllll}
\delta((g_1,z_1) (g',x)) & = & \delta ((g_1g'z_1^{-1}, z_1 x)) & = &
(g_1g'z_1^{-1} k^{-1} ,k z_1 x ) \\
& = & (g_1g'k^{-1}kz_1^{-1}k^{-1} , k z_1 k^{-1} k  x) & = & 
(g_1g'k^{-1} (kz_1k^{-1})^{-1} , (kz_1k^{-1})kx) \\
& = & (g_1,z_1) \star (g'k^{-1}, kx) & = & (g_1,z_1) \star \delta(g',x)
\end{array} 
\end{equation}
and this shows that $\delta$ is $(G\times Z_g)$-equivariant.
Furthermore, one verifies immediately that the diagram
\begin{equation}\label{eqn:ind-3}
\xymatrix@C1pc{
G \times X^g \ar[r]^-{p_{X^g}} \ar[d]_{\delta}   &    X^g \ar[d]^{u} \\
G \times X^h  \ar[r]^-{p_{X^h}} &    X^h}
\end{equation}
commutes in which all maps are $(G \times Z_g)$-equivariant with respect to
the $\star$-action on the bottom row (where we 
give $X^g$ and $X^h$ the trivial $G$-action).

We next claim that the map $p_g: G \times X^g \to I^{\psi}_X$
is same as the map $p_h \circ \delta$.
But this can be directly checked as follows.
\begin{equation}\label{eqn:ind-4}
\begin{array}{lllll}
 p_h \circ \delta (g_1,x) & = & p_h(g_1 k^{-1} ,kx) & = &
((g_1k^{-1}) h (g_1 k^{-1})^{-1} , g_1 x ) \\
& = & (g_1k^{-1}kgk^{-1}k g_1^{-1} , g_1 x )  & = &
(g_1 g g_1^{-1} , g_1 x) \\
& = & p_g(g_1,x). & &                              
\end{array}
\end{equation}

Combining what we have shown above, we get a  commutative diagram
\begin{equation}\label{diag:thetahandgindependence}
\xymatrix@C1pc{ 
G_*(Z_g,X^g) \ar[r]^-{p^*_{X^g}} \ar[d]_{u*} & 
G_*(G \times Z_g, G \times X^g) \ar[d]^{\delta_*} 
\ar[r]^-{{\rm Inv}^{1 \times Z_g}_{G \times X^g}} & 
G_*(G,G\times^{Z_g} X^g) \ar[d]^{{\bar{\delta}_*}} \ar[r]^-{p_g} & 
G_*(G,I^{\psi}_X) \ar[d]^{Id} \\
G_*(Z_g,X^h) \ar[r]^-{p^*_{X^h}}  \ar[d]_{\phi_*}   & 
G_*(G \times Z_g, G \times X^h)  
\ar[r]^-{{\rm Inv}^{1 \times Z_g}_{G \times X^h}}   
\ar[d]^{(Id \times \phi)_*}     & 
G_*(G,G\times^{Z_g} X^h) \ar[r]^-{p_h}        
\ar[d]^{{\ov{(Id \times \phi)}_*}}  & 
G_*(G,I^{\psi}_X) \ar[d]^{Id} \\ 
G_*(Z_h,X^h) \ar[r]^-{p^*_{X^h}}  & 
G_*(G \times Z_h, G \times X^h)  
\ar[r]^-{{\rm Inv}^{1 \times Z_h}_{G \times X^h}}     & 
G_*(G,G \times^{Z_h} X^h) \ar[r]^-{p_h}  & G_*(G,I^{\psi}_X),}
\end{equation}
where the top squares commute by ~\eqref{eqn:ind-2}, ~\eqref{eqn:ind-3} and
~\eqref{eqn:ind-4}, and the bottom squares commute by ~\eqref{eqn:ind-1}.
Since the composite arrows on the top and the bottom are $\mu_g$ and
$\mu_h$, respectively, and the composite vertical arrow on the left is 
$\theta_X$, the lemma follows.
\end{proof}

\begin{remk}\label{remk:Nnn-proper}
As ${1 \times Z_g}$ acts freely on $G \times X^g$, 
the map ${\rm Inv}^{1 \times Z_g}_{G \times X^g}$ exists and is 
an isomorphism even if $G$-action on $X$ is not proper (see
Remark~\ref{remk:Inv-iso}). In particular, the reader can check that
\lemref{lem:thetahandgindependence} holds without the properness
of the $G$-action on $X$. But we shall not need this general version.
\end{remk}

\begin{lem}\label{lem:moritaandrrproper}
Let $f: X \to Y$ be a $G$-equivariant proper morphism of algebraic spaces
with proper $G$-actions and let $f^I: I_X \to I_Y$ be the induced map
on the inertia spaces.
Let $g \in G$ be a semi-simple element of $G$ with $\psi$ its conjugacy 
class. Let $\mu^X_g$ and $\mu^Y_g$ be the Morita isomorphisms for $X$ and $Y$.
Then there is a commutative diagram
\begin{equation}\label{eqn:Mor-commute}
\xymatrix@C1pc{
G_*(Z_g,X^g) \ar[r]^-{\mu^X_g} \ar[d]_{f_*} & G_*(G,I^{\psi}_X) \ar[d]^{f^I_*} \\
G_*(Z_g,Y^g) \ar[r]^-{\mu^Y_g} & G_*(G, I^{\psi}_Y).}
\end{equation}
\end{lem}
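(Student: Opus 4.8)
The plan is to reduce \eqref{eqn:Mor-commute} to the commutativity of a square of maps of quotient stacks, using the description of the Morita equivalence as a pushforward given in \eqref{eqn:mu-g-*}, and then to invoke \lemref{lem:Morita-stack*}(3) together with an elementary check on geometric points. First I would record the required geometric facts. Since $f$ is $G$-equivariant it carries $X^g$ into $Y^g$, and since $X^g \subset X$ is closed, the restriction $f^g := f|_{X^g}: X^g \to Y^g$ is a proper $Z_g$-equivariant morphism of algebraic spaces with proper $Z_g$-actions. Likewise $f^I: I_X \to I_Y$ satisfies ${\rm fix}_Y \circ f^I = {\rm fix}_X$ (on points, $(g',x) \mapsto (g', f(x))$ and $g'f(x) = f(g'x) = f(x)$), hence carries $I^{\psi}_X$ into $I^{\psi}_Y$; write $f^{I,\psi}: I^{\psi}_X \to I^{\psi}_Y$ for the restriction, which is again proper. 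By \lemref{lem:torsor} and \propref{prop:direcsumdecomp}, the restriction of $f^I_*$ to the summand $G_*(G, I^{\psi}_X)$ is exactly $f^{I,\psi}_*$, so it suffices to prove the lemma with $f^I_*$ replaced by $f^{I,\psi}_*$.

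Next, recall from \eqref{eqn:mu-g-*} that $\mu^X_g$ is the isomorphism on $K$-theory induced by the composite equivalence of stacks
\[
[X^g/Z_g] \xrightarrow{\ \iota^{GZ_g}_{X^g}\ } [(G \times^{Z_g} X^g)/G] \xrightarrow{\ p_g\ } [I^{\psi}_X/G],
\]
where $p_g$ also denotes the isomorphism of stacks induced by the $G$-equivariant isomorphism $p_g$ of \lemref{lem:torsor}(2), and similarly for $\mu^Y_g$. By the covariant functoriality of proper pushforward in equivariant $K$-theory (the Thomason--Trobaugh construction used in \S~\ref{sec:EKT}), to prove \eqref{eqn:Mor-commute} it is enough to show that the diagram of stacks
\[
\xymatrix@C1pc{
[X^g/Z_g] \ar[r]^-{\iota^{GZ_g}_{X^g}} \ar[d]_{\bar{f^g}} & [(G\times^{Z_g}X^g)/G] \ar[r]^-{p_g} \ar[d]^{\overline{\id\times f^g}} & [I^{\psi}_X/G] \ar[d]^{\bar{f^{I,\psi}}} \\
[Y^g/Z_g] \ar[r]^-{\iota^{GZ_g}_{Y^g}} & [(G\times^{Z_g}Y^g)/G] \ar[r]^-{p_g} & [I^{\psi}_Y/G] }
\]
commutes, where $\bar{f^g}$ and $\bar{f^{I,\psi}}$ are the induced maps of quotient stacks and the middle vertical arrow is induced by the $(G\times Z_g)$-equivariant map $\id_G\times f^g: G\times X^g \to G\times Y^g$ (which descends to the $Z_g$-quotients). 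The left square is precisely the instance of \lemref{lem:Morita-stack*}(3) for the subgroup $Z_g \subset G$ and the $Z_g$-equivariant map $f^g$. For the right square, recall from \lemref{lem:torsor}(2) and \eqref{eqn:Inert-1} that $p_g$ is induced by $(g',x) \mapsto (g'gg'^{-1}, g'x)$; then for a geometric point $(g',x)$ of $G \times X^g$,
\[
f^{I,\psi}\bigl(p_g(g',x)\bigr) = (g'gg'^{-1}, f(g'x)) = (g'gg'^{-1}, g' f^g(x)) = p_g\bigl(g', f^g(x)\bigr),
\]
using the $G$-equivariance $f(g'x) = g' f(x)$; since $p_g$ is an isomorphism, this gives commutativity of the right square. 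Chaining the two squares with functoriality of pushforward yields \eqref{eqn:Mor-commute}.

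Finally, I would remark that the only genuinely non-formal inputs here are \lemref{lem:Morita-stack*}(3) and the point-check displayed above; everything else is bookkeeping. Alternatively, one can avoid \eqref{eqn:mu-g-*} entirely and argue directly with the three-step factorization of \defref{defn:Mor-fun-0}: the square for the pullback $p^*_{(-)}$ commutes by flat base change along the Cartesian square $G\times X^g \to G\times Y^g$ lying over $f^g$; the square for the invariants functor ${\rm Inv}^{1\times Z_g}$ is exactly the commutative diagram \eqref{eqn:invariants-1} of \thmref{thm:invariants} applied to the normal subgroup $1\times Z_g \subset G\times Z_g$ and the proper map $\id_G\times f^g$; and the square for $p_g$ is the point-check above. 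So the one substantive ingredient is the compatibility of the functor of invariants with proper pushforward, which has already been established, and I expect that to be the only place where any care is needed; the present lemma is then a matter of assembling these three commuting squares.
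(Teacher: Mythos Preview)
Your proposal is correct and follows essentially the same approach as the paper: both reduce \eqref{eqn:Mor-commute} to the commutativity of the square of quotient stacks $[X^g/Z_g] \to [I^{\psi}_X/G]$ over $[Y^g/Z_g] \to [I^{\psi}_Y/G]$ via the identification \eqref{eqn:mu-g-*}, and then invoke functoriality of proper pushforward. The paper simply asserts this stack square commutes, whereas you spell out the verification by splitting off the $p_g$-isomorphism and applying \lemref{lem:Morita-stack*}(3) plus a point-check; your alternative three-step argument is also valid and in fact appears (commented out) in the paper's source.
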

\begin{proof}
It is well known and an elementary exercise that $f^I$ is proper 
if and only if $f$ is proper. 
We have the commutative diagram
\begin{equation}\label{eqn:Mor-commute-0}
\xymatrix@C1pc{
[{X^g}/{Z_g}] \ar[r]^{\iota^{GZ_g}_{X^g}} \ar[d] & [{I^{\psi}_X}/G] \ar[d] \\
[{Y^g}/{Z_g}] \ar[r]^-{\iota^{GZ_g}_{Y^g}} & [{I^{\psi}_Y}/G],}
\end{equation}
where the all maps are proper. The lemma now follows from the functoriality of
proper push-forward on $K$-theory of coherent sheaves because
$\mu^X_g$ and $\mu^Y_g$ are just the push-forward on $K$-theory induced by
$\iota^{GZ_g}_{X^g}$ and $\iota^{GZ_g}_{Y^g}$, 
respectively (see ~\eqref{eqn:mu-g-*}).
\end{proof}

\begin{lem}\label{lem:moritaandrrproper-loc}
Let $\iota: X \inj Y$ be a $G$-equivariant closed immersion of algebraic spaces
with proper $G$-actions with open complement $U$.
Let $g \in G$ be a semi-simple element of $G$ with $\psi$ its conjugacy 
class. Then there is a commutative diagram of 
$R(Z_g)$-modules
\begin{equation}\label{eqn:Mor-commute-1}
\xymatrix@C1pc{
G_{i+1}(Z_g,U^g) \ar[r] \ar[d]_{\mu_g} & G_i(Z_g, X^g) \ar[r]^-{\iota_*} 
\ar[d]^{\mu_g} & 
G_i(Z_g, Y^g) \ar[r] \ar[d]^{\mu_g} & 
G_i(Z_g,U^g) \ar[r] \ar[d]^{\mu_g} & G_{i-1}(Z_g,X^g) \ar[d]^{\mu_g} \\
G_{i+1}(G,I^{\psi}_U) \ar[r]  & G_i(G,I^{\psi}_X) \ar[r]^-{\iota_*}  & 
G_i(G,I^{\psi}_Y) \ar[r] & 
G_i(G,I^{\psi}_U) \ar[r]  & G_{i-1}(G,I^{\psi}_X)}
\end{equation}
commutes with rows being exact.
\end{lem}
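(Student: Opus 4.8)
The statement asserts the commutativity of a ladder between two localization long exact sequences, with the vertical maps given by the Morita isomorphisms $\mu_g$. The plan is to reduce everything to the functoriality of the pushforward and (flat) pullback at the level of $K$-theory spectra, using the interpretation $\mu_g = (\iota^{GZ_g}_{(-)^g})_*$ from~\eqref{eqn:mu-g-*}. First I would observe that the fixed-point construction $X \mapsto X^g$ is compatible with the open-closed decomposition: since $X^g$ is the maximal $g$-fixed closed subspace and open/closed immersions are monomorphisms, we have $U^g = U \cap X^g$ is open in $Y^g$ with closed complement exactly $X^g$. Hence the top row is the localization long exact sequence in equivariant $K$-theory (see~\cite[Proposition~A.1]{AK1}) for the pair $(Y^g, X^g, U^g)$ with the $Z_g$-action, and the bottom row is the localization sequence for $(I^{\psi}_Y, I^{\psi}_X, I^{\psi}_U)$ with the $G$-action; both are exact. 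This also uses \lemref{lem:torsor} to identify $I^{\psi}_{(-)}$ with $G \times^{Z_g} (-)^g$, and \lemref{lem:Morita-stack*} to see that the closed immersion $I^{\psi}_X \inj I^{\psi}_Y$ corresponds under these identifications to $[{X^g}/{Z_g}] \inj [{Y^g}/{Z_g}]$ composed with the isomorphisms $\iota^{GZ_g}_{(-)^g}$.

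The second step is to verify the commutativity of each of the four squares. The two squares built from pushforward maps — the one involving $\iota_* \colon G_i(Z_g, X^g) \to G_i(Z_g, Y^g)$ and the boundary square $G_{i-1}$ — commute by exactly the same argument as \lemref{lem:moritaandrrproper}: we have a commutative square of stacks
\begin{equation*}
\xymatrix@C1pc{
[{X^g}/{Z_g}] \ar[r]^-{\iota^{GZ_g}_{X^g}} \ar[d]_{\bar\iota} & [{I^{\psi}_X}/G] \ar[d]^{\iota^I} \\
[{Y^g}/{Z_g}] \ar[r]^-{\iota^{GZ_g}_{Y^g}} & [{I^{\psi}_Y}/G]}
\end{equation*}
(part~(3) of \lemref{lem:Morita-stack*} gives this compatibility), and since all four maps are representable proper morphisms (the horizontal ones are isomorphisms, the vertical ones closed immersions), the functoriality $(\iota^I \circ \iota^{GZ_g}_{X^g})_* = (\iota^{GZ_g}_{Y^g} \circ \bar\iota)_*$ of proper pushforward on $K$-theory of coherent sheaves gives the commutativity. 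The square involving the restriction map $G_i(Z_g, Y^g) \to G_i(Z_g, U^g)$ commutes because restriction to an open is a flat pullback, and $\mu_g$ commutes with flat pullback: this follows from the functoriality of $j^*$ for the open immersion $j$ and the fact that the Morita construction~\eqref{moritamap} is built from a flat pullback $p^*_{X^g}$ followed by an invariants pushforward, together with base change over the Cartesian square relating $U^g \to Y^g$ and $I^{\psi}_U \to I^{\psi}_Y$.

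The fourth square, connecting the two boundary maps $\partial\colon G_{i+1}(-,(-)^g_U) \to G_i(-,(-)^g_X)$, is the one I expect to be the main obstacle, since it mixes the pushforward description of $\mu_g$ with the naturality of the connecting homomorphism. The cleanest way around this is to work at the level of spectra rather than homotopy groups: the localization sequence comes from a homotopy fiber sequence $G([{X^g}/{Z_g}]) \to G([{Y^g}/{Z_g}]) \to G([{U^g}/{Z_g}])$, and the Morita isomorphism is induced by a zig-zag of exact functors (or, via~\eqref{eqn:mu-g-*}, by a single pushforward along an isomorphism of stacks). Since the square of stacks $[{U^g}/{Z_g}] \inj [{Y^g}/{Z_g}] \hookleftarrow [{X^g}/{Z_g}]$ maps, via the isomorphisms $\iota^{GZ_g}_{(-)^g}$ and compatibly with the immersions (again \lemref{lem:Morita-stack*}(3)), to $[{I^{\psi}_U}/G] \inj [{I^{\psi}_Y}/G] \hookleftarrow [{I^{\psi}_X}/G]$, we get a map of homotopy fiber sequences, whence a map of the associated long exact sequences, which is precisely the asserted commutativity of all four squares including the boundary one. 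I would phrase the final write-up this way: exhibit the morphism of fiber sequences and quote naturality of the long exact sequence of homotopy groups, so that the verification of the individual squares in the display reduces to the square-of-stacks identity already used in \lemref{lem:moritaandrrproper}, together with compatibility of $\mu_g$ with flat pullback. The exactness of the rows is then immediate from the fiber sequence, completing the proof.
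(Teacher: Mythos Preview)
Your approach is correct and essentially the same as the paper's: both argue at the level of $K$-theory spectra by exhibiting a map of localization fiber sequences, so that commutativity of the long exact sequence ladder (including the boundary squares) follows from naturality. The only cosmetic difference is that the paper decomposes $\mu_g$ into its three constituent exact functors $p^{(-)}_g \circ {\rm Inv}^{1 \times Z_g}_{G \times (-)} \circ p^*_{(-)}$ and verifies that each one separately preserves the Serre-subcategory inclusion ${\sC oh}^{(-)}_{(-)_X} \hookrightarrow {\sC oh}^{(-)}_{(-)_Y}$, whereas you use the one-step description $\mu_g = (\iota^{GZ_g}_{(-)^g})_*$ from~\eqref{eqn:mu-g-*} together with \lemref{lem:Morita-stack*}(3); the underlying mechanism is identical. (Your middle paragraph slightly mislabels one of the boundary squares as a ``pushforward square,'' but since your final paragraph correctly handles all four squares at once via the map of fiber sequences, this is harmless.)
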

\begin{proof}
It is clear that $I_X \inj I_Y$ is a closed immersion with complement $I_U$.
We have the following diagram of abelian categories
\begin{equation}\label{eqn:Mor-commute-2}
\xymatrix@C1pc{
{\sC oh}^{Z_g}_{X^g} \ar[r]^-{\iota_*} \ar[d]_{p^*_{X^g}} & 
{\sC oh}^{Z_g}_{Y^g} \ar[d]^{p^*_{Y^g}} &
{\sC oh}^{G \times Z_g}_{G \times X^g} \ar[r]^-{\iota_*}  
\ar[d]_{{\rm Inv}^{1 \times Z_g}_{G \times X}} &
{\sC oh}^{G \times Z_g}_{G \times Y^g} \ar[d]^{{\rm Inv}^{1 \times Z_g}_{G \times X}} &
{\sC oh}^G_{G \times^{Z_g} X^g}  \ar[r]^-{\iota_*} 
\ar[d]_{p^X_g} & {\sC oh}^G_{G \times^{Z_g} Y^g} \ar[d]^{p^Y_g} \\
{\sC oh}^{G \times Z_g}_{G \times X^g} \ar[r]^-{\iota_*} &
{\sC oh}^{G \times Z_g}_{G \times Y^g} &   
{\sC oh}^G_{G \times^{Z_g} X^g}  \ar[r]^-{\iota_*} & 
{\sC oh}^G_{G \times^{Z_g} Y^g} &
{\sC oh}^G_{I^{\psi}_{X}}  \ar[r]^-{\iota_*} & 
{\sC oh}^G_{I^{\psi}_{Y}}.}
\end{equation}

It is shown in \lemref{lem:moritaandrrproper} that all the squares in
~\eqref{eqn:Mor-commute-2} are commutative. 
Moreover, in each square, every horizontal functor
is an inclusion of a localizing Serre subcategory whose localization is
canonically equivalent to the analogous abelian category for 
$U$ via restriction.
In particular, every square gives rise to a commutative diagram
of the localization sequences of $K$-theory spectra.
This implies that each of the maps $p^*_{(-)}, 
{\rm Inv}^{1 \times Z_g}_{G \times (-)}$ and $p^{(-)}_g$ commutes with 
the localization sequences. It follows that the composite
$\mu_g = p^{(-)}_g \circ {\rm Inv}^{1 \times Z_g}_{G \times (-)} \circ  p^*_{(-)}$
commutes with the localization sequence. Equivalently, 
~\eqref{eqn:Mor-commute-1} commutes.
\end{proof}

\subsection{The map $\omega_g$}\label{sec:Lie-alg}
We continue with the above notations. 
Let $\psi \subset G$ be a semi-simple conjugacy class and let $g \in \psi$
with centralizer $Z_g$.
We have seen in ~\eqref{moritamap} that the Morita map
$\mu_g: G_*(Z_g, X^g) \to G_*(G, I^{\psi}_X)$ is an $R(G)$-linear
isomorphism, where $R(G)$ acts on $G_*(Z_g, X^g)$ 
via the restriction $R(G) \to R(Z_g)$.
This map induces a similar isomorphism of the localizations
$\mu_g: G_*(Z_g, X^g)_{\fm_{\Phi}} \xrightarrow{\simeq} 
G_*(G, I^{\psi}_X)_{\fm_{\Phi}}$ at the maximal ideals of 
$R(G)$ corresponding the semi-simple conjugacy classes $\Phi \in S_G$.

\begin{enumerate}
\item
If we let $\Phi = \psi$, we have 
$\psi \cap Z_g = \{\{g\}, \psi_1, \cdots , \psi_r\}$.
Letting $G_*(Z_g, X^g)_{nc_{\psi}} = \stackrel{r}{\underset{i = 1}\oplus}
G_*(Z_g, X^g)_{\fm_{\psi_i}}$, we get
\begin{equation}\label{eqn:Omega**-0}
G_*(Z_g, X^g)_{\fm_g} \oplus G_*(Z_g, X^g)_{nc_{\psi}}
\xrightarrow{\simeq} G_*(G, I^{\psi}_X)_{\fm_{\psi}}.
\end{equation}

\item
If we let $\Phi = \{e\}$, we get $\Phi \cap Z_g = \{e\}$
and hence an isomorphism
\begin{equation}\label{eqn:Omega**-0-e} 
G_*(Z_g, X^g)_{\fm_1} \xrightarrow{\simeq} G_*(G, I^{\psi}_X)_{\fm_1}.
\end{equation}
\end{enumerate}

We shall denote this isomorphism by $\mu^1_g$. 
One should keep in mind that the localization on the left side of $\mu^1_g$
is at the augmentation ideal of $R(Z_g)$ while on the right, it is
at the augmentation ideal of $R(G)$.

We shall let
$\mu^{\psi}_g$ denote the composite map
\begin{equation}\label{eqn:Omega**-1}
\mu^{\psi}_g: G_*(Z_g, X^g)_{\fm_g} \inj G_*(Z_g, X^g)_{\fm_{\psi}} 
\xrightarrow{\simeq} G_*(G, I^{\psi}_X)_{\fm_{\psi}}. 
\end{equation} 

Note that the inclusion map in ~\eqref{eqn:Omega**-1}
is $R(Z_g)_{\fm_{\psi}}$-linear while the isomorphism is $R(G)_{\fm_{\psi}}$-linear.
In particular, $\mu^{\psi}_g$ is  $R(G)_{\fm_{\psi}}$-linear.

\begin{defn}\label{defn:OMEGA}
We let $\omega_g$ be the composite map
\begin{equation}\label{eqn:w_g-0}
\omega_g = \mu^{\psi}_* \circ \mu^{\psi}_g: G_*(Z_g, X^g)_{\fm_g} 
\to G_*(G, I^{\psi}_X)_{\fm_{\psi}} \to G_*(G, X_{\psi})_{\fm_{\psi}}.
\end{equation}
\end{defn}

We now show that $\omega_g$ is an isomorphism of $R(G)_{\fm_{\psi}}$-modules
and is compatible with the choice of $g \in \psi$.
Recall that the conjugation action of
$G$ on itself fixes the identity element of $G$ and hence the 
tangent space of $G$ at its identity element (which is same as its 
Lie algebra $\fg$) is naturally a representation of $G$. 
This the called the adjoint representation of $G$.
If $\psi$ is a semi-simple conjugacy class in $G$ and $g \in \psi$,
we see that ${\fg}/{\fz_g}$ is representation of $Z_g$.
We let $\lambda_{-1}((\fg/\fz_g)^*) \in R(Z_g)$ be the top Chern  
class of the dual representation $(\fg/\fz_g)^*$, given by 
$\lambda_{-1}((\fg/\fz_g)^*) = \sum_i(-1)^i[\wedge^i((\fg/\fz_g)^*)]$.
We shall denote $\lambda_{-1}((\mathfrak{g}/\mathfrak{z}_g)^{*})$ by $l_g$
in the rest of this text.

If we let $h = kgk^{-1}$, the conjugation automorphism 
$\phi:G \to G$ (sending $g'$ to $kg'k^{-1}$) takes $Z_g$ onto $Z_h$.
In particular, $\phi_*: R(G) \xrightarrow{\simeq} R(G)$ 
restricts to an isomorphism $R(Z_g) \xrightarrow{\simeq} R(Z_h)$
such that 
\begin{equation}\label{eqn:phi-rep}
\phi_*(\fg) = \fg, \ \ \phi_*(\fz_g) = \fz_h \ \ \mbox{and} \ \
\phi_*(\fm_g) = \fm_h.
\end{equation}

To show that $\omega_g$ is an isomorphism when $X$ is smooth, we need
the following representation theoretic fact.

\begin{lem}\label{lem:Adj-rep}
The element $l_g$ is invertible in $R(Z_g)_{\fm_g}$.
\end{lem}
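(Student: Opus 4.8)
The plan is to show that $l_g = \lambda_{-1}((\mathfrak{g}/\mathfrak{z}_g)^*)$ does not lie in the maximal ideal $\mathfrak{m}_g = \ker(\chi^g)$ of $R(Z_g)$, which is precisely the condition for it to become a unit in the local ring $R(Z_g)_{\mathfrak{m}_g}$. Equivalently, I must verify that $\chi^g(l_g) \ne 0$, i.e. the character of the virtual representation $\lambda_{-1}((\mathfrak{g}/\mathfrak{z}_g)^*)$, evaluated at the element $g$, is nonzero.

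First I would recall the standard identity: for any finite-dimensional representation $V$ of a group containing $g$ as a semi-simple element of finite order, one has $\chi^g(\lambda_{-1}(V^*)) = \det(1 - g^{-1} \mid V)$, where $g^{-1}$ acts via the representation on $V$ (this is just the expansion $\sum_i (-1)^i \operatorname{tr}(g \mid \wedge^i V^*) = \prod_j (1 - \beta_j)$ where $\beta_j$ are the eigenvalues of $g$ on $V^*$, equivalently $\overline{\alpha_j} = \alpha_j^{-1}$ for the eigenvalues $\alpha_j$ of $g$ on $V$, since $g$ has finite order). Applying this with $V = \mathfrak{g}/\mathfrak{z}_g$ reduces the claim to showing that $1$ is \emph{not} an eigenvalue of $g$ acting by the adjoint representation on $\mathfrak{g}/\mathfrak{z}_g$. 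But the fixed subspace $\mathfrak{g}^g$ of the adjoint action of $g$ on $\mathfrak{g}$ is exactly the Lie algebra of the centralizer $Z_g$ — this holds because $g$ is semi-simple, so $\mathfrak{g}$ decomposes as $\mathfrak{g}^g \oplus (\text{sum of nontrivial eigenspaces})$ and $\operatorname{Lie}(Z_g) = \mathfrak{g}^g = \mathfrak{z}_g$. Hence the induced action of $g$ on $\mathfrak{g}/\mathfrak{z}_g = \mathfrak{g}/\mathfrak{g}^g$ has no nonzero fixed vectors, so $\det(1 - g^{-1} \mid \mathfrak{g}/\mathfrak{z}_g) \ne 0$, giving $\chi^g(l_g) \ne 0$.

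From $\chi^g(l_g) \ne 0$ I conclude $l_g \notin \mathfrak{m}_g$, and since $R(Z_g)_{\mathfrak{m}_g}$ is a local ring with maximal ideal $\mathfrak{m}_g R(Z_g)_{\mathfrak{m}_g}$, any element outside $\mathfrak{m}_g$ is a unit there; this is exactly the assertion. The one subtlety worth spelling out is that $g$ being semi-simple of finite order (which follows from the properness assumptions in the ambient setup, cf. the proof of \propref{prop:finite conjugacy class}) is what guarantees $\mathfrak{g} = \mathfrak{g}^g \oplus \mathfrak{g}^{g,\perp}$ with the complement carrying no trivial $g$-eigenline, and that $\operatorname{Lie}(Z_g) = \mathfrak{g}^g$; one can instead argue more robustly that $g$ being semi-simple in the algebraic-group sense means $\operatorname{Ad}(g)$ is a semi-simple endomorphism of $\mathfrak{g}$, and that $\ker(\operatorname{Ad}(g) - 1) = \mathfrak{z}_g$ by differentiating the orbit map, so $\operatorname{Ad}(g) - 1$ is invertible on $\mathfrak{g}/\mathfrak{z}_g$.

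The main obstacle — really the only place that needs care rather than being routine — is the identification $\operatorname{Lie}(Z_g) = \ker(\operatorname{Ad}(g) - \operatorname{id} \mid \mathfrak{g})$ together with semi-simplicity of $\operatorname{Ad}(g)$ ensuring that $\operatorname{Ad}(g) - \operatorname{id}$ is actually invertible (not merely injective) on the quotient $\mathfrak{g}/\mathfrak{z}_g$. In characteristic zero over $\C$ this is clean: $g$ semi-simple implies $\operatorname{Ad}(g)$ diagonalizable, the $1$-eigenspace is $\mathfrak{z}_g$, and the restriction to the sum of the other eigenspaces is invertible, so the determinant computation goes through without issue. I would present this determinant/eigenvalue computation as the heart of the proof and cite the Lie-theoretic fact $\operatorname{Lie}(Z_g) = \mathfrak{g}^{\operatorname{Ad}(g)}$ as standard (e.g. from Borel or Springer), then conclude with the one-line local-ring argument.
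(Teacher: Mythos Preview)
Your proof is correct and follows essentially the same approach as the paper: both reduce invertibility in the local ring to the nonvanishing of the character at $g$, use the product/determinant formula for $\lambda_{-1}$, and appeal to the Lie-theoretic identification $\mathfrak{z}_g = \mathfrak{g}^{\operatorname{Ad}(g)}$ (cited from Borel) to conclude that $g$ has no nonzero fixed vectors on $\mathfrak{g}/\mathfrak{z}_g$. The only cosmetic difference is that you package the computation as a determinant while the paper writes out the product over one-dimensional eigenspaces.
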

\begin{proof}
Using \propref{prop:conjugacyclassandmaximalideal}, it suffices to show that 
the character of $l_g$ does not vanish at $g$.
Since $g$ is semi-simple, its action on $V = {\fg}/{\fz_g}$ is diagonalizable.
In particular, we have $l_g = \stackrel{d}{\underset{i = 1}\prod} (1 - [V_i])$,
where each $V_i$ is an one-dimensional representation of $\<g\>$,
on which the action is by a character.
It is therefore enough to show that $V^*$ has no non-zero vector
which is left invariant by $g$. 
Since every representation of $\<g\>$ is completely reducible,
it is equivalent to show that $V$ has no non-zero $g$-invariant vector.

Now, it follows from \cite[\S~9.1 ($*$)]{Borel} that
$\fz_g = \{v \in \fg| {\rm Ad}_g(v) = v\}$, where ${\rm Ad}_g$ is the
action of $g$ on $\fg$ under the adjoint representation.
So we can write $\fg = V \oplus \fz_g$ as a representation of $\<g\>$
and it is clear that $V^g = \{0\}$. 
\end{proof}

\begin{lem}\label{lem: w_g}
The map 
\begin{equation}\label{eqn:w_g-0*}
\omega_g: G_*(Z_g,X^g)_{\fm_g} \to G_*(G,X_{\psi})_{\fm_{\psi}}
\end{equation}
is an isomorphism. Moreover, we have $\omega_g = \omega_h \circ \theta_X$
if we let $h = kgk^{-1} \in \psi$.
\end{lem}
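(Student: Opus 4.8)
The plan is to treat the two assertions in turn, the compatibility $\omega_g=\omega_h\circ\theta_X$ being essentially formal and the isomorphism statement the real content. For the compatibility, I would first note that the inertia space $I^\psi_X$, the closed subspace $X_\psi=GX^g$ and the finite surjective map $\mu^\psi\colon I^\psi_X\to X_\psi$ depend only on $\psi$, not on the chosen representative $g\in\psi$, so the \emph{same} map $\mu^\psi_*$ enters the definition \eqref{eqn:w_g-0} of both $\omega_g$ and $\omega_h$. Hence it suffices to show $\mu^\psi_g=\mu^\psi_h\circ\theta_X$ after localization. Now $\theta_X=\phi_*\circ u_*$ is $R(G)$-linear, because the two restrictions $\res^G_{Z_g}$ and $\res^G_{Z_h}$ of a $G$-representation are identified by $\phi_*$ up to an inner automorphism of $G$, which acts trivially on $R(G)$; and by \eqref{eqn:phi-rep} it carries $\fm_g$ to $\fm_h$ while fixing $\fm_\psi$, so it restricts to an isomorphism $G_*(Z_g,X^g)_{\fm_g}\xrightarrow{\simeq}G_*(Z_h,X^h)_{\fm_h}$ of the summands appearing in \eqref{eqn:Omega**-1}. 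Localizing the identity $\mu_h\circ\theta_X=\mu_g$ of \lemref{lem:thetahandgindependence} at the maximal ideal $\fm_\psi$ of $R(G)$ and restricting to these summands gives $\mu^\psi_g=\mu^\psi_h\circ\theta_X$, and post-composing with $\mu^\psi_*$ yields $\omega_g=\omega_h\circ\theta_X$.

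For the isomorphism I would first reduce the target: since $j^\psi\colon X_\psi\inj X$ induces an isomorphism $G_*(G,X_\psi)_{\fm_\psi}\xrightarrow{\simeq}G_*(G,X)_{\fm_\psi}$ by \thmref{thm:closedimmiso}, and $j^\psi\circ\mu^\psi$ is the restriction $\rho$ of the structure map $\phi$ of the inertia space to $I^\psi_X$, it is enough to prove that $\widetilde{\omega}_g:=j^\psi_*\circ\omega_g=\rho_*\circ\mu^\psi_g$ is an isomorphism onto $G_*(G,X)_{\fm_\psi}$. I would then do the case $X$ smooth, which is the heart of the matter. Here $X^g$ and $I^\psi_X\cong G\times^{Z_g}X^g$ are smooth, $i^g\colon X^g\inj X$ is a regular $Z_g$-equivariant immersion, and one has the refined Gysin pullback $(i^g)^!\colon G_*(Z_g,X)\to G_*(Z_g,X^g)$. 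Composing $\widetilde{\omega}_g$ (through the Morita identification $\mu_g$) with $\res^G_{Z_g}$, with $(i^g)^!$, and with the projection onto the $\fm_g$-summand, the self-intersection/excess-intersection formula identifies the resulting endomorphism of $G_*(Z_g,X^g)_{\fm_g}$ with multiplication by $\lambda_{-1}(N^\vee_{X^g/X})$; the $g$-fixed part of $N_{X^g/X}$ vanishes by definition of $X^g$, and the subbundle of orbit directions is the trivial bundle with fibre $\fg/\fz_g$, so this class is $l_g$ times a $\lambda_{-1}$-class of a bundle on which $g$ acts with no trivial summand. The factor $l_g$ is a unit in $R(Z_g)_{\fm_g}$ by \lemref{lem:Adj-rep}, and the remaining factor acts invertibly on $G_*(Z_g,X^g)_{\fm_g}$ by the argument of \lemref{lem:Adj-rep}; an analogous computation for the reverse composite then shows $\widetilde{\omega}_g$ is an isomorphism.

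For general $X$ I would argue by induction on $\dim X$: letting $U\subseteq X$ be the $G$-invariant dense open smooth locus and $Z=X\setminus U$ its closed complement (of smaller dimension), \lemref{lem:moritaandrrproper-loc}, together with the compatibility of finite push-forward and of \thmref{thm:closedimmiso} with localization sequences, shows that $\omega_g$ is a morphism of the long exact localization sequences attached to $(Z,X,U)$; the smooth case handles $U$, the inductive hypothesis handles $Z$, and the five lemma gives the result for $X$, the base case $\dim X=0$ being smooth. The main obstacle is the smooth case, specifically carrying out the self-intersection computation and verifying that the Euler class that appears factors as $l_g$ times a class that becomes a unit after localizing at $\fm_g$; this is where \lemref{lem:Adj-rep} is used and where the normal bundle of $X^g$ in $X$ must be split into its orbit-direction part and its genuinely transverse part. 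By comparison the dévissage to the smooth case and the representative-independence are routine, once the compatibilities of \lemref{lem:moritaandrrproper}, \lemref{lem:moritaandrrproper-loc} and \thmref{thm:closedimmiso} with push-forward and localization are invoked.
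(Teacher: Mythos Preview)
Your overall architecture matches the paper's: the compatibility $\omega_g=\omega_h\circ\theta_X$ via \lemref{lem:thetahandgindependence} and \eqref{eqn:phi-rep}, and the reduction of the isomorphism to the smooth case by Noetherian (equivalently, dimension) induction using \lemref{lem:moritaandrrproper-loc} and the localization sequence, are exactly what the paper does.

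The gap is in your smooth case. The paper does \emph{not} carry out a self-intersection computation here; it invokes the nonabelian localization theorem of Edidin--Graham \cite[Theorems~5.3,~5.9]{EG4}, which gives directly that $j^\psi_*\circ\mu^\psi_*\circ\mu^\psi_g\circ l_g$ is an isomorphism, and then \lemref{lem:Adj-rep} and \thmref{thm:closedimmiso} finish. Your attempt to replace this citation by a direct argument does not go through as written. The composite $\mathrm{proj}_{\fm_g}\circ (i^g)^!\circ\res^G_{Z_g}\circ\widetilde{\omega}_g$ is not multiplication by $\lambda_{-1}(N^\vee_{X^g/X})$: the point is that $\res^G_{Z_g}\circ\mu_g$ is \emph{not} the identity on $G_*(Z_g,X^g)$ but rather pullback along a map $[I^\psi_X/Z_g]\to[X^g/Z_g]$ with fibre $G/Z_g$, so after $\rho_*$ and $(i^g)^!$ one is computing an excess intersection over this nontrivial base. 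Untangling this is precisely the content of the nonabelian localization theorem, and the projection to the $\fm_g$-summand is essential there in a way your sketch does not capture. Your subbundle $\fg/\fz_g\hookrightarrow N_{X^g/X}$ is correct (this is the orbit-direction argument), but it enters the actual proof as the discrepancy between $N_f$ for $f\colon I^\psi_X\to X$ and $N_{X^g/X}$ under Morita, not as a factorization of a self-intersection class. Finally, ``an analogous computation for the reverse composite'' is too vague: you have not named a candidate left inverse to pair with your candidate right inverse, and without both you cannot conclude $\widetilde{\omega}_g$ is an isomorphism.

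In short: either cite \cite{EG4} for the smooth case as the paper does, or be prepared to reprove nonabelian localization in full; the two-line excess-intersection sketch does not suffice.
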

\begin{proof}
We first show that $w_g$ is an isomorphism. Suppose that $X$ is smooth.
In this case, it follows from \cite[Theorems~5.3, 5.9]{EG4} that the map
$j^{\psi}_* \circ \mu^{\psi}_* \circ \mu^{\psi}_g \circ l_g: 
G_*(Z_g,X^g)_{\fm_g} \to G_*(G,X)_{\fm_{\psi}}$ is an isomorphism, 
where $j^{\psi}: X_{\psi} \inj X$ is the inclusion and 
$l_g: G_*(Z_g,X^g)_{\fm_g} \to G_*(Z_g,X^g)_{\fm_g}$
is multiplication by $l_g \in R(Z_g)_{\fm_g}$. We conclude from
\thmref{thm:closedimmiso} and \lemref{lem:Adj-rep} that
$\omega_g = \mu^{\psi}_* \circ \mu^{\psi}_g$ is an isomorphism.
We prove the general case by the Noetherian induction.

If $X$ is a reduced $G$-space, there exists a $G$-invariant dense open subspace 
$U \subset X$ which is a smooth scheme.
Letting $\iota: Y = X \setminus U \inj X$, we get a diagram 
\[
\xymatrix@C1pc{
G_{i+1}(Z_g,U^g)_{\fm_g} \ar[r] \ar[d]_{\omega_g} & G_i(Z_g,Y^g)_{\fm_g} \ar[r] 
\ar[d]^{\omega_g} & 
G_i(Z_g, X^g)_{\fm_g} \ar[r] \ar[d]^{\omega_g} & 
G_i(Z_g,U^g)_{\fm_g} \ar[r] \ar[d]^{\omega_g} & G_{i-1}(Z_g,Y^g)_{\fm_g} 
\ar[d]^{\omega_g} \\
G_{i+1}(G,U_{\psi})_{\fm_{\psi}} \ar[r] & G_i(G,Y_{\psi})_{\fm_{\psi}} \ar[r]  & 
G_i(G, X_{\psi})_{\fm_{\psi}} \ar[r]  & G_i(G,U_{\psi})_{\fm_{\psi}} 
\ar[r]& G_{i-1}(G,Y_{\psi})_{\fm_{\psi}},}
\]
where the rows are the long exact localization sequences 
of $R(G)_{\fm_g}$-modules (see \cite[Theorem~2.7]{TO1}).
The first and the fourth vertical arrows (from left) are isomorphisms since $
U$ is smooth. The second and the fifth vertical arrows are isomorphisms by
the Noetherian induction. The lemma would now follow if we
know that all the squares in this diagram commute.
All we are therefore left with showing is the commutativity of the
diagram
\begin{equation}\label{diag:w_gfunctorialitylocalization}
\xymatrix@C1pc{
G_{i+1}(Z_g,U^g)_{\fm_g} \ar[r] \ar[d]_{\mu^{\psi}_g} & G_i(Z_g,Y^g)_{\fm_g} \ar[r] 
\ar[d]^{\mu^{\psi}_g} & G_i(Z_g, X^g)_{\fm_g} \ar[r] \ar[d]^{\mu^{\psi}_g} & 
G_i(Z_g,U^g)_{\fm_g} \ar[d]^{\mu^{\psi}_g} \\
G_{i+1}(G,I^{\psi}_U)_{\fm_{\psi}} \ar[r] \ar[d]_{\mu^{\psi}_*} & 
G_i(G,I^{\psi}_Y)_{\fm_{\psi}} \ar[r] \ar[d]^{\mu^{\psi}_*} & 
G_i(G,I^{\psi}_X)_{\fm_{\psi}} 
\ar[r] \ar[d]^{\mu^{\psi}_*} & G_i(G,I^{\psi}_U)_{\fm_{\psi}} \ar[d]^{\mu^{\psi}_*} \\
G_{i+1}(G,U_{\psi})_{\fm_{\psi}} \ar[r] & G_i(G,Y_{\psi})_{\fm_{\psi}} \ar[r]  & 
G_i(G, X_{\psi})_{\fm_{\psi}} \ar[r]  & G_i(G,U_{\psi})_{\fm_{\psi}}.}
\end{equation}  

Now, the bottom squares commute because all the vertical arrows are induced 
by the finite $G$-equivariant map $\mu^{\psi}: I^{\psi}_X  \to X_{\psi}$ 
(see \lemref{lem:torsor})
and its restrictions to $Y$ and $U$. In particular, the diagram of
abelian categories
\[
\xymatrix@C1pc{
{\sC oh}^G_{I^{\psi}_Y} \ar[r]^-{\iota_*} \ar[d]_{\mu^{\psi}_*} & 
{\sC oh}^G_{I^{\psi}_X} \ar[d]^{\mu^{\psi}_*} \\
{\sC oh}^G_{Y_{\psi}} \ar[r]^-{\iota_*} & 
{\sC oh}^G_{X_{\psi}}}
\]
commutes. The top horizontal arrow is the inclusion of a 
localizing Serre subcategory whose localization is
canonically equivalent to ${\sC oh}^G_{I^{\psi}_U}$ via restriction.
Similarly, ${\sC oh}^G_{U_{\psi}}$ is the localization of  
the bottom horizontal arrow.  The top squares commute by 
\lemref{lem:moritaandrrproper-loc}. This proves that ~\eqref{eqn:w_g-0*} is
an isomorphism.

The second part follows directly from Lemma~\ref{lem:thetahandgindependence}
and ~\eqref{eqn:phi-rep} 
which imply for any
$\alpha \in G_i(Z_g,X^g)_{\fm_g}$ that 
$\omega_h \circ \theta_X(\alpha) = \mu^{\psi}_* \circ 
\mu^{\psi}_h \circ \theta_X(\alpha) = \mu^{\psi}_* \circ \mu^{\psi}_g (\alpha)$.
\end{proof}

\section{The Atiyah-Segal map}\label{sec:ASmap}
Let a compact Lie group $G$ act on a compact Hausdorff 
space $X$. Let $I_G$ denote the augmentation ideal of $R(G)$. 
Recall from \cite{AS} that the Atiyah-Segal theorem in topology provides
an isomorphism between the $I_G$-adic completion of the $G$-equivariant 
topological $K$-theory of $X$ and the usual topological $K$-theory of
the Borel space $EG \times^G X$. The algebraic version of Atiyah-Segal theorem was
studied in \cite{AK2arxiv}. 
The $K$-theory of $EG \times^G X$ is often called 
the {\sl geometric part} of the equivariant $K$-theory $K(G,X)$.
As the goal of this text is to functorially describe the full equivariant $K$-theory
of a $G$-space in terms of the geometric part of the $K$-theory of
the inertia space, we call such a connection the {\sl Atiyah-Segal
correspondence}. In this section, we define the Atiyah-Segal map
which will describe this correspondence.  

\subsection{Two completions of equivariant $K$-theory}\label{sec:Comp-compl&}
Let $\sX$ be a separated quotient stack of finite type over
$\C$. Let $p:\sX \to M$ be the coarse moduli space map and let
$\{\sX_1, \cdots , \sX_r\}$ denote the set of inverse images of the connected
components of $M$ under this map. We shall say that $\sX$ is connected if
$r =1$.
Let $K_0(\sX)$ denote the Grothendieck group 
of vector bundles on $\sX$. For each $1 \le i \le r$, let $\fm_{\sX_i} \subset 
K_0(\sX)$ denote the set of virtual vector bundles on $\sX$ whose 
restrictions to $\sX_i$ have rank zero. It is easy to see that
$\fm_{\sX_i}$ is a maximal ideal of $K_0(\sX)$. We let $\fm_{\sX} 
= \stackrel{r}{\underset{i =1}\cap} \fm_{\sX_i}$ and call this the
{\sl augmentation ideal} of $K_0(\sX)$.

Let $X$ be an algebraic space with an action of a linear algebraic group $G$
such that $\sX = [X/G]$ and let $p_X: X \to \sX$ be the quotient map. 
The pull-back map $p^*_X: K_0(\sX) \to K_0(G,X)$ is 
an isomorphism of rings and $p^*_X(\fm_{\sX})$ is the set of 
$G$-equivariant virtual vector bundles on $X$ whose restrictions to each
$G$-invariant closed subspace $X_i:= (p_X)^{-1}(\sX_i)$ have rank zero.
We shall denote $p^*_X(\fm_{\sX})$ by $\fm_{\sX}$.

The $G$-equivariant maps $X_i \inj X \to \Spec(\C)$ induce the ring 
homomorphisms $R(G) \to K_0(G,X) \to K_0(G,X_i)$ and it follows from this that
$\pi^*(\fm_{1}) \subset p^*_X(\fm_{\sX})$, where we use $\fm_1$ as a shorthand for
the augmentation ideal $\fm_{1_G} \subset R(G)$.
In other words, we have the inclusion of ideals
$\fm_{1}K_0(\sX) \subset \fm_{\sX}$. In particular, given any $K_0(\sX)$-module
$N$, there is a natural map $\wh{N}_{\fm_{1}} \to 
\wh{N}_{\fm_{\sX}}$, where $\wh{N}_{\fm_{1}}$ and $\wh{N}_{\fm_{\sX}}$
are the $\fm_{1}$-adic and $\fm_{\sX}$-adic completions
of $N$, respectively.
Since each $G_i(\sX)$ is a $K_0(\sX)$-module,
we get a natural map $\wh{G_i(\sX)}_{\fm_{1}} \to \wh{G_i(\sX)}_{\fm_{\sX}}$.

Let $G_i(\sX)_{\fm_{1}}$ and $G_i(\sX)_{\fm_{\sX}}$ denote the localizations
of $G_i(\sX)$ at $\fm_{1}$ and $\fm_{\sX}$, respectively.
Since $\fm_{1}$ is a maximal ideal of $R(G)$ and $\fm_1K_0(\sX) \subset
\fm_{\sX}$, it follows that $\fm_1 = R(G) \cap \fm_{\sX}$.
In particular, the localization map $G_i(\sX) \to G_i(\sX)_{\fm_{\sX}}$ factors
through $G_i(\sX)_{\fm_1}$. To compare various localizations and completions of
the $K$-theory of stacks, we shall use the following elementary result from
commutative algebra.

\begin{lem}\label{lem:Elem-CAlg}
Let $A$ be a commutative ring (not necessarily Noetherian) and let $\fm$
be an ideal of $A$ which is intersection of maximal ideals
$\{\fm_1, \cdots , \fm_r\}$. Then for any $A$-module $M$ and any integer 
$n \ge 0$, the map ${M}/{\fm^nM} \to {M_{\fm}}/{\fm^nM_{\fm}}$ is an isomorphism.
\end{lem}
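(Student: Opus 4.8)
The plan is to reduce to a statement about a single maximal ideal. Write $\fm = \bigcap_{i=1}^r \fm_i$, where the $\fm_i$ are distinct maximal ideals of $A$ (if some coincide, just drop repetitions — the intersection is unchanged). The key structural fact I would invoke is the Chinese Remainder Theorem: since the $\fm_i$ are pairwise comaximal, for every $n \ge 0$ the powers $\fm_i^n$ are also pairwise comaximal, and $\fm^n \supseteq \prod_i \fm_i^n = \bigcap_i \fm_i^n$; in fact one even has $\fm^n$ sandwiched between $\bigcap_i \fm_i^n$ and $\bigcap_i \fm_i$, but what I actually need is just that $A/\fm^n$ is a finite product $\prod_{i} A/\fm^n_{(i)}$ of local-type pieces, or more precisely that $A/\fm^n$ is already $\fm$-adically complete and its localization at $\fm$ (viewed through the $A_\fm$ picture) doesn't change it. The cleanest route avoids CRT on $\fm^n$ directly: I would instead argue that multiplication on $M/\fm^n M$ by any element $s \in A \setminus (\fm_1 \cup \cdots \cup \fm_r)$ is already an isomorphism, which is exactly the condition needed for $M/\fm^n M \to M_\fm / \fm^n M_\fm$ to be an isomorphism.

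First I would recall that localization at $\fm$ means inverting the multiplicative set $S = A \setminus \fm = A \setminus \bigcup_i \fm_i$ (the complement of $\fm$ as an ideal is the complement of the union of the $\fm_i$). The natural map $M \to M_\fm = S^{-1}M$ induces $M/\fm^n M \to M_\fm/\fm^n M_\fm = S^{-1}(M/\fm^n M)$, and I want this to be an isomorphism. By the universal property of localization, it suffices to show that every $s \in S$ acts invertibly on $N := M/\fm^n M$. Now $N$ is a module over $A/\fm^n$, so it is enough to show that the image $\bar{s}$ of $s$ in $A/\fm^n$ is a unit. The element $s$ lies outside every maximal ideal $\fm_i$, hence outside every $\fm_i \supseteq \fm^n$ that contains $\fm^n$; but the maximal ideals of $A/\fm^n$ are precisely the images of the maximal ideals of $A$ containing $\fm^n$, and $\sqrt{\fm^n} = \sqrt{\fm} \subseteq \fm_i$ forces any maximal ideal containing $\fm^n$ to contain $\fm$, hence to be one of $\fm_1, \dots, \fm_r$. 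Therefore $\bar s$ avoids every maximal ideal of $A/\fm^n$, so $\bar s$ is a unit in $A/\fm^n$, and consequently $s$ acts invertibly on $N$.

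The only point requiring a little care — and the main (very mild) obstacle — is the identification of the maximal ideals of $A/\fm^n$: I must check that any maximal ideal $\mathfrak{q}$ of $A$ with $\fm^n \subseteq \mathfrak{q}$ satisfies $\mathfrak{q} \in \{\fm_1,\dots,\fm_r\}$. This follows because $\fm^n \subseteq \mathfrak{q}$ and $\mathfrak{q}$ prime give $\fm \subseteq \mathfrak{q}$, i.e. $\bigcap_i \fm_i \subseteq \mathfrak{q}$, and then primeness of $\mathfrak{q}$ forces $\fm_i \subseteq \mathfrak{q}$ for some $i$ (prime avoidance / the standard fact that a prime containing a finite intersection of ideals contains one of them), whence $\mathfrak{q} = \fm_i$ by maximality. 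With this in hand, the universal property argument above gives that $M/\fm^n M \to M_\fm/\fm^n M_\fm$ is an isomorphism, with no Noetherian hypothesis used anywhere. I would then note in one line that passing to the inverse limit over $n$ yields the corresponding isomorphism $\widehat{M}_\fm \cong \widehat{M_\fm}$ of completions, which is presumably how the lemma will be applied in the sequel.
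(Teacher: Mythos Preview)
Your argument is correct and essentially matches the paper's: both reduce to showing that every $s \in A \setminus \bigcup_i \fm_i$ is a unit in $A/\fm^n$ (the paper phrases this via the semi-locality of $A_{\fm}$ and then passes to general $M$ by tensoring with $A/\fm^n \simeq A_\fm/\fm^n A_\fm$, while you argue directly that $s$ acts invertibly on $M/\fm^n M$). One slip to fix: the equality $A \setminus \fm = A \setminus \bigcup_i \fm_i$ is false in general, since $\fm = \bigcap_i \fm_i$ gives $A \setminus \fm = A \setminus \bigcap_i \fm_i$; the paper explicitly \emph{defines} $M_\fm$ as the localization at $S = A \setminus \bigcup_i \fm_i$, and that is the multiplicative set you (correctly) use in the rest of your argument.
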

\begin{proof}
We first recall for the reader that $M_{\fm}$ is, by definition, the localization of
$M$ obtained by inverting all elements in $A \setminus 
(\stackrel{r}{\underset{j =1}\cup} \fm_i)$.
We next observe that $A_{\fm}$ must be a semi-local ring with maximal ideals
$\{\fm_1A_{\fm}, \cdots, \fm_rA_{\fm}\}$. In particular, any element of
$A_{\fm}$ which is not in any of these maximal ideals must be a unit.
This immediately implies the lemma when $M = A$. The general case follows from
this because we can now write
${M}/{\fm^nM} \simeq M \otimes_A {A}/{\fm^n}
\simeq (M \otimes_A A_{\fm}) \otimes_{A_{\fm}} {A_{\fm}}/{\fm^nA_{\fm}} 
\simeq {M_{\fm}}/{\fm^nM_{\fm}}$.
\end{proof}

Our main result on the two completions of the $K$-theory is 
the following.

\begin{thm}\label{thm:Com-compl}
Let $\sX$ be a separated quotient Deligne-Mumford stack of finite type over 
$\C$. Given any presentation $\sX = [X/G]$ and integer $i \ge 0$,
the following hold.
\begin{enumerate}
\item
$\fm^n_{\sX}G_i(\sX)_{\fm_{\sX}} = 0$ for all $n \gg 0$.
\item
There is a commutative diagram
\begin{equation}\label{eqn:Com-compl-0}
\xymatrix@C1pc{
G_i(\sX)_{\fm_{1}} \ar[r] \ar[d] & \wh{G_i(\sX)}_{\fm_{1}} \ar[d] \\
G_i(\sX)_{\fm_{\sX}} \ar[r] & \wh{G_i(\sX)}_{\fm_{\sX}},}
\end{equation}
in which all arrows are isomorphisms.
\end{enumerate}
\end{thm}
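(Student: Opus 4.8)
The strategy is to isolate a single statement — that, after localizing $G_i(\sX)$ at the augmentation ideal $\fm_{1}\subset R(G)$, the larger ideal $\fm_{\sX}\subset K_0(\sX)$ acts nilpotently — from which both assertions follow formally. First I would reduce to the case $\sX$ connected: the substacks $\sX_1,\dots,\sX_r$ are open and closed, so $G_i(\sX)=\bigoplus_j G_i(\sX_j)$ and $K_0(\sX)=\prod_j K_0(\sX_j)$, and unwinding $\fm_{\sX}=\bigcap_j\fm_j$ together with the compatible decomposition of $\fm_1$ under $\pi\colon R(G)\to K_0(\sX)=\prod_j K_0(\sX_j)$ shows $G_i(\sX)_{\fm_{\sX}}=\bigoplus_j G_i(\sX_j)_{\fm_{\sX_j}}$ and likewise for the $\fm_1$-localizations and all completions. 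So one may assume $\fm_{\sX}$ is a maximal ideal of $K_0(\sX)$ with $R(G)\cap\fm_{\sX}=\fm_{1}$ the augmentation ideal, i.e. $\pi^{-1}(\fm_{\sX})=\fm_1$.

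Next, the $\fm_1$-nilpotence. The trivial conjugacy class always lies in $\Sigma^G_X$, so $\fm_1$ belongs to the finite support of the ideal $J\subset R(G)$ furnished by \lemref{lem:FiniteJGeneral} (which satisfies $JG_i(G,X)=0$); hence $(R(G)/J)_{\fm_1}$ is Artinian local, so $\fm_1^nR(G)_{\fm_1}\subseteq JR(G)_{\fm_1}$ for $n\gg0$, whence $\fm_1^nG_i(\sX)_{\fm_1}\subseteq JG_i(\sX)_{\fm_1}=0$. Thus $G_i(\sX)_{\fm_1}$ is a module over the ring $D:=K_0(\sX)_{\fm_1}/\fm_1^nK_0(\sX)_{\fm_1}$, in which $\fm_{\sX}D$ is a maximal ideal since $\pi^{-1}(\fm_{\sX})=\fm_1$.

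The crux is to prove that $\fm_{\sX}D$ is nilpotent, equivalently that $\mathrm{Spec}(D)$ is the single point $\fm_{\sX}D$, equivalently that $\fm_{\sX}$ is the only prime of $K_0(\sX)$ lying over $\fm_1$. When the $G$-action on $X$ is free — so $\sX=X/G$ is an algebraic space — this is classical, since then $\fm_{\sX}$ is the nilpotent ideal of rank-zero classes in $K_0(X/G)$. For a general proper action I would pass to the finite free cover $f\colon X'\to X$ of \cite[Proposition~10]{EG1} (so $\sX'=[X'/G]=X'/G$ is an algebraic space and $f\colon\sX'\to\sX$ is finite surjective), observe $f^*(\fm_{\sX})\subseteq\fm_{\sX'}$, and use that $f^*(\fm_{\sX})^d=0$ together with the projection formula ${f_*}{f^*}=(\,\cdot\,[{\bf R}f_*\mathcal{O}_{\sX'}])$ and $[{\bf R}f_*\mathcal{O}_{\sX'}]=\deg f+b$ with $b\in\fm_{\sX}$, $\deg f\in\C^{\times}$; this forces $\fm_{\sX}^dG_i(\sX)=\fm_{\sX}^{d+1}G_i(\sX)=\cdots$, and combining with the zero-dimensionality of $K_0(\sX)_{\fm_{\sX}}/\fm_1K_0(\sX)_{\fm_{\sX}}$ — which holds because $\mathrm{Spec}$ of $K_0(\sX)/\fm_1K_0(\sX)$, localized at $\fm_{\sX}$, detects only $\pi_0$ of the coarse space $M$, hence a single point — yields $\fm_{\sX}^NG_i(\sX)_{\fm_1}=0$ for $N\gg0$. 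I expect this commutative-algebra step — controlling the coarse-space-defined ideal $\fm_{\sX}$ in terms of the group-theoretic ideal $\fm_1$ — to be the main difficulty.

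Granting it, part (1) is immediate: $\fm_{\sX}^NG_i(\sX)_{\fm_{\sX}}=\fm_{\sX}^NG_i(\sX)_{\fm_1}=0$, where the equality of the two localizations holds because for $a\in K_0(\sX)\setminus\fm_{\sX}$ one has $a=r+b$ with $r=\mathrm{rk}(a)\in\C^{\times}$ and $b\in\fm_{\sX}$ nilpotent on $G_i(\sX)_{\fm_1}$, so $a$ already acts invertibly there and $G_i(\sX)_{\fm_1}\xrightarrow{\ \sim\ }G_i(\sX)_{\fm_{\sX}}$ — this is the left vertical arrow of \eqref{eqn:Com-compl-0}. For part (2), the top arrow is an isomorphism by \lemref{lem:Elem-CAlg} applied with the maximal ideal $\fm_1\subset R(G)$: $G_i(\sX)/\fm_1^kG_i(\sX)\cong G_i(\sX)_{\fm_1}/\fm_1^kG_i(\sX)_{\fm_1}$, which stabilizes to $G_i(\sX)_{\fm_1}$, so $\widehat{G_i(\sX)}_{\fm_1}\cong G_i(\sX)_{\fm_1}$; the bottom arrow is an isomorphism in the same way, applying \lemref{lem:Elem-CAlg} with $\fm_{\sX}=\bigcap_j\fm_j$ an intersection of maximal ideals of $K_0(\sX)$ together with part (1); the right vertical arrow is then forced, and commutativity of the square is clear since all four maps are the canonical localization and completion maps out of $G_i(\sX)$, compatible via $\fm_1K_0(\sX)\subseteq\fm_{\sX}$.
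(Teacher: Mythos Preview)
Your reduction strategy --- isolate the nilpotence of $\fm_{\sX}$ on $G_i(\sX)_{\fm_1}$ and deduce both parts formally via \lemref{lem:Elem-CAlg} --- is sound, and the concluding paragraph is correct. The problem is in establishing that nilpotence, where both steps of your ``crux'' have gaps.

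\emph{Flatness of the cover.} The identity $f_*f^{*}=(\,\cdot\,[{\bf R}f_*\sO_{\sX'}])$ on $G_i(\sX)$ needs a pull-back $f^{*}$ on $G$-theory, hence $f$ flat. The cover supplied by \cite[Proposition~10]{EG1} is only finite and surjective; the finite \emph{flat} cover of \cite[Theorem~1]{KV} requires the coarse moduli space to be quasi-projective, which is not assumed here.

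\emph{From stabilization to vanishing.} Even granting a flat cover, the projection-formula argument yields $[f_*\sO_{\sX'}]\cdot\fm_{\sX}^{N}G_i(\sX)=0$, which you record as $\fm_{\sX}^{N}G_i(\sX)=\fm_{\sX}^{N+1}G_i(\sX)=\cdots$. Localizing at $\fm_{\sX}$ this does give part~(1), since $[f_*\sO_{\sX'}]\notin\fm_{\sX}$. But it does \emph{not} give $\fm_{\sX}^{N}G_i(\sX)_{\fm_1}=0$, which is what you need for the left vertical arrow. Your justification via ``zero-dimensionality of $K_0(\sX)_{\fm_{\sX}}/\fm_1K_0(\sX)_{\fm_{\sX}}$ because it detects only $\pi_0(M)$'' is not a proof: the quotient $K_0(\sX)/\fm_1K_0(\sX)$ has no evident description in terms of the coarse space, and asserting that $\fm_{\sX}$ is the unique prime of $K_0(\sX)$ over $\fm_1$ is essentially the content of \cite[Theorem~6.1]{EG5}. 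Nakayama is unavailable either, since $K_0(\sX)$ is not known to be Noetherian and $G_i(\sX)_{\fm_1}$ not known to be finitely generated.

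The paper takes a different route that sidesteps both issues: Noetherian induction on $X$. The smooth base case quotes \cite[Theorem~6.1]{EG5} directly (the $\fm_1$-adic and $\fm_{\sX}$-adic topologies on $K_i(\sX)$ agree) together with \cite[Lemma~7.3]{AK1} for $\fm_1$-nilpotence; this already gives all four isomorphisms in~\eqref{eqn:Com-compl-0} when $\sX$ is smooth. The inductive step uses that the localization fibre sequence $G(\sZ)\to G(\sX)\to G(\sU)$ is one of $K_0(\sX)$-module spectra, hence remains exact after localizing at $\fm_1$ or at $\fm_{\sX}$, and concludes by a five-lemma argument after verifying that the further-localization maps $G_i(\sZ)_{\fm_{\sX}}\to G_i(\sZ)_{\fm_{\sZ}}$ and $G_i(\sU)_{\fm_{\sX}}\to G_i(\sU)_{\fm_{\sU}}$ are isomorphisms.
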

\begin{proof}
We first observe that
\begin{equation}\label{eqn:Com-compl-2}
\wh{G_i(\sX)}_{\fm_{\sX}} = {\underset{n}\varprojlim} \
{G_i(\sX)}/{\fm^n_{\sX}G_i(\sX)} \simeq 
{\underset{n}\varprojlim} \ {G_i(\sX)_{\fm_{\sX}}}/{\fm^n_{\sX}G_i(\sX)_{\fm_{\sX}}}, 
\end{equation}
where the second isomorphism follows from \lemref{lem:Elem-CAlg}.
Note here that $K_0(\sX)$ may not be a Noetherian ring. We similarly have 
\begin{equation}\label{eqn:Com-compl-3}
\wh{G_i(\sX)}_{\fm_1} = {\underset{n}\varprojlim} \
{G_i(\sX)}/{\fm^n_1G_i(\sX)} \simeq {\underset{n}\varprojlim} \
{G_i(\sX)_{\fm_1}}/{\fm^n_{1}G_i(\sX)_{\fm_{1}}}. 
\end{equation}

We shall prove the theorem by the Noetherian induction on $X$.
We first assume that $X$ is a smooth scheme. In this case, we have $K(\sX) 
\xrightarrow{\simeq} G(\sX)$.
We know from \cite[Lemma~7.3]{AK1} that  
$\fm^n_1K_i(\sX)_{\fm_1} = 0$ for $n \gg 0$. Since $K_i(\sX)_{\fm_{\sX}}$ is
a localization of $K_i(\sX)_{\fm_1}$, we get $\fm^n_{1}K_i(\sX)_{\fm_{\sX}} = 0$ for 
$n \gg 0$. On the other hand, since
each $K_i(\sX)$ is a $K_0(\sX)$-module, it follows from  
\cite[Theorem~6.1]{EG5} that for every $n \ge 0$, one has
\begin{equation}\label{eqn:Com-compl-1}
\fm^{n'}_{1} K_i(\sX) \subseteq \fm^{n'}_{\sX}K_i(\sX) \subseteq 
\fm^n_1K_i(\sX) \ \mbox{for} \  n' \gg 0.
\end{equation}

We conclude from this that $\fm^n_{\sX}K_i(\sX)_{\fm_{\sX}} = 0$ for all $n \gg 0$. 
It follows now from 
~\eqref{eqn:Com-compl-2} and ~\eqref{eqn:Com-compl-3} that the bottom and
the top horizontal arrows in ~\eqref{eqn:Com-compl-0} are isomorphisms.
It follows from ~\eqref{eqn:Com-compl-1} that the $\fm_1$-adic and
$\fm_{\sX}$-adic topologies on $K_i(\sX)$ coincide. Hence, the 
right vertical arrow in ~\eqref{eqn:Com-compl-0} is an isomorphism.
We conclude that the left vertical arrow is also an isomorphism. The theorem is
thus proven when $X$ is a smooth scheme.

In general, there is a non-empty $G$-invariant 
open subspace $U \subset X$ which is a smooth scheme, so that the open substack
$\sU = [U/G] \subset \sX$ is smooth. We let $Z = X \setminus U$ and 
$\sZ = [Z/G]$. Let $\sZ \xrightarrow{\iota} \sX \xleftarrow{j} \sU$ denote the
inclusion maps. 

It is easy to check that $\fm_{\sX}K_0(\sZ) \subset \fm_{\sZ}K_0(\sZ)$.
It is also immediate from the definition of $\fm_{\sX}$ that
$\fm_{\sZ} \cap K_0(\sX) = \fm_{\sX} = \fm_{\sU} \cap K_0(\sX)$.
It follows that we have natural maps of localizations
$G_i(\sZ)_{\fm_1} \to G_i(\sZ)_{\fm_{\sX}} \to G_i(\sZ)_{\fm_{\sZ}}$.
It follows by the Noetherian induction that the composite map
is an isomorphism. In particular, the second map is surjective.
We claim that this map is also injective, and hence an isomorphism.
To prove this, we note that showing injectivity of this map is 
equivalent to showing that given any $a \in G_i(\sZ)$
and $s \in K_0(\sZ) \setminus \fm_{\sZ}$ such that $sa = 0$, there is 
$t \in K_0(\sX) \setminus \fm_{\sX}$ such that $ta = 0$. 
However, the injectivity of the map $G_i(\sZ)_{\fm_1} \to G_i(\sZ)_{\fm_{\sZ}}$
implies that there is $u \in R(G) \setminus \fm_1$ such that $ua = 0$.
As $\fm_{\sX} \cap R(G) = \fm_1$, if we let $t$ be the 
image of $u$ under the map 
$R(G) \to K_0(\sX)$, we get $t \notin \fm_{\sX}$ and $ta = 0$.

We similarly have the natural maps of localizations
$G_i(\sU)_{\fm_1} \to G_i(\sU)_{\fm_{\sX}} \to G_i(\sU)_{\fm_{\sU}}$
and the composite map is an isomorphism since $\sU$ is smooth.
An identical reason as before now shows that these maps are
isomorphisms.

We have the exact sequence of $K_0(\sX)$-modules
\begin{equation}\label{eqn:Com-compl-4}
G_i(\sZ) \to G_i(\sX) \to G_i(\sU)
\end{equation}
and hence this remains exact after localization at $\fm_{\sX}$.
For reader unfamiliar with the fact that the above is a sequence of 
$K_0(\sX)$-modules, recall that $K(\sX)$ is a ring spectrum and 
$G(\sZ) \to G(\sX) \to G(\sU)$
is a fiber sequence in the stable homotopy category of module spectra over the
ring spectrum $K(\sX)$. In particular, the associated long exact sequence of
homotopy groups is a sequence of $\pi_0(\sK(\sX))$-modules. But
$\pi_0(\sK(\sX))$ is $K_0(\sX)$ by definition.

Now, we know that $\fm^n_{\sU} G_i(\sU)_{\fm_{\sU}} = 0$ as $\sU$ is smooth.
Since $G_i(\sU)_{\fm_{\sX}} \xrightarrow{\simeq} G_i(\sU)_{\fm_{\sU}}$
as shown above, and since $\fm^n_{\sX}G_i(\sU) \subset \fm^n_{\sU}G_i(\sU)$,
it follows that $\fm^n_{\sX}G_i(\sU)_{\fm_{\sX}} = 0$ for all $n \gg 0$.
Similarly, we have $\fm^n_{\sX}G_i(\sZ)_{\fm_{\sX}} = 0$ for $n \gg 0$. 
It easily follows from this and ~\eqref{eqn:Com-compl-4} that
$\fm^n_{\sX}G_i(\sX)_{\fm_{\sX}} = 0$ for $n \gg 0$. This proves (1).

We now prove (2). It follows from (1) and ~\eqref{eqn:Com-compl-2}
that the bottom horizontal arrow in ~\eqref{eqn:Com-compl-0} is an isomorphism.
It follows from \cite[Lemma~7.3]{AK1} that $\fm^n_1G_i(\sX)_{\fm_1} = 0$
for $n \gg 0$. Combining this with ~\eqref{eqn:Com-compl-3}, it follows that
the top horizontal arrow in ~\eqref{eqn:Com-compl-0} is an isomorphism.
It is now enough to show that the left vertical arrow is an isomorphism.

To prove this, we consider the commutative diagram 
\begin{equation}\label{eqn:Com-compl-5}
\xymatrix@C1pc{
G_{i+1}(\sU)_{\fm_1} \ar[r] \ar[d] & G_i(\sZ)_{\fm_1} \ar[r] \ar[d] & 
G_i(\sX)_{\fm_1} \ar[r] \ar[d] & G_i(\sU)_{\fm_1} \ar[r] \ar[d] & G_{i-1}(\sZ)_{\fm_1}
\ar[d] \\
G_{i+1}(\sU)_{\fm_{\sX}} \ar[r] & G_i(\sZ)_{\fm_{\sX}} \ar[r]  & 
G_i(\sX)_{\fm_{\sX}} \ar[r] & G_i(\sU)_{\fm_{\sX}} \ar[r] & G_{i-1}(\sZ)_{\fm_{\sX}}.}
\end{equation}

Since the localization exact sequence for the $K$-theory 
(of coherent sheaves) is a sequence of $K_0(\sX)$-modules,
it follows that this sequence remains exact after localization at $\fm_{\sX}$. 
Similarly, it remains exact after localization
at $\fm_1 \subset R(G)$. We conclude that the top and the bottom rows of
~\eqref{eqn:Com-compl-5} are exact. We have shown that all vertical arrows
in this diagram (except possibly the middle one) are isomorphisms.
It follows that the middle one must also be an isomorphism.
\end{proof}

\subsection{K{\"o}ck's conjecture}\label{sec:K-conj}
A conjecture of K{\"o}ck \cite[Conjecture~5.6]{Kock} in equivariant
$K$-theory asserts that if $f: X \to Y$ is a $G$-equivariant proper map
of schemes such that $f$ is a local complete intersection, then it induces
a push-forward map of completions $f_*: \wh{K_i(G,X)}_{\fm_{[X/G]}} \to
\wh{K_i(G,Y)}_{\fm_{[Y/G]}}$. 
The main result of \cite{Kock} is based on the validity of this conjecture.
If $X$ and $Y$ are smooth, this conjecture was
settled by Edidin and Graham \cite{EG5}. 

The following consequence of
\thmref{thm:Com-compl} proves K{\"o}ck's conjecture when 
$Y$ is smooth but
$X$ is possibly singular with proper $G$-actions.
In fact, the result below is stronger than the one predicted by
K{\"o}ck's conjecture because it holds for any proper map of
stacks which may not be representable. 
Recall that a stack $\sX$ satisfies the {\sl resolution property} if
every coherent sheaf on $\sX$ is a quotient of a vector bundle.
If $\sX$ is a quotient Deligne-Mumford stack with quasi-projective coarse
moduli space, then it is known that it satisfies the resolution property
(see \cite[Proposition~5.1]{KR}).  

\begin{cor}\label{cor:Com-compl-cons}
Let $f: \sX \to \sY$ be a proper morphism of 
separated quotient Deligne-Mumford stacks of finite type over $\C$.
Let $i \ge 0$ be an integer. Then there is a push-forward map 
$f_*: \wh{G_i(\sX)}_{\fm_{\sX}} \to \wh{G_i(\sY)}_{\fm_{\sY}}$.
If $\sY$ is smooth and satisfies the resolution property, 
then there is a push-forward map 
$f_*: \wh{K_i(\sX)}_{\fm_{\sX}} \to \wh{K_i(\sY)}_{\fm_{\sY}}$.
\end{cor}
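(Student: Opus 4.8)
The plan is to deduce the corollary from \thmref{thm:Com-compl}, which identifies the (intrinsic) $\fm_{\sX}$-adic completion of $G_i(\sX)$ with a localization of $G_i(\sX)$, namely the localization at the augmentation ideal of a representation ring, which is a \emph{maximal} ideal and hence singles out one of the conjugacy-class summands of \propref{prop:direcsumdecomp}. First I would produce the push-forward on $K$-theory: by \lemref{lem:Fin-coh} the proper morphism $f$ has finite cohomological dimension, so the Thomason--Trobaugh construction (\cite[\S~3.16.1]{TT}) gives a map of spectra $f_*: G(\sX) \to G(\sY)$; since $G(\sX)$ (resp. $G(\sY)$) is a module spectrum over the ring spectrum $K(\sX)$ (resp. $K(\sY)$) and $f^*: K(\sY) \to K(\sX)$ is a map of ring spectra, the projection formula makes $f_*: G_i(\sX) \to G_i(\sY)$ a homomorphism of $K_0(\sY)$-modules for every $i$.

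Next I would set up the relevant linearity. Fix a presentation $\sY = [Y/F]$, so that $R(F) \to K_0(\sY)$ is defined. As in \eqref{eqn:Fin-coh-0-new} and \lemref{lem:GRR-QDM-stacks}, present $\sX = [Z/G]$ with $G = H \times F$ and $f$ induced by a $G$-equivariant morphism $Z \to Y$, where $G$ acts on $Y$ through the projection $u: G \to F$; a direct check (compare pullbacks of representations along $Z \to Y$ and along $Z \to \Spec(\C)$) shows that the $R(F)$-module structure on $G_i(\sX)$ coming from the $K_0(\sY)$-structure is the restriction, along $u_*: R(F) \to R(G)$, of the $R(G)$-module structure on $G_i(\sX) = G_i(G,Z)$. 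Hence $f_*$ is $R(F)$-linear.

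Now the key identifications. By \thmref{thm:Com-compl} (applied to $\sX = [Z/G]$, so that $\fm_1 = \fm_{1_G}$) together with \propref{prop:direcsumdecomp}, $\wh{G_i(\sX)}_{\fm_{\sX}} \cong G_i(\sX)_{\fm_{1_G}}$ is canonically the direct summand of $G_i(\sX)$ indexed by the trivial conjugacy class $\{1_G\}$; similarly $\wh{G_i(\sY)}_{\fm_{\sY}} \cong G_i(\sY)_{\fm_{1_F}}$ is the summand indexed by $\{1_F\}$. By \lemref{lem:FiniteJGeneral} each summand $G_i(G,Z)_{\fm_{\psi}}$ has $\fm_{\psi}$ acting nilpotently, so as an $R(F)$-module (via $u_*$) it is supported exactly at the maximal ideal $u_*^{-1}(\fm_{\psi}) = \fm_{u(\psi)}$ of $R(F)$ (using \propref{prop:grp-hom}); since $f_*$ is $R(F)$-linear and $G_i(\sY)$ has disjoint supports in its conjugacy-class decomposition, $f_*$ carries $G_i(G,Z)_{\fm_{\psi}}$ into $G_i(\sY)_{\fm_{u(\psi)}}$. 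In particular $f_*\bigl(\wh{G_i(\sX)}_{\fm_{\sX}}\bigr) = f_*\bigl(G_i(G,Z)_{\fm_{1_G}}\bigr) \subseteq G_i(\sY)_{\fm_{u(1_G)}} = G_i(\sY)_{\fm_{1_F}} = \wh{G_i(\sY)}_{\fm_{\sY}}$, and restricting $f_*$ to this summand yields the desired map; its independence of the chosen presentations follows from the presentation-independence in \thmref{thm:Com-compl}. I expect the matching of the intrinsic $\fm_{\sX}$-adic completion with the summand $G_i(\sX)_{\fm_{1_G}}$ singled out by $R(F)$-linearity to be the main obstacle: $f_*$ is a priori \emph{not} continuous for the $\fm_{\sX}$- and $\fm_{\sY}$-adic topologies (as stressed in the introduction), and only \thmref{thm:Com-compl} — turning those completions into localizations at maximal ideals of representation rings — lets the argument go through.

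Finally, for the statement about $K$-theory: if $\sY$ is smooth and satisfies the resolution property, then every coherent sheaf on $\sY$ admits a finite resolution by vector bundles, so $K(\sY) \xrightarrow{\simeq} G(\sY)$; thus the $G$-theory push-forward above may be viewed as a map landing in $\wh{K_i(\sY)}_{\fm_{\sY}}$. Precomposing it with the map of completions $\wh{K_i(\sX)}_{\fm_{\sX}} \to \wh{G_i(\sX)}_{\fm_{\sX}}$ induced by the forgetful map $K(\sX) \to G(\sX)$ — which is $K_0(\sX)$-linear, hence continuous for the $\fm_{\sX}$-adic topologies — produces the required push-forward $f_*: \wh{K_i(\sX)}_{\fm_{\sX}} \to \wh{K_i(\sY)}_{\fm_{\sY}}$.
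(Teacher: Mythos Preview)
Your proposal is correct and follows essentially the same strategy as the paper: both reduce the problem, via \thmref{thm:Com-compl}, to producing a map $G_i(G,Z)_{\fm_{1_G}} \to G_i(F,Y)_{\fm_{1_F}}$, and both do so by exploiting $R(F)$-linearity of the push-forward together with the conjugacy-class decomposition. The only difference is cosmetic: the paper factors $f$ as $[Z/G] \to [W/F] \to [Y/F]$ (with $W = Z/H$) and uses the known $R(F)$-linearity of $\inv^H_Z$ and of the representable push-forward $q_*$ on each piece, whereas you obtain $R(F)$-linearity of $f_*$ in one stroke from the projection formula and the compatibility of the two $R(F)$-module structures on $G_i(G,Z)$ --- your route is marginally more direct but otherwise identical in substance, and your treatment of the $K$-theory statement matches the paper's verbatim.
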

\begin{proof}
By ~\eqref{eqn:st-gen*-0}, we can write $f$ as the product 
$\sX \simeq [Z/G] \xrightarrow{p} [W/F] \xrightarrow{q} [Y/F] = \sY$,
where $G = H \times F$ and $[Z/H] \to W$ is the coarse moduli space
and $W \to Y$ is $F$-equivariant.
We thus get maps $G_i(G, Z)_{\fm_{1_G}} \to G_i(G, Z)_{\fm_{1_F}} \to
G_i(F,W)_{\fm_{1_F}} \to G_i(F,Y)_{\fm_{1_F}}$ whose composite is $f_*$.
It follows now from \thmref{thm:Com-compl} that this composite map is
same as the map $f: G_i(\sX)_{\fm_{\sX}} \to G_i(\sY)_{\fm_{\sY}}$. 
This proves the first part.
If $\sY$ is smooth and satisfies the resolution property, we get maps
$\wh{K_i(\sX)}_{\fm_{\sX}} \to \wh{G_i(\sX)}_{\fm_{\sX}} \xrightarrow{f_*}
\wh{G_i(\sY)}_{\fm_{\sY}} \xleftarrow{\simeq} \wh{K_i(\sY)}_{\fm_{\sY}}$
and this completes the proof.
\end{proof}

\begin{remk}\label{remk:twocompletions1}
	We remark that it is not necessary to assume in \thmref{thm:Com-compl}
	that $\sX=[X/G]$ is separated. It is enough to 
assume that the $G$-action has finite stabilizers. 
We also note that $\fm_{X}$ can be defined without the assumption that 
$[X/G]$ has a coarse moduli space (see \cite[\S~4.2]{AK1}).   
	
\end{remk}

\begin{remk}\label{remk:kocksconjecture}
	Further, in \corref{cor:Com-compl-cons}, we make 
	the separatedness assumption because
 we need to use the factorization constructed in 
\lemref{lem:GRR-QDM-stacks}. If we assume $f$
to be representable (this was the setting of K{\"o}ck's conjecture), 
then \corref{cor:Com-compl-cons} holds more generally when
the stacks $\sX$ and $\sY$ have finite stabilizers. 
\end{remk}

\subsection{The Atiyah-Segal map}\label{sec:AS-map*}
Let $G$ be a linear algebraic group over $\C$ and let $X$ be an algebraic
space with a proper $G$-action. We now define our Atiyah-Segal map.
The next few sections will be devoted to proving its functorial properties.

\begin{defn}\label{defn:RR-mapconjclass-AS}
Let $\psi$ be a semi-simple conjugacy class in $G$ and let $g \in \psi$. 
For any integer $i \ge 0$, we let $\vartheta^{\psi}_X$ denote the composite map
\begin{equation}\label{eqn:RR-AS}
G_i(G,X_{\psi})_{\fm_{\psi}} \xrightarrow{\omega_g^{-1} } 
G_i(Z_g,X^g)_{\fm_g} \xrightarrow{t_{g^{-1}}} 
G_i(Z_g,X^g)_{\fm_1} \xrightarrow{\mu^1_g} G_i(G,I^{\psi}_X)_{\fm_1}.
\end{equation}

Note that all arrows in ~\eqref{eqn:RR-AS} are isomorphisms.
\end{defn}

\begin{lem}\label{lem:Independence of w_g on g}
The map $\vartheta^{\psi}_X$ is independent of the choice 
of $g$ in $\psi$. That is, the composite maps 
$\mu^1_g \circ t_{g^{-1}} \circ \omega_{g^{-1}}$ and 
$\mu^{1}_h \circ t_{h^{-1}} \circ \omega_{h^{-1}}$ 
coincide if $g, h \in \psi$ .
\end{lem}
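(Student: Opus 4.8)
The plan is to reduce the independence claim to the already-established independence properties of the individual maps in the composition $\vartheta^{\psi}_X = \mu^1_g \circ t_{g^{-1}} \circ \omega_g^{-1}$. Since $g, h \in \psi$ with $h = kgk^{-1}$ for some $k \in G$, I would track the conjugation isomorphism $\phi\colon Z_g \xrightarrow{\simeq} Z_h$ and the induced isomorphism $\theta_X = \phi_* \circ u_*\colon G_*(Z_g, X^g) \xrightarrow{\simeq} G_*(Z_h, X^h)$ through each of the three arrows. Concretely, the claim will follow once I assemble a commutative diagram in which the left column is $\omega_h^{-1} \circ (\text{identity on } G_i(G,X_{\psi})_{\fm_{\psi}})$ compared with $\theta_X \circ \omega_g^{-1}$, the middle column compares $t_{g^{-1}}$ on $G_i(Z_g, X^g)$ with $t_{h^{-1}}$ on $G_i(Z_h, X^h)$ via $\theta_X$, and the right column compares $\mu^1_g$ with $\mu^1_h$ via $\theta_X$ on source and the identity on $G_i(G, I^{\psi}_X)_{\fm_1}$.

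The three compatibilities I would invoke or verify are: (1) \emph{For $\omega$}: by \lemref{lem: w_g} we have $\omega_g = \omega_h \circ \theta_X$, hence $\omega_g^{-1} = \theta_X^{-1} \circ \omega_h^{-1}$, i.e. $\theta_X \circ \omega_g^{-1} = \omega_h^{-1}$ as maps $G_i(G,X_{\psi})_{\fm_{\psi}} \to G_i(Z_h, X^h)_{\fm_h}$. (2) \emph{For $\mu^1$}: by \lemref{lem:thetahandgindependence} we have $\mu_h \circ \theta_X = \mu_g$ at the level of the full Morita maps; restricting to the augmentation-ideal summands (the case $\Phi = \{e\}$ of the decomposition, see \eqref{eqn:Omega**-0-e}) and using that $\theta_X$ respects the localization at $\fm_1$ — which holds because $\theta_X$ is $R(G)$-linear and $\phi_*(\fm_{1_{Z_g}}) = \fm_{1_{Z_h}}$, $\phi_*$ being a ring isomorphism fixing the trivial representation — gives $\mu^1_h \circ \theta_X = \mu^1_g$. (3) \emph{For the twist}: I need $t_{h^{-1}} \circ \theta_X = \theta_X \circ t_{g^{-1}}$ as maps $G_i(Z_g, X^g)_{\fm_g} \to G_i(Z_h, X^h)_{\fm_1}$. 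This is the one genuinely new verification.

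For step (3), recall from \secref{sec:Decomp-t} and \eqref{eqn:Twist-0} that the twist $t_{g^{-1}}$ acts on the summand $G^{\chi}_i(\langle g \rangle, X^g)$ by the scalar $\chi(g)$ (using $t_{p}(\alpha) = \chi(p^{-1})\alpha$ with $p = g^{-1}$). The key point is that the conjugation isomorphism $\phi\colon \langle g\rangle \xrightarrow{\simeq} \langle h \rangle$, $\phi(g^m) = h^m$, together with the $Z_g$-equivariant isomorphism $u\colon X^g \xrightarrow{\simeq} X^h$, carries the character-eigenspace decomposition of a $Z_g$-equivariant sheaf on $X^g$ to that of the corresponding $Z_h$-equivariant sheaf on $X^h$ \emph{compatibly with the identification $\wh{\langle g \rangle} \cong \wh{\langle h \rangle}$ induced by $\phi$}: if $\sF \in \sC^{\chi}_{X^g}$ then $\theta_X(\sF) = \phi_* u_* \sF$ lies in $\sC^{\chi \circ \phi^{-1}}_{X^h}$, and the scalar $\chi(g)$ equals $(\chi\circ\phi^{-1})(h)$ since $\phi(g) = h$. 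Hence $\theta_X$ intertwines the two twists. I would spell this out at the level of the exact-category decompositions of \lemref{lem:definitionoftwist}, mimicking the argument in the proof of \lemref{lem:thetahandgindependence}, where the analogous compatibility of $\phi_*$ with the $\star$-action was already checked.

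The main obstacle I anticipate is purely bookkeeping: correctly matching the localizations (at $\fm_g$ versus $\fm_h$ versus $\fm_1$, and in $R(Z_g)$ versus $R(Z_h)$ versus $R(G)$) as $\theta_X$ is applied, and confirming that each of the three arrows in \eqref{eqn:RR-AS} respects these, so that the diagram actually lives in the right categories of localized modules. Once the three square-commutativities (1)–(3) are in hand, the conclusion $\mu^1_h \circ t_{h^{-1}} \circ \omega_h^{-1} = \mu^1_h \circ t_{h^{-1}} \circ \theta_X \circ \omega_g^{-1} = \mu^1_h \circ \theta_X \circ t_{g^{-1}} \circ \omega_g^{-1} = \mu^1_g \circ t_{g^{-1}} \circ \omega_g^{-1}$ is immediate, and the independence of $\vartheta^{\psi}_X$ from the choice of $g \in \psi$ follows.
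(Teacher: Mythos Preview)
Your proposal is correct and follows essentially the same approach as the paper: you set up the three-square diagram with $\theta_X$ as the vertical comparison, invoke \lemref{lem: w_g} for the $\omega$-square, \lemref{lem:thetahandgindependence} for the $\mu^1$-square, and verify the twist compatibility $t_{h^{-1}}\circ\theta_X=\theta_X\circ t_{g^{-1}}$ directly from the character decomposition. The only cosmetic difference is that the paper splits the twist step into two pieces, first using \propref{prop:invariantsfunct} to get $u_*\circ t_{g^{-1}}=t_{g^{-1}}\circ u_*$ and then checking $\phi_*\circ t_{g^{-1}}=t_{h^{-1}}\circ\phi_*$ by the same character calculation you outline, whereas you handle $\theta_X=\phi_*\circ u_*$ in one stroke.
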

\begin{proof}
We need to show that the diagram
\begin{equation}\label{eqn:Indep-0}
\xymatrix@C1pc{
G_i(G,X)_{\fm_{\psi}} \ar[r]^-{\omega_g^{-1} } \ar[d]_{Id} &  
G_i(Z_g,X^g)_{\fm_g} \ar[r]^-{t_{g^{-1}}} \ar[d]^{\theta_X} & G_i(Z_g,X^g)_{\fm_1} 
\ar[r]^-{\mu^1_g} \ar[d]^{\theta_X} &  G_i(G,I^{\psi}_X)_{\fm_1} \ar[d]^{Id}  \\
G_i(G,X)_{\fm_{\psi}} \ar[r]^-{\omega_h^{-1} } & G_i(Z_h,X^h)_{\fm_h} 
\ar[r]^-{t_{h^{-1}}} & G_i(Z_h,X^h)_{\fm_1} \ar[r]^-{\mu^{1}_h} & 
G_i(G,I^{\psi}_X)_{\fm_1}}
\end{equation}
commutes.
\lemref{lem:thetahandgindependence} says that the right square
commutes. To show that the left square commutes, we
can replace the horizontal arrows in this square by their inverses. 
In this case, the desired commutativity follows from \lemref{lem: w_g}. 
We are now left with showing that the middle square commutes.

Recall from ~\eqref{diag:defnofphi_*}
that $u_*: G_i(Z_g,X^g) \to G_i(Z_g, X^h)$ is an isomorphism, where 
$X^h$ is given the $\star$-action of $Z_g$ via $\phi$. 
As $u_*$ is a $Z_g$-equivariant isomorphism, it follows 
from Proposition~\ref{prop:invariantsfunct} that 
$u_* \circ t_{g^{-1}} = t_{g^{-1}} \circ u_*$.
Since $\theta_X = \phi_* \circ u_*$, it suffices to show that
$t_{g^{-1}}$ commutes with $\phi_*$.
First, one checks directly from the definitions of $t_{g^{-1}}$
and $\phi_*$ that the square
\begin{equation}\label{eqn:Indep-1}
\xymatrix@C1pc{
R(Z_g)_{\fm_g} \ar[r]^-{t_{g^{-1}}} \ar[d]_{\phi_*} & R(Z_g)_{\fm_1}
\ar[d]^{\phi_*} \\
R(Z_h)_{\fm_h} \ar[r]^-{t_{h^{-1}}} & R(Z_h)_{\fm_1}}
\end{equation}
commutes. This can also be deduced from \cite[Lemma~6.5]{EG2}. Indeed,
an element of $R(Z_g)$ is a virtual representation of $Z_g$,
which we can assume to be an actual representation where ${\<g\>}$ acts
by a fixed character $\chi$ (see \lemref{lem:definitionoftwist}).
If $V$ is such a representation, then $\phi_*([V])$ is just $V$ itself,
considered as a representation of $Z_h$ via the isomorphism
$\phi^{-1}: Z_h \xrightarrow{\simeq} Z_g$. We thus have
$t_{h^{-1}} \circ \phi_*([V]) = \chi(\phi^{-1}(h))\cdot [V]$.
But this is clearly same as $\chi(g)\cdot[V] = \phi_* \circ t_{g^{-1}}([V])$.

In general, for $a \in G^{\chi}_i(\<g\>,X^g)$, we have
\[
\phi_* \circ t_{g^{-1}} (a)  = \phi_*(\chi(g) a) = 
\phi_*(t_{g^{-1}}([1_{Z_g}])) \phi_*(a) 
\ {=}^{\dagger} \ t_{h^{-1}}([1_{Z_g}]) \phi_*(a) 
=  t_{h^{-1}} \circ \phi_*(a),
\]
where $1_{Z_g}$ is the rank one trivial representation of $Z_g$  
and ${=}^{\dagger}$ holds by case of $\phi_*: R(Z_g) \xrightarrow{\simeq} R(Z_h)$
shown above. Since $G_i(Z_g,X^g) = \oplus_{\chi \in \wh{\<g\>}} 
G^{\chi}_i(\<g\>,X^g)$ by \lemref{lem:definitionoftwist},
we have shown that the middle square in ~\eqref{eqn:Indep-0} commutes. 
This finishes the proof.
\end{proof}

Recall from \S~\ref{sec:Inertia} that there is a $G$-equivariant
decomposition $I_X = \amalg_{\psi \in \Sigma^G_X} I^{\psi}_X$ 
and hence we have $\oplus_{\psi \in \Sigma^G_X} G_i(G,I^{\psi}_X) 
\xrightarrow{\simeq} G_i(G,I_X)$. 
Using \thmref{thm:closedimmiso} and Proposition~\ref{prop:direcsumdecomp}, 
we get the following.

\begin{defn}\label{defn:RR-mapfull}
Let $G$ act properly on an algebraic space $X$.
For any integer $i \ge 0$, the Atiyah-Segal map 
$\vartheta^G_X: G_i(G,X) \to G_i(G,I^{\psi}_X)_{\fm_1}$
is defined to be the composite
\begin{equation}\label{diag:RR-mapfull}
\vartheta^G_X : G_i(G,X) \xrightarrow{\simeq} \oplus_{\psi \in \Sigma^G_X} 
G_i(G, X_{\psi})_{\fm_{\psi}} 
\xrightarrow{\oplus {\vartheta^{\psi}_X}}
\oplus_{\psi \in \Sigma^G_X} G_i(G,I^{\psi}_X)_{\fm_1} 
\xrightarrow{\simeq} G_i(G,I_X)_{\fm_1}.
\end{equation}
\end{defn}

Since each $\vartheta^{\psi}_X$ is an isomorphism, it follows that $\vartheta^G_X$
is an isomorphism.

\section{Atiyah-Segal correspondence for the coarse 
moduli space map}\label{Sec:GRR-CoaMod}
The construction of the Atiyah-Segal correspondence for the $G$-theory of
stacks goes in two steps. The first is to show that the Atiyah-Segal map
is well defined and is an isomorphism. The second part is to show that this
map is covariant with respect to proper maps of separated quotient stacks.
We have shown the first part in the previous section. The more 
intricate second part will be
shown in several steps. In this section, we prove it for the representable
maps and the coarse moduli space maps.

\begin{prop}\label{prop:riemannrochmapfunct}
For any $G$-equivariant proper morphism $f : X \rightarrow Y$ of 
algebraic spaces 
with proper $G$-action and for any integer $i \ge 0$, 
there is a commutative diagram
\begin{equation}\label{eqn:GRR-I-0}
\xymatrix@C1pc{
G_i(G,X) \ar[r]^-{\vartheta^G_X} \ar[d]_{f_*} & 
G_i(G, I_X)_{\fm_1} \ar[d]^{f^I_*} \\
G_i(G,Y) \ar[r]^-{\vartheta^G_Y} & G_i(G,I_Y)_{\fm_1}.}
\end{equation}
\end{prop}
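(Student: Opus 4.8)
The plan is to reduce the statement, via the decompositions $G_i(G,X) \xrightarrow{\simeq} \oplus_{\psi \in \Sigma^G_X} G_i(G,X_\psi)_{\fm_\psi}$ and $G_i(G,I_X) \xrightarrow{\simeq} \oplus_{\psi} G_i(G,I^\psi_X)$ of \propref{prop:direcsumdecomp} and \thmref{thm:closedimmiso}, to a statement for a single semi-simple conjugacy class $\psi$. Here one must be a little careful: a proper $G$-equivariant map $f\colon X\to Y$ satisfies $\Sigma^G_X \subseteq \Sigma^G_Y$ (if $X^g\neq\emptyset$ then $f(X^g)\subseteq Y^g$), and $f$ carries $X_\psi = GX^g$ into $Y_\psi = GY^g$, so $f$ restricts to a $G$-equivariant proper map $f_\psi\colon X_\psi \to Y_\psi$ and also to a proper $Z_g$-equivariant map $f^g\colon X^g\to Y^g$ on fixed loci and to $f^{I,\psi}\colon I^\psi_X\to I^\psi_Y$; for $\psi\in \Sigma^G_Y\setminus\Sigma^G_X$ both source summands vanish so there is nothing to check. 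Since $f^I_* = \oplus_\psi f^{I,\psi}_*$ is compatible with these direct sum decompositions (the summands are localizations at distinct maximal ideals of $R(G)$, and $f_*$, being $R(G)$-linear, respects the decomposition of \propref{prop:direcsumdecomp}), it suffices to prove that for each $\psi$ the square
\begin{equation*}
\xymatrix@C1pc{
G_i(G,X_\psi)_{\fm_\psi} \ar[r]^-{\vartheta^\psi_X} \ar[d]_{(f_\psi)_*} & G_i(G,I^\psi_X)_{\fm_1} \ar[d]^{(f^{I,\psi})_*} \\
G_i(G,Y_\psi)_{\fm_\psi} \ar[r]^-{\vartheta^\psi_Y} & G_i(G,I^\psi_Y)_{\fm_1}}
\end{equation*}
commutes.

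Next I would unwind $\vartheta^\psi_X = \mu^1_g \circ t_{g^{-1}} \circ \omega_g^{-1}$ (\defref{defn:RR-mapconjclass-AS}) and chase the diagram through the three intermediate groups $G_i(Z_g, X^g)_{\fm_g}$, $G_i(Z_g, X^g)_{\fm_1}$, $G_i(G,I^\psi_X)_{\fm_1}$. The commutativity then breaks into three independent squares, each of which I expect to already be essentially available. First, the square involving $\omega_g^{-1}$: \lemref{lem: w_g} defines $\omega_g = \mu^\psi_* \circ \mu^\psi_g$; the compatibility of $\mu_g$ with proper push-forward is exactly \lemref{lem:moritaandrrproper} (using that $\mu_g$ is the push-forward along $\iota^{GZ_g}_{X^g}$, which is functorial in $X$ by \lemref{lem:Morita-stack*}(3)), and the compatibility of $\mu^\psi_*$ with $f_*$ is the functoriality of push-forward along the finite map $\mu^\psi\colon I^\psi_X\to X_\psi$ together with the naturality of \thmref{thm:closedimmiso}; putting these together gives $\omega_g \circ (f^g)_* = (f_\psi)_* \circ \omega_g$, i.e.\ the $\omega_g^{-1}$ square commutes. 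Second, the twisting square: $t_{g^{-1}}$ commutes with proper push-forward by \corref{cor:proper-T-commute} (applied with $Q = Z_g$, $P = \langle g\rangle$, $T = X^g$, $T' = $ nothing — rather $T=Y^g$, $T'=X^g$, $f = f^g$), noting that $X^g$, $Y^g$ carry the trivial $\langle g\rangle$-action. Third, the $\mu^1_g$ square: again $\mu^1_g$ is the restriction to the augmentation-ideal localization of the push-forward $\mu_g$ along $\iota^{GZ_g}_{X^g}$, so this is once more \lemref{lem:moritaandrrproper} localized at $\fm_1$.

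The one point requiring genuine care — and what I expect to be the main obstacle — is verifying that all of these localizations at $\fm_\psi$ (resp.\ $\fm_g$, $\fm_1$) are legitimate and that the push-forward maps descend to them. The map $(f_\psi)_*\colon G_i(G,X_\psi)\to G_i(G,Y_\psi)$ is $R(G)$-linear, so it localizes at the maximal ideal $\fm_\psi$ of $R(G)$ without any subtlety; similarly $(f^g)_*\colon G_i(Z_g,X^g)\to G_i(Z_g,Y^g)$ is $R(Z_g)$-linear hence localizes at $\fm_g$ and at $\fm_1$. The subtle continuity issue flagged in the introduction for $f^I_*$ on $\fm_{I_{\sX}}$-adic localizations does \emph{not} arise here, because on the inertia side we are only localizing at the \emph{single} maximal ideal $\fm_1 \subset R(G)$, and $f^{I,\psi}_*$ is genuinely $R(G)$-linear. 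So the whole argument stays within honest localizations of finitely generated $\C$-algebras, and the three squares assemble: the outer square commutes because each inner square does, and reassembling over $\psi \in \Sigma^G_Y$ (with the vanishing remark for $\psi\notin \Sigma^G_X$) yields \eqref{eqn:GRR-I-0}. I would also remark that the commutativity of \eqref{eqn:GRR-I-0} is compatible with passing between presentations $[X/G]$, since all the maps involved are defined at the level of stacks, which sets up the later reduction steps.
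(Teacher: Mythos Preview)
Your proposal is correct and follows essentially the same route as the paper's proof: reduce to a single conjugacy class $\psi$ via the decomposition of \propref{prop:direcsumdecomp}, then verify commutativity of each of the three squares arising from $\vartheta^{\psi}_X = \mu^1_g \circ t_{g^{-1}} \circ \omega_g^{-1}$ using \lemref{lem:moritaandrrproper}, \corref{cor:proper-T-commute}, and the functoriality of $\mu^{\psi}_*$ for the commutative square~\eqref{eqn:GRR-I-1}. Your observation that $\Sigma^G_X \subseteq \Sigma^G_Y$ is slightly sharper than the paper's case analysis, and your closing remarks on localization and presentations are correct but not needed for this proposition.
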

\begin{proof}
Using ~\eqref{diag:Inertiadefn} and ~\eqref{eqn:Inert-1}, 
it is easy to check that 
$f^I: I_X \to I_Y$ is a $G$-equivariant proper map which takes 
$I^{\psi}_X$ to $I^{\psi}_Y$. 
Since $f_*: G_i(G,X) \to G_i(G,Y)$ is $R(G)$-linear, it takes the summand 
$G_i(G,X)_{\fm_{\psi}} \to G_i(G,Y)_{\fm_{\psi}}$. 

Since $f$ is a $G$-equivariant map, it is clear that  
$\Sigma^G_X \subseteq \Sigma^G_Y$.
If $\psi$ is a semi-simple 
conjugacy class of $G$ that does not belong to $\Sigma^G_Y$,
then both $X_{\psi}$ and $Y_{\psi}$ are empty hence there is nothing to prove. 
If $\psi$ belongs to $\Sigma^G_Y \setminus \Sigma^G_X$,
then $G_i(G,X)_{\fm_{\psi}} = 0 $ and $I^{\psi}_X = \emptyset$.
So $f_*$ and $f^I_*$ are both zero.
We can thus assume that $\psi \in \Sigma^G_X$. 
This reduces us to showing that $\vartheta^{\psi}_Y \circ f_* =
f^I_* \circ \vartheta^{\psi}_X: G_i(G,X)_{\fm_{\psi}} \to 
G_i(G, I^{\psi}_Y)_{\fm_1}$.

Now, going back to ~\eqref{eqn:RR-AS}, we note that 
the map $t_{g^{-1}}$ commutes with $f_*$ by 
\corref{cor:proper-T-commute}. The maps $\mu^{\psi}_g$ and $\mu^1_g$
commute with $f_*$ and $f^I_*$ by Lemma~\ref{lem:moritaandrrproper}. Since
\begin{equation}\label{eqn:GRR-I-1}
\xymatrix@C1pc{
I^{\psi}_X \ar[d]_{\mu^{\psi}} \ar[r]^-{f^I} & I^{\psi}_Y \ar[d]^{\mu^{\psi}} \\
X \ar[r]^-{f} & Y} 
\end{equation}
is a commutative square of $G$-equivariant proper maps, it follows
that $\mu^{\psi}_*$ and $f_*$ commute. Since $\omega_g = \mu^{\psi}_* \circ 
\mu^{\psi}_g$, it follows that $f_* \circ \omega_g = \omega_g \circ f_*$.
Since $\omega_g$ is invertible, we get $f_* \circ \omega^{-1}_g = \omega^{-1}_g 
\circ f_*$.  We thus get $\vartheta^{\psi}_Y \circ f_* =
f^I_* \circ \vartheta^{\psi}_X$ and this finishes the proof.
\end{proof}

\subsection{The case of coarse moduli space map}\label{SubSec:GRR-CoaMod}
Our goal now is to prove that the Atiyah-Segal map is covariant 
with respect to the coarse moduli space map $[X/G] \to X/G$.

Let $X$ be an algebraic space with proper $G$-action and 
let $Y = X/G$ be the quotient. 
Let $\psi \subset G$ be a 
semi-simple conjugacy class. We have the $G$-equivariant closed immersion 
$j^{\psi}_X: X_{\psi} \subset X$ and a $G$-equivariant finite and surjective map
$\mu^{\psi}: I^{\psi}_X \to X_{\psi}$. Let $\nu^{\psi}: {I^{\psi}_X}/G \to  
{X_{\psi}}/G := Y_{\psi}$ denote the induced map on the quotients.
This is clearly finite and surjective. We fix an integer $i \ge 0$.
 
\begin{lem}\label{lem:semisimpleconjclass}
There is a commutative diagram
\begin{equation}\label{eqn:Aug-2}
\xymatrix@C1pc{
G_i(G,X_{\psi})_{\fm_{\psi}} \ar[r]^-{\vartheta^{\psi}_X} \ar[d]_{\inv^G_{X_{\psi}}} & 
G_i(G, I^{\psi}_X)_{\fm_1} \ar[d]^{\nu^{\psi}_* \circ \inv^G_{I^{\psi}_X}} \\
G_i(Y_{\psi}) \ar[r]^-{Id} & G_i(Y_{\psi}).}
\end{equation}
\end{lem}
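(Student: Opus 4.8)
The key observation is that the Atiyah-Segal map $\vartheta^\psi_X$ is built out of three isomorphisms (see ~\eqref{eqn:RR-AS}): the inverse Morita-type map $\omega_g^{-1}$, the twisting operator $t_{g^{-1}}$, and the Morita map $\mu^1_g$; and that each of these interacts in a controlled way with the functor of invariants $\inv^{(-)}_{(-)}$. The plan is to verify commutativity of ~\eqref{eqn:Aug-2} by decomposing it into smaller squares matching this factorization, handling the three constituent maps separately. The heart of the argument is \propref{prop:invtwisst}, which says precisely that $\inv$ is insensitive to twisting; this is what allows the $t_{g^{-1}}$ step to be absorbed.

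First I would note that by \lemref{lem:torsor}(2), $I^\psi_X \simeq G\times^{Z_g}X^g$ $G$-equivariantly, so that via the Morita isomorphism $\mu_g$ of ~\eqref{eqn:mu-g-*} (which is the push-forward along $\iota^{GZ_g}_{X^g}\colon [X^g/Z_g]\xrightarrow{\simeq}[I^\psi_X/G]$) one has a commuting identification of $G_i(G,I^\psi_X)$ with $G_i(Z_g,X^g)$, and — crucially — of the $H$-invariants functors: $\inv^G_{I^\psi_X}$ corresponds to $\inv^{Z_g}_{X^g}$ composed with the coarse-moduli-space identification ${I^\psi_X}/G \cong {X^g}/{Z_g}$. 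Indeed both sides are push-forward along the coarse moduli map, and $\iota^{GZ_g}_{X^g}$ is an isomorphism of stacks which induces the identity on coarse moduli spaces. Then the composite $\nu^\psi_*\circ\inv^G_{I^\psi_X}$ on the right of ~\eqref{eqn:Aug-2} is identified with $\nu^\psi_* \circ (\text{cms of }\mu_g) \circ \inv^{Z_g}_{X^g} = (\text{coarse moduli map of }\mu^\psi)\circ\inv^{Z_g}_{X^g}$, using that $\mu^\psi = j^\psi\circ(\text{the stacky map})$ factors compatibly.

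Next I would peel off the maps one at a time. The map $\mu^1_g\colon G_i(Z_g,X^g)_{\fm_1}\to G_i(G,I^\psi_X)_{\fm_1}$ is (a localization of) the Morita push-forward, so by the previous paragraph precomposing with $\nu^\psi_*\circ\inv^G_{I^\psi_X}$ just gives the coarse-moduli image of $\mu^\psi$ applied after $\inv^{Z_g}_{X^g}$. The twisting map $t_{g^{-1}}$ is killed by $\inv^{Z_g}_{X^g}$ by \propref{prop:invtwisst} (here $P=\langle g\rangle\subset Z_g$ is the finite central subgroup acting trivially on $X^g$, exactly the setup of \S\ref{Sec:Twisting}). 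Finally, for the $\omega_g^{-1}$ step, recall $\omega_g = \mu^\psi_*\circ\mu^\psi_g$ where $\mu^\psi_*$ is push-forward along the finite $G$-map $\mu^\psi\colon I^\psi_X\to X_\psi$ and $\mu^\psi_g$ is the Morita map followed by inclusion of the $\fm_g$-summand. The relation I need is $\inv^G_{X_\psi}\circ\mu^\psi_* = (\nu^\psi \text{ on coarse moduli})_*\circ\inv^G_{I^\psi_X}$, which follows from functoriality of proper push-forward (\thmref{thm:invariants}) applied to the $G$-equivariant proper map $\mu^\psi$; combined with the $\mu_g$-compatibility of $\inv$ above and the fact that the inclusion of the $\fm_g$-summand is a localization (which $\inv$, being $R(F)$-linear, respects after noting the relevant maximal ideals correspond), this pins down how $\inv^G_{X_\psi}$ transports back through $\omega_g^{-1}$ to $\inv^{Z_g}_{X^g}$ up to the coarse moduli map of $\mu^\psi$.

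Assembling: $\inv^G_{X_\psi}$ on the left equals $\inv^{Z_g}_{X^g}$ transported via $\omega_g$ and then pushed along $\mu^\psi$'s coarse moduli map, while $\nu^\psi_*\circ\inv^G_{I^\psi_X}$ on the right equals the same thing transported via $\mu^1_g$ and $\mu_g$'s coarse moduli map; since $t_{g^{-1}}$ drops out, both routes from $G_i(G,X_\psi)_{\fm_\psi}$ to $G_i(Y_\psi)$ agree, and the bottom map is the identity because ${I^\psi_X}/G$ and ${X_\psi}/G$ have a common further quotient only through $\nu^\psi$ — here one uses that taking coarse moduli spaces of $\mu_g$ (an isomorphism of stacks) and then composing with $\nu^\psi$ recovers the coarse moduli map of $\mu^\psi$, which is exactly $\nu^\psi$ itself. \textbf{The main obstacle} I anticipate is bookkeeping the interaction between the localization at $\fm_\psi$ (resp.\ $\fm_g$, $\fm_1$) and the invariants map, since $\inv$ is only $R(F)$-linear (with $F = Z_g/\langle g\rangle$ or the relevant quotient) and one must check the various augmentation and conjugacy-class ideals match up under the maps $R(G)\to R(Z_g)$ and $t_{g^{-1}}$; this is where \propref{prop:decom-surj}, \propref{prop:invariantsfunct}(3) and \lemref{lem:basicclosedembedding} will be needed to ensure everything is compatible.
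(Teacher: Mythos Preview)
Your proposal is correct and follows essentially the same route as the paper: both decompose $\vartheta^\psi_X$ into its three constituent maps $\omega_g^{-1}$, $t_{g^{-1}}$, $\mu^1_g$, handle the twisting step via \propref{prop:invtwisst}, and handle the two Morita-type steps via the coarse-moduli compatibility $[X^g/Z_g]\simeq[I^\psi_X/G]$ (your first paragraph, the paper's square~\eqref{eqn:Aug-4}) together with the functoriality of $\inv$ for the proper map $\mu^\psi$ (\thmref{thm:invariants}). The localization bookkeeping you anticipate as the main obstacle is milder than you fear, since once $\inv^G$ lands in ordinary (non-equivariant) $K$-theory the distinction between $\fm_g$-, $\fm_\psi$-, and $\fm_1$-localizations of the Morita map evaporates; the paper handles this implicitly and does not need \propref{prop:decom-surj} or \lemref{lem:basicclosedembedding}.
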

\begin{proof}
We consider the diagram
\begin{equation}\label{eqn:Aug-3}
\xymatrix@C1pc{
G_i(G,X_{\psi})_{\fm_{\psi}} \ar[r]^-{\omega_g^{-1}} 
\ar[dd]_{\inv^G_{X_{\psi}}} & G_i(Z_g,X^g)_{\fm_g} \ar[r]^-{t_{g^{-1}}} 
\ar[d]^{\inv^{Z_g}_{X^g}} & G_i(Z_g,X^g)_{\fm_1} \ar[r]^-{\mu^1_g} 
\ar[d]^{\inv^{Z_g}_{X^g}} & G_i(G,I^{\psi}_X)_{\fm_1} 
\ar[d]^{\inv^G_{I^{\psi}_X}} \\
& G_i(X^g/Z_g) \ar[r]^-{Id} \ar[ld]_{\nu^{\psi}_* \circ \gamma_{g*}} & G_i(X^g/Z_g) 
\ar[r]^-{\gamma_{g*}} & G_i(I^{\psi}_X/G)  
\ar[d]^{\nu^{\psi}_*} \\
G_i(Y_{\psi}) \ar[rrr]^-{Id} &&& G_i(Y_{\psi}).}
\end{equation}

The lemma is equivalent to the commutativity of the big outer square.
Using the $G$-equivariant isomorphism $p_g: G \times^{Z_g} X^g
\xrightarrow{\simeq} I^{\psi}_X$, we note from ~\eqref{eqn:mu-g-*}
that $\mu^1_g$ is the push-forward isomorphism induced on $K$-theory via the 
isomorphism of quotient stacks 
$\iota^{GZ_g}_{X^g}: [{X^g}/{Z_g}]  \xrightarrow{\simeq} 
[{I^{\psi}_X}/G]$ given by \lemref{lem:Morita-stack*}.
We have a commutative square of separated quotient stacks and 
coarse moduli spaces
\begin{equation}\label{eqn:Aug-4}
\xymatrix@C1pc{
[{X^g}/{Z_g}] \ar[r]^-{\iota^{GZ_g}_{X^g}}_{\simeq} \ar[d]_{\pi_g} & 
[{I^{\psi}_X}/G] \ar[d]^{\pi^I_g} \\
{X^g}/{Z_g} \ar[r]^{\gamma_g}_{\simeq} & {I^{\psi}_X}/G,}
\end{equation}
where the vertical arrows are proper. Taking the corresponding maps
on the $K$-theory, we see that the top square from the extreme right 
in ~\eqref{eqn:Aug-3} is commutative.

The middle square on the top in ~\eqref{eqn:Aug-3} commutes by
\propref{prop:invtwisst}. We are now left with showing that the
trapezium on the left side of ~\eqref{eqn:Aug-3} commutes.
For any $a \in G_i(G,X)_{\fm_{\psi}}$, we have
\[
\begin{array}{lllll}
{\inv}_{X_{\psi}}^G (a) & = & {\inv}_{X_{\psi}}^G \circ \omega_g \circ 
\omega_g^{-1} (a) & = & {\inv}_{X_{\psi}}^G \circ 
\mu^{\psi}_* \circ \mu^1_g \circ \omega^{-1}_g (a) \\
& {=}^{1} & \nu^{\psi}_* \circ {\inv}^G_{I^{\psi}_X}  \circ \mu^1_g 
\circ \omega_g^{-1} (a) & {=}^2 & 
\nu^{\psi}_* \circ \gamma_{g*} \circ {\inv}^{Z_g}_{X^g}  \circ 
\omega_g^{-1} (a),
\end{array}
\]
where ${=}^1$ follows from \thmref{thm:invariants} and ${=}^2$ follows from
~\eqref{eqn:Aug-4}. This proves the desired commutativity and
finishes the proof of the lemma.  
\end{proof}

\begin{thm}\label{thm:GRR-CoaMod}
Let $X$ be an algebraic space with proper $G$-action and let
$Y = X/G$ be the quotient. For a semi-simple conjugacy class
$\psi \subset G$, let $j^{\psi}_Y: Y_{\psi} \inj Y$ denote the closed immersion, 
where $Y_{\psi} = {X_{\psi}}/G$. Then there is a commutative diagram
\begin{equation}\label{diag:GRR-CoaMod}
\xymatrix@C1pc{
G_i(G,X) \ar[rr]^-{\vartheta^G_X} \ar[dd]_{\inv^G_X} && \oplus_{\psi \in S_G} 
G_i(G,I^{\psi}_X)_{\fm_1} \ar[d]^{\oplus_{\psi \in S_G}
{\inv^G_{X_{\psi}} \circ \mu_*^{\psi}}} \\
&&  \oplus_{\psi \in S_G} G_i(Y_{\psi}) 
\ar[d]^{\oplus_{\psi \in S_G} j^{\psi}_{Y *}}     \\
G_i(Y) \ar[rr]^-{Id} && G_i(Y).}
\end{equation}
\end{thm}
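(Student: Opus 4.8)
The plan is to decompose the statement along the finite set of semi-simple conjugacy classes $\psi \in S_G$ and reduce to \lemref{lem:semisimpleconjclass}, which is precisely the $\psi$-graded piece of the claimed square, after accounting for the various direct-sum decompositions built into $\vartheta^G_X$ and $\inv^G_X$. Concretely, recall from \defref{defn:RR-mapfull} that $\vartheta^G_X$ is the composite of the canonical isomorphism $G_i(G,X) \xrightarrow{\simeq} \oplus_{\psi} G_i(G,X_{\psi})_{\fm_{\psi}}$ (from \propref{prop:direcsumdecomp} together with \thmref{thm:closedimmiso}) with $\oplus_{\psi} \vartheta^{\psi}_X$. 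So the first step is to observe that $\vartheta^G_X$ already factors as
\[
G_i(G,X) \xrightarrow{\simeq} \bigoplus_{\psi \in S_G} G_i(G,X_{\psi})_{\fm_{\psi}}
\xrightarrow{\oplus_{\psi} \vartheta^{\psi}_X} \bigoplus_{\psi \in S_G} G_i(G,I^{\psi}_X)_{\fm_1},
\]
and hence the top horizontal arrow of \eqref{diag:GRR-CoaMod} is understood one $\psi$ at a time.

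Next I would handle the left vertical arrow $\inv^G_X$ similarly. The key point here is that the decomposition $X = \amalg_{\psi} X_{\psi}$ is not literally a decomposition of $X$ (the $X_{\psi}$ can overlap), so one cannot directly split $\inv^G_X$ as a direct sum over $\psi$; rather, one uses the decomposition of $G_i(G,X)$ itself. The plan is to show that under the isomorphism $G_i(G,X) \simeq \oplus_{\psi} G_i(G,X_{\psi})_{\fm_{\psi}}$, the functor of invariants $\inv^G_X: G_i(G,X) \to G_i(Y)$ becomes the sum $\sum_{\psi} j^{\psi}_{Y*} \circ \inv^G_{X_{\psi}}$ composed with the projection onto each summand. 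This follows from \thmref{thm:invariants} applied to the proper $G$-maps $j^{\psi}_X: X_{\psi} \inj X$: each such map gives a commutative square relating $\inv^G_{X_{\psi}}$, $\inv^G_X$, $j^{\psi}_{X*}$ and $j^{\psi}_{Y*}$, and the pushforwards $j^{\psi}_{X*}: G_i(G,X_{\psi})_{\fm_{\psi}} \to G_i(G,X)_{\fm_{\psi}}$ are the very isomorphisms of \thmref{thm:closedimmiso} that assemble (via \propref{prop:direcsumdecomp}) into the decomposition of $G_i(G,X)$. Thus, tracking a class $a \in G_i(G,X)$ through its decomposition $a = \sum_{\psi} j^{\psi}_{X*}(a_{\psi})$ with $a_{\psi} \in G_i(G,X_{\psi})_{\fm_{\psi}}$, we get $\inv^G_X(a) = \sum_{\psi} j^{\psi}_{Y*}(\inv^G_{X_{\psi}}(a_{\psi}))$.

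With both outer arrows thus $\psi$-decomposed, the commutativity of \eqref{diag:GRR-CoaMod} reduces to showing, for each $\psi \in S_G$, that $j^{\psi}_{Y*} \circ \inv^G_{X_{\psi}} \circ \mu^{\psi}_* = j^{\psi}_{Y*} \circ \big(\inv^G_{X_{\psi}} \circ \mu^{\psi}_* \big)$ after precomposition with $\vartheta^{\psi}_X$ equals $j^{\psi}_{Y*}$ of the identity map $G_i(Y_{\psi}) \to G_i(Y_{\psi})$ applied to $\inv^G_{X_{\psi}}(a_{\psi})$ — but this is exactly \lemref{lem:semisimpleconjclass}, which states that $\nu^{\psi}_* \circ \inv^G_{I^{\psi}_X} \circ \vartheta^{\psi}_X = \inv^G_{X_{\psi}}$ (after identifying $\nu^{\psi}_* \circ \inv^G_{I^{\psi}_X}$ with $\inv^G_{X_{\psi}} \circ \mu^{\psi}_*$ via \thmref{thm:invariants} and the finiteness of $\mu^{\psi}$, exactly as in the proof of that lemma). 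So the final step is just to post-compose \lemref{lem:semisimpleconjclass} with $j^{\psi}_{Y*}$ and sum over $\psi$.

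\textbf{Main obstacle.} The routine-looking but genuinely delicate point is the bookkeeping in the second step: verifying that the pushforward maps $j^{\psi}_{X*}$ appearing in the source decomposition of $G_i(G,X)$ are compatible — via \thmref{thm:invariants} — with $\inv^G_{X}$ and the $j^{\psi}_{Y*}$ on the target side, and that the various localizations ($\fm_{\psi}$ versus $\fm_1$) are handled consistently so that $\inv^G_{X_{\psi}} \circ \mu^{\psi}_*$ and $\nu^{\psi}_* \circ \inv^G_{I^{\psi}_X}$ are literally the same map as used in \lemref{lem:semisimpleconjclass}. Once this identification is in place, the theorem is a formal assembly of \lemref{lem:semisimpleconjclass} over $\psi \in \Sigma^G_X$ (the classes outside $\Sigma^G_X$ contributing zero on both sides since $X_{\psi}$ and $I^{\psi}_X$ are then empty).
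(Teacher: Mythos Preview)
Your proposal is correct and follows essentially the same approach as the paper: reduce to each summand $G_i(G,X)_{\fm_{\psi}}$ via \propref{prop:direcsumdecomp} and \thmref{thm:closedimmiso}, use the functoriality of invariants (\thmref{thm:invariants}) to rewrite $\inv^G_X \circ j^{\psi}_{X*}$ as $j^{\psi}_{Y*} \circ \inv^G_{X_{\psi}}$, invoke \lemref{lem:semisimpleconjclass}, and then identify $\nu^{\psi}_* \circ \inv^G_{I^{\psi}_X}$ with $\inv^G_{X_{\psi}} \circ \mu^{\psi}_*$ via the commutative square of proper stack maps (exactly as you note in your ``main obstacle'' paragraph). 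The paper's proof is essentially your argument written out as a chain of equalities for a single element $a \in G_i(G,X)_{\fm_{\psi}}$.
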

\begin{proof}
It suffices to show that the diagram commutes when restricted to each 
$G_i(G,X)_{\fm_{\psi}}$ such that $\psi \in \Sigma^G_X$.
Continuing with the notations set up before and in 
\lemref{lem:semisimpleconjclass}, let $a \in G_i(G,X)_{\fm_{\psi}}$.
We then have
\[
\begin{array}{lllll}
\inv^G_{X} (a) & {=}^1 & \inv^G_{X} 
\circ j^{\psi}_{X *}(a) & {=}^2 &  j^{\psi}_{Y *} \circ \inv^G_{X_{\psi}} (a) \\
& {=}^3 & j^{\psi}_{Y *} \circ \nu^{\psi}_* \circ \inv^G_{I^{\psi}_X}
\circ \vartheta^{\psi}_X (a) 
& {=}^4 & j^{\psi}_{Y *} \circ \inv^G_{X_{\psi}} \circ \mu^{\psi}_* \circ 
\vartheta^{\psi}_X (a).
\end{array}
\]
The equality ${=}^1$ follows from \thmref{thm:closedimmiso}, (2) follows from
\thmref{thm:invariants} and
${=}^3$ follows from \lemref{lem:semisimpleconjclass}.
The equality ${=}^4$ follows from the functoriality of the proper push-forward
map in $K$-theory of coherent sheaves, applied to the commutative square
of proper maps of stacks
\begin{equation}\label{eqn:stack-proper}
\xymatrix@C1pc{
[{I^{\psi}_X}/G] \ar[r]^-{\mu^{\psi}} \ar[d] & [{X_{\psi}}/G] \ar[d] \\
{I^{\psi}_X}/G \ar[r]^-{\nu^{\psi}} & Y_{\psi}}
\end{equation}
and noting that $\inv^G_{(-)}$ is the push-forward map on $K$-theory induced
by the coarse moduli space map which is proper (see \thmref{thm:invariants}). 
This proves the theorem.
\end{proof}

\section{Atiyah-Segal correspondence for a partial quotient 
map}\label{Sec:Product Of Groups}
In this section, we shall prove a version of the Atiyah-Segal correspondence
for a map which is the quotient by one of the factors 
of a product of groups acting properly on an algebraic space.
More specifically, we shall work with the following set up.

Let $G = H \times F$ be the product of two linear algebraic groups
which acts properly on an algebraic space $X$ with quotients 
$Y = X/H$ and $Z = X/G$. 
Let $g = (g_1,g_2) \in H \times F = G$ be a semi-simple element. 
Let $Z_{g_1} = \mathcal{Z}_H(g_1)$ and $Z_{g_2} =  \mathcal{Z}_F(g_2)$ be the 
centralizers of $g_1$ in $H$ and $g_2$ in $F$, respectively so that
$Z_{g} = \sZ_G(g) = Z_{g_1} \times Z_{g_2}$. Let $\psi$ denote 
the conjugacy class of $g$ in $G$ and let $\phi$ denote its 
image in $F$. As $g$ is a semi-simple element in $G$, so is $g_2$ and hence 
$\phi$ is a semi-simple conjugacy class in $F$. 

We let $W = {X^g}/{Z_{g_1}}$ and let $k^g: W \to Y^{g_2}$ denote the
map induced by $X \to Y$. Let $\upsilon^\psi: {I^{\psi}_X}/H \to I^{\phi}_Y$
and $\upsilon: {I_X}/H \to I_Y$ denote the maps on the inertia spaces.
Let $j^{\psi}_X : X_{\psi} \rightarrow X$ and $j^{\phi}_Y: Y_{\phi} \to Y$
be the inclusion maps. Let $s^{\psi}_X: I^\psi_X \to I_X$ and
$s^{\phi}_Y : I^{\phi}_Y \to I_Y$ be the inclusion maps.
Note that $W$ and $Y^{g_2}$ have natural actions of 
$Z_{g_2}$ by \lemref{lem:Basic}.
Similarly, $F$ acts on ${I^{\psi}_X}/H$ and ${X_{\psi}}/H$.
Let $u^g: {X_{\psi}}/H \to Y_{\phi}$ denote the 
canonical map induced by quotient map $X \to Y$. This is clearly 
$F$-equivariant. 

We have the commutative diagram
\begin{equation}\label{diag:prodstructureofmaps*}
\xymatrix@C1pc{
I^{\psi}_X \ar[rr]^-{\mu^{\psi}} \ar[d]_{\pi^{\psi}} && X  \ar[d] \\
I^{\psi}_X/H \ar[r]^-{\upsilon^{\psi}} \ar[d] & I^{\phi}_Y 
\ar[r]^-{\mu^{\phi}} \ar[d] & Y \ar[d] \\
I^{\psi}_X /G \ar[r]^-{\delta^{\psi}} & {I^{\phi}_Y}/F \ar[r]^-{\gamma^{\phi}} &
Z.}
\end{equation}

\begin{lem}\label{lem:Moritaforpartialquotient}
With respect to the above set up, we have the following.
\begin{enumerate}
\item
All horizontal arrows 
in ~\eqref{diag:prodstructureofmaps*} are finite.
All maps are $G$-equivariant and all maps in the bottom squares are
$F$-equivariant. 
\item
The map $u^g: {X_{\psi}}/H \to Y_{\phi}$ is finite. 
\item
There is an $F$-equivariant isomorphism $p^W_{g_2}: F \times^{Z_{g_2}}W 
\xrightarrow{\simeq} {I^{\psi}_X}/H$.
\item
There is a canonical isomorphism of quotient stacks
$[W/{Z_{g_2}}] \simeq [{({I^{\psi}_X}/H)}/F]$.
\item
The map $k^g: W \to Y^{g_2}$ is $Z_{g_2}$-equivariant and finite.
\end{enumerate}
\end{lem}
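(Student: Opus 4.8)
The strategy is to deduce each of the five assertions from results already established in the excerpt, chiefly Lemmas~\ref{lem:Morita-stack}, \ref{lem:Morita-stack*}, \ref{lem:torsor} and \ref{lem:GRR-QDM-stacks}, by carefully unwinding how the ambient $G = H\times F$ action restricts and descends. First I would record the basic identifications: since $g=(g_1,g_2)$ we have $Z_g = Z_{g_1}\times Z_{g_2}$, and $X^g = X^{g_1}\cap X^{g_2}$ inherits a $Z_g$-action; moreover, since $H$ acts only through the first factor, $X^g$ is $H$-stable precisely under $Z_{g_1}$, so $W = X^g/Z_{g_1}$ carries a residual action of $Z_{g_2}$ (and of $F$ through the larger space).

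For (1), I would argue as follows. The map $\mu^{\psi}: I^{\psi}_X \to X$ is finite by Lemma~\ref{lem:torsor}(3) (it is the restriction of $\phi$, whose image is the closed subspace $X_\psi$, and the $G$-action on $X$ is proper); the lower horizontal arrows $\upsilon^{\psi}$, $\delta^{\psi}$ are the maps induced on $H$- and $G$-quotients, so their finiteness follows from the finiteness of $\mu^{\psi}$ and $I^{\psi}_X \to I^{\phi}_Y$ together with \lemref{lem:properpushforward} (a finite map of stacks induces a finite map on coarse spaces, and quotients by the $H$- or $G$-action are special cases via the quotient stack). Similarly $\mu^{\phi}, \gamma^{\phi}$ are finite by the same reasoning applied to $Y = X/H$ with its proper $F$-action. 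The $G$- and $F$-equivariance of the various arrows is immediate from the constructions in ~\eqref{eqn:Inert-1} and the fact that the $H$-quotient carries down the residual $F$-action. Part (2) is then the statement that $u^g: X_\psi/H \to Y_\phi$ is finite: this factors through $\mu^\psi$ after passing to $H$-quotients (using $X_\psi/H \cong$ image of $I^\psi_X/H$ and $Y_\phi = Y_{\phi}$ being the image of the finite map $\mu^\phi$), so finiteness again follows from \lemref{lem:properpushforward}.

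For (3) and (4) — the heart of the Morita-theoretic content — I would apply \lemref{lem:torsor}(2) \emph{inside the group $F$}: the conjugacy class $\phi\subset F$ has a representative $g_2$, and that lemma gives an $F$-equivariant isomorphism $F\times^{Z_{g_2}} Y^{g_2}\xrightarrow{\simeq} I^{\phi}_Y$. But $I^\psi_X/H$ should be compared not with $I^\phi_Y$ directly but with the $F$-space sitting over it; the key observation is that $I^\psi_X/H$ is the fixed-point-type locus $(I_X/H)^{\phi}$ for the $F$-action, and its base change along $Y^{g_2}\to Y$ is $W = X^g/Z_{g_1}$. Concretely, one checks that the square expressing $I^\psi_X/H$ as $F\times^{Z_{g_2}} W$ is Cartesian by the same torsor computation as in \lemref{lem:torsor}(2), giving the $F$-equivariant isomorphism $p^W_{g_2}$ of (3); then (4) is the Morita statement of \lemref{lem:Morita-stack*}(2) (with $G$ there being $F$, $H$ there being $Z_{g_2}$, and $X$ there being $W$), which turns $F\times^{Z_{g_2}}W$ into $[W/Z_{g_2}]$ after quotienting by $F$. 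Finally (5): $k^g: W \to Y^{g_2}$ is $Z_{g_2}$-equivariant by construction, and it is the map on $Z_{g_1}$-quotients induced by the finite $G$-equivariant map $X^g \to Y^{g_2}$ (which is finite because $X\to Y$ is an $H$-torsor, hence affine, and $X^g \to Y^{g_2}$ is its restriction to the fixed loci, where $H$ acts through the finite stabilizer-type group $Z_{g_1}$ — more precisely, $X^g\to Y^{g_2}$ factors as $X^g \to X^g/Z_{g_1} = W \to Y^{g_2}$ with the first map a $Z_{g_1}$-torsor and the composite finite since the $G$-action is proper), so finiteness of $k^g$ follows.

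The main obstacle I anticipate is part (3)/(4): correctly identifying $I^\psi_X/H$ with the twisted product $F\times^{Z_{g_2}} W$ requires care in tracking which group acts on which piece, since the "inertia direction" lives in the $F$-factor while the $H$-quotient has already been taken, and one must verify that the residual $Z_{g_2}$-action on $W = X^g/Z_{g_1}$ matches the one coming from the conjugation action in \lemref{lem:torsor}(2). I would handle this by checking the relevant square is Cartesian on the level of functors of points (as in the proofs of \lemref{lem:$I_X$torsor} and \lemref{lem:torsor}), reducing everything to the already-proved statement that $G\times X^g \to I^\psi_X$ is a $Z_g$-torsor and then dividing out the $H = Z_{g_1}\times\{e\}$ part. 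The remaining parts are essentially bookkeeping on top of results already in the paper.
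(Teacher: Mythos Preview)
Your treatment of (1), (2) and (4) matches the paper. For (3), your final-paragraph strategy --- start from the $Z_g$-torsor $G\times X^g\to I^{\psi}_X$ and quotient by $H$ --- is exactly what the paper does, but it carries this out by an explicit chain of isomorphisms rather than a Cartesian-square argument: one writes
\[
(G\times^{Z_g}X^g)/H \;\simeq\; \frac{(H\times F)\times X^g}{H\times Z_{g_1}\times Z_{g_2}}\;\simeq\;\frac{F\times X^g}{Z_{g_1}\times Z_{g_2}}\;\simeq\;F\times^{Z_{g_2}}W,
\]
using that $H$ acts on $G=H\times F$ by left translation on the first factor and trivially on $X^g$. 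Note that your phrase ``the $H = Z_{g_1}\times\{e\}$ part'' conflates two groups that are distinct in general; it is the full $H$ that one divides out, and the point is that this $H$-action is free on the $H$-factor of $G$ alone.

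There is a genuine gap in your argument for (5). You assert that $X\to Y$ is an $H$-torsor, that $X^g\to W$ is a $Z_{g_1}$-torsor, and that the composite $X^g\to Y^{g_2}$ is finite; all three fail whenever $Z_{g_1}$ has positive dimension. The $H$-action on $X$ is only proper, not free, so $X\to Y=X/H$ is merely the coarse-moduli map, and the fibre of $X^g\to Y^{g_2}$ over the image of a point $x\in X^g$ contains the entire orbit $Z_{g_1}\cdot x$, which is positive-dimensional since stabilizers are finite. The paper instead deduces (5) \emph{from} (1) and (3): the finiteness of $\upsilon^{\psi}\colon I^{\psi}_X/H\to I^{\phi}_Y$ established in (1), transported through the identifications $I^{\psi}_X/H\simeq F\times^{Z_{g_2}}W$ and $I^{\phi}_Y\simeq F\times^{Z_{g_2}}Y^{g_2}$, gives finiteness of the $F$-equivariant map $F\times^{Z_{g_2}}W\to F\times^{Z_{g_2}}Y^{g_2}$; restricting to the closed fibre over the identity coset in $F/Z_{g_2}$ then yields finiteness of $k^g$.
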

\begin{proof}
The $G$-equivariance and $F$-equivariance of the maps in the left diagram in  
~\eqref{diag:prodstructureofmaps*} are clear.
Except possibly ${\upsilon^{\psi}}$ and $\delta^{\psi}$, all the horizontal 
arrows in ~\eqref{diag:prodstructureofmaps*} are finite by 
Lemmas~\ref{lem:torsor} and ~\ref{lem:properpushforward}. In particular,   
$\mu^{\phi} \circ \upsilon^{\psi}$ and $\gamma^{\phi} \circ \delta^{\psi}$ are
finite. It follows that $\upsilon^{\psi}$ and $\delta^{\psi}$ are also finite.
This proves (1).

We next note that there are maps
${I^{\psi}}/H \xrightarrow{\mu^{\psi}/H} {X_{\psi}}/H \xrightarrow{u^g} Y_{\psi}$,
such that $u^g \circ ({\mu^{\psi}/H}) = \mu^\phi \circ \upsilon^{\psi}$.
The first map is finite and surjective by Lemmas~\ref{lem:torsor}
and ~\ref{lem:properpushforward}. We just showed above that
the map $u^g \circ ({\mu^{\psi}/H})$ is finite. It follows that
$u^g$ is finite (see the proof of 
\lemref{lem:properpushforward}). This proves (2).
We now consider the commutative diagrams
\begin{equation}\label{eqn:Mor-prod-0}
\xymatrix@C1pc{
X^g \ar@{^{(}->}[r] \ar[d] & G \times^{Z_g} X^g \ar@{->>}[r] \ar[d] & 
{G/{Z_g}} \ar[d] 
& & G \times^{Z_g} X^g \ar[r]^-{p_g} \ar[d] & I^{\psi}_X \ar[d] \\
W \ar@{^{(}->}[r]  \ar[d]_{k^g}  & F \times^{Z_{g_2}} W \ar@{->>}[r] 
\ar[d]^{\ov{id_F \times k^g}} 
& {F/{Z_{g_2}}} 
\ar[d] & & F \times^{Z_{g_2}} W \ar[r]^-{p^W_{g_2}} \ar[d]_{\ov{id_F \times k^g}} & 
{I^{\psi}_X}/H \ar[d]^{\upsilon^{\psi}} \\
Y^{g_2} \ar@{^{(}->}[r] & F \times^{Z_{g_2}} Y^{g_2} 
\ar@{->>}[r] & {F/{Z_{g_2}}} & &
F \times^{Z_{g_2}} Y^{g_2} \ar[r]^-{p^Y_{g_2}} & I^{\phi}_Y.} 
\end{equation}

We first explain the maps in the diagram on the left.
The horizontal arrows on the left are all of the type $z \mapsto (e, z)$,
where $e$ is the identity element of $G$ or $F$. The horizontal arrows on the
right are the ones induced on the quotients by the projection maps to the 
first factors. It is then clear that the left column is the closed fiber
of these maps over the identity cosets. In particular, horizontal arrows on the 
left are all closed immersions. 

We now explain the maps in the diagram on the right.
The maps $p_g$ and $p^Y_{g_2}$ are given in \lemref{lem:torsor}. 
Since $p_g$ is $G$-equivariant with respect to the action
$(g'_1, g'_2)(h_1, h_2, x) = (g'_1h_1, g'_2h_2, x)$, it follows that
this descends to an $F$-equivariant map ${(G \times^{Z_g} X^g)}/H \to 
{I^{\psi}_X}/H$.
On the other hand, using the way various actions are defined, we
get
\begin{equation}\label{eqn:Mor-prod-1}
\begin{array}{lllllll}
{(G \times^{Z_g} X^g)}/H & \xrightarrow{\simeq} &
\frac{(H \times F) \times^{(Z_{g_1} \times Z_{g_2})} X^g}{H} &   
\xrightarrow{\simeq} &
\frac{(H \times F) \times X^{g}}{H \times Z_{g_1} \times Z_{g_2}} \\
& \xrightarrow{\simeq} &
\frac{F \times X^g}{Z_{g_1} \times Z_{g_2}} 
& \xrightarrow{\simeq} & 
\frac{F \times ({X^g}/{Z_{g_1}})}{Z_{g_2}} \\ 
& \xrightarrow{\simeq} & F \times^{Z_{g_2}}W. & &
\end{array}
\end{equation}

The second isomorphism holds because $H \simeq (H \times 1_F)$ and 
$(Z_{g_1} \times Z_{g_2})$-actions on $(H \times F) \times X^{g}$ commute.
The third isomorphism holds because the $(H \times 1_F)$-action on 
$(H \times F) \times X^{g}$ is free and this action on $X^g$-factor is 
trivial. The fourth isomorphism holds
because $Z_{g_1}$ acts trivially on $F$. One checks easily that all the maps
in ~\eqref{eqn:Mor-prod-1} are $F$-equivariant.
It follows that $p_g$ descends to an $F$-equivariant isomorphism 
$p^W_{g_2}$ and the two squares on the right side of ~\eqref{eqn:Mor-prod-0}
commute. 
This proves (3) and (4) follows from this because of the isomorphism of
stacks $[W/{Z_{g_2}}] \simeq [{(F \times^{Z_{g_2}}W)}/F]$ by 
\lemref{lem:Morita-stack*}.

We now prove (5). Since the map $\upsilon^{\psi}$ is finite 
(see ~\eqref{diag:prodstructureofmaps*}), it follows that 
the map ${\ov{id_F \times k^g}}: F \times^{Z_{g_2}}W \to F \times^{Z_{g_2}}Y^{g_2}$ 
is finite $F$-equivariant. Since $W \to F \times^{Z_{g_2}}W$ and 
$Y^{g_2} \to F \times^{Z_{g_2}}Y^{g_2}$ are 
closed immersions, it follows by looking at the diagram on the left in 
~\eqref{eqn:Mor-prod-0} that $k^g$ is an $Z_{g_2}$-equivariant finite morphism.
\end{proof}

\subsection{Equivariant $K$-theory for action of $H \times F$}
\label{sec:K-prod*}
We shall let 
\begin{equation}\label{eqn:Mor-prod-2}
\mu_g: G(Z_g, X^g) \xrightarrow{\simeq} G(G,I^{\psi}_X), \
\mu'_g: G(Z_{g_2}, W) \xrightarrow{\simeq} G(F,I^{\psi}_X/H) \
\mbox{and} 
\end{equation}
\[
\mu_{g_2}: G(Z_{g_2},Y^{g_2}) \xrightarrow{\simeq} G(F,I^{\phi}_Y)
\]
denote the Morita isomorphisms on the $K$-theory induced by 
\lemref{lem:Moritaforpartialquotient}.
Continuing with the above set up, we now prove the following statements.

\begin{lem}\label{lem:Moritapartialfunctoriality}
There is a commutative diagram
\begin{equation}\label{eqn:Mor-prod-3}
\xymatrix@C1pc{
G_*(Z_g, X^g) \ar[d]_{\mu_g} \ar[r]^-{\inv^{Z_{g_1}}_{X^g}} &  
G_*(Z_{g_2}, W) \ar[r]^-{k^g_*} \ar[d]^{\mu'_g} & G_*(Z_{g_2}, Y^{g_2})
\ar[d]^{\mu_{g_2}} \\
G_*(G,I^{\psi}_X)  \ar[r]^-{\inv^H_{I^{\psi}_X}} &
G_*(F, {I^{\psi}_X}/H) \ar[r]^-{\upsilon^{\psi}_*} & G_*(F, I^{\phi}_Y).}
\end{equation}
\end{lem}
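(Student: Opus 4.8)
The diagram in \lemref{lem:Moritapartialfunctoriality} naturally splits into a left square and a right square, and I would treat them separately and then paste them together. The strategy throughout is to re-interpret every map as a proper push-forward of coherent sheaves under a morphism of quotient stacks, so that commutativity reduces to an identity of morphisms of stacks together with the covariant functoriality $(g\circ f)_*=g_*\circ f_*$ of push-forward at the level of $K$-theory spectra (this is the Thomason--Trobaugh construction used already in \thmref{thm:invariants} and \lemref{lem:moritaandrrproper}). For this to make sense I first need that all the maps in sight are proper morphisms of separated quotient stacks with finite cohomological dimension; this is supplied by \lemref{lem:Moritaforpartialquotient}(1),(2),(5) (finiteness of $\mu^\psi$, $\upsilon^\psi$, $u^g$, $k^g$) together with \lemref{lem:Fin-coh} and \thmref{thm:invariants} (the coarse-moduli and invariants push-forwards).

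\textbf{The right square.} Recall from \eqref{eqn:mu-g-*} that $\mu'_g$ and $\mu_{g_2}$ are the push-forward isomorphisms induced by the stack isomorphisms $[W/Z_{g_2}]\xrightarrow{\simeq}[({I^\psi_X}/H)/F]$ and $[Y^{g_2}/Z_{g_2}]\xrightarrow{\simeq}[I^\phi_Y/F]$ coming from \lemref{lem:Morita-stack*} applied to the $F$-equivariant isomorphism $p^W_{g_2}$ of \lemref{lem:Moritaforpartialquotient}(3) and to $p^Y_{g_2}$. The map $k^g_*$ is push-forward along the finite $Z_{g_2}$-equivariant map $k^g\colon W\to Y^{g_2}$ (\lemref{lem:Moritaforpartialquotient}(5)), hence along the stack map $[W/Z_{g_2}]\to[Y^{g_2}/Z_{g_2}]$; and $\upsilon^\psi_*$ is push-forward along the finite $F$-equivariant map $\upsilon^\psi\colon {I^\psi_X}/H\to I^\phi_Y$, i.e.\ along $[({I^\psi_X}/H)/F]\to[I^\phi_Y/F]$. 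So the right square of \eqref{eqn:Mor-prod-3} is the push-forward applied to the square of stacks obtained by quotienting the right-hand square in the second display of \eqref{eqn:Mor-prod-0} by $F$ (the one containing $p^W_{g_2}$, $p^Y_{g_2}$, $\upsilon^\psi$ and $\ov{id_F\times k^g}$), which commutes by that lemma. Covariant functoriality of push-forward then gives commutativity of the right square.

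\textbf{The left square.} Here $\mu_g\colon G_*(Z_g,X^g)\xrightarrow{\simeq}G_*(G,I^\psi_X)$ is push-forward along $\iota^{GZ_g}_{X^g}\colon[X^g/Z_g]\xrightarrow{\simeq}[I^\psi_X/G]$ (\eqref{eqn:mu-g-*} again), and $\inv^H_{I^\psi_X}$, $\inv^{Z_{g_1}}_{X^g}$ are the ``$H$-invariants'' push-forwards of \thmref{thm:invariants} along the coarse-moduli-type maps $[I^\psi_X/G]\to[({I^\psi_X}/H)/F]$ and $[X^g/Z_g]=[X^g/(Z_{g_1}\times Z_{g_2})]\to[({X^g}/Z_{g_1})/Z_{g_2}]=[W/Z_{g_2}]$ respectively. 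Using the identification $[W/Z_{g_2}]\simeq[({I^\psi_X}/H)/F]$ of \lemref{lem:Moritaforpartialquotient}(4), commutativity of the left square reduces to commutativity of the square of stacks
\begin{equation*}
\xymatrix@C2pc{
[X^g/Z_g] \ar[r]^-{\iota^{GZ_g}_{X^g}} \ar[d] & [I^\psi_X/G] \ar[d] \\
[W/Z_{g_2}] \ar[r]^-{\simeq} & [({I^\psi_X}/H)/F],}
\end{equation*}
where the vertical arrows are the invariants maps and the bottom is the isomorphism of \lemref{lem:Moritaforpartialquotient}(4). This square commutes essentially by construction: both composites are the canonical map $[X^g/Z_g]\to[({I^\psi_X}/H)/F]$ induced by $X^g\to I^\psi_X\to {I^\psi_X}/H$ and by quotienting successively, which is exactly the content of chasing through the chain of isomorphisms \eqref{eqn:Mor-prod-1} and the left-hand commuting square in \eqref{eqn:Mor-prod-0}. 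Covariant functoriality of push-forward on $K$-theory then gives the left square, and pasting the two squares proves the lemma.

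\textbf{Main obstacle.} The genuine work is bookkeeping: carefully matching the various $\inv$ maps of \thmref{thm:invariants} and Morita maps $\mu_g,\mu'_g,\mu_{g_2}$ with push-forwards along the \emph{correct} morphisms of stacks, and checking that the chain of ``obvious'' isomorphisms in \eqref{eqn:Mor-prod-1}/\eqref{eqn:Mor-prod-0} is compatible with all the relevant group actions ($G$, $F$, $Z_{g_1}$, $Z_{g_2}$) so that the bottom isomorphism of the displayed square really does identify the two composite stack maps. Once everything is phrased as push-forward along stack morphisms, the only non-formal input is that these compositions of stack morphisms agree, which is \lemref{lem:Moritaforpartialquotient}; the $K$-theoretic statement is then pure functoriality.
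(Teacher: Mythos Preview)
Your proposal is correct and follows essentially the same approach as the paper: both squares are obtained by applying covariant functoriality of push-forward to commutative squares of quotient stacks, with the right square coming from \lemref{lem:Morita-stack*}(3) applied to $k^g$ (equivalently, the right diagram in \eqref{eqn:Mor-prod-0}) and the left square from the stack diagram you display, which is exactly the paper's \eqref{eqn:Mor-prod-3-1}. The paper is just terser about the bookkeeping you spell out.
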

\begin{proof}
The commutativity of the right square follows by applying
\lemref{lem:Morita-stack*} to the $Z_{g_2}$-equivariant morphism
$k^g: W \to Y^{g_2}$. The left square commutes because,
by \thmref{thm:invariants} and ~\eqref{eqn:mu-g-*}, it is induced by the 
commutative diagram of quotient stacks
\begin{equation}\label{eqn:Mor-prod-3-1}
\xymatrix@C1pc{
[{X^g}/{Z_g}] \ar[r] \ar[d]_{\iota^{GZ_g}_{X^g}} & [W/{Z_{g_2}}] 
\ar[d]^{\iota^{FZ_{g_2}}_W} \\
[{I^{\psi}_X}/G] \ar[r] & [{(I^{\psi}/H)}/F],}
\end{equation}
where the horizontal arrows are proper and the vertical arrows are isomorphisms.
\end{proof}

\begin{lem}\label{lem:Moritapartialcompatability}
There is a commutative diagram
\begin{equation}
\xymatrix@C1pc{
G_*(Z_g,X^g)_{\fm_g} \ar[r]^-{\mu^{\psi}_g} \ar[d]_{\inv^{Z_{g_1}}_{X^g}} & 
G_*(G,I^{\psi}_X)_{\fm_{\psi}} \ar[r]^-{\mu^{\psi}_{*}} \ar[d]^{\inv^H_{I^{\psi}_X}} & 
G_*(G, X_{\psi})_{\fm_{\psi}} \ar[d]^{\inv^{H}_{X_{\psi}}} \\
G_*(Z_{g_2},W)_{\fm_{g_2}} \ar[r]^-{\mu'^{\phi}_g} 
\ar[d]_{k^g_*} & G_*(F, I^{\psi}_{X}/H)_{\fm_{\phi}} \ar[d]^{\upsilon^{\psi}_*} 
\ar[r]^-{\mu'^{\phi}_*} & G_*(F,X_{\psi}/H)_{\fm_{\phi}} \ar[d]^{u^g_*} \\
G_*(Z_{g_2}, Y^{g_2})_{\fm_{g_2}} \ar[r]^-{\mu^{\phi}_{g_2}} & 
G_*(F ,I^{\phi}_Y)_{\fm_{\phi}} \ar[r]^-{\mu^{\phi}_*} & 
G_*(F,Y_{\phi})_{\fm_{\phi}}.} 
\end{equation}
\end{lem}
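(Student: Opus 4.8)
The plan is to prove the lemma by checking, one at a time, that each of the four small squares in the displayed $3\times 3$ diagram commutes; commutativity of the whole diagram then follows formally. The common framework is that each square is the localization of an already-established commutative square of equivariant $K$-groups, and the only real work is keeping track of the localizations and isotypic summands. Two elementary observations organize this. First, $g=(g_1,g_2)$ maps to $g_2$ under the projection $Z_g=Z_{g_1}\times Z_{g_2}\to Z_{g_2}$ and $\psi$ maps to $\phi$ under $G=H\times F\to F$, so by \propref{prop:grp-hom} the ideals satisfy $\fm_{g_2}=\fm_g\cap R(Z_{g_2})$ and $\fm_{\phi}=\fm_{\psi}\cap R(F)$ for the restriction homomorphisms $R(Z_{g_2})\to R(Z_g)$ and $R(F)\to R(G)$; hence any $\fm_g$-local module is automatically $\fm_{g_2}$-local over $R(Z_{g_2})$, any $\fm_{\psi}$-local module is automatically $\fm_{\phi}$-local over $R(F)$, and (using that $\inv^{Z_{g_1}}_{X^g}$ is $R(Z_{g_2})$-linear by \thmref{thm:invariants} and that the finite push-forwards $\upsilon^{\psi}_*$, $u^g_*$, $k^g_*$, $\mu'^{\phi}_*$, $\mu^{\phi}_*$ are linear over the appropriate representation rings) every vertical arrow in the diagram lands where claimed. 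Second, by \propref{prop:decom-surj} together with \lemref{lem:basicclosedembedding}, the source $G_*(Z_g,X^g)_{\fm_g}$ of $\mu^{\psi}_g$ is precisely the $\fm_g$-isotypic summand of $G_*(Z_g,X^g)_{\fm_{\psi}}$, and likewise for $\mu'^{\phi}_g$ and $\mu^{\phi}_{g_2}$; thus, as in \eqref{eqn:Omega**-1}, each decorated Morita map is the restriction to one summand of the corresponding plain Morita isomorphism $\mu_g$, $\mu'_g$, $\mu_{g_2}$ of \eqref{eqn:Mor-prod-2}, post-composed with its localization.

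The two right-hand squares are the routine ones. The top-right square is the localization of the functoriality diagram \eqref{eqn:invariants-1} of \thmref{thm:invariants}, applied to the $G$-equivariant finite map $\mu^{\psi}\colon I^{\psi}_X\to X_{\psi}$ together with the normal subgroup $H\triangleleft G$; here $\mu'^{\phi}_*$ is by definition the push-forward along the induced $F$-equivariant finite map $I^{\psi}_X/H\to X_{\psi}/H$, and one localizes the top row at $\fm_{\psi}$ over $R(G)$ and the bottom at $\fm_{\phi}$ over $R(F)$, which is consistent because $\fm_{\phi}=\fm_{\psi}\cap R(F)$. The bottom-right square is the localization at $\fm_{\phi}$ of the commutative square obtained from covariant functoriality of proper push-forward in the equivariant $K$-theory of coherent sheaves, applied to the commutative square of $F$-equivariant finite maps with corners $I^{\psi}_X/H$, $X_{\psi}/H$, $I^{\phi}_Y$, $Y_{\phi}$, horizontal maps $\mu'^{\phi}$ and $\mu^{\phi}$, and vertical maps $\upsilon^{\psi}$ and $u^g$; that these maps are finite follows from \lemref{lem:Moritaforpartialquotient} and \lemref{lem:torsor}, and commutativity of this square of spaces is read off from \eqref{diag:prodstructureofmaps*} (or checked directly from the definitions).

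The two left-hand squares come from localizing the two squares of \lemref{lem:Moritapartialfunctoriality}. For the top-left square, start from the identity $\inv^H_{I^{\psi}_X}\circ\mu_g=\mu'_g\circ\inv^{Z_{g_1}}_{X^g}$; by the first paragraph, $\inv^{Z_{g_1}}_{X^g}$ sends the $\fm_g$-summand into $G_*(Z_{g_2},W)_{\fm_{g_2}}$ and $\inv^H_{I^{\psi}_X}$ sends its $\mu_g$-image into $G_*(F,I^{\psi}_X/H)_{\fm_{\phi}}$, compatibly with the defining inclusions of $\mu^{\psi}_g$ and $\mu'^{\phi}_g$, so the restricted, localized square reads $\inv^H_{I^{\psi}_X}\circ\mu^{\psi}_g=\mu'^{\phi}_g\circ\inv^{Z_{g_1}}_{X^g}$. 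The bottom-left square is handled the same way from $\mu_{g_2}\circ k^g_*=\upsilon^{\psi}_*\circ\mu'_g$: since $k^g_*$ is $R(Z_{g_2})$-linear it preserves the $\fm_{g_2}$-isotypic decomposition, and $\upsilon^{\psi}_*$ — which by the unlocalized identity equals $\mu_{g_2}\circ k^g_*\circ(\mu'_g)^{-1}$ — preserves the decomposition transported through the Morita isomorphisms, giving $\mu^{\phi}_{g_2}\circ k^g_*=\upsilon^{\psi}_*\circ\mu'^{\phi}_g$.

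The step I expect to be the main obstacle is not any individual square but the first paragraph: one must be scrupulous in identifying each decorated Morita map with the summand-restriction of the corresponding plain one, and in verifying that the invariant functors and the finite push-forwards are compatible with both the $\fm$-adic localizations and the isotypic decompositions coming from the various intersections $\psi\cap Z_g$, $\psi\cap H$, and so on (\propref{prop:grp-hom}, \lemref{lem:basicclosedembedding}, \propref{prop:decom-surj}). Once that compatibility is in place, all four squares reduce to the already-proven commutative diagrams — \thmref{thm:invariants}, \lemref{lem:Moritapartialfunctoriality}, and functoriality of proper push-forward — followed by exact localization, which preserves commutativity.
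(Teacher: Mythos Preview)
Your proposal is correct and follows essentially the same approach as the paper: the left squares come from \lemref{lem:Moritapartialfunctoriality} after localization, and the right squares from \thmref{thm:invariants} (the top-right) together with covariant functoriality of proper push-forward (the bottom-right). The paper's proof is terse---it compresses your first paragraph into the single remark that the maps in the top squares are $R(G)$-linear and those in the bottom squares are $R(F)$-linear---whereas you spell out the compatibility of the localizations and the identification of the decorated Morita maps with the summand-restrictions; this extra care is warranted and does not diverge from the paper's argument.
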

\begin{proof}
Using the fact that all maps on the top squares are $R(G)$-linear
and all maps on the bottom squares are $R(F)$-linear, the
commutativity of the two squares on the left follows directly from
\lemref{lem:Moritapartialfunctoriality}. The
commutativity of the two squares on the right follows from
\thmref{thm:invariants}.
\end{proof}

\begin{lem}\label{lem:Gen-qt}
Let $G$ be a linear algebraic group acting properly on an algebraic space $X$
and let $p: X \to X/G$ be the quotient. Then the map
$\inv^G_X: G_*(G,X)_{\fm_1} \to G_*(X/G)$ is an isomorphism.
\end{lem}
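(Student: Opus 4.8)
The plan is to prove the isomorphism by d\'evissage and Noetherian induction on $X$, with the base case a free action. By d\'evissage ($G$-theory of coherent sheaves is insensitive to nilpotents) we may assume $X$ reduced. Suppose first that $G$ acts freely on $X$. Then $p\colon X\to X/G$ is a $G$-torsor, so pull-back is an equivalence of abelian categories ${\sC oh}_{X/G}\xrightarrow{\simeq}{\sC oh}^G_X$ whose inverse on $K$-theory is the push-forward $\inv^G_X$; hence $\inv^G_X\colon G_*(G,X)\to G_*(X/G)$ is an isomorphism. Since the stabilizers are trivial we have $\Sigma^G_X=\{1\}$, so by \propref{prop:direcsumdecomp} the localization map $G_*(G,X)\to G_*(G,X)_{\fm_1}$ is an isomorphism as well, and composing the two proves the lemma in this case.

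For the inductive step, Thomason's generic slice theorem \cite[Proposition~4.10]{TO4} furnishes a dense $G$-invariant open $U\subseteq X$ together with a normal subgroup $H\subseteq G$ that acts trivially on $U$ and for which $F:=G/H$ acts freely on $U$; properness of the $G$-action forces $H$ to be finite. Put $Z=X\setminus U$ with its induced $G$-action, a proper closed $G$-invariant subspace, so that $U/G$ is the open complement of the closed subspace $Z/G\inj X/G$. I would treat $U$ directly and invoke the induction hypothesis for $Z$. For $U$ the coarse moduli map of $[U/G]$ factors as $[U/G]\xrightarrow{a}[U/F]\xrightarrow{\simeq}U/F=U/G$, the last arrow being an isomorphism because $F$ acts freely, so $\inv^G_U$ is identified with $a_*$. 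Since $H$ is finite and $\Char\C=0$, an $H$-isotypic decomposition as in \S~\ref{sec:Decomp-t} splits ${\sC oh}^G_U$ (its trivial-character part being $G$-stable even when $H$ is not central, because $F$ fixes the trivial character of $H$), and this trivial-character block is equivalent to ${\sC oh}^F_U$; thus $a_*$ becomes the projection onto the split summand $G_*(F,U)\inj G_*(G,U)$. One then checks that this projection restricts to an isomorphism $G_*(G,U)_{\fm_1}\xrightarrow{\simeq}G_*(F,U)=G_*(U/G)$, by comparing the isotypic splitting with the $R(G)$-module splitting of \propref{prop:direcsumdecomp} and using that $a_*$ respects the $R(F)$-module structures; this is the crux of the argument, and is close to (a mild extension of) the equivariant Riemann-Roch theorem of \cite{AK1}.

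For the final assembly, exactness of the coarse moduli push-forward on coherent sheaves (\cite[Lemma~2.3.4]{AV}, as in the proof of \thmref{thm:invariants}) carries the Serre localization sequence ${\sC oh}^G_Z\inj{\sC oh}^G_X\to{\sC oh}^G_U$ to ${\sC oh}_{Z/G}\inj{\sC oh}_{X/G}\to{\sC oh}_{U/G}$; passing to $K$-theory, localizing the resulting long exact sequences of $R(G)$-modules at $\fm_1$, and applying the five lemma---the $U$- and $Z$-terms being isomorphisms by the preceding steps---shows that $\inv^G_X$ induces an isomorphism on $\fm_1$-localizations, as asserted. The step I expect to be the main obstacle is the verification for $U$: one must show that localization at the augmentation ideal exactly isolates the part of $G_*(G,U)$ on which $H$ acts trivially, even though the $H$-isotypic pieces are not $R(G)$-submodules. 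When the coarse moduli space $X/G$ is quasi-projective one can bypass this entirely, deducing the lemma directly from the equivariant Riemann-Roch theorem of \cite{AK1}, Bloch's Riemann-Roch theorem for the scheme $X/G$, and the rational agreement of the equivariant higher Chow groups of $X$ with the higher Chow groups of $X/G$.
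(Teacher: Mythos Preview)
Your overall d\'evissage/Noetherian-induction strategy is reasonable, but the inductive step has a genuine gap: Thomason's generic slice theorem \cite[Proposition~4.10]{TO4} applies only when $G$ is \emph{diagonalizable}. For a general linear algebraic group $G$ acting properly, there is no reason to expect a dense $G$-invariant open $U$ on which a fixed finite \emph{normal} subgroup $H\lhd G$ acts trivially with $G/H$ acting freely. (Generically the stabilizer is a conjugacy class of finite subgroups, not a single normal one.) You yourself flag the subsequent difficulty---that the $H$-isotypic pieces are not $R(G)$-submodules when $H$ is not central---but the problem arises earlier: the slice producing such an $H$ does not exist in general.

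The paper's argument sidesteps this entirely. It first treats the case where $X/G$ is quasi-projective by a forward reference to \corref{cor:G-surj}, which is exactly the Riemann-Roch route you sketch in your final sentence: combine the equivariant Riemann-Roch isomorphism $t^G_X\colon G_i(G,X)_{\fm_1}\xrightarrow{\simeq}\CH^G_*(X,i)$ of \cite{AK1}, the isomorphism $\ov{\inv}^G_X\colon\CH^G_*(X,i)\xrightarrow{\simeq}\CH_*(X/G,i)$, and Bloch's non-equivariant Riemann-Roch. For a general algebraic space $X$, the paper then passes to the scheme locus of $X/G$ (using \cite[Proposition~2.4]{TO3}), chooses an \emph{affine} open $W'\subset X/G$, and sets $W=p^{-1}(W')$; since $W/G=W'$ is affine (hence quasi-projective), the first case applies to $W$, and Noetherian induction plus the localization sequence handles $Z=X\setminus W$ exactly as you describe. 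So the key reduction is not ``free action on a slice'' but ``affine open in the quotient,'' which is available for arbitrary $G$.
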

\begin{proof}
If $X/G$ is quasi-projective, then so is $X$ and the result follows from
\corref{cor:G-surj}. Note that this corollary does not rely on any result
of this text before \S~\ref{sec:RRT}.

In general, it follows from 
\cite[Proposition~2.4]{TO3} that there is a dense open 
subspace $U \subset X/G$ such that $U$ and $p^{-1}(U)$ are schemes of finite type 
over $\C$. Since the map $p^{-1}(U) \to U$ is affine, we can find an
affine open subscheme $W' \subset X/G$ so that $W = p^{-1}(W') \subset p^{-1}(U)$ 
is $G$-invariant affine and $W' = W/G$. It follows therefore from the
quasi-projective case that the map
$\inv^G_W: G_*(G,W)_{\fm_1} \to G_*(W/G)$ is an isomorphism. 

We now let $Z = X \setminus W$ and consider the commutative diagram
\begin{equation}\label{eqn:Gen-qt-0}
\xymatrix@C1pc{
G_{i+1}(G,W)_{\fm_1} \ar[r] \ar[d] & G_i(G,Z)_{\fm_1} \ar[r] \ar[d] &
G_i(G,X)_{\fm_1} \ar[r] \ar[d] & G_i(G,W)_{\fm_1} \ar[r] \ar[d] &
G_{i-1}(G,Z)_{\fm_1} \ar[d] \\
G_{i+1}(W/G) \ar[r] & G_i(Z/G) \ar[r] &
G_i(X/G) \ar[r] & G_i(W/G) \ar[r] & G_{i-1}(Z/G)}
\end{equation}
of localization exact sequences. We showed above that the first and the fourth 
vertical arrows from the left are isomorphisms. 
The second and the fifth vertical arrows are
isomorphisms by the Noetherian induction on $X$. It follows that the middle
vertical arrow in an isomorphism. 
\end{proof}

\begin{lem}\label{lem:partialinvtwisst}
There is a commutative diagram
\begin{equation}\label{eqn:Mor-prod-5}
\xymatrix@C1pc{
G_*(Z_g, X^g)_{\fm_g} \ar[d]_{t_{g^{-1}}} \ar[r]^-{\inv^{Z_{g_1}}_{X^g}} &
G_*(Z_{g_2}, W)_{\fm_{g_2}} \ar[d]^{t_{g^{-1}_2}} \\
G_*(Z_g, X^g)_{\fm_1} \ar[r]^-{\inv^{Z_{g_1}}_{X^g}} & 
G_*(Z_{g_2}, W)_{\fm_1}.} 
\end{equation}
\end{lem}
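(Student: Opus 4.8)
The plan is to realise both twisting operators as the endomorphisms of $K$-theory induced by a natural automorphism of the identity functor of the relevant category of equivariant coherent sheaves, to show that the exact functor underlying $\inv^{Z_{g_1}}_{X^g}$ carries one such automorphism to the other, and then to descend from the functor level to the localized groups of \eqref{eqn:Mor-prod-5}.

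First I would fix notation. We may assume $X^g\neq\emptyset$ (otherwise all four groups vanish), so $g=(g_1,g_2)$ has finite order; then $\langle g\rangle\subset Z_g$ is a finite central subgroup acting trivially on $X^g$, and $t_{g^{-1}}$ is defined as in \S\ref{sec:Twist}. Moreover $(e,g_2)=(g_1^{-1},e)(g_1,g_2)$ acts on $X^g$ the same way $g_1^{-1}\in Z_{g_1}$ does, hence trivially on $W=X^g/Z_{g_1}$; since $\langle g_2\rangle\subset Z_{g_2}$ is central of finite order, $t_{g^{-1}_2}$ is likewise defined. Next, by \lemref{lem:definitionoftwist} the decomposition ${\sC oh}^{Z_g}_{X^g}=\amalg_{\chi\in\widehat{\langle g\rangle}}\sC^{\chi}_{X^g}$ is precisely the eigenspace decomposition of the natural automorphism $\alpha_g$ of the identity functor of ${\sC oh}^{Z_g}_{X^g}$ that lets the central element $g$ act on each sheaf: $g$ acts on a sheaf of $\sC^{\chi}_{X^g}$ by the scalar $\chi(g)$. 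Comparing with \eqref{eqn:Twist-0} shows that $t_{g^{-1}}$ is exactly the map on $G_*$ induced by $\alpha_g$, and similarly $t_{g^{-1}_2}$ is the map induced by the analogous automorphism $\alpha_{g_2}$ of the identity functor of ${\sC oh}^{Z_{g_2}}_{W}$.

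The core step is the identity $c_*\circ\alpha_g=\alpha_{g_2}\circ c_*$ of natural transformations. By \thmref{thm:invariants}, $\inv^{Z_{g_1}}_{X^g}$ is the push-forward along $g'\colon[X^g/Z_g]\to[W/Z_{g_2}]$; using the canonical isomorphism $[X^g/Z_g]\simeq[[X^g/Z_{g_1}]/Z_{g_2}]$, this $g'$ is $[c/Z_{g_2}]$ for the ($Z_{g_2}$-equivariant) coarse moduli space map $c\colon[X^g/Z_{g_1}]\to W$, and since $Z_{g_1}$ acts properly on $X^g$ the functor $c_*$ is exact (\cite[Lemma~2.3.4]{AV}, cf. \lemref{lem:Fin-coh}), so $\inv^{Z_{g_1}}_{X^g}$ is the map on $G_*$ induced by the exact functor $c_*$. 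Now a local section of $c_*\mathcal F$ is a $Z_{g_1}$-invariant local section of the push-forward of $\mathcal F$ to $W$; on it $\alpha_g(\mathcal F)$ acts the way $(g_1,e)\in Z_{g_1}$ followed by $(e,g_2)=g_2\in Z_{g_2}$ acts, and $(g_1,e)$ acts as the identity on $Z_{g_1}$-invariant sections, so $c_*(\alpha_g(\mathcal F))=\alpha_{g_2}(c_*\mathcal F)$. Applying the $K$-theory functor to this equality of natural transformations of $c_*$, together with the two identifications of the previous paragraph, yields $\inv^{Z_{g_1}}_{X^g}\circ t_{g^{-1}}=t_{g^{-1}_2}\circ\inv^{Z_{g_1}}_{X^g}$ on the unlocalized groups $G_*$.

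Finally I would pass to the summands appearing in \eqref{eqn:Mor-prod-5}. By \propref{prop:invariantsfunct}(3), $t_{g^{-1}}$ carries the $\fm_g$-summand of $G_*(Z_g,X^g)$ onto its $\fm_1$-summand and $t_{g^{-1}_2}$ carries the $\fm_{g_2}$-summand of $G_*(Z_{g_2},W)$ onto its $\fm_1$-summand. Since $\inv^{Z_{g_1}}_{X^g}$ is $R(Z_{g_2})$-linear and, by \propref{prop:grp-hom}, $\fm_{g_2}$ and $\fm_{1_{Z_{g_2}}}$ are the preimages of $\fm_g$ and $\fm_{1_{Z_g}}$ under $R(Z_{g_2})\to R(Z_g)$, the $\fm_g$-summand of $G_*(Z_g,X^g)$ is $\fm_{g_2}$-local as an $R(Z_{g_2})$-module, hence is carried by $\inv^{Z_{g_1}}_{X^g}$ into the $\fm_{g_2}$-summand of $G_*(Z_{g_2},W)$, and similarly the $\fm_1$-summands correspond. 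Restricting the commutative square of unlocalized maps to these summands, using \propref{prop:direcsumdecomp}, gives exactly \eqref{eqn:Mor-prod-5}. The one genuinely delicate point is the natural-transformation identity $c_*\circ\alpha_g=\alpha_{g_2}\circ c_*$, i.e.\ tracking how the $Z_{g_1}$- and $Z_{g_2}$-equivariant structures interact under the partial coarse-moduli push-forward; everything else is formal once that is established.
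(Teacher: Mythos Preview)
Your argument is correct, but it follows a different route from the paper's.

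The paper does not analyse the partial invariants functor directly. Instead it enlarges \eqref{eqn:Mor-prod-5} to a two-square diagram by appending the column $G_*(X^g/Z_g)$ on the right, connected by $\inv^{Z_{g_2}}_W$. Then both the outer rectangle and the right-hand square commute immediately by \propref{prop:invtwisst} (twisting disappears after taking \emph{full} invariants), and \lemref{lem:Gen-qt} says $\inv^{Z_{g_2}}_W$ is an isomorphism, so one cancels from the right to obtain the left square. Your approach, by contrast, verifies the compatibility at the level of sheaves: the intertwining relation $c_*\circ\alpha_g=\alpha_{g_2}\circ c_*$ shows that $c_*$ carries each $\sC^{\chi}_{X^g}$ into the subcategory of $\sC oh^{Z_{g_2}}_W$ on which $g_2$ acts by the scalar $\chi(g)$, whence the scalar-multiplication operators $t_{g^{-1}}$ and $t_{g_2^{-1}}$ commute with $(c_*)_*$ summand by summand. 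This is a direct refinement of the mechanism behind \propref{prop:proper-T}. Your route buys independence from \lemref{lem:Gen-qt} (whose proof in the paper ultimately invokes the equivariant Riemann--Roch isomorphism via \corref{cor:G-surj}); the paper's route is shorter once that lemma is available.

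One expository remark: the phrase ``applying the $K$-theory functor to this equality of natural transformations'' is imprecise, since a natural automorphism of the identity functor does not by itself induce a self-map of $K$-theory. What you actually use is the consequence of $c_*\circ\alpha_g=\alpha_{g_2}\circ c_*$ that $c_*$ preserves the eigencategory decompositions of \lemref{lem:definitionoftwist}, after which the commutation with the twisting maps is immediate from \eqref{eqn:Twist-0}. It would strengthen the write-up to state this step that way.
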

\begin{proof}
We consider the diagram
\begin{equation}\label{eqn:Mor-prod-6}
\xymatrix@C1pc{
G_*(Z_g, X^g)_{\fm_g} \ar[d]_{t_{g^{-1}}} \ar[r]^-{\inv^{Z_{g_1}}_{X^g}} &
G_*(Z_{g_2}, W)_{\fm_{g_2}} \ar[d]^{t_{g^{-1}_2}} 
\ar[r]^-{\inv^{Z_{g_2}}_W} & G_*({X^g}/{Z_g}) \ar[d]^{Id} \\
G_*(Z_g, X^g)_{\fm_1} \ar[r]^-{\inv^{Z_{g_1}}_{X^g}} & 
G_*(Z_{g_2}, W)_{\fm_1} \ar[r]^-{\inv^{Z_{g_2}}_W} & G_*({X^g}/{Z_g}).} 
\end{equation}
The right square and the outer rectangle commute by
\propref{prop:invtwisst} and the map ${\inv^{Z_{g_2}}_W}$ is an isomorphism
by \lemref{lem:Gen-qt}. It follows that the left square
commutes.
\end{proof}

\subsection{Atiyah-Segal for partial quotient map}\label{sec:PartialRR}

We continue with our set up of proper action of an algebraic group 
$G = H \times F$
on an algebraic space $X$. We shall now prove the covariance of the 
Atiyah-Segal map with respect to the partial quotient map $X \to X/H = Y$.
Let $i \ge 0$ be a fixed integer.
Recall from \thmref{thm:invariants} that the map 
$\inv^H_{X_{\psi}}: G_i(G,X_{\psi}) \to 
G_i(F,X_{\psi}/H)$ is $R(F)$-linear.
On the localizations, we get the maps of $R(F)$-modules 
\begin{equation}\label{eqn:G-F-loc}
G_i(G,X_{\psi})_{\fm_\psi} \inj \oplus_{\gamma \cap F = \phi} 
G_i(G, X_{\psi})_{\fm_{\gamma}} \xrightarrow{\simeq} G_i(G,X_{\psi})_{\fm_\phi} 
\xrightarrow{\inv^H_{X_{\psi}}} G_i(F,X_{\psi}/H)_{\fm_\phi},
\end{equation}
where the first map is the canonical inclusion as a 
direct summand and the second map is an isomorphism by 
\propref{prop:decom-surj}.
The map $\inv^H_{X_{\psi}}: G_i(G,X_{\psi}) \to 
G_i(F,X_{\psi}/H)$ therefore restricts to an $R(F)$-linear
map $ G_i(G,X_{\psi})_{\fm_\psi} \xrightarrow{\inv^H_{X_{\psi}}}
G_i(F,X_{\psi}/H)_{\fm_\phi}$.
Similarly, we have an $R(Z_{g_2})$-linear map
$\inv^{Z_{g_1}}_{X^g}: G_i(Z_g,X^g)_{\fm_g} \to  G_i(Z_{g_2},W)_{\fm_{g_2}}$.

\begin{prop}\label{prop:conjprod}
There is a commutative diagram
\begin{equation}\label{diag:conjprod}
\xymatrix@C1pc{
G_i(G,X_{\psi})_{\fm_{\psi}} \ar[r]^-{\vartheta^{\psi}_X} 
\ar[d]_{u^g_* \circ \inv^H_{X_{\psi}}}  &  
G_i(G,I^{\psi}_X)_{\fm_1} \ar[d]^{\upsilon^{\psi}_* \circ \inv^H_{I^{\psi}_X}} \\
G_i(G,Y_{\phi})_{\fm_{\phi}} \ar[r]^{\vartheta^{\phi}_Y} & G_i(F, I_Y^{\phi})_{\fm_1}.}
\end{equation}
\end{prop}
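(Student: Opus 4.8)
The strategy is to unwind the definition of $\vartheta^{\psi}_X$ from~\eqref{eqn:RR-AS} and of $\vartheta^{\phi}_Y$ analogously, and then to decompose the square~\eqref{diag:conjprod} into three horizontally-stacked rectangles corresponding to the three maps $\omega_g^{-1}$, $t_{g^{-1}}$, and $\mu^1_g$. More precisely, I would insert the row $G_i(Z_g,X^g)_{\fm_g} \xrightarrow{t_{g^{-1}}} G_i(Z_g,X^g)_{\fm_1} \xrightarrow{\mu^1_g} G_i(G,I^{\psi}_X)_{\fm_1}$ on the top and the corresponding row $G_i(Z_{g_2},W)_{\fm_{g_2}} \xrightarrow{t_{g_2^{-1}}} G_i(Z_{g_2},W)_{\fm_1} \xrightarrow{\mu'^{1}_g} G_i(F, I^{\psi}_X/H)_{\fm_1}$ on the bottom (note $[W/Z_{g_2}]\simeq [(I^{\psi}_X/H)/F]$ by \lemref{lem:Moritaforpartialquotient}(4), so $\mu'_g$ and its augmentation-localized version $\mu'^{1}_g$ make sense), with vertical maps $\inv^{Z_{g_1}}_{X^g}$ in the middle columns. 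The left rectangle (involving $\omega_g^{-1}$) is the content of \lemref{lem:Moritapartialcompatability}, read off its outermost columns and inverted: that lemma gives $u^g_* \circ \inv^H_{X_\psi} \circ \omega_g = \mu^{\phi}_* \circ \mu^{\phi}_{g_2} \circ k^g_* \circ \inv^{Z_{g_1}}_{X^g}$, and since $\omega_{g_2} = \mu^{\phi}_* \circ \mu^{\phi}_{g_2}$ by Definition~\ref{defn:OMEGA}, inverting $\omega_g$ and $\omega_{g_2}$ (both isomorphisms by \lemref{lem: w_g}) gives exactly the commutativity of the left rectangle.

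The middle rectangle (involving $t_{g^{-1}}$ versus $t_{g_2^{-1}}$) is \lemref{lem:partialinvtwisst}. The right rectangle asserts that $\inv^{Z_{g_1}}_{X^g}$ on the $\fm_1$-localizations is compatible with $\mu^1_g$, $\mu'^{1}_g$ and the final vertical map $\upsilon^{\psi}_* \circ \inv^H_{I^{\psi}_X}$; this is the $\fm_1$-localized special case of the left square of \lemref{lem:Moritapartialfunctoriality}, which commutes because it is induced by the commutative diagram of quotient stacks~\eqref{eqn:Mor-prod-3-1} (here one uses that $\mu'^{1}_g$ is, after the Morita identification, the pushforward along $\iota^{FZ_{g_2}}_W$, localized at the augmentation ideal). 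Stacking the three rectangles and reading off the outer boundary yields precisely~\eqref{diag:conjprod}: the left column of the stacked diagram is $u^g_*\circ\inv^H_{X_\psi}$, the right column is $\upsilon^{\psi}_*\circ\inv^H_{I^{\psi}_X}$ (after composing $\mu'^{1}_g$ with $\inv^H_{I^{\psi}_X}$ appropriately), and the top and bottom rows compose to $\vartheta^{\psi}_X$ and $\vartheta^{\phi}_Y$ respectively.

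The one point requiring genuine care is the bookkeeping of which localization each map respects, and ensuring the bottom row really composes to $\vartheta^{\phi}_Y$. The subtlety is that $\vartheta^{\phi}_Y$ is defined using $g_2 \in \phi \subset F$ and the centralizer $Z_{g_2}$ inside $F$, whereas the maps appearing in the stacked diagram are built from $g=(g_1,g_2)$ and $Z_g = Z_{g_1}\times Z_{g_2}$; the identification $Y^{g} = Y^{g_2}$ (since $g_1$ acts trivially on $Y=X/H$) together with $W/Z_{g_2} = X^g/Z_g$ (from the isomorphism chain~\eqref{eqn:Mor-prod-1}, which is $Z_{g_2}$-equivariantly $X^g/Z_{g_1}$ modulo $Z_{g_2}$) is what makes the bottom composite agree with the defining formula for $\vartheta^{\phi}_Y$. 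I would spell this identification out once at the start and then invoke it silently. Apart from that, the proof is a diagram chase assembled entirely from \lemref{lem:Moritapartialcompatability}, \lemref{lem:partialinvtwisst}, and \lemref{lem:Moritapartialfunctoriality}, so the main obstacle is organizational rather than mathematical: laying out the (rather large) composite diagram cleanly enough that the three constituent commuting rectangles are visibly the three cited lemmas.
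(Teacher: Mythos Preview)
Your overall strategy is the same as the paper's: expand $\vartheta^{\psi}_X$ and $\vartheta^{\phi}_Y$ via~\eqref{eqn:RR-AS} and verify commutativity piece by piece. However, your three rectangles do not fit together as described, and one ingredient is missing.

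In your left rectangle you (correctly) read off \lemref{lem:Moritapartialcompatability} with the composite vertical map $k^g_* \circ \inv^{Z_{g_1}}_{X^g}$ on its right edge, landing in $G_i(Z_{g_2},Y^{g_2})_{\fm_{g_2}}$. But in your middle and right rectangles you take the vertical map to be only $\inv^{Z_{g_1}}_{X^g}$, landing in $G_i(Z_{g_2},W)$. So the right edge of your left rectangle does not match the left edge of your middle rectangle; the bottom row sits partly at the $Y^{g_2}$-level and partly at the $W$-level.

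To repair this you must also pass from the $W$-level down to the $Y^{g_2}$-level via $k^g_*$ (and correspondingly from $I^{\psi}_X/H$ to $I^{\phi}_Y$ via $\upsilon^{\psi}_*$). This is not merely the identification $Y^g=Y^{g_2}$ and $W=X^g/Z_{g_1}$ that you flag; it requires two further commutativity statements: that $t_{g_2^{-1}}$ commutes with the proper push-forward $k^g_*$, which is \corref{cor:proper-T-commute}, and that $\mu'^1_g$ is compatible with $\mu^1_{g_2}$ under $k^g_*$ and $\upsilon^{\psi}_*$, which is the \emph{right} square of \lemref{lem:Moritapartialfunctoriality} (you cite only the left). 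The paper's proof organises this by writing a three-row diagram (top row at $X^g$, middle at $W$, bottom at $Y^{g_2}$) and invoking \lemref{lem:partialinvtwisst} for the upper twisting square, \corref{cor:proper-T-commute} for the lower twisting square, and both squares of \lemref{lem:Moritapartialfunctoriality} on the right. Once you add these citations your argument is complete and matches the paper's.
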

\begin{proof}
We consider the diagram
\begin{equation}\label{diag:conjprod-0}
\xymatrix@C.7pc{
G_i(G,X_{\psi})_{\fm_{\psi}} \ar[r]^-{\omega_g^{-1}} \ar[d]_{\inv^{H}_{X_{\psi}}} & 
G_i(Z_g,X^g)_{\fm_g} \ar[r]^{t_{g^{-1}}}  \ar[d]^{\inv^{Z_{g_1}}_{X^g}} & 
G_i(Z_g,X^g)_{\fm_1} \ar[r]^{\mu^1_g} \ar[d]^{\inv^{Z_{g_1}}_{X^g}} &
G_i(G,I^{\psi}_X)_{\fm_{{1}}} \ar[d]^{\inv^H_{I^{\psi}_X}} \\
G_i(F,X_{\psi}/H)_{\fm_{\phi}} \ar[d]_{u^g_*} & G_i(Z_{g_2},W)_{\fm_{g_2}} 
\ar[d]^{k^g_*} \ar[r]^-{t_{{g^{-1}_2}}} &
G_i(Z_{g_2},W)_{\fm_1} \ar[d]^{k^g_*} \ar[r]^-{\mu'^1_g} &
G_i(F,I^{\psi}_X/H)_{\fm_1} \ar[d]^{\upsilon^{\psi}_*} \\
G_i(F,Y_{\psi})_{\fm_{\phi}} \ar[r]^-{\omega_{g_2}^{-1}} & 
G_i(Z_{g_2}, Y^{g_2})_{\fm_{g_2}} \ar[r]^-{t_{g^{-1}_2}}  &
G_i(Z_{g_2},Y^{g_2})_{\fm_1} \ar[r]^-{\mu^1_{g_2}} & 
G_i(F,I^{\phi}_Y)_{\fm_1}.}
\end{equation}

In this diagram, the top and the bottom squares on the right
commute by \lemref{lem:Moritapartialfunctoriality}.
The middle square on the top commutes by
\lemref{lem:partialinvtwisst} and the one on the bottom commutes by 
\corref{cor:proper-T-commute}. To prove that the big rectangle on the 
left commutes, we can replace the horizontal arrows by their inverses.
In this case, the rectangle commutes by \lemref{lem:Moritapartialcompatability}.
We have thus shown that ~\eqref{diag:conjprod} commutes.
\end{proof}

The final consequence of all the above results of this section is
the following.

\begin{thm}\label{thm:GRR-PROD}
There is a commutative square
\begin{equation}\label{eqn:GRR-PROD-0}
\xymatrix@C1pc{
G_i(G,X) \ar[r]^-{\vartheta^G_X} \ar[d]_{\inv^H_X } & 
G_i(G, I_X)_{\fm_1} \ar[d]^{{\upsilon}_* \circ \inv^H_{I_X}} \\
G_i(F,Y) \ar[r]^-{\vartheta^F_Y} & G_i(F,I_Y)_{\fm_1}}
\end{equation}
in which the horizontal arrows are isomorphisms.
\end{thm}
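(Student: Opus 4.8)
The plan is to reduce the global statement \eqref{eqn:GRR-PROD-0} to the conjugacy-class-wise statement already proven in \propref{prop:conjprod}, using the direct sum decompositions of equivariant $K$-theory supported at semi-simple conjugacy classes. First I would recall that by \propref{prop:direcsumdecomp} there is a canonical isomorphism $G_i(G,X) \xrightarrow{\simeq} \oplus_{\psi \in \Sigma^G_X} G_i(G,X)_{\fm_\psi}$, and by \thmref{thm:closedimmiso} combined with \lemref{lem:torsor}, each summand is identified via $j^{\psi}_{X*}$ with $G_i(G,X_\psi)_{\fm_\psi}$. The Atiyah-Segal map $\vartheta^G_X$ was defined in \defref{defn:RR-mapfull} precisely as the direct sum $\oplus_\psi \vartheta^{\psi}_X$ under these identifications, followed by the decomposition $\oplus_{\psi}G_i(G,I^{\psi}_X)_{\fm_1} \xrightarrow{\simeq} G_i(G,I_X)_{\fm_1}$ coming from $I_X = \amalg_{\psi \in \Sigma^G_X} I^{\psi}_X$. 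Since every map appearing in \eqref{eqn:RR-AS} is an isomorphism, each $\vartheta^{\psi}_X$ is an isomorphism, hence so is $\vartheta^G_X$, and likewise $\vartheta^F_Y$.

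The main work is to check that the two vertical arrows in \eqref{eqn:GRR-PROD-0} respect these decompositions and, summand by summand, agree with the vertical arrow $u^g_* \circ \inv^H_{X_\psi}$ (resp. $\upsilon^{\psi}_* \circ \inv^H_{I^{\psi}_X}$) of \propref{prop:conjprod}. For the left vertical arrow: $\inv^H_X$ is $R(F)$-linear by \thmref{thm:invariants}, and under the decomposition of $G_i(G,X)$ over $S_G$ and of $G_i(F,Y)$ over $S_F$, one uses \propref{prop:decom-surj}(2) applied to the projection $u: G \surj F$ to see that the $\fm_\psi$-summand on the source maps into the $\fm_\phi$-summand on the target, where $\phi = u(\psi)$; this is exactly the passage recorded in \eqref{eqn:G-F-loc} just before \propref{prop:conjprod}. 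The commutativity of the diagram relating $\inv^H_X$ on the whole group with $\inv^H_{X_\psi}$ on the supported pieces follows from \thmref{thm:invariants} applied to the $G$-equivariant closed immersion $j^{\psi}_X: X_\psi \inj X$ together with \thmref{thm:closedimmiso}, and the fact that the coarse/partial quotient map intertwines $j^{\psi}_X$ with $u^g$. For the right vertical arrow one argues identically with $I^{\psi}_X \inj I_X$, $\upsilon^{\psi}$ versus $\upsilon$, using that $I_X/H = \amalg_\psi I^{\psi}_X/H$ is again a disjoint union decomposition and that $\inv^H_{I_X}$ is $R(F)$-linear.

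Having reduced to the level of a single $\psi \in \Sigma^G_X$ (with image $\phi \in \Sigma^F_Y$; note if $\psi \notin \Sigma^G_X$ both sides vanish and if $\phi$ is hit by several $\psi$'s one sums over them), the square
\begin{equation*}
\xymatrix@C1pc{
G_i(G,X_\psi)_{\fm_\psi} \ar[r]^-{\vartheta^{\psi}_X} \ar[d]_{u^g_* \circ \inv^H_{X_\psi}} & G_i(G,I^{\psi}_X)_{\fm_1} \ar[d]^{\upsilon^{\psi}_* \circ \inv^H_{I^{\psi}_X}} \\
G_i(F,Y_\phi)_{\fm_\phi} \ar[r]^-{\vartheta^{\phi}_Y} & G_i(F,I^{\phi}_Y)_{\fm_1}}
\end{equation*}
is precisely \eqref{diag:conjprod} from \propref{prop:conjprod}. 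Summing over $\psi \in \Sigma^G_X$ and reassembling via the isomorphisms $\oplus_\psi G_i(G,X_\psi)_{\fm_\psi} \xrightarrow{\simeq} G_i(G,X)$ and $\oplus_\psi G_i(G,I^{\psi}_X)_{\fm_1} \xrightarrow{\simeq} G_i(G,I_X)_{\fm_1}$ (and the analogous ones on the $Y$ side, indexed by $\Sigma^F_Y$, where the reassembly uses that $\inv^H$ is compatible with the $H$-quotient decomposition of the inertia space) yields the commutativity of \eqref{eqn:GRR-PROD-0}. I expect the main obstacle to be purely bookkeeping: carefully tracking how the $R(G)$-module decomposition over $S_G$ refines, under $u: G \surj F$, the $R(F)$-module decomposition over $S_F$ — i.e. that $\inv^H$ carries the summand at $\psi$ into the summand at $u(\psi)$ and that no cancellation or spillover occurs — which is handled by \propref{prop:decom-surj} and \propref{prop:grp-hom} but must be spelled out to make the "summand by summand" reduction rigorous; the actual commutativity at each $\psi$ is already done in \propref{prop:conjprod}.
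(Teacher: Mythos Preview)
Your proposal is correct and follows essentially the same approach as the paper: both arguments reduce to the conjugacy-class-wise square of \propref{prop:conjprod} via the decompositions of \propref{prop:direcsumdecomp} and \thmref{thm:closedimmiso}, and then reassemble. The paper packages this reduction into a single cube diagram whose back face is \propref{prop:conjprod}, whose top and bottom faces are the definitions of $\vartheta^G_X$ and $\vartheta^F_Y$, and whose left and right faces are the functoriality of proper push-forward; your more discursive treatment of the bookkeeping (how $\inv^H$ carries the $\fm_\psi$-summand into the $\fm_{u(\psi)}$-summand) is exactly what the commutativity of those side faces encodes.
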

\begin{proof}
We first observe that the $G$-equivariant decomposition
$I_X = \amalg_{\psi \in \Sigma^G_X} I^{\psi}_X$ yields an $F$-equivariant
decomposition ${I_X}/H = \amalg_{\psi \in \Sigma^G_X} {I^{\psi}_X}/H$. 
There is a similar decomposition of $I_Y$ in terms of $I^{\phi}_Y$.
We now look at the diagram
\begin{equation}\label{eqn:GRR-PROD-1}
\xymatrix@C1pc{
{\oplus_{\psi \in S_G} G_i(G,X_{\psi})_{\fm_{\psi}}} 
\ar[rr]^-{\oplus \vartheta^{\psi}_X} \ar[dr]^-{\oplus j^{\psi}_{X  *}}
\ar[dd]_{\oplus u^g_* \circ \inv^H_{X_{\psi}}} & & 
{\oplus_{\psi \in S_G} G_i(G,I^{\psi}_X)_{\fm_1}} 
\ar@/.8cm/[dd]^{\oplus \upsilon^{\psi}_* \circ \inv^H_{I^{\psi}_X}}
\ar[dr]^{{\oplus s^{\psi}_X}_*} & \\
& G_i(G,X) \ar[rr]^>>>>>>>>>>>>{\vartheta^G_X} \ar@/.5cm/[dd]^{\inv^H_X } & & 
G_i(G,I_X)_{\fm_1} \ar[dd]^{\upsilon_* \circ \inv^H_{I_X}} \\
{\oplus_{\phi \in S_F} G_i(F, Y_{\phi})_{\fm_{\phi}}} \ar[rr]^-{\oplus \vartheta^{\phi}_Y}
\ar[dr]_{\oplus j^{\phi}_{Y  *}} & & 
{\oplus_{\phi \in S_F} G_i(F,I^{\phi}_Y)_{\fm_1}} \ar[dr]^{{\oplus s^{\phi}_Y}_*}
& \\
& G_i(F,Y) \ar[rr]^-{\vartheta^F_Y} & & G_i(F,I_Y)_{\fm_1}.}
\end{equation}

The back face of this cube commutes by \propref{prop:conjprod}.
The top and the bottom faces commute by the definition of 
$\vartheta^G_X$ and $\vartheta^F_Y$ (see Definition~\ref{defn:RR-mapfull}).
The left and the right faces commute by the functoriality of the
proper push-forward maps in equivariant $K$-theory (of coherent sheaves).
The maps $\oplus j^{\psi}_{X  *}$ and $\oplus j^{\phi}_{Y  *}$ are
isomorphisms by \thmref{thm:closedimmiso} and \propref{prop:direcsumdecomp}.
The maps ${\oplus s^{\psi}_X}_*$ and ${\oplus s^\phi_Y}_*$ are
isomorphisms by the above decompositions of the inertia spaces.
It follows that the front face commutes.
Since each of $\vartheta^{\psi}_X$ and $\vartheta^{\phi}_Y$ is an isomorphism 
(see Definition~\ref{defn:RR-mapconjclass-AS}), we
conclude that $\vartheta^G_X$ and $\vartheta^F_Y$ are isomorphisms too.
The proof of the theorem is complete.
\end{proof}

\section{Atiyah-Segal correspondence for quotient stacks}\label{sec:RR-DM}
In this section, we shall prove our final results on the
Atiyah-Segal correspondence for separated quotient stacks.
In order to define the Atiyah-Segal map for such stacks,
we need to prove the following result about these maps for the
group action.

Let $H$ and $F$ be two linear algebraic groups.
Let $H$ and $F$ act properly on algebraic spaces $X$ and $Y$, respectively. 
We let $\sX = [X/H]$ and $\sY = [Y/F]$ denote the stack quotients.
Let $f: \sX \to \sY$ be a proper morphism.
If we let $G = H \times F$, then this gives rise to 
the algebraic space $Z$ with $G$-action and the Cartesian diagram
~\eqref{eqn:Fin-coh-0-new}, which we reproduce here for
reader's convenience. 

\begin{equation}\label{eqn:Fin-coh-0-new*}
\xymatrix@C1pc{
Z \ar[r]^-{s} \ar[d]_{t} & \sX' \ar[r]^-{f'} \ar[d]^{p'} & Y \ar[d]^{p} \\
X \ar[r]^-{q} & \sX \ar[r]^-{f} & \sY.}
\end{equation}

Let us now assume that $\sX$ is a separated quotient stack with the 
coarse moduli space $M$. Let $\sX = [X/H] = [Y/F]$ be two
presentations of $\sX$. Letting $Z = X \times_{\sX} Y$,
we get a Cartesian diagram like ~\eqref{eqn:Fin-coh-0-new*}, where
$f$ is the identity map. In particular, \lemref{lem:GRR-QDM-stacks}
applies. 
Let $I_{\sX}$ denote the inertia stack of $\sX$. Let $\epsilon: {I_Z}/F \to I_Y$
denote the canonical map induced by $s$.
We fix an integer $i \ge 0$.

\begin{lem}\label{lem:independenceofstackmap}
There exists a commutative diagram
\begin{equation}\label{eqn:Ind-RR-st-0}
\xymatrix@C1pc{
G_i(\sX) \ar[r]^-{q^*} & G_i(H,X) \ar[r]^-{\vartheta^H_X} & 
G_i(H,I_X)_{\fm_{1_H}} \ar[r]^-{{q^*}^{-1} \circ v_X} & 
G_i(I_{\sX})_{\fm_{I_{\sX}}} \\
G_i(\sX) \ar[r]^-{(q \circ t)^*} \ar[u]^{Id} \ar[d]_{Id} 
& G_i(G,Z) \ar[r]^-{\vartheta^G_Z} \ar[d]^{\inv^H_Z}
\ar[u]_{\inv^F_Z}  
& G_i(G, I_Z)_{\fm_{1_G}} \ar[r]^{(t^* \circ q^*)^{-1} \circ v_Z}  
\ar[u]_{\upsilon_* \circ \inv^F_{I_Z}} \ar[d]^{\epsilon_* \circ \inv^H_{I_Z}} & 
G_i(I_{\sX})_{\fm_{I_{\sX}}} \ar[u]_{Id} \ar[d]^{Id} \\
G_i(\sX) \ar[r]^-{p^*}  & G_i(F,Y)  \ar[r]^-{\vartheta^F_Y}
& G_i(F, I_Y)_{\fm_{1_F}} \ar[r]^-{{p^*}^{-1} \circ v_Y} & 
G_i(I_{\sX})_{\fm_{I_{\sX}}}.}
\end{equation}
\end{lem}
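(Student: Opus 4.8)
The plan is to establish the diagram in two halves, joined along the middle row. The middle row is the Atiyah-Segal map $\vartheta^G_Z$ for the $G = H\times F$ action on $Z$, composed with the canonical comparison isomorphisms between the various localizations of $G_i(I_{\sX})$, $G_i(H,I_X)$, $G_i(G,I_Z)$ and $G_i(F,I_Y)$ coming from \thmref{thm:Com-compl} together with the Morita/descent identifications $G_i(\sX) \xrightarrow{q^*} G_i(H,X)$, $G_i(\sX) \xrightarrow{(q\circ t)^*} G_i(G,Z)$, etc. First I would check that all the horizontal composites are well defined isomorphisms: the maps $q^*$, $(q\circ t)^*$, $p^*$ are isomorphisms because $Z\to \sX$, $X\to\sX$, $Y\to\sX$ are torsors (descent), the Atiyah-Segal maps $\vartheta^G_Z$, $\vartheta^H_X$, $\vartheta^F_Y$ are isomorphisms by construction (Definition~\ref{defn:RR-mapfull} and the remark following it), and the completion/localization comparison maps $v_X$, $v_Z$, $v_Y$ are isomorphisms by \thmref{thm:Com-compl} applied to the (possibly non-separated) quotient stack $[I_X/H] = I_{\sX}$, $[I_Z/G] = I_{\sZ}$, $[I_Y/F] = I_{\sX}$. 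Here one uses \lemref{lem:$I_X$torsor}: since $s$ is an $H$-torsor and $t$ is an $F$-torsor, $I_Z \to I_X$ is an $F$-torsor and $I_Z \to I_Y$ is an $H$-torsor, so $[I_Z/G] \simeq [I_X/H] \simeq [I_Y/F]$ is the inertia stack $I_{\sX}$, and the pullback isomorphisms $t^*\circ q^*$, $q^*$, $p^*$ between the $K$-theories of these presentations are compatible with the $\fm_{I_{\sX}}$-adic localizations.

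Next I would verify the two square-blocks. For the upper half (relating the $Z$-row to the $X$-row), the left square commutes because $\inv^F_Z$ is the proper pushforward along the coarse-moduli-type map $[Z/G]\to[Z/H] = \sX'$ composed with the identification $\sX'\simeq\sX$ — concretely, $q^* = \inv^F_Z\circ (q\circ t)^*$ by the covariant functoriality of pushforward on $K$-theory of coherent sheaves together with the fact that $q\circ t$ factors as $Z\to[Z/G]\to\sX$. The middle square is precisely \thmref{thm:GRR-PROD} applied to the $G = H\times F$ action on $Z$ (with $X$ there being $Z$ here, and $Y$ there being $Z/H$ here); this gives $\vartheta^H_{Z/H}\circ\inv^F_Z = (\upsilon_*\circ\inv^F_{I_Z})\circ\vartheta^G_Z$ at the level of the $\fm_1$-localizations, and after applying the identification $Z/H$-presentation $\simeq X$-presentation of $\sX'\simeq\sX$ (using \lemref{lem:GRR-QDM-stacks} and the fact that the canonical map $\sX'\to X/H$ is the identity on coarse moduli spaces, so by \lemref{lem:Gen-qt}-type arguments the two presentations give the same localized $K$-theory), this becomes the middle square. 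The rightmost square of the upper half commutes because all maps there are $K_0(I_{\sX})$-linear proper pushforwards and pullbacks, hence compatible with $\fm_{I_{\sX}}$-adic localization, and $v_X$, $v_Z$ are compatible via the torsor $I_Z\to I_X$. The lower half is entirely symmetric, interchanging the roles of $H$ and $F$ (and $X$ and $Y$), using \thmref{thm:GRR-PROD} again for the partial quotient $Z\to Z/F = $ (a presentation of) $\sY'$... wait, here the identity map case means both $X$ and $Y$ present the same $\sX$, so the lower half relates the $Z$-row to the $Y$-row via $\inv^H_Z$, the pushforward along $[Z/G]\to[Z/F]\simeq\sX$.

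The main obstacle I expect is the careful bookkeeping of the three-fold compatibility: the Atiyah-Segal maps are built conjugacy-class by conjugacy-class, and under the torsor $I_Z\to I_X$ (an $F$-torsor) the decomposition $I_Z = \amalg_{\psi\in\Sigma^G_Z} I^\psi_Z$ must be matched with $I_X = \amalg_{\psi'\in\Sigma^H_X} I^{\psi'}_X$ via the map $\psi\mapsto$ (its $H$-component), and one must know that $\Sigma^G_Z$ surjects appropriately onto $\Sigma^H_X$ under this correspondence (this is where \propref{prop:grp-hom} and \lemref{lem:$I_X$torsor} combine). Concretely, I would reduce the whole diagram to the level of individual $\psi$ using Definition~\ref{defn:RR-mapfull} and \propref{prop:direcsumdecomp}, at which point the upper-half middle square becomes \propref{prop:conjprod} (the conjugacy-class version) applied to $Z$ with $G = H\times F$, and the rest follows by assembling these over $\psi$. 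The technical heart is thus verifying that the localization/completion identifications from \thmref{thm:Com-compl} genuinely intertwine the pullback isomorphisms $q^*$, $p^*$, $t^*\circ q^*$ on the inertia stacks — i.e. that \thmref{thm:Com-compl} is natural in the presentation — which I would spell out using that $K_0(I_{\sX})$ acts compatibly on all three sides and the augmentation ideals $\fm_{I_{\sX}}$, $\fm_{1_H}$, $\fm_{1_G}$, $\fm_{1_F}$ sit inside $\fm_{I_{\sX}}K_0$ as described in \S\ref{sec:Comp-compl&}, so that all four horizontal composites are literally the same map $G_i(\sX) \xrightarrow{\simeq} G_i(I_{\sX})_{\fm_{I_{\sX}}}$ once the identifications are made; then commutativity of the full diagram is automatic from commutativity of each face together with the fact that the vertical maps on the extreme left and extreme right are identities.
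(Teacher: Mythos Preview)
Your approach is correct and essentially the same as the paper's: the middle squares come from \thmref{thm:GRR-PROD}, the left squares from functoriality of pushforward, and the right squares from naturality of the localization comparison maps $v_{(-)}$.

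Two points of streamlining are worth noting. First, in applying \thmref{thm:GRR-PROD} to the upper half you must swap the roles of $H$ and $F$, so the partial quotient is $Z/F$, which equals $X$ on the nose since $t$ is an $F$-torsor --- your ``$Y$ there being $Z/H$'' and the subsequent need to identify a ``$Z/H$-presentation'' with an ``$X$-presentation'' is a slip (indeed $Z/H = Y$, not $X$). Second, the paper exploits the freeness of the $F$-action more directly: by \lemref{lem:$I_X$torsor} the map $\upsilon\colon I_Z/F \to I_X$ is an \emph{isomorphism}, so $\inv^F_Z$ and $\upsilon_*\circ\inv^F_{I_Z}$ are simply the inverses of the pullbacks $t^*$ and $\pi^*$. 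Rewriting the top squares with these pullbacks pointing downward reduces the right square to the tautology $t^*\circ v_X = v_Z\circ t^*$, and renders your final paragraph --- unwinding to individual conjugacy classes via \propref{prop:conjprod} and worrying about matching $\Sigma^G_Z$ with $\Sigma^H_X$ --- unnecessary, since all of that is already packaged inside \thmref{thm:GRR-PROD}.
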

\begin{proof}
We show that all the top squares commute as the argument for
the commutativity of the bottom squares is identical. 
We first observe that $F$ acts freely on $I_Z$ and the map 
$\upsilon: {I_Z}/F \to I_X$ is an isomorphism by \lemref{lem:$I_X$torsor}.
In particular, the map $\upsilon_* \circ \inv^F_{I_Z}$ is same as the
map $\inv^F_{I_Z}: G_i(G, I_Z) \to G_i(H, I_X)$. 
Since the maps $Z \to X$ and $I_Z \to I_X$ are $F$-torsors,
the induced maps $\inv^F_Z: G_i(G,Z) \to G_i(H,X)$ and $\inv^F_{I_Z}:
G_i(G,I_Z) \to G_i(H,I_X)$ are isomorphisms which are inverses to the 
pull-back maps $t^*:G_i(H,X) \to G_i(G,Z)$ and $\pi^*: G_i(H,I_X) \to
G_i(G,I_Z)$, respectively.
Here, $\pi:I_Z \to {I_Z}/F \xrightarrow{\simeq} I_X$ is the
quotient map (see ~\eqref{diag:prodstructureofmaps*}).
The commutativity of the top squares in ~\eqref{eqn:Ind-RR-st-0}
is therefore equivalent to the commutativity of the diagram
\begin{equation}\label{eqn:Ind-RR-st-1}
\xymatrix@C1pc{
G_i(\sX) \ar[r]^-{q^*} \ar[d] _{Id} & G_i(H,X) \ar[r]^-{\vartheta^H_X} 
\ar[d]^{t^*} & 
G_i(H,I_X)_{\fm_{1_H}} \ar[r]^-{{q^*}^{-1} \circ v_X} \ar[d]^{\pi^*} & 
G_i(I_{\sX})_{\fm_{I_{\sX}}} \ar[d]^{Id} \\
G_i(\sX) \ar[r]^-{(q \circ t)^*}  & 
G_i(G,Z) \ar[r]^-{\vartheta^G_Z}   
& G_i(G, I_Z)_{\fm_{1_G}} \ar[r]^{(t^* \circ q^*)^{-1} \circ v_Z}  & 
G_i(I_{\sX})_{\fm_{I_{\sX}}}.}
\end{equation}

Now, the left square commutes by definition and the middle square commutes 
by \thmref{thm:GRR-PROD}. The right square commutes again by definition
since $t^* \circ v_X = v_Z \circ t^*:G_i(H, I_X)_{\fm_{1_H}} \to 
G_i(G, I_Z)_{\fm_{I_{\sX}}}$. This proves the lemma.
\end{proof}

\begin{defn}\label{defn:RR-QDM}
Let $\sX=[X/G]$ be a separated quotient stack.
We define the Atiyah-Segal map $\vartheta_{\sX}$ to be the composite
\begin{equation}\label{eqn:RR-QDM-0}
\vartheta_{\sX} = v_X \circ \vartheta^G_X \circ p_X^*: 
G_i(\sX) \to G_i(I_{\sX})_{\fm_{I_{\sX}}}. 
\end{equation}
Here, $p_X: X \to \sX$ is the projection map.
It follows from \lemref{lem:independenceofstackmap} that $\vartheta_{\sX}$ is 
independent of the choice of the presentation of the stack.
Furthermore, it follows from \thmref{thm:GRR-PROD} that $\vartheta_{\sX}$
is an isomorphism. 
\end{defn}
Our main result on the Atiyah-Segal correspondence for separated quotient
stacks is the following.

\begin{thm}\label{thm:ASCS}
Let $f: \sX \to \sY$ be a proper morphism of separated quotient 
stacks of finite type over $\C$. Let $i \ge 0$ be an integer.
Then there is a commutative diagram
\begin{equation}\label{eqn:ASCS-0}
\xymatrix@C1pc{
G_i(\sX) \ar[r]^-{\vartheta_{\sX}} \ar[d]_{f_*} & G_i(I_{\sX})_{\fm_{I_{\sX}}} 
\ar[d]^{f^I_*} \\
G_i(\sY) \ar[r]^-{\vartheta_{\sY}} & G_i(I_{\sY})_{\fm_{I_{\sY}}}}
\end{equation}
such that the horizontal arrows are isomorphisms.
\end{thm}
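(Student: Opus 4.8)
The plan is to reduce the general proper map $f:\sX\to\sY$ to the two special cases for which we have already proved the covariance of the Atiyah-Segal map, namely the partial quotient map (\thmref{thm:GRR-PROD}) and the representable map, and then glue. First I would invoke \lemref{lem:GRR-QDM-stacks}: writing $\sX=[X/H]$, $\sY=[Y/F]$ and forming $Z=X\times_{\sX}Y$ with $G=H\times F$ acting properly on $Z$, we get the factorization \eqref{eqn:st-gen*-0},
\[
f:\ \sX\cong[Z/G]\xrightarrow{\ \bar\phi\ }[(Z/H)/F]=\sW\xrightarrow{\ \bar{\beta'}\ }[Y/F]=\sY,
\]
where $\bar\phi$ is the stacky coarse moduli space map for the $H$-action (a partial quotient map in the sense of \S\ref{Sec:Product Of Groups}, after identifying $Z/H=W$) and $\bar{\beta'}$ is induced by the $F$-equivariant map $\beta':W\to Y$ of algebraic spaces, hence is representable. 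Both maps are proper by \lemref{lem:GRR-QDM-stacks}(4). Since $\vartheta_{(-)}$ is well defined (independent of presentation) by \defref{defn:RR-QDM} and \lemref{lem:independenceofstackmap}, and since $f^I_* = (\bar{\beta'})^I_*\circ\bar\phi^I_*$ by functoriality of proper push-forward on inertia stacks, it suffices to prove the square \eqref{eqn:ASCS-0} for $\bar\phi$ and for $\bar{\beta'}$ separately and paste the two squares horizontally.

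For the partial quotient map $\bar\phi:[Z/G]\to[W/F]$, the content is exactly \thmref{thm:GRR-PROD} applied to the proper $G=H\times F$-action on $Z$: that theorem gives the commutative square with $\inv^H_Z$ on $K$-theory and $\upsilon_*\circ\inv^H_{I_Z}$ on the localized inertia side, and horizontal arrows $\vartheta^G_Z$, $\vartheta^F_W$ isomorphisms. Unwinding \defref{defn:RR-QDM} (i.e. composing with the pull-backs $p_Z^*$, $p_W^*$ and the maps $v_Z$, $v_W$ to $G_i(I_\sX)_{\fm_{I_\sX}}$, $G_i(I_\sW)_{\fm_{I_\sW}}$), this yields precisely \eqref{eqn:ASCS-0} for $\bar\phi$; the identification of $\bar\phi_*$ on $G_i$ with $\inv^H_Z$ is \thmref{thm:invariants}, and the identification of $\bar\phi^I_*$ with $\upsilon_*\circ\inv^H_{I_Z}$ on the completions uses \thmref{thm:Com-compl} to pass between $\fm_{1}$-adic and $\fm_{I_\sX}$-adic localizations, exactly as in the proof of \lemref{lem:independenceofstackmap}. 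For the representable map $\bar{\beta'}$, I would use \propref{prop:riemannrochmapfunct} for the $F$-equivariant proper map $\beta':W\to Y$ of algebraic spaces, which gives the commuting square for $\vartheta^F_{(-)}$ and the honest push-forwards $\beta'_*$, $(\beta')^I_*$; again composing with the presentation data turns this into \eqref{eqn:ASCS-0} for $\bar{\beta'}$.

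The one genuinely delicate point — and the main obstacle — is that $f^I_*$ on the right side of \eqref{eqn:ASCS-0} must be shown to be \emph{defined}, i.e. $f^I_*:G_i(I_\sX)\to G_i(I_\sY)$ must be continuous for the $\fm_{I_\sX}$-adic / $\fm_{I_\sY}$-adic topologies so that it descends to the localizations (equivalently completions). This is K{\"o}ck's conjecture, and it is handled by \corref{cor:Com-compl-cons}: applying that corollary to the proper map $f^I:I_\sX\to I_\sY$ of quotient Deligne-Mumford stacks (which is proper because $f$ is, by the elementary fact used in \lemref{lem:moritaandrrproper}) produces the required push-forward $f^I_*:\wh{G_i(I_\sX)}_{\fm_{I_\sX}}\to\wh{G_i(I_\sY)}_{\fm_{I_\sY}}$, and \thmref{thm:Com-compl} identifies these completions with the localizations $G_i(I_\sX)_{\fm_{I_\sX}}$, $G_i(I_\sY)_{\fm_{I_\sY}}$ appearing in the statement. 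Once $f^I_*$ is known to be well defined, checking that the pasted square commutes is formal: one verifies that the two intermediate squares (for $\bar\phi$ and $\bar{\beta'}$) share the middle edge $\vartheta_\sW:G_i(\sW)\to G_i(I_\sW)_{\fm_{I_\sW}}$ and that the decomposition $f^I_*=(\bar{\beta'})^I_*\circ\bar\phi^I_*$ holds after localization, which again reduces to the continuity statement plus functoriality of proper push-forward. Finally, $\vartheta_\sX$ and $\vartheta_\sY$ are isomorphisms by \defref{defn:RR-QDM}, so the proof is complete.
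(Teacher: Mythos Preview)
Your proposal is correct and follows essentially the same approach as the paper: factor $f$ via \lemref{lem:GRR-QDM-stacks} as $[Z/G]\to[W/F]\to[Y/F]$, then apply \thmref{thm:GRR-PROD} to the partial quotient piece and \propref{prop:riemannrochmapfunct} to the representable piece, assembling everything into one large commutative diagram. Your explicit attention to the well-definedness of $f^I_*$ on the localizations via \thmref{thm:Com-compl} and \corref{cor:Com-compl-cons} is more detailed than the paper's treatment (which absorbs this into the maps $v_Z$, $v_W$, $v_Y$ in its diagram~\eqref{eqn:ASCS-1}), but the substance is the same.
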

\begin{proof}
We have already seen in Definition~\eqref{defn:RR-QDM} that the horizontal
arrows are isomorphisms. We only need to show the commutativity.
We let $\sX = [X/H]$ and $\sY = [Y/F]$. Let $Z$ be as in
~\eqref{eqn:Fin-coh-0-new*}. It follows from \lemref{lem:GRR-QDM-stacks}
that $G = H \times F$ acts on $Z$ such that $t$ is $G$-equivariant and an
$F$-torsor. 
By \lemref{lem:GRR-QDM-stacks}, we have proper maps 
$\sX = [Z/G] \xrightarrow{p} \sW = [W/F] 
\xrightarrow{q} \sY = [Y/F]$, where $W = Z/H$.

We consider the diagram
\begin{equation}\label{eqn:ASCS-1}
\xymatrix@C1pc{
G_i(\sX) \ar[r]^-{p_Z^*} \ar[d]_{p_*} & 
G_i(G,Z) \ar[r]^-{\vartheta^G_Z} \ar[d]^{\inv^H_Z}  &
G_i(G, I_Z)_{\fm_{1_G}} \ar[d]^{\upsilon_* \circ \inv^H_{I_Z}} \ar[r]^{v_Z} &
G_i(I_Z)_{\fm_{I_{\sZ}}} \ar[d]^{\upsilon_* \circ \inv^H_{I_Z}} \\
G_i(\sW) \ar[r]^-{p^*_W} \ar[d]_{q_*} & 
G_i(F, W) \ar[r]^-{\vartheta^F_W} \ar[d]^{q_*} & G_i(F,I_W)_{\fm_{1_F}} 
\ar[d]^{q^I_*} \ar[r]^-{v_W} & G_i(I_{\sW})_{\fm_{I_{\sW}}} \ar[d]^{q^I_*} \\
G_i(\sY) \ar[r]^{p^*_Y} & G_i(F,Y) \ar[r]^-{\vartheta^F_Y} &
G_i(I_Y)_{\fm_{1_F}} \ar[r]^-{v_Y} & G_i(I_{\sY})_{\fm_{I_{\sY}}}.}
\end{equation}

The top and the bottom squares on the left commute by the
commutativity of proper push-forward and flat pull-back maps on $K$-theory of 
stacks, noting that $\inv^{(-)}_{(-)}$ is just the proper push-forward map
(see \thmref{thm:invariants}). The top and the bottom squares on the right
commute by definition of the various maps. 
The bottom square in the middle commutes by
\propref{prop:riemannrochmapfunct} and the top square in the middle
commutes by \thmref{thm:GRR-PROD}. It follows that
~\eqref{eqn:ASCS-0} commutes. 
\end{proof}

\section{The Riemann-Roch theorem for quotient 
stacks}\label{sec:RRT}
In this section, we shall apply the Atiyah-Segal isomorphism and
the equivariant Riemann-Roch theorem of \cite{AK1} to prove the
Grothendieck Riemann-Roch theorem for separated quotient stacks.
In order to do so, we need to recall the higher Chow groups of
such stacks and prove some properties of these groups.

\subsection{Equivariant higher Chow groups}\label{sec:ECG}
Let $G$ be a linear algebraic group.
Since most of the basic properties of equivariant higher Chow groups 
require us to assume that the underlying algebraic space with $G$-action is 
$G$-quasi-projective, we shall have to mostly work under this
assumption in this section. Of course, all group actions on such 
schemes will be linear.
However, one can check that all the properties that we need or prove below,
hold for algebraic spaces too if we are only interested in the
equivariant (not higher) Chow groups $\CH^G_*(X)$ and Riemann-Roch for these.

The equivariant higher Chow groups of quasi-projective schemes
were defined in \cite{EG1} and their fundamental properties were described in 
\cite{AK3}. We refer to these references without recalling the definition
of equivariant higher Chow groups. We only recall that they share most of
the properties of equivariant $K$-theory. In particular, they satisfy
localization, homotopy invariance, flat pull-back, proper push-forward and
Morita isomorphisms. Furthermore, they coincide with Bloch's higher Chow groups
of quotients under a free action. 
These properties will be used frequently in this section.
We shall let $\CH^G_*(X,i) = \oplus_{j \in \Z} \CH^G_i(X,i)$ for $i \ge 0$
and $\CH^G_*(X) = \CH^G_*(X,0)$. Recall also that these are all $\C$-vector
spaces under our convention.

In this text, we shall use the following convention for
differentiating the maps on $K$-theory and Chow groups, induced by a
morphism of spaces $f: X \to Y$. We shall denote the maps on
$K$-theory by $f^*$ (resp. $f_*$) and on Chow groups by 
$\bar{f}^*$ (resp. $\bar{f}_*$).

Let $H \trianglelefteq G$ be a normal subgroup of an algebraic group $G$ with
quotient $F$.
Let $G$ act properly on quasi-projective $\C$-schemes $X$ and $X'$ whose
quotients $X/G$ and ${X'}/G$ are quasi-projective.
Let $f: X' \to X$ be a $G$-equivariant proper map.
It follows from Lemmas~\ref{lem:Basic} and ~\ref{lem:quot*} 
that under this hypothesis, 
the quotients $Y = X/H$ and $Y' = {X'}/H$ are quasi-projective with proper and 
linear $F$-action. Let $f': Y' \to Y$ be the induced map between partial
quotients.

We recall the following result from
\cite[\S~4]{EG1} and \cite[\S~6.4]{EG2} 
on the functor of invariants for equivariant higher Chow groups.

\begin{thm}\label{thm:pcpu}
There is a natural map
\begin{equation}\label{diag:pcpu}
\ov{\inv}^H_X : \CH_*^G(X,i) \to \CH_*^F(Y,i)
\end{equation}

and a commutative diagram 
\begin{equation}\label{diag:pcpufunct}
\xymatrix@C1pc{ 
\CH_*^G(X',i) \ar[r]^-{\bar{f}_*} \ar[d]_{\ov{\inv}^H_{X'}} & 
\CH_*^G(X,i) \ar[d]^{\ov{\inv}^H_X} \\
\CH_*^F(Y',i) \ar[r]_{\bar{f}'_*} & \CH^F_*(Y,i)}
\end{equation}
such that $\ov{\inv}^G_X = \ov{\inv}^F_Y \circ \ov{\inv}^H_X$.

If $X$ and $X'$ are only algebraic spaces, then the above hold for
$\CH_*^G(-)$. 
\end{thm}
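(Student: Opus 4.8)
The theorem has three assertions to establish: the existence of the map $\overline{\inv}^H_X$, the commutativity of the square \eqref{diag:pcpufunct}, and the transitivity identity $\overline{\inv}^G_X = \overline{\inv}^F_Y \circ \overline{\inv}^H_X$. The strategy mirrors the construction already carried out for $K$-theory in \thmref{thm:invariants}, transported to the setting of higher Chow groups via the references \cite[\S~4]{EG1} and \cite[\S~6.4]{EG2}. First I would dispose of the case $H = G$, where $Y = X/G$ is the coarse moduli space (a quasi-projective scheme by \lemref{lem:quot*}, under our standing hypotheses). In this case $\overline{\inv}^G_X$ is the proper push-forward $\bar{p}_*: \CH^G_*(X,i) \to \CH_*(X/G,i)$ along the coarse moduli space map $p: [X/G] \to X/G$; this is exactly the map defined in \cite[\S~6.4]{EG2}, and its existence and functoriality with respect to proper $G$-equivariant maps are established there. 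I would then record the localization, homotopy invariance, proper push-forward and Morita properties of equivariant higher Chow groups (cited from \cite{AK3}) that will be needed, just as the $K$-theory proof relies on \cite[\S~3.16.1]{TT} and \lemref{lem:Fin-coh}.

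For the general case, I would factor the coarse-moduli-type construction through the intermediate quotient stack, in complete parallel with \eqref{eqn:invariants-2}: using \lemref{lem:quot*}, $Y = X/H$ is quasi-projective with proper linear $F$-action, and there is a factorization of proper maps $[X/G] \xrightarrow{g} [Y/F] \xrightarrow{\bar{g}} Y/F = X/G$. The map $\overline{\inv}^H_X$ is then defined to be the proper push-forward $\bar{g}_*$ on higher Chow groups along the first map $g: [X/G] \to [Y/F]$ (note $g$ is proper, being a coarse-moduli-space type map composed with the quotient by $F$; concretely one uses \lemref{lem:GRR-QDM-stacks}(4) applied with the identity map, or directly the Keel--Mori theorem). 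Here I would invoke the fact that equivariant higher Chow groups admit proper push-forward along proper maps of quotient stacks with the expected functoriality — this is the Chow-group analogue of the Thomason--Trobaugh push-forward used in \thmref{thm:invariants}, and is available from \cite{AK3}. The transitivity identity $\overline{\inv}^G_X = \overline{\inv}^F_Y \circ \overline{\inv}^H_X$ is then just the covariant functoriality $(\bar g \circ g)_* = \bar g_* \circ g_*$ of proper push-forward applied to the factorization above, exactly as in the last line of the proof of \thmref{thm:invariants}.

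For the commutativity of \eqref{diag:pcpufunct}, I would argue exactly as in \eqref{eqn:invariants-3}: given the $G$-equivariant proper map $f: X' \to X$, it induces a commutative square of proper maps of quotient stacks
\[
\xymatrix@C1pc{
[X'/G] \ar[r]^-{f} \ar[d]_{g'} & [X/G] \ar[d]^{g} \\
[Y'/F] \ar[r]^-{\bar{f'}} & [Y/F],}
\]
and the commutativity of \eqref{diag:pcpufunct} follows from the covariant functoriality of proper push-forward on higher Chow groups of quotient stacks, $(g \circ f)_* = g_* \circ f_* = \bar{f'}_* \circ (g')_*$. One has to check that the downstairs square really commutes at the level of spaces, i.e. that the coarse-moduli/quotient constructions are natural in $f$; this is \lemref{lem:GRR-QDM-stacks}(3) type naturality together with the universal property of coarse moduli spaces, and I would cite \cite[\S~6.4]{EG2} for the statement that the push-forward $\overline{\inv}^H$ is functorial for proper morphisms.

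\textbf{Main obstacle.} The substantive point — everything else being formal manipulation of proper push-forwards — is establishing that equivariant \emph{higher} Chow groups of quotient stacks carry a well-behaved proper push-forward along the (non-representable) map $g: [X/G] \to [Y/F]$ and that it is covariantly functorial. For $\CH^G_*$ (the $i=0$ case) this is essentially classical and handled in \cite[\S~6.4]{EG2} via the correspondence-style or exact-functor construction of \cite[\S~4]{EG1}, which is why the theorem carves out that case for algebraic spaces. For the higher groups one needs the machinery of \cite{AK3}, where these Borel-style higher Chow groups are shown to satisfy the axioms of a Borel--Moore homology theory on quotient stacks with representable maps, supplemented by the coarse-moduli push-forward; the delicate part is that $g$ is not representable, so one must know the push-forward is defined for the specific shape of map appearing here (a coarse-moduli map composed with a quotient by a group). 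I would handle this exactly as \thmref{thm:invariants} does for $K$-theory — reduce to the coarse moduli case $\bar{g}_*$, which is a map of schemes, plus the $F$-equivariant descent — and cite the corresponding results of \cite{AK3} and \cite[\S~6.4]{EG2} rather than reconstructing them.
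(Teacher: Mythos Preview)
Your proposal is coherent but takes a substantially harder route than the paper, and the ``main obstacle'' you correctly flag is one the paper sidesteps completely.

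The paper does \emph{not} construct $\ov{\inv}^H_X$ as a push-forward along the non-representable map $g:[X/G]\to[Y/F]$. Instead it exploits a fact specific to equivariant higher Chow groups under proper actions: the flat pull-back $\pi^*_G:\CH_*(X/G,i)\to\CH^G_*(X,i)$ along the quotient map is an \emph{isomorphism} (this is \cite[Theorem~3]{EG1}). The map $\ov{\inv}^G_X$ is simply \emph{defined} to be $(\pi^*_G)^{-1}$, and likewise $\ov{\inv}^F_Y=(\pi^*_F)^{-1}$. Then $\ov{\inv}^H_X$ is defined as the unique map making the transitivity triangle $\ov{\inv}^G_X=\ov{\inv}^F_Y\circ\ov{\inv}^H_X$ commute, i.e.\ $\ov{\inv}^H_X:=\pi^*_F\circ(\pi^*_G)^{-1}$. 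So transitivity holds by construction, and no push-forward along a non-representable map is ever needed.

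For the square \eqref{diag:pcpufunct}, the paper embeds it as the left half of a larger diagram whose right square and outer rectangle both commute by \cite[Proposition~11(a)]{EG1} (compatibility of proper push-forward with the isomorphisms $\pi^*$). Since $\ov{\inv}^F_Y$ is an isomorphism, the left square is forced to commute. This is a two-out-of-three argument at the level of ordinary Chow groups of the quotient schemes $X/G$, $Y/F$, etc.

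Your approach, mirroring the $K$-theory proof of \thmref{thm:invariants}, would work in principle if the push-forward machinery for higher Chow groups along non-representable proper maps of stacks were available, but the references you plan to cite (\cite{AK3}, \cite[\S~6.4]{EG2}) do not obviously supply this for $i>0$. The paper's approach buys you a much shorter argument that stays entirely within the Chow theory of schemes; what it costs is that $\ov{\inv}^H_X$ is not visibly a push-forward, only a formally defined map. In practice the paper only ever uses that these maps are isomorphisms compatible with proper push-forward, which is exactly what the elementary argument delivers.
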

\begin{proof}
Let $X \xrightarrow{\pi_H} Y \xrightarrow{\pi_F} Y/F = X/G$ denote 
the quotient maps and let $\pi_G = \pi_F \circ \pi_H$.
Then the map $\pi^*_G: \CH_*(X/G,i) \to \CH^G_*(X,i)$ is defined in
the proof of \cite[Theorem~3]{EG1} and this is an isomorphism.
The map $\ov{\inv}^G_X$ is defined in \cite[\S~6.4]{EG2} for Chow
groups $\CH^G_*(X)$. However, under the quasi-projective assumption, 
this extends to higher Chow groups as well which we now explain.

We first define $\ov{\inv}^G_X$ as the inverse of $\pi^*_G$.
In particular, $\ov{\inv}^F_Y$ is the inverse of $\pi^*_F$.
We let $\ov{\inv}^H_X: \CH_*^G(X,i) \to \CH_*^F(Y,i)$ be the unique homomorphism
such that the diagram
\begin{equation}\label{eqn:Chow-pf}
\xymatrix@C1pc{
\CH_*^G(X,i) \ar[r]^{\ov{\inv}^H_X} \ar@/_2pc/[rr]^{\ov{\inv}^G_X} & 
\CH_*^F(Y,i) \ar[r]^{\ov{\inv}^F_Y} & \CH_*(X/G,i)}
\end{equation}

commutes. To show that \ref{diag:pcpufunct} commutes, we consider the diagram 
\begin{equation}\label{eqn:Chow-pf-0}
\xymatrix@C1pc{ 
\CH_*^G(X',i) \ar[d]_{\bar{f}_*} \ar[r]^-{\ov{\inv}^H_{X'}} & 
\CH_*^F(Y',i) \ar[r]^-{\ov{\inv}^F_{Y'}} \ar[d]^{\bar{f}'_*} & 
\CH_*^G(X',i) \ar[d]^{\bar{f}_*} \\
\CH_*^G(X,i) \ar[r]^-{\ov{\inv}^H_{X}}  & 
\CH_*^F(Y,i)\ar[r]^-{\ov{\inv}^F_{Y}} &
\CH_*^G(X,i).}
\end{equation}

It follows from \cite[Proposition~11(a)]{EG1}
that the right square and the big outer rectangle commute.
Since $\ov{\inv}^F_{Y}$ is an isomorphism, it follows that the
left square also commutes. For $i = 0$, an identical construction and proof
work for algebraic spaces as well.
\end{proof}

\begin{defn}\label{defn:chowgroupofstack}
Let $\sX$ be a separated quotient stack with coarse moduli 
space $M$. We define the Chow group of $\sX$ as
$\CH_*(\sX): = \CH_*(M)$. If $M$ is quasi-projective,
then we define the higher Chow groups of $\sX$ as 
$\CH_*(\sX,i): = \CH_*(M,i)$.

Let $f:\sX=[X/G] \to \sY=[Y/F]$ be a proper map of separated quotient 
stacks with coarse moduli spaces $M$ and $N$, respectively. 
Let $\tilde{f}$ be the map induced on the coarse moduli spaces. 
It follows from \lemref{lem:properpushforward} that $\tilde{f}$ is proper.

We define $f_*: \CH_*(\sX) \to \CH_*(\sY)$ to be the proper push-forward map
$\tilde{f}_* :  \CH_*(M) \to \CH_*(N)$. If $M$ and $N$ 
are quasi-projective, then we define the push-forward 
map $f_*: \CH(\sX, i) \to \CH_*(\sY,i)$ to be the corresponding proper 
push-forward map $\tilde{f}_* : \CH_*(M,i) \to \CH_*(N,i)$.  
\end{defn}


\subsection{Some consequences of the equivariant 
Riemann-Roch theorem}\label{sec:RR-Final}
Let $G$ be a linear algebraic group acting properly on an algebraic space
$X$. The Riemann-Roch map $G_0(G,X) \to \CH^G_*(X)$ was defined in \cite{EG5}.
If $X$ is quasi-projective, the Riemann-Roch map $G_i(G,X) \to \CH^G_*(X,i)$
was defined in \cite{AK1}. We recall the following.

\begin{thm}$($\cite[Theorem~3.1]{EG5}, \cite[Theorem~1.4]{AK1}$)$
\label{thm:AKRR}
Let $G$ act properly on a quasi-projective scheme $X$. Then for every 
$i \ge 0$, there is a Riemann-Roch map 
\begin{equation}\label{eqn:AKRR-0}
t^G_X : G_i(G,X)\rightarrow \CH_*^G(X,i)
\end{equation}
which is covariant for proper $G$-equivariant maps and the induced
map $G_i(G,X)_{\fm_1} \xrightarrow{t^G_X} \CH^G_*(X,i)$ is an isomorphism.
If $X$ is only an algebraic space, the same holds for $i = 0$.
\end{thm}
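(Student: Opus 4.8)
The plan is to reconstruct the two-step argument of Edidin and Graham \cite{EG5} (for $i=0$) and of Krishna \cite{AK1} (for general $i$): first construct $t^G_X$ out of a non-equivariant Riemann--Roch theorem by means of the mixed-space (Borel) construction, and then prove that it induces an isomorphism after localization at $\fm_1$ by a d\'evissage that reduces everything to the case of a free action.

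To build $t^G_X$, one chooses a finite-dimensional representation $V$ of $G$ together with a $G$-invariant open $U\subseteq V$ on which $G$ acts freely and with $\codim(V\setminus U)$ large. Since the $G$-action on $X$ has finite stabilizers, the diagonal action on $X\times U$ is again free, so $X\times^G U$ is a quasi-projective scheme and, by definition, $\CH^G_*(X,i)$ is an appropriate degree shift of $\CH_*(X\times^G U,i)$ once $\dim V$ is large. Homotopy invariance of equivariant $K$-theory gives $G_i(G,X)\xrightarrow{\simeq}G_i(G,X\times V)$; restriction to $X\times U$ followed by the identification $G_i(G,X\times U)\simeq G_i(X\times^G U)$ for the free action yields a map $G_i(G,X)\to G_i(X\times^G U)$; and composing with Bloch's higher Riemann--Roch transformation \cite{Bloch} $G_i(X\times^G U)\to\CH_*(X\times^G U,i)$ (all groups with $\C$-coefficients) and invoking the stabilization of $\CH^G$ in each fixed degree defines $t^G_X$. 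Independence of the choice of $(V,U)$ is forced by this stabilization, and covariance for proper $G$-equivariant maps follows from the compatibility of the mixed-space functor and of Bloch's transformation with proper push-forward, using that $X'\times^G U\to X\times^G U$ is proper whenever $X'\to X$ is. For $i=0$ one may use the Baum--Fulton--MacPherson transformation in place of Bloch's, and the whole construction applies to algebraic spaces as well.

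For the isomorphism statement I would proceed by Noetherian induction on $X$. By Thomason's generic slice theorem, applied exactly as in \lemref{lem:diagonalizablefiniteness}, there is a dense $G$-invariant open $U\subseteq X$, a finite normal subgroup $H\subseteq G$ acting trivially on $U$, and $F=G/H$ acting freely on $U$. Decomposing $G$-equivariant sheaves on $U$ by the characters of $H$ as in \S\ref{sec:Decomp-t} shows $G_i(G,U)_{\fm_1}\simeq G_i(F,U)$, while on the Chow side the finite group $H$ contributes nothing rationally, so $\CH^G_*(U,i)\simeq\CH^F_*(U,i)$; under these identifications $t^G_U$, after localizing at $\fm_1$, becomes the Riemann--Roch transformation for the free $F$-action and is therefore an isomorphism by \cite{Bloch}. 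Writing $Z=X\setminus U$ with its reduced structure, one then compares the localization long exact sequence of $R(G)$-modules $G_{i+1}(G,U)\to G_i(G,Z)\to G_i(G,X)\to G_i(G,U)\to G_{i-1}(G,Z)$, which stays exact after localizing at $\fm_1$, with the analogous localization sequence in equivariant higher Chow groups \cite{AK3}. Because $t^G$ is compatible with the proper push-forward along $Z\inj X$ (covariance) and with the flat pull-back along the open immersion $U\inj X$, the resulting ladder commutes; the terms involving $U$ are isomorphisms by the slice step and those involving $Z$ by the inductive hypothesis, so the five-lemma completes the proof. Covariance is preserved at every step, and for $i=0$ and $X$ an algebraic space the same argument goes through, the generic slice theorem and the Baum--Fulton--MacPherson transformation both being available in that setting.

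The main obstacle is twofold. First, for the five-lemma step to be valid one must know that $t^G_X$ is a natural transformation of Borel--Moore-type homology theories --- in particular that it commutes with the connecting maps of the localization sequences --- and not merely a degreewise map of graded groups; this is best guaranteed by realizing the whole construction at the level of $K$-theory spectra and using that Bloch's transformation is itself functorial there. Second, if one prefers instead to pass through an algebraic Atiyah--Segal completion theorem --- which the mixed-space picture provides most directly, via $\wh{G_i(G,X)}_{\fm_1}\simeq\varprojlim_U G_i(X\times^G U)$ --- then the properness of the $G$-action enters essentially in upgrading a completed isomorphism to a localized one: properness forces $\Sigma^G_X$ to be finite (Proposition~\ref{prop:finite conjugacy class}), hence by \lemref{lem:FiniteJGeneral} that $\fm_1^N G_i(G,X)_{\fm_1}=0$ for $N\gg0$, so that completion and localization at $\fm_1$ coincide (by \lemref{lem:Elem-CAlg}). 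Pinning down this finiteness and its interaction with the localization sequences is the technically heaviest part of the argument.
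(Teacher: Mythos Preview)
The paper does not prove this theorem; it is stated purely as a citation of \cite[Theorem~3.1]{EG5} and \cite[Theorem~1.4]{AK1}, so there is no in-paper proof to compare against. Your reconstruction of the mixed-space construction of $t^G_X$ and the overall d\'evissage strategy is accurate and matches the approach of those references.

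However, your isomorphism argument has a genuine gap at the generic slice step. Thomason's generic slice theorem (which you invoke via \lemref{lem:diagonalizablefiniteness}) applies only to diagonalizable tori, not to arbitrary linear algebraic groups $G$. For a general $G$ there is no reason to expect a finite normal subgroup $H\subseteq G$ acting trivially on a dense invariant open with $G/H$ acting freely; and even if such an $H$ existed it would not be central in general, so the character decomposition of \S\ref{sec:Decomp-t} (which requires $P$ to be central in $Q$) would not directly yield $G_i(G,U)_{\fm_1}\simeq G_i(F,U)$. The actual proofs in \cite{EG5} and \cite{AK1} insert a reduction layer before the slice step: one passes from $G$ to $GL_n$ via a closed embedding and the Morita equivalence of \lemref{lem:Morita-stack*}, then from $GL_n$ to a maximal torus $T$ using that $G_*(GL_n,-)\inj G_*(T,-)$ is a split monomorphism of $R(GL_n)$-modules (exactly as in the proof of \lemref{lem:FiniteJGeneral}), and only then applies the generic slice theorem for $T$. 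Your localization/five-lemma induction and the use of nilpotence of $\fm_1$ on $G_i(G,X)_{\fm_1}$ are the right ingredients once this reduction-to-torus step is in place.
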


When $G$ is trivial, then the Riemann-Roch map $t^G_X$ of \thmref{thm:AKRR}
coincides with the Riemann-Roch map in the non-equivariant $K$-theory,
established by Bloch \cite{Bloch}. We shall denote this map by $\tau_X$
in the sequel.

\begin{lem}\label{lem:augmentation ideal}
Let $G$ act properly on an algebraic space $X$ with quotient $Y$ and
let $i \ge 0$ be an integer. 
If $Y$ is quasi-projective, there is a commutative diagram
\begin{equation}\label{eqn:Aug-0}
\xymatrix@C1pc{
G_i(G,X)_{\fm_1} \ar[r]^-{t^G_X} \ar[d]_{\inv^G_X} & 
\CH_*^G(X,i) \ar[d]^{\overline{\inv}^G_X} \\
G_i(Y) \ar[r]^-{\tau_Y} & \CH^*(Y,i).}
\end{equation}

If $Y$ is not necessarily quasi-projective, this diagram is commutative for
$i = 0$.
\end{lem}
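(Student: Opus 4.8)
The plan is to reduce to the case of a free action by a finite surjective equivariant cover and then descend. By \cite[Proposition~10]{EG1} choose a finite surjective $G$-equivariant morphism $f\colon X'\to X$ with $G$ acting freely on $X'$, with $X'\to Y':=X'/G$ a $G$-torsor, and with the induced map $\bar f\colon Y'\to Y$ finite and surjective. Since $Y$ is quasi-projective, $Y'$ is quasi-projective (being finite over $Y$) and hence $X,X'\in\qp^G_{\C}$, so \thmref{thm:AKRR}, \thmref{thm:pcpu} and \lemref{lem:Gen-qt} all apply for every $i\ge 0$.

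For the free action on $X'$ one has $\Sigma^G_{X'}=\{1\}$, so $G_i(G,X')_{\fm_1}=G_i(G,X')$ by \propref{prop:direcsumdecomp}, and the maps $\inv^G_{X'}$, $\overline{\inv}^G_{X'}$ are the Morita isomorphisms $G_i(G,X')\xrightarrow{\simeq}G_i(Y')$ and $\CH^G_*(X',i)\xrightarrow{\simeq}\CH_*(Y',i)$ coming from $[X'/G]\cong Y'$. Under these isomorphisms $t^G_{X'}$ becomes Bloch's $\tau_{Y'}$, by the compatibility of the construction of the equivariant Riemann-Roch map with change of presentation (\cite[Theorem~4.6]{AK1}; \cite{EG5} for $i=0$), so \eqref{eqn:Aug-0} commutes for $X'$. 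Now form the cube whose top and bottom faces are \eqref{eqn:Aug-0} for $X'$ and for $X$, with vertical edges the push-forwards $f_*\colon G_i(G,X')_{\fm_1}\to G_i(G,X)_{\fm_1}$ (well defined since $f_*$ is $R(G)$-linear) and the maps $\bar f_*$ on $\CH^G_*(-,i)$, on $G_i(-)$, and on $\CH_*(-,i)$. The two Riemann-Roch side faces commute by covariance of $t^G$ and of $\tau$ for proper morphisms (\thmref{thm:AKRR}, the trivial-group case being Bloch's theorem); the ``$\inv$'' face on the $K$-theory side commutes by \thmref{thm:invariants}, \eqref{eqn:invariants-1} with $H=G$, since $\inv^G_{(-)}$ is the proper push-forward along the coarse moduli map $[(-)/G]\to(-)/G$; and the ``$\overline{\inv}$'' face on the Chow side commutes by \thmref{thm:pcpu}, \eqref{diag:pcpufunct}.

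It remains to show the bottom face commutes, for which it suffices that $f_*\colon G_i(G,X')_{\fm_1}\to G_i(G,X)_{\fm_1}$ be surjective. Now $\bar f_*\colon\CH^G_*(X',i)\to\CH^G_*(X,i)$ is surjective by \cite[Lemma~11.1]{AK1}; since $t^G_{X'}$ and $t^G_X$ are isomorphisms after localizing at $\fm_1$ (\thmref{thm:AKRR}) and the corresponding side face commutes, $f_*$ is surjective too. Given $\alpha\in G_i(G,X)_{\fm_1}$, write $\alpha=f_*(\alpha')$; applying $\overline{\inv}^G_X\circ t^G_X$ and $\tau_Y\circ\inv^G_X$ to $\alpha$ and pushing both computations back to $Y'$ through the five commuting faces, both reduce to $\bar f_*\bigl(\tau_{Y'}(\inv^G_{X'}(\alpha'))\bigr)$. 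Hence \eqref{eqn:Aug-0} commutes.

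The main obstacle is the free case: one must know that the equivariant Riemann-Roch map of \cite{AK1} is built compatibly with the Morita isomorphisms, so that it restricts to Bloch's map on a genuine quotient; granting \cite[Theorem~4.6]{AK1}, the rest is diagram-chasing. A secondary point is that surjectivity of $f_*$ in $K$-theory is not immediate and must be deduced from surjectivity of the Chow push-forward along finite covers together with the Riemann-Roch isomorphism. For $i=0$ and $X$ an arbitrary algebraic space, the identical argument works: each result invoked holds for $i=0$ with $\CH^G_*(-)$ in place of $\CH^G_*(-,i)$ and for algebraic spaces, so no change is needed.
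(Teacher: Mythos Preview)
Your approach is essentially the same as the paper's: reduce to the free case via a finite equivariant cover, form the cube, and descend using surjectivity of $f_*$. The free case and the commutativity of the side faces are handled exactly as the paper does.

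There is, however, one real gap. For the quasi-projective higher case ($i\ge 0$) the paper does \emph{not} use \cite[Proposition~10]{EG1}; it uses \cite[Theorem~1]{KV}, which produces a finite, \emph{flat} and surjective $G$-equivariant cover $f\colon X'\to X$ with free $G$-action. The flatness is essential: the surjectivity statement \cite[Lemma~11.1]{AK1} for $\bar f_*$ on equivariant higher Chow groups is proved via the identity $\bar f_*\bar f^*=\deg(f)\cdot\mathrm{id}$, and $\bar f^*$ on $\CH^G_*(-,i)$ is only available for flat maps. The cover furnished by \cite[Proposition~10]{EG1} is merely finite and surjective, so your appeal to \cite[Lemma~11.1]{AK1} is not justified for $i>0$. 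The fix is immediate: replace the reference to \cite[Proposition~10]{EG1} by \cite[Theorem~1]{KV} in the quasi-projective case, and then your argument goes through verbatim. For $i=0$ on algebraic spaces the paper does exactly what you do, using \cite[Proposition~10]{EG1}; surjectivity of $\bar f_*$ on $\CH^G_*(-)$ for finite surjective maps holds without flatness over $\C$-coefficients by the usual degree argument on cycles.

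A minor remark: your citation of \lemref{lem:Gen-qt} is unnecessary here and in fact circular, since that lemma invokes \corref{cor:G-surj}, which in turn rests on the present lemma. You do not actually use it, so simply drop the reference.
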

\begin{proof} 
If $G$ freely acts on $X$, then $\Sigma^G_X = \{1\}$ and hence
$G_i(G,X) \simeq G_i(G,X)_{\fm_1}$. The lemma then follows from 
\cite[Theorem~4.6]{AK1} when $Y$ is quasi-projective  and
from \cite[\S~3.2]{EG5} when $Y$ is just an algebraic space.

We now prove the general case when $Y$ is quasi-projective.
Note in this case that $X$ is also quasi-projective.
By \cite[Theorem~1]{KV}, we can find a finite, flat and surjective 
$G$-equivariant map $f: X' \to X$ such that $G$ acts freely on $X'$ with 
$G$-torsor $X' \to Y':= {X'}/G$ and the induced map $u: Y' \to Y$
is finite and surjective. 
We consider the diagram
\begin{equation}\label{eqn:Aug-1}
\xymatrix@C1pc{
G_i(G,X')_{\fm_1} \ar[rr]^-{\tau^G_{X'}} \ar[dr]_{\inv^G_{X'}} \ar[dd]_{f_*} & &
\CH_*^G(X',i) \ar[dd]^/.3cm/{\bar{f}_*} \ar[dr]^-{\overline{\inv}^G_{X'}} & \\
& G_i(Y') \ar[rr]^-{\tau_{Y'}} \ar[dd]_/.3cm/{u_*} & & \CH_*(Y', i) 
\ar[dd]^{\bar{u}_*} \\ 
G_i(G,X)_{\fm_1} \ar[rr]^-{\tau^G_{X}} \ar[dr]_{\inv^G_{X}} 
& &  \CH_*^G(X,i) \ar[dr]^-{\overline{\inv}^G_X} & \\
& G_i(Y) \ar[rr]^-{\tau_Y} & & \CH_*(Y,i).}
\end{equation}

The back and the front faces of this cube commute by 
By \thmref{thm:AKRR} and its non-equivariant version. The left face commutes
by the functoriality of the push-forward maps in $K$-theory of stacks
because $\inv^G_{X}$ and $\inv^G_{X'}$ are just the push-forward maps
induced by the proper maps $[{X'}/G] \to Y'$ and $[X/G] \to Y$. The right
face commutes by \thmref{thm:pcpu} and the top face commutes 
because of the free action. Since $f$ is flat, 
the map $\bar{f}_*$ is surjective by
\cite[Lemma~11.1]{AK1}. It follows from \thmref{thm:AKRR} that $f_*$ is 
also surjective. But this easily implies that the bottom face of the cube
commutes. This finishes the proof when $Y$ is quasi-projective.

If $Y$ is only an algebraic space, then we can use \cite[Proposition~10]{EG1}
to find a finite and surjective $G$-equivariant map $f: X' \to X$ such that 
$G$ acts freely on $X'$ with $G$-torsor $X' \to Y':= {X'}/G$ and the induced 
map $u: Y' \to Y$ is finite and surjective.
It is then easy to check that the map $\bar{f}_*: \CH^G_*(X') \to \CH^G_*(X)$
is surjective (we are working with $\C$-coefficients). Now, the 
above argument works verbatim to prove that ~\eqref{eqn:Aug-0} commutes
for $i =0$.   
\end{proof}

\begin{rem}\label{rem:separatednesshypothesis}
	We note here that our assumption that the group action is proper (equivalently, the stack is separated)
	is necessary in the proof of \lemref{lem:augmentation ideal} as we rely on \cite[Theorem~1]{KV} where this 
	hypothesis is made.
\end{rem}	

\begin{cor}\label{cor:G-surj}
With notations of \lemref{lem:augmentation ideal}, the
map $\inv^G_X: G_i(G,X)_{\fm_1} \to G_i(Y)$ is an isomorphism for every
$i \ge 0$ if $Y$ is quasi-projective. 
\end{cor}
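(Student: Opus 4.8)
\textbf{Proof proposal for Corollary~\ref{cor:G-surj}.}
The plan is to deduce the isomorphism directly from \lemref{lem:augmentation ideal} and \thmref{thm:AKRR}. First I would invoke \lemref{lem:augmentation ideal}: since $Y = X/G$ is quasi-projective, we have a commutative square
\begin{equation}\label{eqn:Gsurj-pf}
\xymatrix@C1pc{
G_i(G,X)_{\fm_1} \ar[r]^-{t^G_X} \ar[d]_{\inv^G_X} &
\CH_*^G(X,i) \ar[d]^{\overline{\inv}^G_X} \\
G_i(Y) \ar[r]^-{\tau_Y} & \CH_*(Y,i).}
\end{equation}
By \thmref{thm:AKRR}, the top horizontal arrow $t^G_X$ is an isomorphism (this is precisely the statement that $G_i(G,X)_{\fm_1} \xrightarrow{t^G_X} \CH^G_*(X,i)$ is an isomorphism).

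Next I would handle the right vertical arrow $\overline{\inv}^G_X$. Since $G$ acts properly on the quasi-projective scheme $X$ with quasi-projective quotient $Y = X/G$, the map $\pi_G: X \to X/G$ is the quotient map, and by the construction recalled in the proof of \thmref{thm:pcpu}, the pull-back $\pi_G^*: \CH_*(X/G,i) \to \CH^G_*(X,i)$ is an isomorphism and $\overline{\inv}^G_X$ is defined to be its inverse. Hence $\overline{\inv}^G_X$ is an isomorphism. Similarly, the bottom horizontal arrow $\tau_Y$ is the non-equivariant Riemann-Roch map of Bloch, which is an isomorphism (with $\C$-coefficients) by the main theorem of \cite{Bloch}, as recalled after \thmref{thm:AKRR}.

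Finally, from the commutativity of \eqref{eqn:Gsurj-pf} and the fact that three of the four arrows ($t^G_X$, $\overline{\inv}^G_X$, $\tau_Y$) are isomorphisms, it follows that the fourth arrow $\inv^G_X: G_i(G,X)_{\fm_1} \to G_i(Y)$ is also an isomorphism. I do not anticipate any real obstacle here — the corollary is a formal two-out-of-three (really three-out-of-four) consequence of the preceding results; the only thing to be slightly careful about is invoking the quasi-projectivity of $X$ (which follows from that of $Y$, since $X \to Y$ is affine, as used elsewhere in the paper) so that all the cited results apply, and noting that $\tau_Y$ being an isomorphism requires complex (or at least rational) coefficients, consistent with the running convention of the paper.
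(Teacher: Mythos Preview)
Your proposal is correct and follows essentially the same approach as the paper: the paper's proof likewise cites \lemref{lem:augmentation ideal}, \thmref{thm:AKRR}, and the fact that $\overline{\inv}^G_X$ is an isomorphism (from \thmref{thm:pcpu}), then concludes formally from the commutative square. Your version is more explicit about the role of $\tau_Y$ and the quasi-projectivity of $X$, but the argument is the same.
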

\begin{proof}
The corollary is an immediate consequence of \lemref{lem:augmentation ideal}
and \thmref{thm:AKRR} because the map $\ov{\inv}^G_X$ is an isomorphism
(see \thmref{thm:pcpu}).
\end{proof}

\begin{lem}\label{lem:aiprod}
Let $G = H \times F$ act properly on an algebraic space $X$ and let 
$Y = X/H$. Let $i \ge 0$ be any integer. If $X/G$ is quasi-projective,
there is a commutative diagram
\begin{equation}\label{diag:aiprod}
\xymatrix@C1pc{
G_i(G,X)_{\fm_1} \ar[r]^-{t^G_X} \ar[d]_{\inv^H_X} & 
\CH_*^G(X,i) \ar[d]^{\ov{\inv}^H_X} \\
G_i(F,Y)_{\fm_1} \ar[r]^-{t^F_Y} & \CH_*^F(Y,i).}
\end{equation}
If $i = 0$, this diagram is commutative without any assumption on $X/G$. 
\end{lem}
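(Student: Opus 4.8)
The plan is to reduce \lemref{lem:aiprod} to the already-established \lemref{lem:augmentation ideal} by factoring the $H$-invariants functor through the intermediate quotient $[X/H]$ and using the transitivity formula for invariants. First I would recall the factorization $\inv^G_X = \inv^F_Y \circ \inv^H_X$ from \thmref{thm:invariants} on the $K$-theory side, and the analogous factorization $\ov{\inv}^G_X = \ov{\inv}^F_Y \circ \ov{\inv}^H_X$ from \thmref{thm:pcpu} on the Chow side. Since we have the quasi-projectivity hypothesis $X/G$ quasi-projective, \lemref{lem:quot*} guarantees that $Y = X/H$ is quasi-projective with linear proper $F$-action, so all the maps in the statement are defined and \lemref{lem:augmentation ideal} is applicable both to the $G$-action on $X$ (with quotient $X/G = Y/F$) and to the $F$-action on $Y$ (with quotient $Y/F = X/G$).

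The key step is to stack these two instances of \lemref{lem:augmentation ideal} in a prism. Consider the diagram
\begin{equation}\label{eqn:aiprod-proof-0}
\xymatrix@C1pc{
G_i(G,X)_{\fm_1} \ar[r]^-{t^G_X} \ar[d]_{\inv^H_X} &
\CH_*^G(X,i) \ar[d]^{\ov{\inv}^H_X} \\
G_i(F,Y)_{\fm_1} \ar[r]^-{t^F_Y} \ar[d]_{\inv^F_Y} &
\CH_*^F(Y,i) \ar[d]^{\ov{\inv}^F_Y} \\
G_i(X/G) \ar[r]^-{\tau_{X/G}} & \CH_*(X/G,i).}
\end{equation}
The outer rectangle commutes by \lemref{lem:augmentation ideal} applied to the $G$-action on $X$, using $\inv^F_Y \circ \inv^H_X = \inv^G_X$ and $\ov{\inv}^F_Y \circ \ov{\inv}^H_X = \ov{\inv}^G_X$. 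The bottom square commutes by \lemref{lem:augmentation ideal} applied to the $F$-action on $Y$ (noting that the $R(F)$-localization $G_i(F,Y)_{\fm_{1_F}}$ is exactly what appears there, and the $\fm_1$ in the middle row is the augmentation ideal of $R(F)$). Since $\ov{\inv}^F_Y$ is an isomorphism by \thmref{thm:pcpu}, a diagram chase forces the top square to commute: for $\alpha \in G_i(G,X)_{\fm_1}$, we get $\ov{\inv}^F_Y(\ov{\inv}^H_X \circ t^G_X(\alpha)) = \ov{\inv}^F_Y(t^F_Y \circ \inv^H_X(\alpha))$ from the outer and bottom squares, and then cancel $\ov{\inv}^F_Y$. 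For $i = 0$ one repeats the same argument using the algebraic-space version of \lemref{lem:augmentation ideal}, which holds without the quasi-projectivity hypothesis; here \lemref{lem:quot*}(1) still gives a proper $F$-action on $Y$ so the intermediate Riemann-Roch statement applies.

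I do not expect a serious obstacle here: the only subtlety is bookkeeping about which augmentation ideal ``$\fm_1$'' refers to in each row (the one of $R(G)$ in the top row versus $R(F)$ in the middle row) and checking that the localization $G_i(G,X)_{\fm_{1_G}}$ maps to $G_i(F,Y)_{\fm_{1_F}}$ under $\inv^H_X$ — this is exactly the compatibility already recorded in the discussion around \eqref{eqn:G-F-loc} and in \propref{prop:decom-surj}, since $\fm_{1_G}$ lies over $\fm_{1_F}$ under $R(F) \to R(G)$. One also needs that $t^G_X$ lands in $\CH^G_*(X,i)$ and is $R(G)$-linear so that the localized map is well-defined, which is part of \thmref{thm:AKRR}. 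With these routine checks in place, the prism argument completes the proof.
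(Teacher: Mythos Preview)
Your proposal is correct and follows essentially the same argument as the paper: the paper also stacks the two instances of \lemref{lem:augmentation ideal} (for the $G$-action on $X$ and the $F$-action on $Y$) into a single diagram, uses the factorizations $\inv^G_X = \inv^F_Y \circ \inv^H_X$ and $\ov{\inv}^G_X = \ov{\inv}^F_Y \circ \ov{\inv}^H_X$, and cancels the isomorphism $\ov{\inv}^F_Y$ to deduce commutativity of the remaining square. The only cosmetic difference is that the paper draws the diagram horizontally rather than vertically.
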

\begin{proof}
We let $Z = X/G = Y/F$ and consider the diagram
\begin{equation}\label{eqn:aiprod-0}
\xymatrix@C1pc{
G_i(G,X)_{\fm_1} \ar[d]_{t^G_X} \ar[r]^-{\inv^H_X} &
G_i(F,Y)_{\fm_1} \ar[r]^-{\inv^F_Y} \ar[d]^{t^F_Y} &
G_i(Z) \ar[d]^-{\tau_Z} \\
\CH_*^G(X,i) \ar[r]^-{\ov{\inv}^H_X} & \CH_*^F(Y,i) \ar[r]^{\ov{\inv}^F_Y} &
\CH_*(Z,i).}
\end{equation}
It follows from Theorem~\ref{thm:invariants} that $\inv_Y^F \circ \inv^H_X 
= \inv^G_X$ and it follows from \thmref{thm:pcpu} that $\ov{\inv}^G_X = 
\ov{\inv}^F_Y \circ \ov{\inv}^H_X$. 
The outer rectangle and the right square both commute by
\lemref{lem:augmentation ideal}. Since the map $\ov{\inv}^F_Y$
is an isomorphism (see the proof of \thmref{thm:pcpu}), it follows that 
the left square commutes. For $i = 0$, exactly the same proof works for
algebraic spaces.
\end{proof}

\subsection{Riemann-Roch for quotient stacks}\label{sec:RRQS*}
We now define the Riemann-Roch transformation for separated quotient stacks
and prove the Riemann-Roch theorem for them.

\begin{defn}\label{defn:RR-Final-defn}
Let $\sX = [X/G]$ be a separated quotient stack of finite type over $\C$
and let $M$ denote its coarse moduli space. Let $i \ge 0$ be an integer.

(1) If $M$ is quasi-projective, we let the Riemann-Roch maps be
the composite arrows 
\begin{equation}\label{eqn:RR-Final*-01}
\tau^G_X: G_i(G,X) \xrightarrow{\vartheta^G_X}
G_i(G,I_X)_{\fm_1} \xrightarrow{t^G_{I_X}} \CH^G_*(I_X,i).
\end{equation}
\begin{equation}\label{eqn:RR-Final*-0}
\tau_{\sX}: G_i(\sX) \xrightarrow{p^*_X} G_i(G,X) \xrightarrow{\tau^G_X}
\CH^G_*(I_X,i) \xrightarrow{\ov{\inv}^G_{I_X}} \CH_*(I_{\sX},i).
\end{equation}

(2) If $M$ is only an algebraic space and $i = 0$, we let $\tau^G_X$ and
$\tau_{\sX}$ be the maps as defined in ~\eqref{eqn:RR-Final*-01} and
~\eqref{eqn:RR-Final*-0}, respectively.
\end{defn}

It follows immediately from \lemref{lem:independenceofstackmap} and
\thmref{thm:pcpu} that $\tau_{\sX}$ is independent of the presentation
of $\sX$ as a quotient stack.

\begin{exm}\label{exm:Diag}
If $G$ is a diagonalizable group, then the semi-simple conjugacy classes of $G$
are all the singleton subsets $\{h\}$ of $G$. Since the map $\mu^h: I^h_X \to X^h$ is
an isomorphism, it easily follows from the above constructions that
$\tau^G_X$ in this case is simply the direct sum of the Riemann-Roch maps
$t^G_{X^h}: G_i(G, X^h)_{\fm_1} \xrightarrow{\simeq} \CH^G_*(X^h, i)$ of \cite{AK1}.
\end{exm}

\begin{remk}\label{remk:Comp-EGRR}
If $X$ is smooth and $i = 0$, then an equivariant Riemann-Roch map was
constructed by Edidin and Graham \cite[Theorem~6.7]{EG2}.
Using the non-abelian localization theorem of \cite{EG4}, we can check that 
the map $\tau^G_X$ of ~\eqref{diag:RR-mapfull} coincides
with that of \cite{EG2}.
 
It is enough to check this at every semi-simple conjugacy class $\psi$.
Let $p_g$ denote the projection $G_*(Z_g, X^g)_{\fm_{\psi}} \surj 
G_*(Z_g, X^g)_{\fm_g}$ and let $l_f = \lambda_{-1}(N_f^*)^{-1}$
(see \cite[\S~5.4]{EG4}). To check the agreement, all we need to show is that
$l_f \circ p_g \circ \mu^{-1}_g \circ f^* = \omega^{-1}_g \circ (j^{\psi}_*)^{-1}$,
where $j^\psi: X_{\psi} \inj X$ is the inclusion,
$j^{\psi}_*: G_*(G, X_{\psi})_{\fm_{\psi}} \to  G_*(G, X)_{\fm_{\psi}}$
is the isomorphism of \thmref{thm:closedimmiso} and 
$f = j^\psi \circ \mu^\psi: I^\psi_X \to X$ is a map of smooth algebraic
spaces. Equivalently, we need to show that
$j^{\psi}_* \circ \omega_g \circ l_f \circ p_g \circ \mu^{-1}_g \circ f^*$
is identity. Using our definition of $\omega_g$ and $j^{\psi}_* \circ \mu^\psi_* 
= f_*$, this is equivalent to showing that 
$f_* \circ \mu^\psi_g \circ l_f \circ p_g \circ \mu^{-1}_g \circ f^*$ 
is identity. In the notations of \cite[\S~5.4]{EG4}, this is equivalent to
saying that $f_*((l_f \cap f^*(\alpha))_{c_\psi}) = \alpha$ for
$\alpha \in  G_*(G, X)_{\fm_{\psi}}$. But this is precisely the
identity (13) of \cite{EG4}.
\end{remk}


The Grothendieck-Riemann-Roch theorems that we wish to prove in this text for 
separated quotient stacks are the following. 

\begin{thm}\label{thm:GRR-QDMStacks}
Let $f: \sX \to \sY$ be a proper morphism of separated quotient 
stacks of finite type over $\C$ with quasi-projective 
coarse moduli spaces. Let $i \ge 0$ be an integer.
Then there is a commutative diagram
\begin{equation}\label{eqn:GRR-QDMStacks-0}
\xymatrix@C1pc{
G_i(\sX) \ar[r]^-{\tau_{\sX}} \ar[d]_{f_*} & \CH_*(I_{\sX},i) \ar[d]^{\bar{f^I_*}} \\
G_i(\sY) \ar[r]^-{\tau_{\sY}} & \CH_*(I_{\sY},i)}
\end{equation}
such that the horizontal arrows are isomorphisms.
\end{thm}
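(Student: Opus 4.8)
The plan is to deduce \thmref{thm:GRR-QDMStacks} by combining the Atiyah-Segal isomorphism of \thmref{thm:ASCS} with the equivariant Riemann-Roch theorem of \cite{AK1} (\thmref{thm:AKRR}), exactly as the Riemann-Roch map $\tau_{\sX}$ was built in Definition~\ref{defn:RR-Final-defn}. Writing $\sX = [X/H]$ and $\sY = [Y/F]$, I would first recall that $\tau_{\sX} = \ov{\inv}^G_{I_X} \circ t^G_{I_X} \circ \vartheta^G_X \circ p^*_X$ (and similarly for $\sY$), so the claim decomposes into three compatibility statements: (i) $\vartheta^G_X$ is covariant for $f^I$ at the level of localized $K$-theory of inertia spaces, which is precisely \thmref{thm:ASCS}; (ii) the Riemann-Roch map $t^G_{I_X}: G_i(G,I_X)_{\fm_1} \to \CH^G_*(I_X,i)$ is covariant for the proper map $f^I$, which is \thmref{thm:AKRR} together with the observation that the $\fm_1$-localization is harmless because $t^G$ is already an isomorphism there; and (iii) the functor-of-invariants map $\ov{\inv}^G_{I_X}$ commutes with proper push-forward, which is \thmref{thm:pcpu}. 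Since the composite $\tau^G_X = t^G_{I_X}\circ\vartheta^G_X$ must be shown covariant before passing to the coarse space, I would first establish a commutative square relating $\tau^G_X$ for $Z$ with $\tau^F_W$ and $\tau^F_Y$ along the factorization $\sX \simeq [Z/G] \to [W/F] \to [Y/F] = \sY$ coming from \lemref{lem:GRR-QDM-stacks}, just as in the proof of \thmref{thm:ASCS}.

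Concretely, I would build a large diagram analogous to \eqref{eqn:ASCS-1} but with the inertia-space $K$-theory columns replaced (or rather augmented on the right) by the equivariant higher Chow columns via the Riemann-Roch maps $t^G_{I_Z}$, $t^F_{I_W}$, $t^F_{I_Y}$, and then by the $\ov{\inv}$ maps down to $\CH_*(I_{\sZ},i)$, $\CH_*(I_{\sW},i)$, $\CH_*(I_{\sY},i)$. The leftmost squares (pull-back along the torsor $Z \to \sX$ followed by $p^*_X$, and the invariants push-forwards) commute by functoriality of flat pull-back and proper push-forward on $K$-theory of stacks, exactly as in \thmref{thm:ASCS}. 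The middle squares involving $\vartheta$ commute by \thmref{thm:GRR-PROD} (for the partial quotient $Z \to W$) and \propref{prop:riemannrochmapfunct} (for the representable map $W \to Y$), which is precisely what \thmref{thm:ASCS} already packages. The new squares are those built from the Riemann-Roch maps $t^G_{(-)}$: these commute by the covariance clause of \thmref{thm:AKRR} applied to the proper $G$-equivariant (resp.\ $F$-equivariant) maps $I^{\psi}_Z \to I^{\psi}_W \to I^{\phi}_Y$ on the inertia spaces; here one uses that all these inertia spaces are quasi-projective, which follows from \lemref{lem:GRR-QDM-stacks}(6) together with \lemref{lem:torsor} identifying $I^{\psi}_X$ with $G\times^{Z_g}X^g$ and the quasi-projectivity of $G/Z_g$. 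The final squares, descending to the coarse moduli spaces, commute by \thmref{thm:pcpu} applied to the proper equivariant maps on inertia spaces, noting that $\CH_*(I_{\sX},i) = \CH_*(M_{I_{\sX}},i)$ by Definition~\ref{defn:chowgroupofstack} and that the coarse-space map of $f^I$ is proper by \lemref{lem:properpushforward}.

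There is one compatibility I would need to verify carefully before assembling everything: that the Riemann-Roch map $t^G_{I_X}$ on the $\fm_1$-localized group is well defined and covariant in the way I want. The subtlety, highlighted in the introduction, is that $f^I_*$ need not be continuous for the $\fm$-adic topologies, so one cannot naively localize the push-forward. However, since we work on the inertia spaces where the relevant localization is at the \emph{single} maximal ideal $\fm_1 \subset R(G)$ (the augmentation ideal), and since \thmref{thm:AKRR} asserts $G_i(G,I_X)_{\fm_1} \xrightarrow{\simeq} \CH^G_*(I_X,i)$, the push-forward $\bar{f^I}_*$ on Chow groups simply transports back to a push-forward on the $\fm_1$-localizations, and covariance on $K$-theory before localization forces the square to commute after localization. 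I would spell this out by noting $f^I_*$ is $R(G)$-linear hence preserves the $\fm_1$-primary summand (Proposition~\ref{prop:decom-surj} and the direct-sum decomposition), so it descends to the localization, and then \thmref{thm:AKRR}'s covariance gives the square directly.

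The main obstacle I anticipate is purely bookkeeping: correctly matching up the group actions and the various inertia-space identifications ($I_X^{\psi} \cong G\times^{Z_g}X^g$, ${I^{\psi}_X}/H \cong F\times^{Z_{g_2}}W$, etc., from \lemref{lem:Moritaforpartialquotient}) so that the Riemann-Roch maps $t^G$, the Morita maps $\mu_g$, the twists $t_{g^{-1}}$, and the invariants maps $\ov{\inv}$ all fit into a single coherent cube, and verifying that quasi-projectivity of coarse moduli spaces propagates to all the auxiliary inertia spaces and partial quotients so that \thmref{thm:AKRR} and \thmref{thm:pcpu} apply. No genuinely new idea should be required — the real content is \thmref{thm:ASCS} and \thmref{thm:AKRR}, and this theorem is their formal consequence after unwinding definitions; but the diagram-chase is large enough that care is needed to ensure every face is one of the already-established commutativities rather than something needing separate proof.
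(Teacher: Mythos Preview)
Your proposal is essentially the paper's own proof: the same factorization $\sX \simeq [Z/G] \to [W/F] \to [Y/F] = \sY$ from \lemref{lem:GRR-QDM-stacks}, the same large diagram extending~\eqref{eqn:ASCS-1} by two more columns (equivariant higher Chow groups via $t^{(-)}_{I_{(-)}}$, then coarse-space Chow groups via $\ov{\inv}$), and the same list of citations for the faces.

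One point needs more care than you indicate. You write that the Riemann-Roch squares ``commute by the covariance clause of \thmref{thm:AKRR} applied to the proper $G$-equivariant (resp.\ $F$-equivariant) maps.'' This is fine for the bottom square $I_W \to I_Y$, which is an $F$-equivariant proper map between $F$-spaces. But the top square
\[
\xymatrix@C1pc{
G_i(G,I_Z)_{\fm_{1_G}} \ar[r]^-{t^G_{I_Z}} \ar[d]_{\upsilon_* \circ \inv^H_{I_Z}} &
\CH^G_*(I_Z,i) \ar[d]^{\ov{\upsilon}_* \circ \ov{\inv}^H_{I_Z}} \\
G_i(F,I_W)_{\fm_{1_F}} \ar[r]^-{t^F_{I_W}} & \CH^F_*(I_W,i)
}
\]
changes the group from $G$ to $F$ via the $H$-invariants functor, and \thmref{thm:AKRR} by itself says nothing about that: its covariance is for $G$-equivariant proper maps between $G$-spaces with the \emph{same} $G$. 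The paper handles this square by invoking \lemref{lem:aiprod}, which in turn rests on \lemref{lem:augmentation ideal} (reduce to a free action via a finite surjective cover, where the claim becomes the non-equivariant Riemann-Roch). So in your write-up you should split the Riemann-Roch column into two cases and cite \lemref{lem:aiprod} for the partial-quotient step; everything else in your plan matches the paper exactly.
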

\begin{proof}
We let $\sX = [X/H]$ and $\sY = [Y/F]$. Let $Z$ be as in
~\eqref{eqn:Fin-coh-0-new*}. It follows from \lemref{lem:GRR-QDM-stacks}
that $G = H \times F$ acts on $Z$ such that $t$ is $G$-equivariant and
an $F$-torsor. 
By \lemref{lem:GRR-QDM-stacks}, we have proper maps maps 
$\sX = [Z/G] \xrightarrow{p} \sW = [W/F] 
\xrightarrow{q} \sY = [Y/F]$, where $W = Z/H$.
Let $\upsilon: {I_Z}/H \to I_W$ denote the canonical map induced on the
inertia schemes.
We consider the diagram
\begin{equation}\label{eqn:GRR-QDMStacks-1}
\xymatrix@C1pc{
G_i(\sX) \ar[r]^-{p_Z^*} \ar[d]_{p_*} & 
G_i(G,Z) \ar[r]^-{\vartheta^G_Z} \ar[d]^{\inv^H_Z} &
G_i(G, I_Z)_{\fm_1} \ar[r]^-{t^G_{I_X}} \ar[d]^{\upsilon_* \circ \inv^H_{I_Z}}  
& \CH^G_*(I_Z,i) \ar[d]^{\ov{\upsilon}_* \circ \ov{\inv}^H_{I_Z}} 
\ar[r]^-{\ov{\inv}^G_{I_Z}} & \CH_*(I_{\sX}, i) \ar[d]^{\ov{p}^I_*} \\
G_i(\sW) \ar[r]^-{p^*_W} \ar[d]_{q_*} & 
G_i(F, W) \ar[r]^-{\vartheta^F_W} \ar[d]^{q_*} & 
G_i(F, I_W)_{\fm_1} \ar[r]^-{t^F_W} \ar[d]^{q^I_*} &
\CH_*^F(I_W,i) \ar[d]^{\ov{q}^I_*} 
\ar[r]^-{\ov{\inv}^F_{I_W}} &  \CH_*(I_{\sW}, i) \ar[d]^{\ov{q}^I_*}  \\
G_i(\sY) \ar[r]^{p^*_Y} & G_i(F,Y) \ar[r]^-{\vartheta^F_Y} &
G_i(F, I_Y)_{\fm_1} \ar[r]^-{t^F_Y} &
\CH_*^F(I_Y,i) \ar[r]^-{\ov{\inv}^F_{I_Y}} & \CH_*(I_{\sY}, i).}
\end{equation}

The top and the bottom squares on the left commute by the
commutativity of flat pull-back and proper push-forward maps in 
$K$-theory of stacks, noting that $\inv^{(-)}_{(-)}$ is just the proper 
push-forward map (see \thmref{thm:invariants}). 
The top and the bottom squares on the right
commute by the functoriality of the proper push-forward maps
on the higher Chow groups of stacks (see \S~\ref{sec:ECG}).
The second square from the left on the top commutes by \thmref{thm:GRR-PROD}
and the corresponding bottom square commutes by 
\propref{prop:riemannrochmapfunct}.
The second square from the right on the top commutes using
\lemref{lem:aiprod} and \thmref{thm:AKRR}. The corresponding bottom
square commutes by \thmref{thm:AKRR}. It follows that
~\eqref{eqn:GRR-QDMStacks-0} commutes. 
We have already shown before that all horizontal arrows in
~\eqref{eqn:GRR-QDMStacks-1} are isomorphisms. This finishes the proof.
\end{proof}

An identical proof also gives us the following Riemann-Roch theorem 
without any assumption on the coarse moduli spaces. 
In the special case where we
assume $\sX$ to be smooth and $f: \sX \to \sY = M$ the
coarse moduli space map for $\sX$, this result is due to 
Edidin and Graham (see \cite[Theorem~6.8]{EG2} and \cite[Theorem~5.1]{G1}).

\begin{thm}\label{thm:GRR$G_0$algspaces}
Let $f: \sX \to \sY$ be a proper morphism of separated quotient 
stacks. Then there is a commutative diagram
\begin{equation}\label{eqn:GRR$G_0$algspaces-0}
\xymatrix@C1pc{
G_0(\sX) \ar[r]^-{\tau_{\sX}} \ar[d]_{f_*} & \CH_*(I_\sX) \ar[d]^{\bar{f}^I_*} \\
G_0(\sY) \ar[r]^-{\tau_{\sY}} & \CH_*(I_\sY)}
\end{equation}
in which the horizontal arrows are isomorphisms.
\end{thm}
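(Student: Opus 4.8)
The plan is to follow the proof of \thmref{thm:GRR-QDMStacks} essentially verbatim, dropping the assumption on the coarse moduli spaces and restricting to $i = 0$ throughout. The point is that every ingredient in the diagram \eqref{eqn:GRR-QDMStacks-1} has been established for $G_0$ and $\CH^G_*(-)$ of arbitrary algebraic spaces with proper group action, even when the relevant quotients are not quasi-projective: the twisting operators of \S\ref{Sec:Twisting} are constructed for $G_0$ of algebraic spaces (see \corref{cor:proper-T-commute} and \propref{prop:invtwisst}), the Morita isomorphisms $\omega_g$, $\mu^1_g$, etc.\ of \S\ref{Sec:Morita Isomorphism} work at the level of abelian categories of equivariant coherent sheaves on algebraic spaces, the functor of invariants $\inv^H_X$ of \thmref{thm:invariants} is defined for algebraic spaces, and the equivariant Riemann-Roch map $t^G_X$ of \thmref{thm:AKRR} together with \lemref{lem:augmentation ideal} and \lemref{lem:aiprod} hold for $i = 0$ without the quasi-projectivity hypothesis.

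First I would reduce, exactly as in \thmref{thm:ASCS} and \thmref{thm:GRR-QDMStacks}, to the case of the factorization $\sX = [Z/G] \xrightarrow{p} \sW = [W/F] \xrightarrow{q} \sY = [Y/F]$ provided by \lemref{lem:GRR-QDM-stacks}, where $G = H \times F$, $W = Z/H$, the map $p$ is (stackily) the coarse moduli space map along $H$, and $q$ is representable. This splits the verification into two halves. For $p$: invoke \thmref{thm:GRR-PROD} for the commutativity of the Atiyah-Segal square for the partial quotient, then \lemref{lem:aiprod} (valid for $i=0$ and algebraic spaces) together with \thmref{thm:AKRR} to pass from $\vartheta$ to $\tau$, and \thmref{thm:pcpu} (again stated for $\CH^G_*(-)$ of algebraic spaces) to descend the Chow side along $\ov{\inv}^H$. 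For $q$, which is representable: use \propref{prop:riemannrochmapfunct} for the Atiyah-Segal map under a proper $G$-equivariant map of algebraic spaces, then \thmref{thm:AKRR}'s covariance of $t^G_X$ for proper equivariant maps, and the functoriality of proper push-forward on the higher (here ordinary) Chow groups of stacks. Assembling these into the three-by-four diagram analogous to \eqref{eqn:GRR-QDMStacks-1}, the left-hand squares commute by the compatibility of proper push-forward and flat pull-back in $K$-theory, the right-hand squares by functoriality of $\ov{\inv}$ and proper push-forward in Chow theory, and the two middle columns by the results just cited; a diagram chase then yields \eqref{eqn:GRR$G_0$algspaces-0}. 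That the horizontal arrows are isomorphisms follows because $\vartheta_{\sX}$ is an isomorphism (\thmref{thm:ASCS}, or rather Definition~\ref{defn:RR-QDM}, which makes no quasi-projectivity assumption) and $t^G_{I_X}\colon G_0(G,I_X)_{\fm_1} \to \CH^G_*(I_X)$ together with $\ov{\inv}^G_{I_X}$ are isomorphisms for $i=0$ over algebraic spaces.

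The step I expect to require the most care is checking that each auxiliary lemma from the earlier sections genuinely holds for $G_0$ of algebraic spaces without quasi-projectivity — in particular \lemref{lem:aiprod} and \lemref{lem:augmentation ideal}, whose proofs for higher $i$ used quasi-projective hypotheses (e.g.\ the appeal to \cite{KV} for a flat equivariant cover) but whose $i=0$ statements are explicitly recorded to hold for algebraic spaces via \cite[Proposition~10]{EG1} and \cite[\S~3.2]{EG5}. One must also confirm that the inertia spaces $I_Z$, $I_W$, $I_Y$ and the maps $\upsilon$, $p^I$, $q^I$ behave correctly: $I_Z \to I_X$ and ${I_Z}/H \to I_W$ are handled by \lemref{lem:$I_X$torsor} and \lemref{lem:GRR-QDM-stacks}, which again make no quasi-projectivity assumption. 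Once these are in place, the argument is a routine transcription of the proof of \thmref{thm:GRR-QDMStacks}, so I would simply write: ``The proof is identical to that of \thmref{thm:GRR-QDMStacks}, using the versions of \thmref{thm:AKRR}, \lemref{lem:augmentation ideal}, \lemref{lem:aiprod} and \thmref{thm:pcpu} valid for $G_0(-)$ and $\CH^G_*(-)$ of algebraic spaces, and the $i=0$ case of \thmref{thm:ASCS}.''
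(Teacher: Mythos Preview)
Your proposal is correct and follows exactly the approach the paper takes: the text preceding the theorem states that ``An identical proof also gives us the following,'' and the (commented-out) proof in the source simply notes that the argument of \thmref{thm:GRR-QDMStacks} goes through verbatim for $i=0$ since the quasi-projectivity hypothesis was only needed for higher Chow groups. Your careful enumeration of which auxiliary lemmas (\lemref{lem:augmentation ideal}, \lemref{lem:aiprod}, \thmref{thm:pcpu}, \thmref{thm:AKRR}) have recorded $i=0$ versions for algebraic spaces is exactly the verification the paper leaves implicit.
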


\noindent\emph{Acknowledgments.} The main results of this paper were
largely suggested by Dan Edidin during his conversations
with the first author on the equivariant Riemann-Roch problem.
The authors are grateful to him for his suggestion and for providing
many valuable inputs during this work.   
The authors would also like to thank Edidin and Graham for writing 
\cite{EG2} whose
influence is ubiquitous in this work.

This work was partly completed when the first author was visiting
IMS at National university of Singapore 
during the program on `Higher dimensional algebraic Geometry, holomorphic 
dynamics and their interactions' in January 2017. He would like to thank
the institute and Professor De-Qi Zhang for invitation and support. 
The authors would like thank the referee for providing many helpful
suggestions.


\end{document}